\DeclareMathOperator*{\argmin}{arg\,min}
\newtheorem{theorem}{Theorem}
\newtheorem{proposition}{Proposition} 
\newtheorem{lemma}{Lemma} 
\newtheorem{assumption}{Assumption} 
\newtheorem{corollary}{Corollary} 
\newtheorem{definition}{Definition}
\title{Achieving $\mathcal{O}(\epsilon^{-1.5})$ Complexity in Hessian/Jacobian-free Stochastic Bilevel Optimization}
\author{%
  Yifan Yang, Peiyao Xiao and Kaiyi Ji
  \\
  Department of Computer Science and Engineering\\
  University at Buffalo\\
  \texttt{\{yyang99, peiyaoxi, kaiyiji\}@buffalo.edu} \\
  \date{May 30, 2023}
}
\begin{document}

\maketitle

\begin{abstract}
    In this paper, we revisit the bilevel optimization problem, in which the upper-level objective function is generally nonconvex and the lower-level objective function is strongly convex. Although this type of problem has been studied extensively, it still remains an open question how to achieve an $\mathcal{O}(\epsilon^{-1.5})$ sample complexity in Hessian/Jacobian-free stochastic bilevel optimization without any second-order derivative computation. To fill this gap, we propose a novel Hessian/Jacobian-free bilevel optimizer named FdeHBO, which features a simple fully single-loop structure, a projection-aided finite-difference Hessian/Jacobian-vector approximation, and momentum-based updates. Theoretically, we show that FdeHBO requires $\mathcal{O}(\epsilon^{-1.5})$ iterations (each using $\mathcal{O}(1)$ samples and only first-order gradient information) to find an $\epsilon$-accurate stationary point. As far as we know, this is the first Hessian/Jacobian-free method with an $\mathcal{O}(\epsilon^{-1.5})$
    sample complexity for nonconvex-strongly-convex stochastic bilevel optimization.  
\end{abstract}

\section{Introduction}
Bilevel optimization has 
drawn intensive attention due to its wide applications in meta-learning~\cite{franceschi2018bilevel,bertinetto2018meta,rajeswaran2019meta}, hyperparameter optimization~\cite{franceschi2018bilevel,shaban2019truncated,feurer2019hyperparameter}, reinforcement learning~\cite{konda2000actor,hong2020two}, signal process~\cite{kunapuli2008classification,flamary2014learning} and communication~\cite{ji2022network} and federated learning~\cite{tarzanagh2022fednest}. 
In this paper, we study the following stochastic bilevel optimization problem. 
\begin{align}\label{objective_total}
&\min_{x\in\mathbb{R}^{p}} \Phi(x)=f(x, y^*(x)) : =
\mathbb{E}_{\xi} \left[f(x,y^*(x);\xi)\right]  \nonumber
\\&\;\;\mbox{s.t.} \; y^*(x)= \argmin_{y\in\mathbb{R}^q} g(x,y):=\mathbb{E}_{\zeta} \left[g(x,y^*(x);\zeta)\right] 
\end{align}
where the upper- and lower-level objective functions   $f(x,y)$ and $g(x,y)$ take the expectation form w.r.t.~the random variables $\xi$ and $\zeta$, and are jointly continuously differentiable. In this paper, we focus on the nonconvex-strongly-convex bilevel setting, where the lower-level function $g(x,\cdot)$ is strongly convex and the upper-level function $\Phi(x)$ is nonconvex. This class of bilevel problems has been studied extensively from the theoretical perspective in recent years. 
Among them, \cite{ghadimi2018approximation,ji2021bilevel,arbel2022amortized,yang2021provably} proposed bilevel approaches with a double-loop structure, which update $x$ and $y$ in a nested manner. Single-loop bilevel algorithms have also attracted significant attention recently~\cite{hong2020two,yang2021provably,khanduri2021near,guo2021randomized,chen2021single,li2022fully,dagreou2022framework} due to the simple updates on all variables simultaneously. 
Among them, the approaches in \cite{yang2021provably,khanduri2021near,guo2021randomized} have been shown to achieve an $\mathcal{O}(\epsilon^{-1.5})$ sample complexity, but with expensive evaluations of Hessian/Jacobian matrices or Hessian/Jacobian-vector products.


Hessian/Jacobian-free bilevel optimization has received increasing attention due to its high efficiency and feasibility in practical large-scale settings. 
In particular, \cite{finn2017model,nichol2018first,vuorio2019multimodal}  directly ignored the computation of  
all second-order derivatives. 
However, such eliminations may lead to performance degeneration~\cite{antoniou2019train,fallah2020convergence}, and  can vanish the hypergradient for bilevel problems
with single-variable upper-level function, i.e., $\Phi(x)=f(y^*(x))$. \cite{song2019maml,gu2021optimizing} proposed zeroth-order approaches that approximate the hypergradient using only function values. These methods do not have a convergence rate guarantee. 
Recently, several Hessian/Jacobian-free bilevel algorithms were proposed by ~\cite{liu2022bome,sow2022constrained,shen2023penalty,chen2023bilevel} by reformulating the lower-level problem into the optimality-based constraints such as $g(x,y)\leq \min_yg(x,y)$. However, these approaches all focus on the deterministic setting, and their extensions to the stochastic setting remain unclear. 
In the stochastic case, \cite{sow2022on} proposed evolution strategies based bilevel method, which achieves a high sample complexity of $\mathcal{O}(p^2\epsilon^{-2})$, where $p$ is the problem dimension. Most recently, \cite{kwon2023fully} proposed two fully first-order (i.e., Hessian/Jacobian-free) value-function-based stochastic bilevel optimizer named F$^2$SA and its momentum-based version F$^3$SA with a single-loop structure, which achieves sample complexities of $\mathcal{O}(\epsilon^{-3.5})$ and $\mathcal{O}(\epsilon^{-2.5})$, respectively. However, there is still a large gap of $\epsilon^{-1}$, compared to the optimal complexity of $\mathcal{O}(\epsilon^{-1.5})$.  
Then, an important open question, as recently proposed by \cite{kwon2023fully}, is: 
 \begin{list}{$\bullet$}{\topsep=0.2ex \leftmargin=0.2in \rightmargin=0.in \itemsep =0.06in}
\item Can we achieve an $\mathcal{O}(\epsilon^{-1.5})$ sample/gradient complexity for nonconvex-strongly-convex bilevel optimization using only first-order gradient information? 
\end{list}

\begin{table*}[t]
 \centering
  \begin{tabular}{|l|l|l|l|c|}
   \hline
Algorithm & Samples & Batch size & \# of iterations &Loops per iteration 
\\ \hline \hline
PZOBO-S \cite{sow2022on}  &$ \mathcal{\widetilde O}(p^2\epsilon^{-3})$ & $\mathcal{O}(\epsilon^{-1})$& $\mathcal{\widetilde O}(p^2\epsilon^{-2})$&2
   \\ \hline 
   F$^2$SA \cite{kwon2023fully}  &$ \mathcal{\widetilde O}(\epsilon^{-3.5})$ & $\mathcal{O}(1)$& $ \mathcal{\widetilde O}(\epsilon^{-3.5})$&1
   \\ \hline 
   F$^3$SA \cite{kwon2023fully}  &$ \mathcal{\widetilde O}(\epsilon^{-2.5})$ & $\mathcal{O}(1)$& $ \mathcal{\widetilde O}(\epsilon^{-2.5})$&1
   \\ \hline 
      \cellcolor{blue!15}{FdeHBO (this paper)} & \cellcolor{blue!15}{$\mathcal{\widetilde O}(\epsilon^{-1.5})$} & \cellcolor{blue!15}{$\mathcal{O}(1)$ }  &\cellcolor{blue!15}{$\mathcal{\widetilde O}(\epsilon^{-1.5})$}&\cellcolor{blue!15}{1} 
   \\ \hline 
  \end{tabular}
   \vspace{0.2cm}
   \caption{Comparison of stochastic Hessian/Jacobian-free bilevel optimization algorithms. 
   } \label{tb:hessian-free}
\end{table*}

\subsection{Our Contributions}
In this paper, 
we provide an affirmative answer to the above question by proposing a new Hessian/Jacobian-free stochastic bilevel optimizer named FdeHBO with three main features. First, FdeHBO takes the fully single-loop structure with momentum-based updates on three variables $y,v$ and $x$ for optimizing the lower-level objective, the linear system (LS) of the Hessian-inverse-vector approximation, and the upper-level objective, respectively. Second, FdeHBO contains only a single matrix-vector product at each iteration, which admits a simple first-order finite-difference estimation. Third, FdeHBO involves an auxiliary projection on $v$ updates to ensure the boundedness of the Hessian-vector approximation error, the variance on momentum-based iterates, and the smoothness of the LS loss function. 
Our detailed contributions are summarized below. 
 \begin{list}{$\bullet$}{\topsep=0.2ex \leftmargin=0.2in \rightmargin=0.in \itemsep =0.06in}
 \item Theoretically, we show that FdeHBO achieves a sample/gradient complexity of $\mathcal{O}(\epsilon^{-1.5})$ and an iteration complexity of $\mathcal{O}(\epsilon^{-1.5})$ to achieve an $\epsilon$-accurate stationary point, both of which outperforms existing results by a large margin. As far as we know, this is the first-known method with an $\mathcal{O}(\epsilon^{-1.5})$ sample complexity for nonconvex-strongly-convex stochastic bilevel optimization using only first-order gradient information. 
 \item Technically, we show that the auxiliary projection can provide more accurate iterates on $v$ in solving the LS problem without affecting the overall convergence behavior, and in addition, provide a novel characterization of the gradient estimation error and the iterative progress during the $v$ updates, as well as the impact of the $y$ and $v$ updates on the momentum-based hypergradient estimation, all of which do not exist in previous studies. In addition, the finite-different approximations make the unbiased assumptions in the momentum-based gradients no longer hold, and hence a more careful analysis is required. 
\item As a byproduct, we further propose a fully single-loop momentum-based method named FMBO in the small-dimensional case with matrix-vector-based hypergradient computations. Differently from existing momentum-based bilevel methods with $\mathcal{O}(\log\frac{1}{\epsilon})$ Hessian-vector evaluations per iteration, FMBO contains only a single Hessian-vector computation per iteration with the same $\mathcal{O}(\epsilon^{-1.5})$ sample complexity.    
 
 \end{list}

We also want to emphasize our technical differences from previous works as below. 

\noindent
{\bf Comparison to existing momentum-based methods.} Previous momentum-based methods~\cite{yang2021provably,khanduri2021near} solve the linear system (LS) to a high accuracy of $\mathcal{O}(\epsilon)$, whereas our algorithm includes a new estimation error by the single-step momentum update on LS, and this error is also correlated with the lower-level updating error and the hypergradient estimation error. In addition, due to the finite-difference approximation, the stochastic gradients in all three updates on $y,v,x$ are no longer unbiased. 
Non-trivial efforts need to be taken to deal with such challenges and derive the optimal complexity. 

\noindent
{\bf Comparison to existing fully single-loop methods.} 
The analysis of the single-step momentum update in solving the LS requires the smoothness of the LS loss function and the boundedness of LS gradient variance, both of which may not be satisfied. 
To this end, we include an auxiliary projection and show it not only guarantees these crucial properties, but also, in theory, provides an improved per-iteration progress. As a comparison, existing works on fully single-loop stochastic bilevel optimization such as SOBA/SABA~\cite{dagreou2022framework} and  FLSA~\cite{li2022fully} with a new time scale to update the LS problem often assume that the iterates on $v$ are bounded during the process. We do not require such assumptions. In addition, an $\mathcal{O}(\epsilon^{-1.5})$ complexity has not been established for fully single-loop bilevel algorithms yet. 

\subsection{Related Work}
\noindent{\bf Bilevel optimization methods.}
 Bilevel optimization, which was first introduced by~\cite{bracken1973mathematical}, has been studied for decades. By replacing the lower-level problem with its optimality conditions,
\cite{hansen1992new, gould2016differentiating, shi2005extended,sinha2017review} reformulated the bilevel problem to the single-level problem. Gradient-based bilevel methods have shown great promise recently, which can be  divided into approximate implicit differentiation  
(AID)~\cite{domke2012generic, pedregosa2016hyperparameter, liao2018reviving,arbel2022amortized} and iterative differentiation (ITD)~\cite{maclaurin2015gradient, franceschi2017forward,finn2017model, shaban2019truncated, grazzi2020iteration} based approaches. Recently, a bunch of stochastic bilevel algorithms has been proposed via Neumann series~\cite{chen2021single,ji2021bilevel}, recursive momentum~\cite{yang2021provably,huang2021biadam,guo2021randomized} and variance reduction~\cite{yang2021provably,dagreou2022framework}.
Theoretically, the convergence of bilevel optimization has been analyzed by \cite{franceschi2018bilevel,shaban2019truncated,liu2021value,ghadimi2018approximation,ji2021bilevel,hong2020two,arbel2022amortized,dagreou2022framework}. 
Among them, \cite{ji2021lower} provides the lower complexity bounds for deterministic bilevel optimization with (strongly-)convex upper-level functions. \cite{guo2021randomized,chen2021single,yang2021provably,khanduri2021near} achieved the near-optimal sample complexity with second-order derivative computations. 
Some works studied deterministic bilevel optimization with convex or Polyak-Lojasiewicz (PL) lower-level problems via mixed gradient aggregation~\cite{sabach2017first,liu2020generic,li2020improved}, log-barrier regularization~\cite{liu2021value}, primal-dual method~\cite{sow2022constrained} and dynamic barrier~\cite{ye2022bome}.  More results and details can be found in the survey by  \cite{liu2021investigating}. 


\noindent{\bf Hessian/Jacobian-free bilevel optimization.} Some Hessian/Jacobian-free bilevel optimization methods have been proposed recently by~\cite{sow2022on,liu2018darts,finn2017model,gu2021optimizing,song2019maml,nichol2018first}. Among them,  FOMAML~\cite{finn2017model,nichol2018first} and MUMOMAML~\cite{vuorio2019multimodal} directly ignore the computation of  
all second-order derivatives. 
Several Hessian/Jacobian-free bilevel algorithms were proposed by ~\cite{liu2022bome,sow2022constrained,shen2023penalty,chen2023bilevel} by replacing the lower-level problem with the optimality conditions as the constraints.  However, these approaches focus only on the deterministic setting. 
Recently, zeroth-order stochastic approaches have been proposed for the hypergradient estimation~\cite{song2019maml,gu2021optimizing,sow2022on}. Theoretically,  
\cite{sow2022on} analyzed the convergence rate for their method. \cite{kwon2023fully} proposed fully first-order stochastic bilevel optimization algorithms based on the value-function-based lower-level problem reformulation. 
This paper proposes a new Hessian/Jacobian-free stochastic bilevel algorithm that for the first time achieves an $\mathcal{O}(\epsilon^{-1.5})$ sample complexity. 

\noindent
{\bf Momentum-based bilevel approaches.}
The recursive momentum technique was first introduced by~\cite{cutkosky2019momentum,tran2019hybrid} for minimization problems to improve the SGD-based updates in theory and in practice. This technique has been incorporated in stochastic bilevel optimization~\cite{khanduri2021near,chen2021single,guo2021stochastic,guo2021randomized,yang2021provably}. These approaches involve either Hessian-inverse matrix computations or a subloop of a number of iterations in the Hessian-inverse-vector approximation. As a comparison, our proposed method 
takes the simpler fully single-loop structure, and only uses the first-order gradient information. 

\noindent
{\bf Finite-difference matrix-vector  approximation.}
The finite-difference matrix-vector estimation has been studied extensively in the problems of escaping from saddle points~\cite{allen2018neon2} \cite{carmon2018accelerated} (some other related works can be found therein), neural architecture search (NAS)~\cite{liu2018darts} and meta-learning~\cite{fallah2020convergence}. However, such finite-different estimation can be sensitive to the selection of the smoothing constant, and may suffer from some numerical issues in practice \cite{jin2017escape}\cite{jorge2006numerical}, such as rounding errors. It is interesting but still open to developing a fully first-order stochastic bilevel optimizer without the finite-different matrix-vector estimation. We would like to leave it for future study.  


\section{Algorithms}
In this section, we first describe the hypergradient computation in bilevel optimization, and then
present the proposed Hessian/Jacobian-free bilevel method.  

\subsection{Hypergradient Computation}
One major challenge in bilevel optimization
lies in computing the hypergradient $\nabla\Phi(x)$ due to the implicit and complex dependence of the lower-level minimizer $y^*$ on $x$. To see this, if $g$ is twice differentiable, $\nabla_yg$ is continuously differentiable  
and the Hessian $\nabla_{yy}^2g(x,y^*(x))$ 
is invertible, using the implicit function theorem (IFT)~\cite{griewank2008evaluating,blondel2021efficient}, the hypergradient $\nabla \Phi(x)$ takes the form of 
\begin{align}\label{def:Phi}
    \nabla \Phi(x) &= \nabla_xf(x,y^*) - \nabla_{xy}^2g(x,y^*)\big[\nabla_{yy}^2g(x,y^*)\big]^{-1}\nabla_y f(x,y^*).
\end{align}
Note that the hypergradient in \cref{def:Phi} requires computing the exact solution $y^*$ and the expensive Hessian inverse $[\nabla_{yy}^2g(x,y^*)]^{-1}$. To approximate this hypergradient efficiently, we define the following (stochastic) hypergradient surrogates as 
\begin{align}\label{def:fffform}
    \bar{\nabla}f(x, y, v) =& \nabla_xf(x,y) - \nabla_{xy}^2g(x,y)v, \nonumber
    \\\bar{\nabla}f(x, y, v;\xi) =&\nabla_x f(x, y; \xi) 
- \nabla_{xy}^2 g(x, y; \xi)v,
\end{align}
where $v\in\mathbb{R}^q$ is an auxiliary vector to approximate the Hessian-inverse-vector product in \cref{def:Phi}, and $\bar{\nabla}f(x, y, v;\xi)$ can be regarded as a stochastic version of $ \bar{\nabla}f(x, y, v)$. Based on \cref{def:fffform}, one needs to find an efficient estimate $y$ of $y^*$, e.g., via an iterative optimization procedure, as well as a feasible estimate $v$ of the solution $v^* = [\nabla_{yy}^2g(x,y)]^{-1}\nabla_y f(x,y)$ of a linear system (LS) (equivalently quadratic programming) whose generic loss function is given by    
\begin{align}\label{def:R}
   \text{(Linear system loss:)}\quad  R(x, y, v) = \frac{1}{2}v^T\nabla_{yy}^2g(x,y)v - v^T\nabla_y f(x,y),
\end{align}
where the gradient of  $R(x,y,v)$ w.r.t.~$v$  is given by 
\begin{align}\label{def:gradR} 
   \nabla_v R(x, y, v) &= \nabla_{yy}^2g(x,y)v - \nabla_y f(x,y).
\end{align}
Similarly to \cref{def:fffform}, we also define $\nabla_v R(x, y, v;\psi) = \nabla_{yy}^2g(x,y;\psi)v - \nabla_y f(x,y;\psi)$ over any sample $\psi$ as a stochastic version of $ \nabla_v R(x, y, v)$ in \cref{def:gradR}. It can be seen from \cref{def:fffform}, \cref{def:R} and \cref{def:gradR} that the updates on the LS system involve the Hessian- and Jacobian-vector products, which can be computationally intractable in the high-dimensional case.   
In the next section, we propose a novel stochastic Hessian/Jacobian-free bilevel algorithm.

\begin{algorithm}[t]
	\caption{Hessian/Jacobian-free Bilevel Optimizer via Projection-aided Finite-difference Estimation}   
	\small
	\label{alg:main_free}
	\begin{algorithmic}[1]
		\STATE {\bfseries Input:}  $\left\{\alpha_t, \beta_t,\lambda_t\right\}_{t=0}^{T-1}$ and $r_v$. 
		\STATE {\bfseries Initialize:}
		\FOR{$t=0,1,2,...,T-1$}
		\STATE{Compute the gradient estimator $h_t^g$ by~\cref{def:htg} and update $y_{t+1} = y_t-\beta_t h_t^g$.}
		\STATE{Compute the gradient estimator $h_t^R$ by~\cref{def:htR_free} and update $w_{t+1} = v_t-\lambda_t \widetilde{h}_t^R$.}
		\STATE{Set $v_{t+1} = \begin{cases} w_{t+1},& \|w_{t+1}\|\leq r_v;\\ \frac{r_v w_{t+1}}{\|w_{t+1}\|},& \|w_{t+1}\|> r_v. \end{cases}$}
		\STATE{Compute the gradient estimator $h_t^f$ by~\cref{def:htf_free} and update $x_{t+1} = x_t-\alpha_t \widetilde{h}_t^f$.}
		\ENDFOR
	\end{algorithmic}
\end{algorithm}

\subsection{Hessian/Jacobian-free Bilevel Optimizer via Projection-aided Finite-difference Estimation}\label{sec:hfbo}
As shown in \Cref{alg:main_free}, we propose a fully single-loop stochastic Hessian/Jacobian-free bilevel optimizer named FdeHBO via projection-aided finite-difference estimation. 
It can be seen that 
FdeHBO first minimizes the lower-level objective function $g(x,y)$ w.r.t.~$y$ by running a single-step momentum-based update as 
    $y_{t+1} = y_t-\beta_t h_t^g$,
where $\beta_t$ is the stepsize and $h_t^g$ is the momentum-based gradient estimator that takes the form of  
\begin{align}\label{def:htg}
h_t^g = \eta_t^g&\nabla_y g(x_t,y_t;\zeta_t) 
+ (1-\eta_t^g)\big(h_{t-1}^g + \nabla_y g(x_t,y_t;\zeta_t) - \nabla_y g(x_{t-1},y_{t-1};\zeta_t)\big)
\end{align}
where $\eta^g_t \in [0,1]$ is a tuning parameter. 
The next key step is to deal with the LS problem via solving the quadratic problem \cref{def:R}  as 
$w_{t+1} = v_t-\lambda_t \widetilde{h}_t^R$, with the momentum-based gradient  $\widetilde{h}_t^R$ given by 
\begin{align}\label{def:htR_free}
\widetilde{h}_t^R =& \eta_t^R\widetilde{\nabla}_v R(x_t,y_t, v_t,\delta_{\epsilon};\psi_t) + (1-\eta_t^R)\big(h_{t-1}^R+\widetilde{\nabla}_v R(x_t,y_t, v_t,\delta_{\epsilon};\psi_t) \nonumber
\\&- \widetilde{\nabla}_v R(x_{t-1},y_{t-1},v_{t-1},\delta_{\epsilon};\psi_t)\big),
\end{align}
 where $\widetilde {\nabla}_vR $ is a Hessian-free version of the LS gradient ${\nabla}_vR$ in \cref{def:gradR},  given by 
 \begin{align}\label{def:gradR_free}
   \text{(First-order LS gradient:)} \;\;\widetilde{\nabla}_v &R(x_t, y_t, v_t, \delta_{\epsilon};\psi_t) = \widetilde{H}(x_t, y_t, v_t, \delta_{\epsilon};\psi_t) - \nabla_y f(x_t,y_t;\psi_t).
\end{align}
Note that in the above \cref{def:gradR_free},  $\widetilde{H}(x_t, y_t, v_t, \delta_{\epsilon};\psi_t)$ is the finite-difference estimation of the Hessian-vector product   $\nabla_{yy}^2g(x_t,y_t;\psi_t)v_t$, which takes the form of 
\begin{align}\label{def:H}
\widetilde{H}(x_t, y_t, v_t, \delta_{\epsilon};\psi_t) = \frac{\nabla_y g(x_t, y_t+\delta_{\epsilon} v_t;\psi_t) - \nabla_y g(x_t, y_t-\delta_{\epsilon} v_t;\psi_t)}{2\delta_{\epsilon}},
\end{align}
where $\delta_\epsilon>0$ is a small constant. Note that in \cref{def:H}, if the iterative $v_t$ is unbounded, the approximation error between $\widetilde H$ and  $\nabla_{yy}^2g(x_t,y_t;\psi_t)v_t$ can be uncontrollable as well. 
We further prove \cref{lm:boundofeJH} in \cref{appd:auxiLemma} that the bound of this gap relies on $\|v_t\|$ and $\delta$ but it is independent of the dimension of $y_t$.
To this end, 
after obtaining $w_{t+1}$, our key step in line 6 introduces an auxiliary projection on a ball (which can be generalized to any convex and bounded domain) with a radius of $r_v$ as  
\begin{align*}
  \text{(Auxiliary projection)} \quad \ v_{t+1} = \begin{cases} w_{t+1},& \|w_{t+1}\|\leq r_v;\\ \frac{r_v w_{t+1}}{\|w_{t+1}\|},& \|w_{t+1}\|> r_v. \end{cases}
\end{align*}
This auxiliary projection guarantees the boundedness of $v_t,t=0,...,T-1$, which serves {\bf three} important purposes. First, it ensures the smoothness of the LS loss function $R(x,y,v)$ in \cref{def:gradR}  w.r.t.~all $x,y$ and $v$, which is crucial in the convergence analysis of the momentum-based updates. Second, the boundedness of $v_t$ also ensures that the estimation variance of the stochastic LS gradient $\nabla_v R(x_t, y_t, v_t;\psi_t)$ does not explode. Third, it guarantees the error of the finite-difference Hessian-vector approximation to be sufficiently small with proper $\delta_\epsilon$. 
We will show later that under a proper choice of the radius $r_v$, this auxiliary projection provides a better per-step progress, and the proposed algorithm achieves a stronger convergence performance. 
Finally, for the upper-level problem, the momentum-based hypergradient estimate $\widetilde{h}_t^f$ is designed as 
\begin{align}\label{def:htf_free}
\widetilde{h}_t^f = &\eta_t^f\widetilde{\nabla} f(x_t,y_t,v_t,\delta_{\epsilon};\bar{\xi}_t) + (1-\eta_t^f)\big(h_{t-1}^f+\widetilde{\nabla} f(x_t,y_t,v_t,\delta_{\epsilon};\bar{\xi}_t)  \nonumber
\\&- \widetilde{\nabla} f(x_{t-1},y_{t-1},v_{t-1},\delta_{\epsilon};\bar{\xi}_t)\big),
\end{align}
where  $\widetilde\nabla f(x, y, v, \delta_{\epsilon};\bar \xi_t)$ is the fully first-order hypergradient estimate evaluated at two consecutive  iterates  $(x_t,y_t,v_t)$ and $(x_{t-1},y_{t-1},v_{t-1})$ is given by 
\begin{align*}
    \widetilde{\nabla}f(x, y, v, \delta_{\epsilon};\bar\xi_t) =&\nabla_x f(x, y; \bar \xi_t) 
- \widetilde{J}(x, y, v, \delta_{\epsilon}; \bar \xi_t),
\end{align*}
and $\widetilde{J}(x, y, v, \delta_{\epsilon};\bar\xi_t) $ is the finite-difference Jacobian-vector  approximation given by 
\begin{align}\label{def:J}
\widetilde{J}(x, y, v, \delta_{\epsilon};\bar\xi_t):=\frac{\nabla_x g(x, y+\delta_{\epsilon} v;\bar\xi_t) - \nabla_x g(x, y-\delta_{\epsilon} v;\bar\xi_t)}{2\delta_{\epsilon}}.
\end{align}
Note that $\widetilde\nabla_v R$ and $\widetilde \nabla f$ are {\bf biased} estimators of the gradients $\nabla_v R$ and $\bar \nabla f$, which further complicates the convergence analysis on the momentum-based updates because the conventional analysis on the recursive momentum requires the unbiased gradient estimation to ensure the variance reduction effect. 
By controlling the perturbation $\delta_\epsilon$ properly, we will show that FdeHBO can achieve an $\mathcal{O}(\epsilon^{-1.5})$ convergence and complexity performance without any second-order derivative computation.

\subsection{Extension to Small-Dimensional Case}\label{sec:FNBO}
As a byproduct of our proposed FdeHBO, we further propose a fully single-loop momentum-based bilevel optimizer (FMBO), which is more suitable in the small-dimensional case without finite-difference approximation. As shown in \Cref{alg:main}, FMBO first takes the same lower-level updates on $y_t$ as in \cref{def:htg}. Then, it solves the LS problem  as 
 $w_{t+1} = v_t-\lambda_t h_t^R$,
where the momentum-based gradient estimator is given by 
\begin{align}\label{def:htR}
h_t^R = & \eta_t^R\nabla_v R(x_t,y_t, v_t;\psi_t) + (1-\eta_t^R)\big(h_{t-1}^g+ \nabla_v R(x_t,y_t, v_t;\psi_t) \nonumber
\\&- \nabla_v R(x_{t-1},y_{t-1},v_{t-1};\psi_t)\big),
\end{align}
where differently from FdeHBO, we here use the precise gradient $\nabla_v R$ without finite-difference approximation. Similarly to FdeHBO, we add an auxiliary projection on the $v_t$ updates to ensure the LS smoothness and bounded variance. Finally, for the upper-level problem, we optimize $x_t$ based on a momentum-based update as $x_{t+1} = x_t-\alpha_t h_t^f$ with the hypergradient estimator 
\begin{align}\label{def:htf}
h_t^f =& \eta_t^f\bar{\nabla} f(x_t,y_t,v_t;\bar{\xi}_t) + (1-\eta_t^f)(h_{t-1}^f+ \bar{\nabla} f(x_t,y_t,v_t;\bar{\xi}_t) - \bar{\nabla} f(x_{t-1},y_{t-1},v_{t-1};\bar{\xi}_t))
\end{align}
where $\eta_t^f\in [0,1]$ is a tuning parameter. Similarly, we 
directly use the hypergradient estimate in \cref{def:fffform} without the finite-difference estimation. We note that compared to existing momentum-based algorithms~\cite{yang2021provably,khanduri2021near} that contains $\mathcal{O}(\log\frac{1}{\epsilon})$ steps in solving the LS problem, FMBO takes the fully single-loop structure with a single-step momentum-based acceleration on the LS updates. 

\begin{algorithm}[t]
	\caption{Fully Single-loop Momentum-based Bilevel Optimizer (FMBO)}   
	\small
	\label{alg:main}
	\begin{algorithmic}[1]
		\STATE {\bfseries Input:}  $\left\{\alpha_t, \beta_t,\lambda_t\right\}_{t=0}^{T-1}$, and $r_v$.
		\STATE {\bfseries Initialize:}
		\FOR{$t=0,1,2,...,T-1$}
		\STATE{Compute the gradient estimator $h_t^g$ by~\cref{def:htg} and update  $y_{t+1} = y_t-\beta_t h_t^g$.}
		\STATE{Compute the gradient estimator $h_t^R$ by~\cref{def:htR} and update  $w_{t+1} = v_t-\lambda_t h_t^R$.}
		\STATE{Set $v_{t+1} = \begin{cases} w_{t+1},& \|w_{t+1}\|\leq r_v;\\ \frac{r_v w_{t+1}}{\|w_{t+1}\|},& \|w_{t+1}\|> r_v. \end{cases}$}
		\STATE{Compute the gradient estimator $h_t^f$ by~\cref{def:htf} and update  $x_{t+1} = x_t-\alpha_t h_t^f$.}
		\ENDFOR
	\end{algorithmic}
\end{algorithm}

\section{Main Results}

\subsection{Assumptions and Definitions}
We make the following standard assumptions for the upper- and lower-level objective functions, as also adopted by \cite{ji2021bilevel,chen2021single,khanduri2021near}. The following assumption imposes the Lipschitz 
condition on the upper-level function~$f(x,y)$. 
\begin{assumption}
\label{as:ulf}
For any $x \in \mathbb{R}^{d_{x}}$ and $y \in \mathbb{R}^{d_{y}}$, there exist positive constants $L_{f_x}$, $L_{f_y}$, $C_{f_x}$ and $C_{f_y}$ such that 
$\nabla_x f(x,y)$ and $\nabla_y f(x,y)$ are   $L_{f_x}$- and $L_{f_y}$-Lipschitz continuous w.r.t. $(x,y)$, and  
$\|\nabla_x f(x,y)\|^2\leq C_{f_x}$, $\|\nabla_y f(x,y)\|^2\leq C_{f_y}$. 
\end{assumption}\noindent
The following assumption imposes the Lipschitz condition on the lower-level function~$g(x,y)$.
\begin{assumption}
\label{as:llf}
For any $x \in \mathbb{R}^{d_{x}}$ and $y \in \mathbb{R}^{d_{y}}$, 
there exist positive constants 
$\mu_g$, $L_g$, $L_{g_{xy}}$, $L_{g_{yy}}$, $C_{g_{xy}}, C_{g_{y y}}$ such that 
 \begin{list}{$\bullet$}{\topsep=0.1ex \leftmargin=0.2in \rightmargin=0.in \itemsep =0.01in}
\item  Function $g(x, y)$ is twice continuously differentiable;
\item Function $g(x, \cdot)$ is $\mu_g$-strongly-convex;
\item The derivatives $\nabla_y g(x,y)$,~$\nabla^2_{xy}g(x,y)$ and $\nabla^2_{yy}g(x,y)$ are $L_g$-, $L_{g_{xy}}$- and $L_{g_{yy}}$-Lipschitz continuous w.r.t.~$(x,y)$;
\item $\|\nabla^2_{xy}g(x,y)\|^2 \leq C_{g_{xy}}$ and $\|\nabla^2_{yy}g(x,y)\|^2 \leq C_{g_{yy}}$.
\end{list}
\end{assumption}
\noindent
The following assumption is adopted for the stochastic functions $f(x,y;\xi)$ and $g(x,y;\zeta)$.
\begin{assumption}
\label{as:sf}
Assumptions~\ref{as:ulf} and ~\ref{as:llf} hold for $f(x, y; \xi)$ and $g(x, y; \zeta)$  for $\forall\,\xi$ and $\zeta$. Moreover, we assume that  there exist positive constants $ \sigma_{f_x}$, $\sigma_{f_y}$, $\sigma_{g}$, $\sigma_{g_{xy}}$ and $\sigma_{g_{yy}}$ such that 
\begin{align*}
   & \mathbb{E}\left[\|\nabla_x f(x,y)-\nabla_x f(x,y;\xi)\|^2\right] \leq \sigma^2_{f_x}, \quad 
    \mathbb{E}\left[\|\nabla_y f(x,y)-\nabla_y f(x,y;\xi)\|^2\right] \leq \sigma^2_{f_y}, \\
   & \mathbb{E}\left[\|\nabla_y g(x,y) - \nabla_y g(x,y;\zeta)\|^2\right] \leq \sigma_g^2, \quad\;\;\; 
    \mathbb{E}\left[\|\nabla_{xy}^2 g(x,y)-\nabla_{xy}^2 g(x,y;\xi)\|^2\right] \leq \sigma^2_{g_{xy}}, \\ 
    &\mathbb{E}\left[\|\nabla_{yy}^2 g(x,y)-\nabla_{yy}^2 g(x,y;\xi)\|^2\right] \leq \sigma^2_{g_{yy}}. 
\end{align*}
\end{assumption}

\begin{definition}
We say $\bar x$ is an $\epsilon$-accurate stationary point of a function $\Phi(x)$ if $\mathbb{E}\|\nabla \Phi(\bar x)\|^2\leq \epsilon$, where $\bar x$ is the output of an optimization algorithm. 
\end{definition}

\subsection{Convergence and Complexity Analysis of FdeHBO}\label{sec:analysisHF}
We further provide the convergence analysis for the proposed Hessian/Jacobian-free FdeHBO algorithm. We first characterize several estimation properties of FdeHBO. Let $e^f_t :=\widetilde{h}^f_t - \nabla f(x_t,y_t,v_t)-\Delta(x_t,y_t,v_t)$ denote  the hypergradient estimation error. 
\begin{proposition}\label{prop:HFetf} 
Under Assumption~\ref{as:sf}, the iterates of the outer problem by~\Cref{alg:main_free} satisfy
\begin{align*}
    \mathbb{E}\|e^f_{t+1}\|^2 
    \leq& \Big[(1-\eta^f_{t+1})^2+4L_{g_{xy}}r_v^2\delta_{\epsilon}\Big]\mathbb{E}\|e_t^f\|^2  + 4(\eta^f_{t+1})^2\sigma_f^2+ \big(4L_{g_{xy}}r_v^2\delta_{\epsilon} + 16L^2_{g_{xy}}r_v^4\delta_{\epsilon}^2\big) \\
    &+6(1-\eta^f_{t+1})^2\Big[L_F^2\alpha_t^2\mathbb{E}\|\widetilde{h}_t^f\|^2 + 2L_F^2\beta_t^2\big(\mathbb{E}\|e_t^g\|^2 + \|\nabla_y g(x_t, y_t)\|^2\big)
    \\&\qquad \qquad \qquad \ \ \ + 2C_{g_{xy}}\lambda^2_t\big(\mathbb{E}\|e_t^R\|^2 + L_g^2\mathbb{E}\|v_t - v_t^*\|^2\big)\Big],  
\end{align*}
for all $t \in \{0, . . . , T-1\}$ with $L_F^2 = 2\big(L_{f_x}^2 + L^2_{g_{xy}}r_v^2\big)$.
\end{proposition}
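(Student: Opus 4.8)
The plan is to convert the recursive-momentum rule \cref{def:htf_free} into a one-step recursion for $e^f_t$ and then to separate three effects: the momentum contraction, the finite-difference bias, and the zero-mean stochastic fluctuation. Writing $\mathbb{E}_{t+1}[\cdot]=\mathbb{E}[\cdot\mid\mathcal{F}_{t+1}]$ for the conditional expectation given the history through $(x_{t+1},y_{t+1},v_{t+1})$, and abbreviating $\widetilde\nabla f_{t+1}=\widetilde\nabla f(x_{t+1},y_{t+1},v_{t+1},\delta_\epsilon;\bar\xi_{t+1})$ and $\widetilde\nabla f_t^{+}=\widetilde\nabla f(x_t,y_t,v_t,\delta_\epsilon;\bar\xi_{t+1})$ (the previous iterate evaluated with the \emph{shared} sample), I first collapse \cref{def:htf_free} to $\widetilde h^f_{t+1}=\widetilde\nabla f_{t+1}+(1-\eta^f_{t+1})(h^f_t-\widetilde\nabla f_t^{+})$. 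Substituting $h^f_t=e^f_t+\nabla f(x_t,y_t,v_t)+\Delta(x_t,y_t,v_t)$, subtracting the reference $\nabla f(x_{t+1},y_{t+1},v_{t+1})+\Delta(x_{t+1},y_{t+1},v_{t+1})$, and regrouping yields
\[ e^f_{t+1}=(1-\eta^f_{t+1})e^f_t+N_{t+1}+D_{t+1}, \]
where $N_{t+1}$ gathers the centered fluctuations $\big(\widetilde\nabla f_{t+1}-\mathbb{E}_{t+1}\widetilde\nabla f_{t+1}\big)-(1-\eta^f_{t+1})\big(\widetilde\nabla f_t^{+}-\mathbb{E}_{t+1}\widetilde\nabla f_t^{+}\big)$ and $D_{t+1}$ is the finite-difference bias not absorbed by $\Delta$. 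The point that replaces the unbiasedness used in the standard momentum analysis is that $\mathbb{E}_{t+1}[N_{t+1}]=0$, so every cross term involving $N_{t+1}$ vanishes in expectation.

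Next I would control the bias. The only biased ingredient is the finite-difference Jacobian-vector estimate \cref{def:J}, and by \cref{lm:boundofeJH} its gap to $\nabla^2_{xy}g\,v$ is $\mathcal{O}(L_{g_{xy}}\|v\|^2\delta_\epsilon)$ with a constant independent of the dimension of $y$. The auxiliary projection in line~6 of \Cref{alg:main_free} enforces $\|v_t\|\le r_v$ for every $t$, so $\|D_{t+1}\|\le 4L_{g_{xy}}r_v^2\delta_\epsilon$ uniformly. Expanding $\mathbb{E}\|(1-\eta^f_{t+1})e^f_t+D_{t+1}\|^2$ and applying Young's inequality to the cross term, $2(1-\eta^f_{t+1})\langle e^f_t,D_{t+1}\rangle\le\rho\|e^f_t\|^2+\rho^{-1}(1-\eta^f_{t+1})^2\|D_{t+1}\|^2$ with the matched choice $\rho=4L_{g_{xy}}r_v^2\delta_\epsilon$, reproduces exactly the inflated factor $(1-\eta^f_{t+1})^2+4L_{g_{xy}}r_v^2\delta_\epsilon$ on $\mathbb{E}\|e^f_t\|^2$, together with the additive terms $4L_{g_{xy}}r_v^2\delta_\epsilon+16L^2_{g_{xy}}r_v^4\delta_\epsilon^2$ after using $(1-\eta^f_{t+1})^2\le1$.

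It then remains to bound $\mathbb{E}\|N_{t+1}\|^2$. I would split $N_{t+1}=\eta^f_{t+1}(\widetilde\nabla f_{t+1}-\mathbb{E}_{t+1}\widetilde\nabla f_{t+1})+(1-\eta^f_{t+1})\big[(\widetilde\nabla f_{t+1}-\widetilde\nabla f_t^{+})-\mathbb{E}_{t+1}(\widetilde\nabla f_{t+1}-\widetilde\nabla f_t^{+})\big]$, so that $\mathbb{E}\|N_{t+1}\|^2\le 4(\eta^f_{t+1})^2\sigma_f^2+2(1-\eta^f_{t+1})^2\mathbb{E}\|\widetilde\nabla f_{t+1}-\widetilde\nabla f_t^{+}\|^2$, where $\sigma_f^2$ is the projection-controlled variance of the first-order surrogate and the centering uses $\mathbb{E}\|Z-\mathbb{E}_{t+1}Z\|^2\le\mathbb{E}\|Z\|^2$. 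The remaining second moment is controlled by the Lipschitz smoothness of the finite-difference map in $(x,y,v)$ — again made available by the projection — splitting the displacement into its three blocks with constants $L_F$ in $x,y$ and $\sqrt{C_{g_{xy}}}$ in $v$. Substituting $\|x_{t+1}-x_t\|^2=\alpha_t^2\|\widetilde h^f_t\|^2$, $\|y_{t+1}-y_t\|^2=\beta_t^2\|h^g_t\|^2$, and $\|v_{t+1}-v_t\|^2\le\lambda_t^2\|\widetilde h^R_t\|^2$ (nonexpansiveness of the projection, with $v_t$ already feasible), then expanding $\|h^g_t\|^2\le2\|e^g_t\|^2+2\|\nabla_y g(x_t,y_t)\|^2$ and $\|\widetilde h^R_t\|^2\le2\|e^R_t\|^2+2L_g^2\|v_t-v_t^*\|^2$ through the identity $\nabla_v R=\nabla^2_{yy}g\,(v-v^*)$, produces exactly the $6(1-\eta^f_{t+1})^2[\cdots]$ block. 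Adding the bias and noise contributions yields the claim.

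The main obstacle is precisely the loss of unbiasedness: the textbook recursive-momentum variance-reduction argument presumes $\mathbb{E}_{t+1}\widetilde\nabla f_{t+1}=\nabla f(x_{t+1},y_{t+1},v_{t+1})$, which the finite-difference scheme violates. The work lies in (i) isolating a \emph{deterministic} bias $D_{t+1}$ so that the centered part $N_{t+1}$ remains a genuine martingale difference, and (ii) showing that the single projection simultaneously keeps $D_{t+1}$ of order $r_v^2\delta_\epsilon$, keeps $\sigma_f^2$ finite, and supplies the smoothness needed for the telescoped difference; everything else is routine Young and Cauchy--Schwarz bookkeeping across the three coupled stepsizes $\alpha_t,\beta_t,\lambda_t$.
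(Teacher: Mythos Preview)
Your proposal is correct and follows essentially the same route as the paper's proof (\Cref{lm:errorf_free}): decompose $e^f_{t+1}$ into $(1-\eta^f_{t+1})e^f_t$, a conditionally centered fluctuation, and a finite-difference bias bounded via \Cref{lm:boundofeJH} together with the projection $\|v_t\|\le r_v$, then control the centered difference through the per-sample Lipschitzness of the surrogate and the step relations $\|x_{t+1}-x_t\|=\alpha_t\|\widetilde h^f_t\|$, $\|y_{t+1}-y_t\|=\beta_t\|h^g_t\|$, $\|v_{t+1}-v_t\|\le\lambda_t\|\widetilde h^R_t\|$. The only cosmetic difference is in the bias cross-term: you use Young's inequality with the matched weight $\rho=4L_{g_{xy}}r_v^2\delta_\epsilon$, while the paper applies Cauchy--Schwarz followed by the crude bound $\sqrt{a}\le 1+a$; both recover the stated constants.
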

\noindent
The hypergradient estimator error {\small$\mathcal{O}(\mathbb{E}\|e_{t+1}^f\|^2)$} contains three main components. The first term {\small$[(1-\eta^f_{t+1})^2+4L_{g_{xy}}r_v^2\delta_{\epsilon}]\mathbb{E}\|e_t^f\|^2$}  indicates the per-iteration improvement induced by the momentum-based update, the error term $\alpha_t^2\mathbb{E}\|h_t^f\|^2$ is caused by the $x_t$ updates, the error term 
{\small$\mathcal{O}(\beta_t^2\mathbb{E}(\|e_t^g\|^2+\|\nabla_yg(x_t, y_t)\|^2))$}  
is caused by solving the lower-level problem, and the 
new error term {\small$\mathcal{O}(\lambda^2_t\mathbb{E}(\|e_t^R\|^2 + L_g^2\|v_t - v_t^*\|^2))$} is induced by the one-step momentum update on the LS problem, which does not exist in previous momentum-based bilevel methods~\cite{yang2021provably,khanduri2021near,guo2021randomized} that solve the LS problem to a high accuracy. 
Also note that the errors $4L_{g_{xy}}r_v^2\delta_{\epsilon}\mathbb{E}\|e_t^f\|^2$ and  $4L_{g_{xy}}r_v^2\delta_{\epsilon} + 16L^2_{g_{xy}}r_v^4\delta_{\epsilon}^2$ are caused by the finite-difference approximation error. Fortunately, by choosing the perturbation level $\delta_\epsilon$ in these two terms to be properly small, it can guarantee the descent factor $(1-\eta^f_{t+1})^2+4L_{g_{xy}}r_v^2\delta_{\epsilon}$ to be at an order of $(1-\mathcal{O}(\eta_{t+1}^f))^2$, and hence the momentum-based variance reduction effect is still applied. 
\begin{proposition}\label{prop:hfetR}
    For $\forall\,\psi$, define $e_t^R := \widetilde{h}_t^R - \nabla_v R(x_t, y_t, v_t)$. Under Assumptions~\ref{as:ulf},~\ref{as:llf},~\ref{as:sf}, we have 
\begin{align*}
\mathbb{E}\|e^R_{t+1}\|^2 \leq& \big[(1-\eta^R_{t+1})^2(1+96L_g^4\lambda^2_t) + 4L_{g_{yy}}r^2_v\delta_{\epsilon}\big] \mathbb{E}\|e_t^R\|^2 + \big(4L_{g_{yy}}r^2_v\delta_{\epsilon} 
+ 8L^2_{g_{yy}}r_v^4\delta_{\epsilon}^2\big)
\\&+8(\eta_{t+1}^R)^2(\sigma^2_{g_{yy}}r_v^2+ \sigma^2_{f_y}) + 96(1-\eta_{t+1}^R)^2L^2_g\lambda^2_t \left(\mathbb{E}\|e_t^R\|^2 + L_g^2\mathbb{E}\|v_t-v_t^*\|^2\right)
\\ 
& + 96(1-\eta_{t+1}^R)^2\big(L^2_{g_{yy}}r^2_v + L^2_{f_y}\big) \Big[\alpha_t^2\mathbb{E}\|\widetilde{h}_{t}^f\|^2 + 2\beta_t^2(\mathbb{E}\|e_t^g\|^2 + \mathbb{E}\|\nabla_y g(x_t,y_t)\|^2)\Big] 
\end{align*}
for all $t\in \{0,1,...,T-1\}$. 
\end{proposition}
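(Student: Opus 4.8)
The plan is to unroll the single-step recursive-momentum update \cref{def:htR_free} and propagate $e^R_{t+1}$ through a decomposition into a deterministically contracting part, a conditionally zero-mean stochastic part, and a finite-difference bias part. Writing $G_{t+1}:=\widetilde{\nabla}_v R(x_{t+1},y_{t+1},v_{t+1},\delta_\epsilon;\psi_{t+1})$ and $G_t:=\widetilde{\nabla}_v R(x_t,y_t,v_t,\delta_\epsilon;\psi_{t+1})$ (both with the fresh sample $\psi_{t+1}$) and $\nabla_s:=\nabla_v R(x_s,y_s,v_s)$, the momentum update rearranges into
\begin{align*}
e^R_{t+1} = (1-\eta^R_{t+1})e^R_t + \eta^R_{t+1}(G_{t+1}-\nabla_{t+1}) + (1-\eta^R_{t+1})\big[(G_{t+1}-G_t)-(\nabla_{t+1}-\nabla_t)\big].
\end{align*}
I would then split each $G_s$ into the exact stochastic LS gradient $\bar G_s:=\nabla_{yy}^2 g(x_s,y_s;\psi_{t+1})v_s-\nabla_y f(x_s,y_s;\psi_{t+1})$ and the finite-difference error $c_s:=G_s-\bar G_s=\widetilde{H}(x_s,y_s,v_s,\delta_\epsilon;\psi_{t+1})-\nabla_{yy}^2 g(x_s,y_s;\psi_{t+1})v_s$. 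Since $(x_s,y_s,v_s)$ are $\mathcal{F}_t$-measurable and $\psi_{t+1}$ is fresh, $\mathbb{E}[\bar G_s\mid\mathcal{F}_t]=\nabla_s$, so the $\bar G_s-\nabla_s$ pieces form a conditionally zero-mean noise $N$, while the $c_s$ pieces form a bias $B=\eta^R_{t+1}c_{t+1}+(1-\eta^R_{t+1})(c_{t+1}-c_t)$. Crucially, the auxiliary projection forces $\|v_s\|\le r_v$, so \cref{lm:boundofeJH} yields the uniform bound $\|c_s\|=\mathcal{O}(L_{g_{yy}}\delta_\epsilon r_v^2)$, hence $\|B\|=\mathcal{O}(L_{g_{yy}}\delta_\epsilon r_v^2)$.

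For the zero-mean part I would apply $\|a+B\|^2\le(1+\theta)\|a\|^2+(1+\theta^{-1})\|B\|^2$ with $a=(1-\eta^R_{t+1})e^R_t+N$. Because $e^R_t$ is $\mathcal{F}_t$-measurable and $\mathbb{E}[N\mid\mathcal{F}_t]=0$, the cross term drops and $\mathbb{E}\|a\|^2=(1-\eta^R_{t+1})^2\mathbb{E}\|e^R_t\|^2+\mathbb{E}\|N\|^2$. I then bound $\mathbb{E}\|N\|^2$ by $\mathcal{O}((\eta^R_{t+1})^2)$ times the stochastic-gradient variance $\mathbb{E}\|\bar G_{t+1}-\nabla_{t+1}\|^2\le 2\sigma^2_{g_{yy}}r_v^2+2\sigma^2_{f_y}$ (using $\|v_{t+1}\|\le r_v$) plus $\mathcal{O}((1-\eta^R_{t+1})^2)$ times the centered-difference variance, which via $\mathbb{E}\|Z-\mathbb{E}Z\|^2\le\mathbb{E}\|Z\|^2$ is at most $\mathbb{E}\|\bar G_{t+1}-\bar G_t\|^2$. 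The latter is controlled by Lipschitzness of $\nabla_v R(\cdot;\psi)$: with $\|v\|\le r_v$ it is $(L_{g_{yy}}r_v+L_{f_y})$-Lipschitz in $(x,y)$ and $L_g$-Lipschitz in $v$, giving $\mathbb{E}\|\bar G_{t+1}-\bar G_t\|^2=\mathcal{O}\big((L^2_{g_{yy}}r_v^2+L^2_{f_y})(\|x_{t+1}-x_t\|^2+\|y_{t+1}-y_t\|^2)+L^2_g\|v_{t+1}-v_t\|^2\big)$. Substituting $\|x_{t+1}-x_t\|=\alpha_t\|\widetilde{h}^f_t\|$, $\|y_{t+1}-y_t\|=\beta_t\|h^g_t\|$ with $\|h^g_t\|^2\le2\|e^g_t\|^2+2\|\nabla_y g(x_t,y_t)\|^2$, and — via nonexpansiveness of the projection — $\|v_{t+1}-v_t\|\le\lambda_t\|\widetilde{h}^R_t\|$ with $\|\widetilde{h}^R_t\|^2\le2\|e^R_t\|^2+2L^2_g\|v_t-v^*_t\|^2$ (since $\nabla_v R(x_t,y_t,v_t)=\nabla_{yy}^2 g(x_t,y_t)(v_t-v^*_t)$), reproduces the $\alpha^2_t$-, $\beta^2_t$- and $\lambda^2_t$-weighted terms of the statement. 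In particular, the single-step $v$-update makes $e^R_t$ re-enter both multiplicatively, as the contraction inflation $1+\mathcal{O}(L^4_g\lambda^2_t)$, and additively, as the self-referential coupling $\mathcal{O}\big(L^2_g\lambda^2_t(\|e^R_t\|^2+L^2_g\|v_t-v^*_t\|^2)\big)$.

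Finally, the bias contributes $(1+\theta^{-1})\mathbb{E}\|B\|^2$; choosing $\theta=\Theta(L_{g_{yy}}r_v^2\delta_\epsilon)$ balances the $(1+\theta)$ inflation of the contraction against the bias, producing the additive perturbation $\mathcal{O}(L_{g_{yy}}r_v^2\delta_\epsilon)\,\mathbb{E}\|e^R_t\|^2$ of the descent factor together with the standalone constants $4L_{g_{yy}}r_v^2\delta_\epsilon+8L^2_{g_{yy}}r_v^4\delta_\epsilon^2$, after which collecting all terms gives the claimed inequality. I expect the \emph{bias handling} to be the main obstacle: unlike the standard recursive-momentum argument, the estimators $G_s$ are not unbiased, so the usual telescoping/variance-reduction cancellation does not apply verbatim. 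The resolution is to show that the projection-enforced boundedness $\|v_s\|\le r_v$ together with an $\mathcal{O}(\delta_\epsilon)$ smoothing radius keeps the bias so small that the descent factor degrades only to $(1-\eta^R_{t+1})^2+\mathcal{O}(\delta_\epsilon)$, preserving the momentum-based variance reduction. A secondary difficulty is the careful bookkeeping of the cross-coupling among $e^R_t$, $v_t-v^*_t$, $e^g_t$, $\nabla_y g(x_t,y_t)$ and $\widetilde{h}^f_t$ introduced by the single-step $v$-update, which is absent in methods that solve the linear system to $\mathcal{O}(\epsilon)$ accuracy.
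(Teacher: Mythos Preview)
Your proposal is correct and follows essentially the same route as the paper's proof (given as Lemma~\ref{lm:errorR_free}): unroll the momentum recursion, split into the contraction $(1-\eta^R_{t+1})e^R_t$, the conditionally zero-mean noise $N$ coming from the exact stochastic LS gradients, and the finite-difference bias $B=c_{t+1}-(1-\eta^R_{t+1})c_t$ controlled via $\|v_s\|\le r_v$ and \Cref{lm:boundofeJH}; then bound $\mathbb{E}\|N\|^2$ through the variance term and the Lipschitz-controlled centered difference exactly as you describe. The only technical deviation is that the paper handles the bias by expanding directly and bounding the cross term $\langle e^R_t,B\rangle$ via Cauchy--Schwarz together with the crude estimate $\sqrt{\mathbb{E}\|e^R_t\|^2}\le 1+\mathbb{E}\|e^R_t\|^2$, whereas you use Young's inequality with $\theta=\Theta(L_{g_{yy}}r_v^2\delta_\epsilon)$; both produce the same $\mathcal{O}(\delta_\epsilon)$ perturbation of the descent factor and the same additive $\mathcal{O}(\delta_\epsilon)+\mathcal{O}(\delta_\epsilon^2)$ constants, with your version actually yielding slightly sharper numerical constants.
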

\noindent
As shown in \Cref{prop:hfetR}, the LS gradient estimation error $e_{t+1}^R$ contains an 
iteratively improved error component $\big[(1-\eta^R_{t+1})^2(1+96L_g^4\lambda^2_t) + 4L_{g_{yy}}r^2_v\delta_{\epsilon}\big] \mathbb{E}\|e_t^R\|^2 $ for the stepsize $\lambda_t$ and the approximation factor $\delta_\epsilon$ sufficiently small, a finite-difference approximation error $\mathcal{O}(\delta_\epsilon)$  as well as an approximation error $\mathcal{O}(\lambda^2_t\mathbb{E}\|v_t-v_t^*\|^2)$ for solving the LS problem. The next step is to upper-bound  $\mathbb{E}\|v_{t}- v_{t}^*\|^2$.

\begin{proposition}\label{prop:hessian_free_v}
Under the Assumption~\ref{as:ulf},~\ref{as:llf}, the iterates of the LS problem  by~\Cref{alg:main_free} satisfy
\begin{align*}
\mathbb{E}\|v_{t+1} &- v_{t+1}^*\|^2  \\
&\leq (1+\gamma_t')\Big(1+\delta_t'\Big)\Big[\Big(1-2\lambda_t\frac{(L_g+L_g^3)\mu_g}{\mu_g + L_g} + \lambda_t^2L_g^2\Big)\mathbb{E}\|v_t - v^*_t\|^2\Big] \\
&\quad \ + (1+\gamma_t')\Big(1+\frac{1}{\delta_t'}\Big)\lambda_t^2\mathbb{E}\|e_t^R\|^2 \\
&\quad \ + (1+\frac{1}{\gamma_t'})\Big( \frac{2L^2_{f_y}}{\mu^2_g} + \frac{2C^2_{f_y}L^2_{g_{yy}}}{\mu^4_g} \Big)\left[ \alpha_t^2\mathbb{E}\|\widetilde{h}_t^f\|^2 + \beta_t^2\left(2\mathbb{E}\|e_t^g\|^2 + 2\mathbb{E}\|\nabla_y g(x_t, y_t)\|^2 \right) \right].
\end{align*}
for all $t \in \{0, . . . , T-1\}$ with some $\gamma_t'>0$ and $ \delta_t' > 0$.
\end{proposition}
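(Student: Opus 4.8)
The plan is to track $\mathbb{E}\|v_{t+1}-v_{t+1}^*\|^2$ by inserting the previous target $v_t^*$ and splitting the result with two Young inequalities: one (with parameter $\gamma_t'$) isolating the drift $v_t^*\to v_{t+1}^*$ of the moving LS minimizer, and one (with parameter $\delta_t'$) isolating the stochastic gradient error $e_t^R$. First I would write
\[
\|v_{t+1}-v_{t+1}^*\|^2\le(1+\gamma_t')\|v_{t+1}-v_t^*\|^2+\Big(1+\tfrac{1}{\gamma_t'}\Big)\|v_t^*-v_{t+1}^*\|^2,
\]
which already exhibits the two outer factors $(1+\gamma_t')$ and $(1+1/\gamma_t')$ of the claim. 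The argument then reduces to bounding the two pieces separately.

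For the progress term $\|v_{t+1}-v_t^*\|^2$, the key observation is that $v_t^*=[\nabla_{yy}^2 g(x_t,y_t)]^{-1}\nabla_y f(x_t,y_t)$ satisfies $\|v_t^*\|\le \sqrt{C_{f_y}}/\mu_g$, so choosing $r_v\ge \sqrt{C_{f_y}}/\mu_g$ guarantees $v_t^*$ lies in the projection ball $\mathcal{B}$. Since projection onto a convex set is non-expansive and fixes $v_t^*$, I obtain $\|v_{t+1}-v_t^*\|=\|\Pi_{\mathcal{B}}(w_{t+1})-\Pi_{\mathcal{B}}(v_t^*)\|\le\|w_{t+1}-v_t^*\|$, which is exactly where the auxiliary projection buys the improved per-step progress. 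Substituting $w_{t+1}=v_t-\lambda_t\widetilde h_t^R=v_t-\lambda_t(\nabla_v R(x_t,y_t,v_t)+e_t^R)$ and splitting off $e_t^R$ with the Young parameter $\delta_t'$ gives
\[
\|w_{t+1}-v_t^*\|^2\le(1+\delta_t')\big\|(v_t-v_t^*)-\lambda_t\nabla_v R(x_t,y_t,v_t)\big\|^2+\Big(1+\tfrac{1}{\delta_t'}\Big)\lambda_t^2\|e_t^R\|^2,
\]
which yields the second line of the claim. For the remaining deterministic term I would use that $\nabla_v R(x_t,y_t,v_t)=\nabla_{yy}^2 g(x_t,y_t)(v_t-v_t^*)$ (since $\nabla_v R(x_t,y_t,v_t^*)=0$), so the update is an exact gradient step on the strongly convex quadratic $R(x_t,y_t,\cdot)$ toward its minimizer; expanding the square and applying the standard strongly-convex/smooth gradient-step inequality, with the spectrum of $\nabla_{yy}^2 g$ contained in $[\mu_g,L_g]$, produces a contraction factor of the form $1-2\lambda_t\frac{(L_g+L_g^3)\mu_g}{\mu_g+L_g}+\lambda_t^2L_g^2$.

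For the drift term $\|v_t^*-v_{t+1}^*\|^2$, I would first establish Lipschitz continuity of the map $(x,y)\mapsto v^*(x,y)=[\nabla_{yy}^2 g(x,y)]^{-1}\nabla_y f(x,y)$. Differentiating the product $H^{-1}b$ and using $\|H^{-1}\|\le 1/\mu_g$, $\|\nabla_y f\|\le\sqrt{C_{f_y}}$, the $L_{f_y}$-Lipschitzness of $\nabla_y f$ and the $L_{g_{yy}}$-Lipschitzness of $\nabla_{yy}^2 g$ gives a Lipschitz constant whose square is bounded by $\tfrac{2L_{f_y}^2}{\mu_g^2}+\tfrac{2C_{f_y}^2L_{g_{yy}}^2}{\mu_g^4}$. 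Then $\|v_t^*-v_{t+1}^*\|^2\le L_{v^*}^2(\|x_{t+1}-x_t\|^2+\|y_{t+1}-y_t\|^2)$, into which I substitute the update rules $x_{t+1}-x_t=-\alpha_t\widetilde h_t^f$ and $y_{t+1}-y_t=-\beta_t h_t^g$, bounding $\|h_t^g\|^2=\|\nabla_y g(x_t,y_t)+e_t^g\|^2\le 2\|\nabla_y g(x_t,y_t)\|^2+2\|e_t^g\|^2$. This reproduces the third line. Combining the three estimates, multiplying through by the Young factors, and taking expectations gives the stated inequality.

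The step I expect to be the main obstacle is the contraction estimate: obtaining the exact coefficient $\frac{(L_g+L_g^3)\mu_g}{\mu_g+L_g}$ requires invoking the co-coercivity of the strongly-convex smooth quadratic in the right form rather than the crude operator-norm bound $\|I-\lambda_t\nabla_{yy}^2 g\|^2\le 1-2\lambda_t\mu_g+\lambda_t^2L_g^2$, together with checking that $\lambda_t$ is small enough for genuine contraction. A secondary care point is verifying $v_t^*\in\mathcal{B}$ so that non-expansiveness of $\Pi_{\mathcal{B}}$ applies; this is where the radius $r_v$ enters and must be chosen consistently with its use elsewhere in the analysis.
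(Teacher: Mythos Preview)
Your proposal is correct and follows essentially the same route as the paper: two nested Young inequalities with parameters $\gamma_t'$ and $\delta_t'$, the non-expansiveness of the projection (using $\|v_t^*\|\le r_v$) to pass from $v_{t+1}$ to $w_{t+1}$, the co-coercivity inequality for the $\mu_g$-strongly-convex $L_g$-smooth quadratic $R$ combined with $\|\nabla_v R\|^2\le L_g^2\|v_t-v_t^*\|^2$ to obtain the stated contraction factor, and the add-and-subtract decomposition of $H^{-1}b$ to get the Lipschitz bound on $v^*(x,y)$ for the drift term. Your anticipated ``main obstacle'' is exactly where the paper's argument lies: it applies the co-coercivity bound $\langle\nabla_v R,v_t-v_t^*\rangle\ge\frac{\mu_gL_g}{\mu_g+L_g}\|v_t-v_t^*\|^2+\frac{1}{\mu_g+L_g}\|\nabla_v R\|^2$ and then substitutes $\|\nabla_v R\|^2\le L_g^2\|v_t-v_t^*\|^2$ into the resulting $-\,c\,\|\nabla_v R\|^2+\lambda_t^2\|\nabla_v R\|^2$ terms.
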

\noindent
Based on the above important properties, we now provide the general convergence theorem for FdeHBO. 
\begin{theorem}\label{th:hessianfree}
Suppose Assumptions~\ref{as:ulf},~\ref{as:llf}, ~\ref{as:sf} are satisfied. Choose $r_v \geq \frac{C_{f_y}}{\mu_g}$ and 
set
\begin{align*}
\alpha_t = \frac{1}{(w+t)^{1/3}},\quad \beta_t = c_\beta\alpha_t, &\quad \lambda_t = c_\lambda\alpha_t, \quad
\eta_t^f = c_{\eta_f}\alpha_t^2,\quad \eta_t^R = c_{\eta_R}\alpha_t^2,\quad \eta_t^g = c_{\eta_g}\alpha_t^2,
\end{align*}
and $\delta_{\epsilon} \leq \frac{\min\{c_{\eta_f},c_{\eta_R}\}}{8(L_{g_{xy}}r_v^2(w+T-1)^{2/3})}$, 
where the constants $w$, $c_\beta,c_\lambda,c_{\eta_f},c_{\eta_R}$ and  $c_{\eta_g}$ are defined in~\cref{def:parameters_free} in the appendix. Then, the iterates generated by~\Cref{alg:main_free} satisfy
\begin{align*}
    \mathbb{E}\|\nabla \Phi\big(x_a(T)\big)\|^2 \leq \widetilde{\mathcal{O}}\bigg(& \frac{\Phi(x_0) - \Phi^*}{T^{2/3}}
    + \frac{\|y_0 - y^*(x_0)\|^2}{T^{2/3}}
    + \frac{\|v_0 - v^*(x_0, y_0)\|^2}{T^{2/3}} \\& + \frac{1}{T^{2/3}} + \frac{\sigma_f^2}{T^{2/3}}
    +  \frac{\sigma_g^2}{T^{2/3}}
    +  \frac{\sigma_R^2}{T^{2/3}}\bigg) .
\end{align*}
\end{theorem}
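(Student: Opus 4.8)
The plan is to build a single Lyapunov potential that couples the upper-level objective with every tracking and gradient-estimation error, prove a one-step descent for it, and telescope against $\sum_t\alpha_t=\Theta(T^{2/3})$. First I would invoke the $L_\Phi$-smoothness of $\Phi$ (a standard consequence of \Cref{as:ulf,as:llf} together with the closed form \cref{def:Phi}) and apply the descent lemma to $x_{t+1}=x_t-\alpha_t\widetilde{h}_t^f$. Using the decomposition
\begin{align*}
\widetilde{h}_t^f=\bar{\nabla} f(x_t,y_t,v_t)+\Delta(x_t,y_t,v_t)+e_t^f,
\end{align*}
where $\bar{\nabla}f(x_t,y_t,v_t)-\nabla\Phi(x_t)$ is $\mathcal{O}(\|y_t-y^*(x_t)\|+\|v_t-v_t^*\|)$ because $\bar{\nabla}f$ is Lipschitz in $(y,v)$ on the ball $\{\|v\|\le r_v\}$, while the finite-difference bias obeys $\|\Delta\|=\mathcal{O}(\delta_\epsilon)$ by \Cref{lm:boundofeJH}, a Young's-inequality split of the inner product $\inner{\nabla\Phi(x_t),\widetilde{h}_t^f}$ yields a descent of the shape
\begin{align*}
\mathbb{E}\Phi(x_{t+1})\le\mathbb{E}\Phi(x_t)-\frac{\alpha_t}{2}\mathbb{E}\|\nabla\Phi(x_t)\|^2+\mathcal{O}(\alpha_t)\Big(\mathbb{E}\|e_t^f\|^2+\mathbb{E}\|y_t-y^*(x_t)\|^2+\mathbb{E}\|v_t-v_t^*\|^2\Big)+\mathcal{O}(\alpha_t\delta_\epsilon)+\frac{L_\Phi\alpha_t^2}{2}\mathbb{E}\|\widetilde{h}_t^f\|^2.
\end{align*}

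Next I would assemble the potential
\begin{align*}
V_t=\Phi(x_t)+c_y\mathbb{E}\|y_t-y^*(x_t)\|^2+c_v\mathbb{E}\|v_t-v_t^*\|^2+c_f\mathbb{E}\|e_t^f\|^2+c_R\mathbb{E}\|e_t^R\|^2+c_g\mathbb{E}\|e_t^g\|^2,
\end{align*}
and feed in \Cref{prop:HFetf,prop:hfetR,prop:hessian_free_v} together with their counterparts for the lower-level error $e_t^g$ and the tracking error $\|y_t-y^*(x_t)\|^2$ (the latter using $\mu_g$-strong convexity of $g$ to produce a $(1-\Theta(\beta_t))$ contraction plus an $\mathcal{O}(\alpha_t^2\|\widetilde{h}_t^f\|^2)$ drift from the Lipschitzness of $y^*(\cdot)$). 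The hard part will be that the recursions for $e_t^f$, $e_t^R$ and $v_t-v_t^*$ are mutually coupled---each bound in \Cref{prop:HFetf,prop:hfetR,prop:hessian_free_v} contains the other two errors as well as $\alpha_t^2\mathbb{E}\|\widetilde{h}_t^f\|^2$---so the crux is to choose $c_y,c_v,c_f,c_R,c_g$ so that, after adding the weighted recursions, the net coefficient multiplying each error is a genuine contraction. This is exactly where the schedule enters: with $\eta_t^{\{f,R,g\}}=\Theta(\alpha_t^2)$ the momentum factor $(1-\eta_{t+1})^2$ contracts at rate $1-\Theta(\alpha_t^2)$, which must dominate the $\Theta(\lambda_t^2)=\Theta(\beta_t^2)=\Theta(\alpha_t^2)$ expansion factors appearing in the same bounds, while the stochastic inputs scale as $(\eta_{t+1})^2\sigma^2=\Theta(\alpha_t^4)$ and the $v$-iterate enjoys the stronger linear contraction $1-\Theta(\lambda_t)=1-\Theta(\alpha_t)$ from strong convexity.

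To close the loop I would bound $\mathbb{E}\|\widetilde{h}_t^f\|^2\le 2\mathbb{E}\|\nabla\Phi(x_t)\|^2+\mathcal{O}\big(\mathbb{E}\|e_t^f\|^2+\mathbb{E}\|y_t-y^*(x_t)\|^2+\mathbb{E}\|v_t-v_t^*\|^2+\delta_\epsilon^2\big)$; since every occurrence of this quantity carries an $\alpha_t^2$ factor and $\alpha_t^2\ll\alpha_t$, the $\|\nabla\Phi\|^2$ part is absorbed into the main $-\tfrac{\alpha_t}{2}\|\nabla\Phi(x_t)\|^2$ descent once $w$ is chosen large enough that $\alpha_0$ is small. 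The bias is handled by the theorem's prescription $\delta_\epsilon=\mathcal{O}\big((w+T-1)^{-2/3}\big)=\mathcal{O}(\alpha_{T-1}^2)$, which keeps each expansion factor such as $(1-\eta^f_{t+1})^2+4L_{g_{xy}}r_v^2\delta_\epsilon$ of the form $1-\Theta(\alpha_t^2)$, so that the momentum variance-reduction effect survives despite $\widetilde{\nabla}_vR$ and $\widetilde{\nabla}f$ being biased estimators.

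Assembling these pieces yields a clean one-step bound
\begin{align*}
\mathbb{E}[V_{t+1}]\le\mathbb{E}[V_t]-\frac{\alpha_t}{4}\mathbb{E}\|\nabla\Phi(x_t)\|^2+\mathcal{O}(\alpha_t^4\sigma^2)+\mathcal{O}(\alpha_t^3)+\mathcal{O}(\alpha_t\delta_\epsilon).
\end{align*}
Telescoping over $t=0,\dots,T-1$, using $V_T\ge\Phi^*$ and the sums $\sum_t\alpha_t=\Theta(T^{2/3})$, $\sum_t\alpha_t^3=\Theta(\log T)$, $\sum_t\alpha_t^4=\Theta(1)$, and invoking the output rule that draws $x_a(T)$ from $\{x_0,\dots,x_{T-1}\}$ with probability proportional to $\alpha_t$, I would divide by $\sum_t\alpha_t$ to obtain the stated rate. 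The initial potential $V_0$ gives the $(\Phi(x_0)-\Phi^*+\|y_0-y^*(x_0)\|^2+\|v_0-v^*(x_0,y_0)\|^2)/T^{2/3}$ contributions; the $\Theta(1)$-summable variance inputs give the $\sigma_f^2,\sigma_g^2,\sigma_R^2$ terms over $T^{2/3}$; the bias term $\mathcal{O}(\delta_\epsilon\sum_t\alpha_t)=\mathcal{O}(1)$ produces the standalone $1/T^{2/3}$; and the borderline sum $\sum_t\alpha_t^3=\Theta(\log T)$ is the sole source of the logarithmic factor hidden in $\widetilde{\mathcal{O}}$.
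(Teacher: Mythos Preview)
Your overall architecture---coupled Lyapunov potential, one-step descent, telescope over $\sum_t\alpha_t=\Theta(T^{2/3})$---is the paper's, but the potential you write down cannot deliver the one-step inequality you claim. With \emph{constant} weights $c_f,c_g,c_R$ on $\|e_t^f\|^2,\|e_t^g\|^2,\|e_t^R\|^2$, there is a scale mismatch: by \Cref{prop:HFetf,prop:hfetR} each momentum error contracts only at rate $1-\Theta(\alpha_t^2)$, but these same errors feed into the $\Phi$-descent and into the $y$- and $v$-tracking recursions with coefficients of order $\Theta(\alpha_t)$ (e.g.\ $+\alpha_t\|e_t^f\|^2$ in the $\Phi$ step, and $(1+\gamma_t)(1+1/\delta_t)\beta_t^2\|e_t^g\|^2=\Theta(\beta_t)\|e_t^g\|^2$, $(1+\gamma_t')(1+1/\delta_t')\lambda_t^2\|e_t^R\|^2=\Theta(\lambda_t)\|e_t^R\|^2$ once the Young parameters in \Cref{prop:hessian_free_v} are set to $\Theta(\lambda_t)$). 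Adding $c_f\cdot[\text{error recursion}]$ then leaves a net coefficient $\alpha_t-c_f\Theta(\alpha_t^2)$ on $\|e_t^f\|^2$, which is positive for all large $t$ regardless of how big (but fixed) $c_f$ is; the error terms are therefore not absorbed and your bound $\mathbb{E}[V_{t+1}]\le\mathbb{E}[V_t]-\tfrac{\alpha_t}{4}\|\nabla\Phi\|^2+\mathcal{O}(\alpha_t^4\sigma^2)+\cdots$ does not follow. (Taking $c_f\sim 1/\alpha_T$ instead would make the argument close, but then $V_0$ is $\Theta(T^{1/3})$ and you only recover a $T^{-1/3}$ rate.)

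The missing idea, which the paper uses, is to put \emph{time-varying} weights on the momentum errors: the potential carries $\frac{\|e_t^f\|^2}{\bar c_{\eta_f}\alpha_{t-1}}$, $\frac{\|e_t^g\|^2}{\bar c_{\eta_g}\alpha_{t-1}}$, $\frac{\|e_t^R\|^2}{\bar c_{\eta_R}\alpha_{t-1}}$. The coefficient of $\|e_t^f\|^2$ in the increment is then $\frac{(1-\eta^f_{t+1})^2+4L_{g_{xy}}r_v^2\delta_\epsilon}{\alpha_t}-\frac{1}{\alpha_{t-1}}$, and the crucial extra ingredient is the concavity estimate $\frac{1}{\alpha_t}-\frac{1}{\alpha_{t-1}}=(w+t)^{1/3}-(w+t-1)^{1/3}\le\tfrac{1}{3}(w+t-1)^{-2/3}=\Theta(\alpha_t)$; combined with $\eta^f_{t+1}/\alpha_t=c_{\eta_f}\alpha_t$ this upgrades the effective contraction from $\Theta(\alpha_t^2)$ to $-\Theta(\alpha_t)$, which \emph{does} absorb the $\Theta(\alpha_t)$ inputs. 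Correspondingly the variance contributions become $(\eta_{t+1})^2\sigma^2/\alpha_t=\Theta(\alpha_t^3)\sigma^2$ (not $\Theta(\alpha_t^4)$ as you wrote), and $\sum_t\alpha_t^3=\Theta(\log T)$ is indeed the source of the logarithm in $\widetilde{\mathcal O}$. A secondary difference: the paper never bounds $\|\widetilde h_t^f\|^2$ by $\|\nabla\Phi(x_t)\|^2$ plus errors; it instead retains the $-\tfrac{\alpha_t}{2}(1-\alpha_tL_f)\|\widetilde h_t^f\|^2$ term from the descent lemma and uses it directly to absorb every $\Theta(\alpha_t^2)\|\widetilde h_t^f\|^2$ produced by the other recursions.
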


\begin{corollary}\label{co:hftribo}
  Under the same setting of \Cref{th:hessianfree}, FdeHBO requires $\mathcal{\widetilde O}(\epsilon^{-1.5})$ samples and gradient evaluations, respectively, to achieve an $\epsilon$-accurate stationary point.  
\end{corollary}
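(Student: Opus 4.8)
The plan is to treat \Cref{co:hftribo} as a bookkeeping consequence of the convergence rate in \Cref{th:hessianfree} together with a per-iteration cost count, since the substantive analytical work has already been done in \Cref{prop:HFetf}, \Cref{prop:hfetR}, \Cref{prop:hessian_free_v} and the theorem itself. First I would read off the rate: every term on the right-hand side of the bound in \Cref{th:hessianfree} decays as $\widetilde{\mathcal{O}}(T^{-2/3})$, while the numerators (the suboptimality gap $\Phi(x_0)-\Phi^*$, the initialization gaps $\|y_0-y^*(x_0)\|^2$ and $\|v_0-v^*(x_0,y_0)\|^2$, and the variance proxies $\sigma_f^2,\sigma_g^2,\sigma_R^2$) are all $\mathcal{O}(1)$ constants independent of $T$ and $\epsilon$ once $r_v\geq C_{f_y}/\mu_g$ is fixed. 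Hence there is a constant $C$, hiding at most logarithmic factors, such that $\mathbb{E}\|\nabla\Phi(x_a(T))\|^2\leq C\,T^{-2/3}$.

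Next I would solve for the iteration budget. Requiring $C\,T^{-2/3}\leq\epsilon$ forces $T\geq(C/\epsilon)^{3/2}$, so $T=\widetilde{\mathcal{O}}(\epsilon^{-1.5})$ iterations suffice to produce an output $x_a(T)$ with $\mathbb{E}\|\nabla\Phi(x_a(T))\|^2\leq\epsilon$. I would then verify that each iteration of \Cref{alg:main_free} draws only $\mathcal{O}(1)$ fresh samples ($\zeta_t,\psi_t,\bar\xi_t$) and evaluates only a constant number of first-order gradients. Concretely, the momentum estimators in \cref{def:htg}, \cref{def:htR_free} and \cref{def:htf_free} each reuse a single sample to evaluate gradients at the current and previous iterates, and the finite-difference surrogates in \cref{def:H} and \cref{def:J} replace every Hessian/Jacobian-vector product by two first-order gradient evaluations of $g$ at the perturbed points $y_t\pm\delta_\epsilon v_t$. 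Thus no second-order oracle is ever invoked and the batch size stays $\mathcal{O}(1)$, in sharp contrast to e.g. PZOBO-S whose batch grows like $\epsilon^{-1}$. Multiplying the $\widetilde{\mathcal{O}}(\epsilon^{-1.5})$ iterations by the $\mathcal{O}(1)$ cost per iteration yields the claimed $\widetilde{\mathcal{O}}(\epsilon^{-1.5})$ sample and gradient complexity.

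The one point that requires care, and the closest thing to an obstacle, is the consistency of the perturbation schedule with the iteration budget: \Cref{th:hessianfree} imposes $\delta_\epsilon\leq\min\{c_{\eta_f},c_{\eta_R}\}/(8L_{g_{xy}}r_v^2(w+T-1)^{2/3})$, which couples $\delta_\epsilon$ to $T$. I would resolve this apparent circularity by fixing $T=\Theta(\epsilon^{-1.5})$ from the rate above and only then setting $\delta_\epsilon=\Theta(\epsilon)$ accordingly; because the finite-difference scheme uses a fixed $\delta_\epsilon$ within each step, shrinking $\delta_\epsilon$ does not increase the number of samples or gradient evaluations per iteration, so the cost count is untouched. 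It then remains to confirm that with this $\delta_\epsilon$ the finite-difference error contributions flagged in \Cref{prop:HFetf} and \Cref{prop:hfetR} (the $\mathcal{O}(\delta_\epsilon)$ and $\mathcal{O}(\delta_\epsilon^2)$ terms) are absorbed into the $\widetilde{\mathcal{O}}(T^{-2/3})$ rate rather than dominating it. This is precisely what the stated choice of $\delta_\epsilon$ guarantees, as already used in the proof of \Cref{th:hessianfree}, so no new estimate is needed and the corollary follows.
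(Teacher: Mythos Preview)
Your proposal is correct and matches the paper's (implicit) approach: the paper does not give a separate proof of \Cref{co:hftribo}, treating it as an immediate consequence of the $\widetilde{\mathcal{O}}(T^{-2/3})$ rate in \Cref{th:hessianfree} together with the $\mathcal{O}(1)$ per-iteration sample/gradient cost of \Cref{alg:main_free}. Your handling of the $\delta_\epsilon$--$T$ coupling (fix $T=\Theta(\epsilon^{-1.5})$ first, then choose $\delta_\epsilon$ per \cref{def:parameters_free}) is exactly how the parameters are specified in the theorem statement, so nothing additional is needed.
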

\noindent
It can be seen from \Cref{co:hftribo} that the proposed FdeHBO achieves an $\mathcal{\widetilde O} (\epsilon^{-1.5})$ sample complexity without any second-order derivative computation. As far as we know, this is the first Hessian/Jacobian-free stochastic bilevel optimizer with an $\mathcal{\widetilde O}(\epsilon^{-1.5})$ sample complexity. 

\subsection{Convergence and Complexity Analysis of FMBO}
In this section, we analyze the convergence and complexity of the simplified FMBO method.
\begin{theorem}\label{th:main}
Suppose Assumptions~\ref{as:ulf},~\ref{as:llf} and ~\ref{as:sf} are satisfied. 
Choose $r_v \geq \frac{C_{f_y}}{\mu_g}$ and 
set parameters 
\begin{align*}
\alpha_t = \frac{1}{(w+t)^{1/3}},\quad \beta_t &= c_\beta\alpha_t, \quad \lambda_t = c_\lambda\alpha_t, \\
\eta_t^f = c_{\eta_f}\alpha_t^2,\quad \eta_t^R &= c_{\eta_R}\alpha_t^2,\quad \eta_t^g = c_{\eta_g}\alpha_t^2
\end{align*}
where $w$, $c_\beta,c_\lambda,c_{\eta_f},c_{\eta_R}$ and  $c_{\eta_g}$ are defined in~\cref{def:parameters} in the appendix. The iterates generated by~\Cref{alg:main} satisfy
\begin{align*}
    \mathbb{E}\|\nabla \Phi(x_a(T))\|^2 \leq &\widetilde{\mathcal{O}}\Bigg( \frac{\Phi(x_0) - \Phi^*}{T^{2/3}}
    + \frac{\|y_0 - y^*(x_0)\|^2}{T^{2/3}} 
    \\&\quad \ \ + \frac{\|v_0 - v^*(x_0, y_0)\|^2}{T^{2/3}}+ \frac{\sigma_f^2}{T^{2/3}} + \frac{\sigma_g^2}{T^{2/3}} 
    +\frac{\sigma_R^2}{T^{2/3}} \Bigg).
\end{align*} 
\end{theorem}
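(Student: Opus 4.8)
The plan is to treat FMBO as the exact-gradient specialization of FdeHBO and to run the same Lyapunov-telescoping argument, but now with every finite-difference bias term removed. The crucial structural simplification is that, because FMBO evaluates the true Hessian/Jacobian-vector products rather than their finite-difference surrogates, the three momentum iterates $h_t^g$, $h_t^R$ and $h_t^f$ are \emph{unbiased} estimators of $\nabla_y g(x_t,y_t)$, $\nabla_v R(x_t,y_t,v_t)$ and $\bar\nabla f(x_t,y_t,v_t)$. Consequently the recursions of Propositions~\ref{prop:HFetf} and~\ref{prop:hfetR} hold verbatim after dropping every term carrying a factor $\delta_\epsilon$, and Proposition~\ref{prop:hessian_free_v} carries over unchanged. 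First I would record the one-step error recursions: a strongly-convex contraction for the lower-level gap $\mathbb{E}\|y_t-y^*(x_t)\|^2$ (contraction factor $1-\Theta(\beta_t\mu_g)$ plus a drift $\Theta(\alpha_t^2\|h_t^f\|^2)$ from the movement of $x$), the LS-gap recursion for $\mathbb{E}\|v_t-v_t^*\|^2$ from Proposition~\ref{prop:hessian_free_v}, and the recursive-momentum (STORM-type) recursions for $\mathbb{E}\|e_t^g\|^2$, $\mathbb{E}\|e_t^R\|^2$, $\mathbb{E}\|e_t^f\|^2$, each of the form $(1-\eta_{t+1})^2\mathbb{E}\|e_t\|^2+\Theta(\eta_{t+1}^2\sigma^2)+(1-\eta_{t+1})^2\cdot(\text{iterate drift})$.

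Next I would establish the main descent inequality for the hyperobjective. Using the $L_\Phi$-smoothness of $\Phi$, which follows from Assumptions~\ref{as:ulf}--\ref{as:llf} by the standard implicit-function estimates, together with the update $x_{t+1}=x_t-\alpha_t h_t^f$, I would split $h_t^f=\nabla\Phi(x_t)+e_t^f+\big(\bar\nabla f(x_t,y_t,v_t)-\nabla\Phi(x_t)\big)$, where the last surrogate-bias term is controlled by $\|y_t-y^*(x_t)\|$ and $\|v_t-v_t^*\|$ through the Lipschitz bounds of Assumptions~\ref{as:ulf}--\ref{as:llf} and the boundedness of $v_t$ guaranteed by the projection with $r_v\ge C_{f_y}/\mu_g$. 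Writing $\Gamma_t:=\mathbb{E}\|e_t^f\|^2+\mathbb{E}\|y_t-y^*(x_t)\|^2+\mathbb{E}\|v_t-v_t^*\|^2$, this yields
\[\mathbb{E}\Phi(x_{t+1})\le \mathbb{E}\Phi(x_t)-\tfrac{\alpha_t}{2}\mathbb{E}\|\nabla\Phi(x_t)\|^2+\Theta(\alpha_t)\,\Gamma_t+\Theta(\alpha_t^2)\,\mathbb{E}\|h_t^f\|^2,\]
so that a negative multiple of the target quantity $\mathbb{E}\|\nabla\Phi(x_t)\|^2$ appears with weight $\alpha_t$.

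I would then combine these into a single potential function, namely
\[V_t=\Phi(x_t)+c_y\mathbb{E}\|y_t-y^*(x_t)\|^2+c_v\mathbb{E}\|v_t-v_t^*\|^2+c_g\mathbb{E}\|e_t^g\|^2+c_R\mathbb{E}\|e_t^R\|^2+c_f\mathbb{E}\|e_t^f\|^2,\]
choosing the constants $c_y,c_v,c_g,c_R,c_f$ together with $w,c_\beta,c_\lambda,c_{\eta_f},c_{\eta_R},c_{\eta_g}$ so that, after substituting $\alpha_t=(w+t)^{-1/3}$ and $\eta_t=\Theta(\alpha_t^2)$, the net coefficient multiplying each of the five error quantities in $V_{t+1}-V_t$ is nonpositive. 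The point is that the momentum recursions contribute a contraction $(1-\eta_{t+1})^2\approx 1-2\eta_{t+1}$, while the drift terms they inject are all of order $\alpha_t^2\|h_t^f\|^2$, $\beta_t^2$, or $\lambda_t^2$ times quantities already tracked, which the schedule renders summable; the leftover per-step residual is the noise floor $\Theta(\eta_t^2\sigma^2)=\Theta(t^{-4/3})$. The genuine difficulty is exactly this coupling: the LS-gap $\mathbb{E}\|v_t-v_t^*\|^2$ feeds into $e_t^R$ and into the descent inequality, while $e_t^R$ feeds back into the LS-gap, and both are entangled with $e_t^f$ and the lower-level error. Closing the loop requires the Lyapunov weights and $w$ to be large enough that every cross term is dominated, which is what forces the $\Theta(\alpha_t^2)$ scaling of the momentum parameters and the radius $r_v\ge C_{f_y}/\mu_g$ (the projection being precisely what makes $R$ smooth and its stochastic gradient variance bounded, so that the single-step momentum update on the LS is legitimate).

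Finally I would telescope. Summing $V_{t+1}-V_t$ over $t=0,\dots,T-1$ leaves $\tfrac12\sum_t\alpha_t\mathbb{E}\|\nabla\Phi(x_t)\|^2$ bounded by $V_0-\Phi^*$ plus $\sum_t\Theta(\eta_t^2\sigma^2)$, where the latter is $O(1)$ since $\sum_t t^{-4/3}<\infty$, so the accumulated variance stays at the constant level that yields the improved rate. Dividing by $\sum_{t=0}^{T-1}\alpha_t=\Theta(T^{2/3})$ and choosing $x_a(T)$ to be $x_t$ with probability proportional to $\alpha_t$ gives $\mathbb{E}\|\nabla\Phi(x_a(T))\|^2=\big(\sum_t\alpha_t\big)^{-1}\sum_t\alpha_t\mathbb{E}\|\nabla\Phi(x_t)\|^2$, and the initial terms $\Phi(x_0)-\Phi^*$, $\|y_0-y^*(x_0)\|^2$, $\|v_0-v^*(x_0,y_0)\|^2$ together with the accumulated variances $\sigma_f^2,\sigma_g^2,\sigma_R^2$ all appear divided by $T^{2/3}$, which is exactly the claimed bound, with the $\widetilde{\mathcal{O}}$ absorbing logarithmic factors from the harmonic-type sums. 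Setting this right-hand side equal to $\epsilon$ then reproduces the $\widetilde{\mathcal{O}}(\epsilon^{-1.5})$ complexity.
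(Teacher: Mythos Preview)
Your overall strategy---drop the finite-difference bias terms, assemble a Lyapunov combining $\Phi$, the two tracking errors, and the three momentum errors, then telescope---is exactly the paper's, and your handling of the descent inequality, the projection, and the coupling between the LS gap and $e_t^R$ is correct in spirit. There is, however, one concrete gap: the Lyapunov you write down uses \emph{constant} weights $c_g,c_R,c_f$ on the momentum errors, whereas the paper uses the time-varying weights $1/(\bar c\,\alpha_{t-1})$, i.e.
\[
V_t=\Phi(x_t)+K_1\|y_t-y^*(x_t)\|^2+K_2\|v_t-v_t^*\|^2+\frac{\|e_t^f\|^2}{\bar c_{\eta_f}\alpha_{t-1}}+\frac{\|e_t^g\|^2}{\bar c_{\eta_g}\alpha_{t-1}}+\frac{\|e_t^R\|^2}{\bar c_{\eta_R}\alpha_{t-1}}.
\]
This is not cosmetic. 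With $\eta_{t+1}=c_\eta\alpha_t^2$ the STORM recursion contracts $\mathbb{E}\|e_t\|^2$ only by a factor $(1-\eta_{t+1})^2\approx 1-2c_\eta\alpha_t^2$, so a constant weight $c_f$ contributes $-\Theta(\alpha_t^2)\mathbb{E}\|e_t\|^2$ to $V_{t+1}-V_t$. But your descent inequality for $\Phi$ injects $+\alpha_t\mathbb{E}\|e_t^f\|^2$, and since $\alpha_t\gg\alpha_t^2$ as $t\to\infty$, no fixed $c_f$ can make the net coefficient nonpositive for all $t$; your claimed absorption step therefore fails. The $1/\alpha_{t-1}$ weighting repairs this: the paper shows
\[
\frac{(1-\eta_{t+1})^2}{\alpha_t}-\frac{1}{\alpha_{t-1}}\le -\bar c\,\alpha_t,
\]
using that $\frac{1}{\alpha_t}-\frac{1}{\alpha_{t-1}}=O(\alpha_t)$ for the $(w+t)^{-1/3}$ schedule, which gives precisely the $\Theta(\alpha_t)$ contraction needed to swallow the $\alpha_t\mathbb{E}\|e_t^f\|^2$ term. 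After this correction the per-step noise residual becomes $\eta_{t+1}^2\sigma^2/\alpha_t=\Theta(\alpha_t^3)$, whose sum over $t$ is $O(\log T)$; this, rather than a $\sum t^{-4/3}$ bound, is where the $\widetilde{\mathcal O}$ logarithm actually arises. With that single fix the rest of your plan goes through and matches the paper's proof.
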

\noindent
\Cref{th:main} shows that the proposed fully single-loop FMBO achieves a convergence rate of $\frac{1}{T^{2/3}}$, which further yields the following complexity result.   
\begin{corollary}\label{Co:complexitytribo}
   Under the same setting of \Cref{th:main}, FMBO requires totally $\mathcal{\widetilde O}(\epsilon^{-1.5})$ data samples, gradient and matrix-vector evaluations, respectively, to achieve an $\epsilon$-accurate stationary point. 
\end{corollary}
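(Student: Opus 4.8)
The plan is to read off the complexity directly from the convergence rate established in \Cref{th:main}. That theorem shows every term on the right-hand side of the bound on $\mathbb{E}\|\nabla\Phi(x_a(T))\|^2$ decays at the rate $\widetilde{\mathcal{O}}(T^{-2/3})$, so that $\mathbb{E}\|\nabla\Phi(x_a(T))\|^2 = \widetilde{\mathcal{O}}(T^{-2/3})$. To guarantee an $\epsilon$-accurate stationary point it therefore suffices to drive this quantity below $\epsilon$, i.e.\ to require $\widetilde{\mathcal{O}}(T^{-2/3}) \le \epsilon$. Solving for the iteration count gives $T^{2/3} = \widetilde{\Omega}(\epsilon^{-1})$, hence $T = \widetilde{\mathcal{O}}(\epsilon^{-3/2})$ iterations are enough, which settles the iteration-complexity part of the claim.

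It then remains to translate this iteration count into the sample, gradient, and matrix-vector complexities by accounting for the per-iteration oracle cost of \Cref{alg:main}. The key observation is that each of the three momentum-based estimators uses only $\mathcal{O}(1)$ fresh oracle calls per step. Inspecting \cref{def:htg}, the update $h_t^g$ draws a single sample $\zeta_t$ and evaluates $\nabla_y g$ at the two consecutive iterates $(x_t,y_t)$ and $(x_{t-1},y_{t-1})$, reusing the stored momentum $h_{t-1}^g$ rather than recomputing past gradients; this is $\mathcal{O}(1)$ samples and $\mathcal{O}(1)$ gradient evaluations. Likewise \cref{def:htR} draws one sample $\psi_t$ and forms $\nabla_v R$ at two consecutive iterates, each requiring one Hessian-vector product $\nabla_{yy}^2 g\,v$ and one gradient $\nabla_y f$, so $\mathcal{O}(1)$ matrix-vector and $\mathcal{O}(1)$ gradient evaluations; and \cref{def:htf} draws one sample $\bar\xi_t$ and forms $\bar\nabla f$ at two iterates, each with one Jacobian-vector product $\nabla_{xy}^2 g\,v$ and one gradient $\nabla_x f$. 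Thus every iteration consumes $\mathcal{O}(1)$ samples, $\mathcal{O}(1)$ gradient evaluations, and $\mathcal{O}(1)$ matrix-vector evaluations, consistent with the $\mathcal{O}(1)$ batch size reported in \Cref{tb:hessian-free}.

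Multiplying the per-iteration cost by the iteration count $T = \widetilde{\mathcal{O}}(\epsilon^{-3/2})$ yields total sample, gradient, and matrix-vector complexities each of order $\widetilde{\mathcal{O}}(\epsilon^{-1.5})$, as claimed. There is no genuine analytic obstacle here: the corollary is a bookkeeping consequence of \Cref{th:main}, and the only point requiring care is confirming that the momentum recursions in \cref{def:htg,def:htR,def:htf} reuse the previous estimators $h_{t-1}^g$, $h_{t-1}^R$, $h_{t-1}^f$ and hence do not hide a growing number of oracle calls inside the variance-reduction correction term. Once that is checked, the rate-to-complexity conversion and the final multiplication are immediate.
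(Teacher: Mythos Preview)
Your proposal is correct and matches the paper's approach: the paper states \Cref{Co:complexitytribo} as an immediate consequence of \Cref{th:main} without providing a separate proof, and your argument spells out exactly the intended rate-to-complexity conversion together with the $\mathcal{O}(1)$ per-iteration oracle count.
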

\noindent
\Cref{Co:complexitytribo} shows that FMBO requires a total number $\mathcal{\widetilde O}(\epsilon^{-1.5})$ of data samples, which matches the best sample complexity in \cite{khanduri2021near,yang2021provably,huang2021biadam}. More importantly, each iteration of FMBO contains only one Hessian-vector computation due to the simple fully single-loop implementation, whereas other momentum-based approaches require $\mathcal{O}(\log\frac{1}{\epsilon})$ Hessian-vector computations in a nested manner per iteration. Also, note that FMBO is the first fully single-loop bilevel optimizer that achieves the $\mathcal{\widetilde O}(\epsilon^{-1.5})$ sample complexity. 

\section{Experiments}
In this section, we test the performance of the proposed FdeHBO and FMBO on two applications: hyper-representation and data hyper-cleaning, respectively. 

\subsection{Hyper-representation on MNIST Dataset}
We now compare the performance of our Hessian/Jacobian-free FdeHBO with the relevant 
Hessian/Jacobian-free methods PZOBO-S~\cite{sow2022on}, F$^2$SA \cite{kwon2023fully} and F$^3$SA \cite{kwon2023fully}. We perform the hyper-representation with the $7$-layer LeNet network \cite{lecun1998gradient}, which aims to solve the following bilevel problem. 
\begin{align*}
&\min_{\lambda}L_\nu(\lambda) := \frac{1}{|S_\nu|}\sum_{(x_i, y_i)\in S_\nu}L_{CE}(w^*(\lambda)f(\lambda; x_i), y_i)\\
&s.t.\;\;w^*(\lambda)=\argmin_{w}L_{in}(\lambda, w), \;\;\; L_{in}(\lambda, w):=\frac{1}{|S_\tau|}\sum_{(\tau,y_i)\in 
S_\tau}L_{CE}(wf(\lambda, x_i),y_i),
\end{align*}
where $L_{CE}$ denotes the cross-entropy loss, $S_{\nu}$ and $S_{\tau}$ denote the training data and validation data, and $f(\lambda; x_i)$ denotes the features extracted from the data $x_i$. More details of the experimental setups are specified in~\Cref{suply-HR}. 

As shown in~\Cref{fig:hr}, our FdeHBO converges much faster and more stably than PZOBO-S, F$^2$SA and F$^3$SA, while achieving a higher training accuracy. This is consistent with our theoretical results, and validates the momentum-based approaches in reducing the variance during the entire training.  

\begin{figure}[t]
\centering    
 \vspace{-1cm}
\includegraphics[width=80mm]{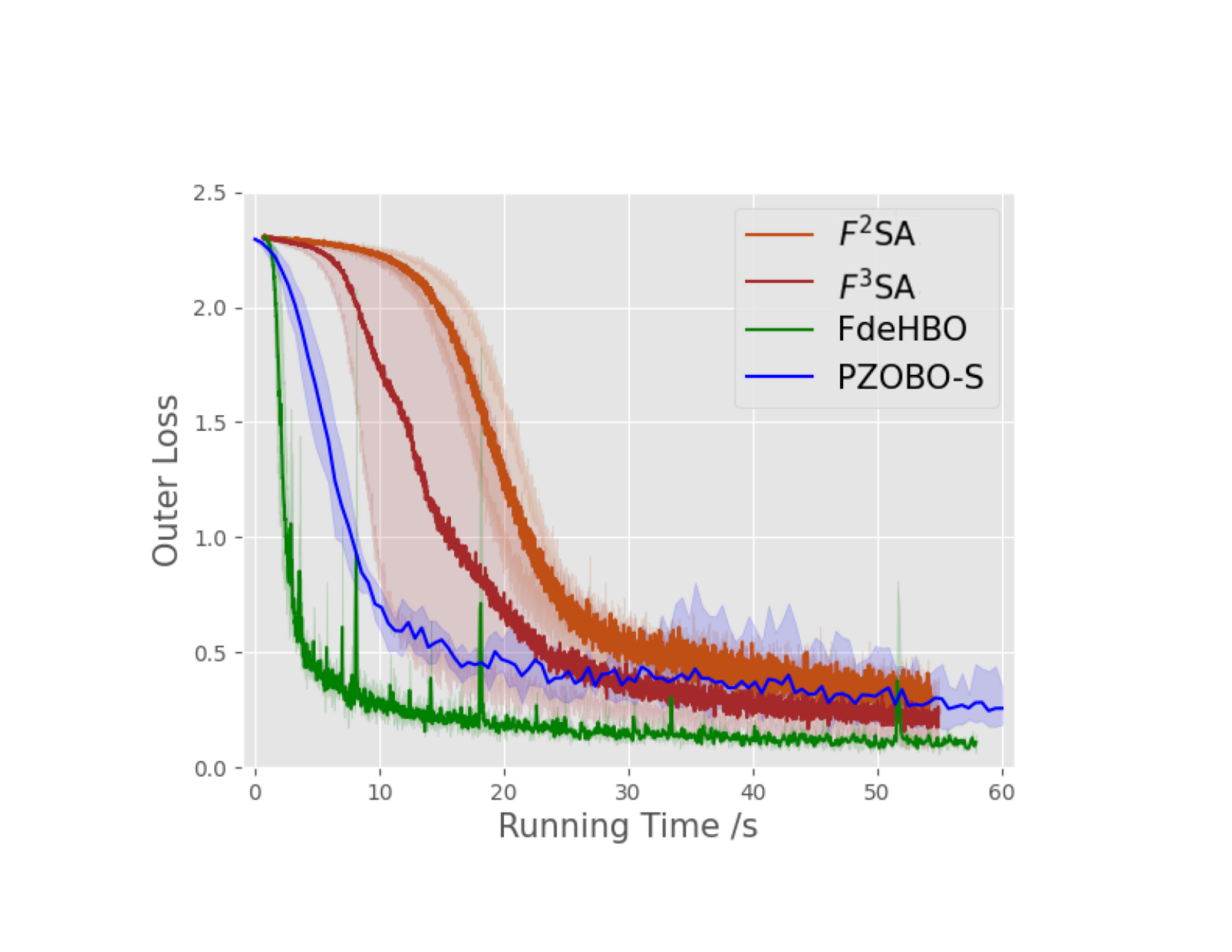}
\hspace{-0.8cm}
\includegraphics[width=80mm]{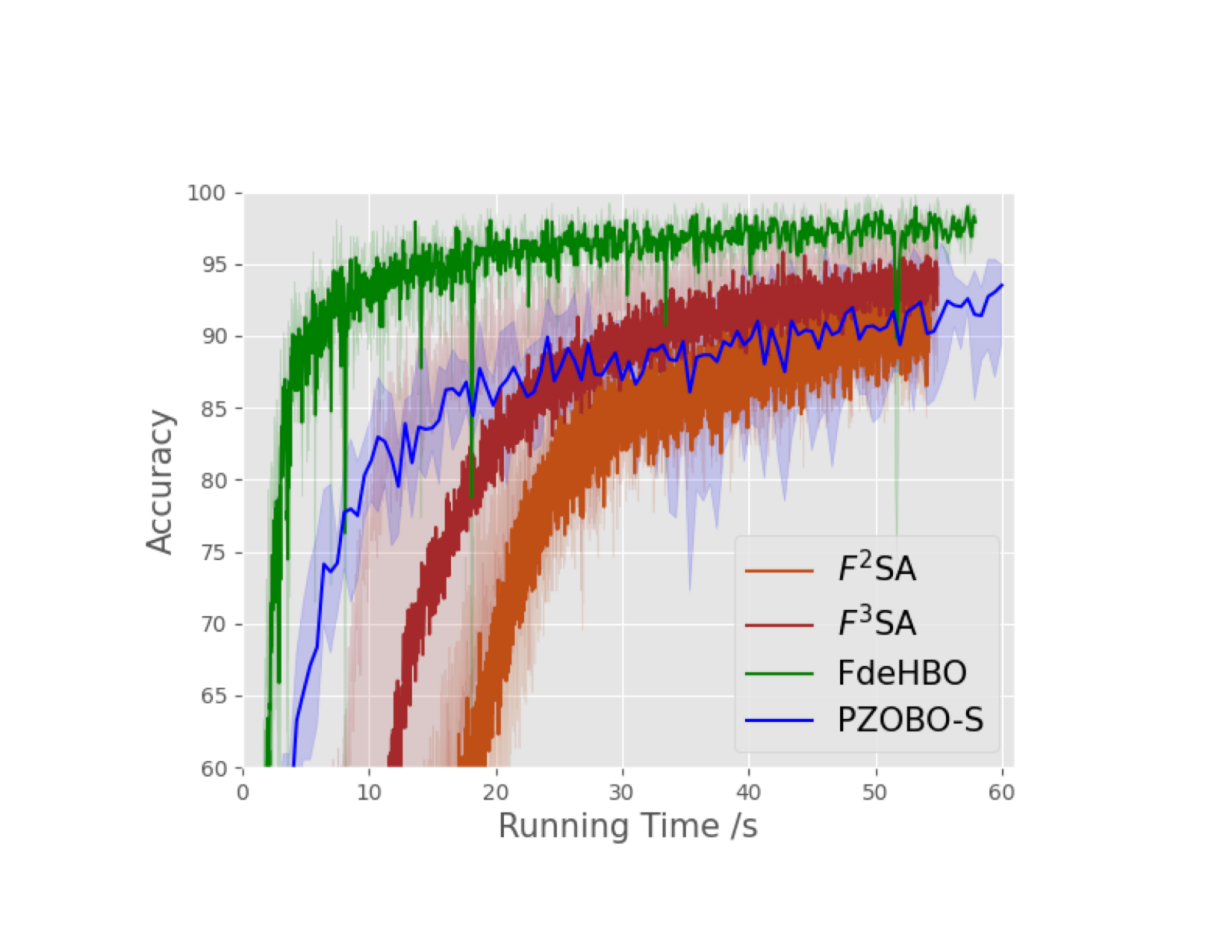}
 \vspace{-0.5cm}
\caption{Comparison on hyper-representation with the LeNet neural network. Left plot: outer loss v.s.~running time; right plot: accuracy v.s.~running time.}\label{fig:hr}
\end{figure}

\subsection{Hyper-cleaning on MNIST Dataset}
We compare the performance of our FMBO to various bilevel algorithms including AID-FP~\cite{grazzi2020iteration}, reverse\cite{franceschi2017forward}, SUSTAIN~\cite{khanduri2021near}, MRBO and VRBO~\cite{yang2021provably}, BSA~\cite{ghadimi2018approximation}, stocBiO~\cite{ji2021bilevel}, FSLA~\cite{li2022fully} and SOBA~\cite{dagreou2022framework}, on a low-dimensional data hyper-cleaning problem with a linear classifier on MNIST dataset, which takes the following formulation.
\begin{align}\label{data_cleaning}
    &\min_{\lambda}L_{\nu}(\lambda, w^*) = \frac{1}{|S_{\nu}|}\sum_{(x_i, y_i)\in S_{\nu}} L_{CE}((w^*)^Tx_i, y_i) \nonumber \\
    & s.t. \quad w^* = \arg\min_{w}L(\lambda, w) := \frac{1}{|S_{\tau}|}\sum_{(x_i, y_i)\in S_{\tau}}  \sigma(\lambda_i)L_{CE}(w^Tx_i, y_i) + C\|w\|^2,
\end{align}
where $L_{CE}$ denotes the cross-entropy loss, $S_{\nu}$ and $S_{\tau}$ denote the training data and validation data, whose sizes are set to 20000 and 5000, respectively, $\lambda = \{\lambda_i\}_{i \in S_{\tau}}$ and $C$ are the regularization parameters, and $\sigma(\cdot)$ is the sigmoid function.  AmIGO~\cite{arbel2022amortized} is not included in the figures because it performs similarly to stocBiO. 
The experimental details can be found in~\Cref{app:hc}.

As shown in \Cref{fig1:a}, FMBO, stocBiO and AID-FP converge much faster and more stable than other algorithms. Compared to stocBiO and AID-FP, FMBO achieves a lower training loss. This demonstrates the effectiveness of momentum-based variance reduction in finding more accurate iterates. It can be seen from \Cref{fig1:b} that FMBO converges faster than existing fully single-loop FSLA and SOBA algorithms with a lower training loss.


\begin{figure*}[t]
	\subfigure[noise p = 0.1]{\label{fig1:a}\includegraphics[width=55mm]{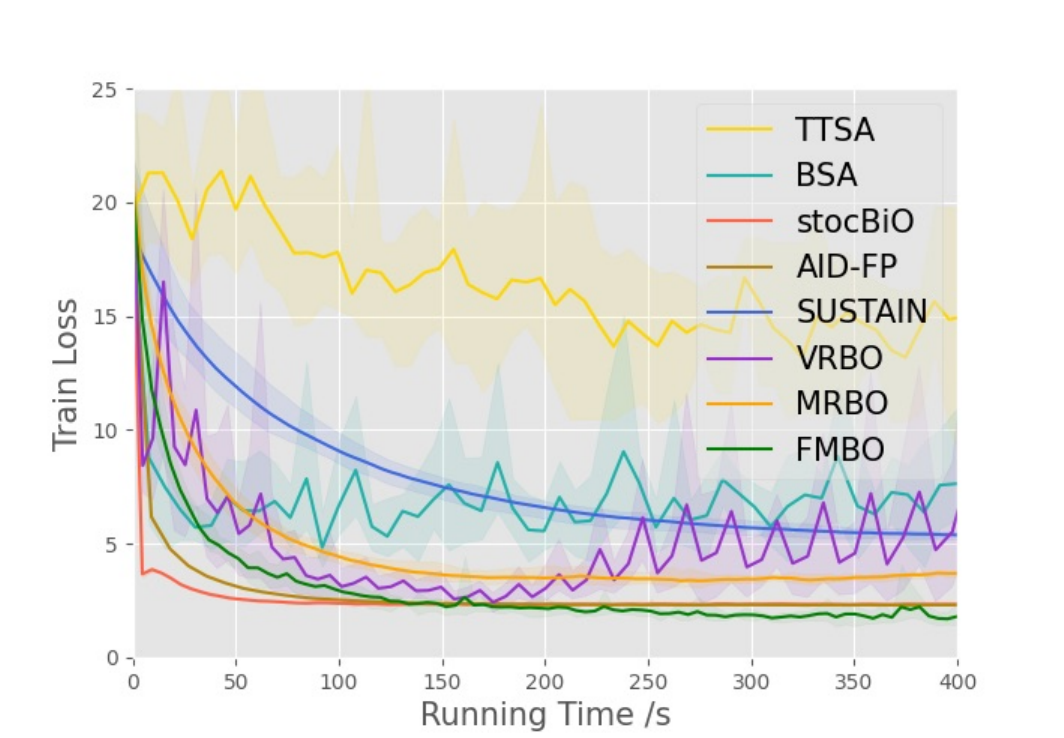}\includegraphics[width=55mm]{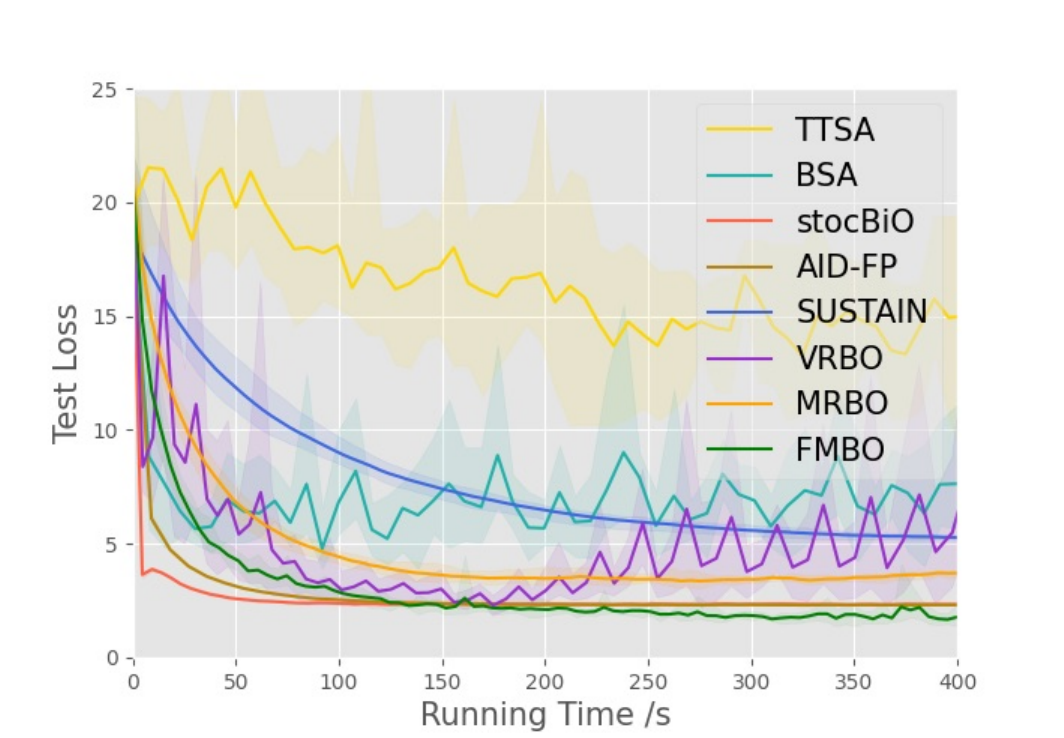}}
	\subfigure[Single-loop methods]{\label{fig1:b}\includegraphics[width=55mm]{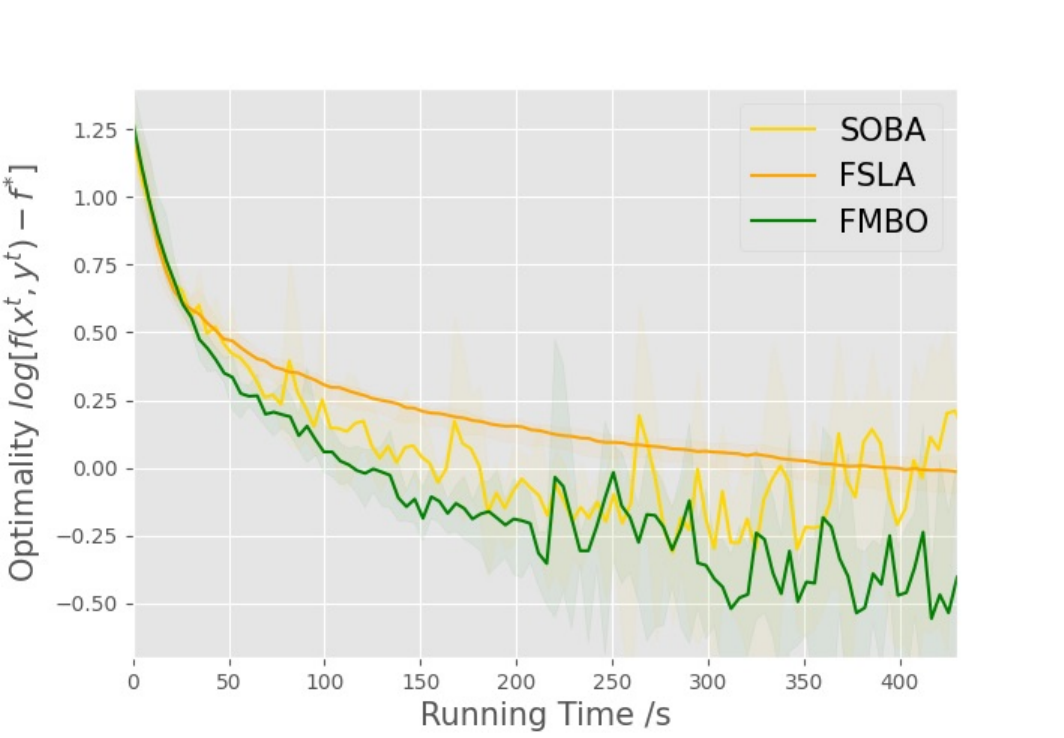}}
	\caption{(a) Comparison of different algorithms on data hyper-cleaning with noise $p=0.1$. Left plot: test loss v.s.~running time; right plot: train loss v.s.~running time. (b) Comparison among different single-loop algorithms: training loss v.s.~running time.}\label{fig:hc}
\end{figure*}

\section{Conclusion} 
In this paper, we propose a novel Hessian/Jacobian-free bilevel optimizer named FdeHBO. We show that FdeHBO achieves an $\mathcal{O}(\epsilon^{-1.5})$ sample complexity, which outperforms existing algorithms of the same type by a large margin. Our experiments validate the theoretical results and the effectiveness of the proposed algorithms.
We anticipate that the developed analysis will shed light on developing provable Hessian/Jacobian-free bilevel optimization algorithms and the proposed algorithms may be applied to other applications such as fair machine learning.

\section*{Acknowledgement}
The work is supported in part by NSF under grants 2326592 and 2311274.

\bibliography{ref}

\begin{thebibliography}{10}

\bibitem{allen2018neon2}
Z.~Allen-Zhu and Y.~Li.
\newblock Neon2: Finding local minima via first-order oracles.
\newblock {\em Advances in Neural Information Processing Systems}, 31, 2018.

\bibitem{antoniou2019train}
A.~Antoniou, H.~Edwards, and A.~Storkey.
\newblock How to train your {MAML}.
\newblock In {\em International Conference on Learning Representations (ICLR)},
  2019.

\bibitem{arbel2022amortized}
M.~Arbel and J.~Mairal.
\newblock Amortized implicit differentiation for stochastic bilevel
  optimization.
\newblock In {\em International Conference on Learning Representations (ICLR)},
  2022.

\bibitem{bertinetto2018meta}
L.~Bertinetto, J.~F. Henriques, P.~Torr, and A.~Vedaldi.
\newblock Meta-learning with differentiable closed-form solvers.
\newblock In {\em International Conference on Learning Representations (ICLR)},
  2018.

\bibitem{blondel2021efficient}
M.~Blondel, Q.~Berthet, M.~Cuturi, R.~Frostig, S.~Hoyer, F.~Llinares-L{\'o}pez,
  F.~Pedregosa, and J.-P. Vert.
\newblock Efficient and modular implicit differentiation.
\newblock {\em arXiv preprint arXiv:2105.15183}, 2021.

\bibitem{bracken1973mathematical}
J.~Bracken and J.~T. McGill.
\newblock Mathematical programs with optimization problems in the constraints.
\newblock {\em Operations Research}, 21(1):37--44, 1973.

\bibitem{carmon2018accelerated}
Y.~Carmon, J.~C. Duchi, O.~Hinder, and A.~Sidford.
\newblock Accelerated methods for nonconvex optimization.
\newblock {\em SIAM Journal on Optimization}, 28(2):1751--1772, 2018.

\bibitem{chen2023bilevel}
L.~Chen, J.~Xu, and J.~Zhang.
\newblock On bilevel optimization without lower-level strong convexity.
\newblock {\em arXiv preprint arXiv:2301.00712}, 2023.

\bibitem{chen2021single}
T.~Chen, Y.~Sun, and W.~Yin.
\newblock A single-timescale stochastic bilevel optimization method.
\newblock {\em arXiv preprint arXiv:2102.04671}, 2021.

\bibitem{cutkosky2019momentum}
A.~Cutkosky and F.~Orabona.
\newblock Momentum-based variance reduction in non-convex {SGD}.
\newblock In {\em Advances in Neural Information Processing Systems (NeurIPS)},
  2019.

\bibitem{dagreou2022framework}
M.~Dagr{\'e}ou, P.~Ablin, S.~Vaiter, and T.~Moreau.
\newblock A framework for bilevel optimization that enables stochastic and
  global variance reduction algorithms.
\newblock {\em arXiv preprint arXiv:2201.13409}, 2022.

\bibitem{domke2012generic}
J.~Domke.
\newblock Generic methods for optimization-based modeling.
\newblock In {\em Artificial Intelligence and Statistics (AISTATS)}, pages
  318--326, 2012.

\bibitem{fallah2020convergence}
A.~Fallah, A.~Mokhtari, and A.~Ozdaglar.
\newblock On the convergence theory of gradient-based model-agnostic
  meta-learning algorithms.
\newblock In {\em International Conference on Artificial Intelligence and
  Statistics (AISTATS)}, pages 1082--1092. PMLR, 2020.

\bibitem{feurer2019hyperparameter}
M.~Feurer and F.~Hutter.
\newblock Hyperparameter optimization.
\newblock In {\em Automated Machine Learning}, pages 3--33. Springer, Cham,
  2019.

\bibitem{finn2017model}
C.~Finn, P.~Abbeel, and S.~Levine.
\newblock Model-agnostic meta-learning for fast adaptation of deep networks.
\newblock In {\em Proc. International Conference on Machine Learning (ICML)},
  pages 1126--1135, 2017.

\bibitem{flamary2014learning}
R.~Flamary, A.~Rakotomamonjy, and G.~Gasso.
\newblock Learning constrained task similarities in graphregularized multi-task
  learning.
\newblock {\em Regularization, Optimization, Kernels, and Support Vector
  Machines}, page 103, 2014.

\bibitem{franceschi2017forward}
L.~Franceschi, M.~Donini, P.~Frasconi, and M.~Pontil.
\newblock Forward and reverse gradient-based hyperparameter optimization.
\newblock In {\em International Conference on Machine Learning (ICML)}, pages
  1165--1173, 2017.

\bibitem{franceschi2018bilevel}
L.~Franceschi, P.~Frasconi, S.~Salzo, R.~Grazzi, and M.~Pontil.
\newblock Bilevel programming for hyperparameter optimization and
  meta-learning.
\newblock In {\em International Conference on Machine Learning (ICML)}, pages
  1568--1577, 2018.

\bibitem{ghadimi2018approximation}
S.~Ghadimi and M.~Wang.
\newblock Approximation methods for bilevel programming.
\newblock {\em arXiv preprint arXiv:1802.02246}, 2018.

\bibitem{gould2016differentiating}
S.~Gould, B.~Fernando, A.~Cherian, P.~Anderson, R.~S. Cruz, and E.~Guo.
\newblock On differentiating parameterized argmin and argmax problems with
  application to bi-level optimization.
\newblock {\em arXiv preprint arXiv:1607.05447}, 2016.

\bibitem{grazzi2020iteration}
R.~Grazzi, L.~Franceschi, M.~Pontil, and S.~Salzo.
\newblock On the iteration complexity of hypergradient computation.
\newblock In {\em Proc. International Conference on Machine Learning (ICML)},
  2020.

\bibitem{griewank2008evaluating}
A.~Griewank and A.~Walther.
\newblock {\em Evaluating derivatives: principles and techniques of algorithmic
  differentiation}.
\newblock SIAM, 2008.

\bibitem{gu2021optimizing}
B.~Gu, G.~Liu, Y.~Zhang, X.~Geng, and H.~Huang.
\newblock Optimizing large-scale hyperparameters via automated learning
  algorithm.
\newblock {\em arXiv preprint arXiv:2102.09026}, 2021.

\bibitem{guo2021stochastic}
Z.~Guo, Y.~Xu, W.~Yin, R.~Jin, and T.~Yang.
\newblock On stochastic moving-average estimators for non-convex optimization.
\newblock {\em arXiv preprint arXiv:2104.14840}, 2021.

\bibitem{guo2021randomized}
Z.~Guo and T.~Yang.
\newblock Randomized stochastic variance-reduced methods for stochastic bilevel
  optimization.
\newblock {\em arXiv preprint arXiv:2105.02266}, 2021.

\bibitem{hansen1992new}
P.~Hansen, B.~Jaumard, and G.~Savard.
\newblock New branch-and-bound rules for linear bilevel programming.
\newblock {\em SIAM Journal on Scientific and Statistical Computing},
  13(5):1194--1217, 1992.

\bibitem{hong2020two}
M.~Hong, H.-T. Wai, Z.~Wang, and Z.~Yang.
\newblock A two-timescale framework for bilevel optimization: Complexity
  analysis and application to actor-critic.
\newblock {\em arXiv preprint arXiv:2007.05170}, 2020.

\bibitem{huang2021biadam}
F.~Huang and H.~Huang.
\newblock Biadam: Fast adaptive bilevel optimization methods.
\newblock {\em arXiv preprint arXiv:2106.11396}, 2021.

\bibitem{ji2021lower}
K.~Ji and Y.~Liang.
\newblock Lower bounds and accelerated algorithms for bilevel optimization.
\newblock {\em arXiv preprint arXiv:2102.03926}, 2021.

\bibitem{ji2021bilevel}
K.~Ji, J.~Yang, and Y.~Liang.
\newblock Bilevel optimization: Convergence analysis and enhanced design.
\newblock In {\em International Conference on Machine Learning (ICML)}, pages
  4882--4892. PMLR, 2021.

\bibitem{ji2022network}
K.~Ji and L.~Ying.
\newblock Network utility maximization with general and unknown utility
  functions: A distributed, data-driven bilevel optimization approach.
\newblock {\em Submitted}, 2022.

\bibitem{jin2017escape}
C.~Jin, R.~Ge, P.~Netrapalli, S.~M. Kakade, and M.~I. Jordan.
\newblock How to escape saddle points efficiently.
\newblock In {\em International conference on machine learning}, pages
  1724--1732. PMLR, 2017.

\bibitem{jorge2006numerical}
N.~Jorge and J.~W. Stephen.
\newblock {\em Numerical optimization}.
\newblock Spinger, 2006.

\bibitem{khanduri2021near}
P.~Khanduri, S.~Zeng, M.~Hong, H.-T. Wai, Z.~Wang, and Z.~Yang.
\newblock A near-optimal algorithm for stochastic bilevel optimization via
  double-momentum.
\newblock {\em Advances in Neural Information Processing Systems (NeurIPS)},
  34:30271--30283, 2021.

\bibitem{konda2000actor}
V.~R. Konda and J.~N. Tsitsiklis.
\newblock Actor-critic algorithms.
\newblock In {\em Advances in neural information processing systems (NeurIPS)},
  pages 1008--1014, 2000.

\bibitem{kunapuli2008classification}
G.~Kunapuli, K.~P. Bennett, J.~Hu, and J.-S. Pang.
\newblock Classification model selection via bilevel programming.
\newblock {\em Optimization Methods \& Software}, 23(4):475--489, 2008.

\bibitem{kwon2023fully}
J.~Kwon, D.~Kwon, S.~Wright, and R.~Nowak.
\newblock A fully first-order method for stochastic bilevel optimization.
\newblock {\em arXiv preprint arXiv:2301.10945}, 2023.

\bibitem{lecun1998gradient}
Y.~LeCun, L.~Bottou, Y.~Bengio, and P.~Haffner.
\newblock Gradient-based learning applied to document recognition.
\newblock {\em Proceedings of the IEEE}, 86(11):2278--2324, 1998.

\bibitem{li2020improved}
J.~Li, B.~Gu, and H.~Huang.
\newblock Improved bilevel model: Fast and optimal algorithm with theoretical
  guarantee.
\newblock {\em arXiv preprint arXiv:2009.00690}, 2020.

\bibitem{li2022fully}
J.~Li, B.~Gu, and H.~Huang.
\newblock A fully single loop algorithm for bilevel optimization without
  hessian inverse.
\newblock In {\em Proceedings of the AAAI Conference on Artificial
  Intelligence}, volume~36, pages 7426--7434, 2022.

\bibitem{liao2018reviving}
R.~Liao, Y.~Xiong, E.~Fetaya, L.~Zhang, K.~Yoon, X.~Pitkow, R.~Urtasun, and
  R.~Zemel.
\newblock Reviving and improving recurrent back-propagation.
\newblock In {\em Proc. International Conference on Machine Learning (ICML)},
  2018.

\bibitem{liu2022bome}
B.~Liu, M.~Ye, S.~Wright, P.~Stone, and Q.~Liu.
\newblock Bome! bilevel optimization made easy: A simple first-order approach.
\newblock {\em Advances in Neural Information Processing Systems},
  35:17248--17262, 2022.

\bibitem{liu2018darts}
H.~Liu, K.~Simonyan, and Y.~Yang.
\newblock Darts: Differentiable architecture search.
\newblock In {\em International Conference on Learning Representations (ICLR)},
  2018.

\bibitem{liu2021investigating}
R.~Liu, J.~Gao, J.~Zhang, D.~Meng, and Z.~Lin.
\newblock Investigating bi-level optimization for learning and vision from a
  unified perspective: A survey and beyond.
\newblock {\em IEEE Transactions on Pattern Analysis and Machine Intelligence},
  2021.

\bibitem{liu2021value}
R.~Liu, X.~Liu, X.~Yuan, S.~Zeng, and J.~Zhang.
\newblock A value-function-based interior-point method for non-convex bi-level
  optimization.
\newblock In {\em International Conference on Machine Learning (ICML)}, 2021.

\bibitem{liu2020generic}
R.~Liu, P.~Mu, X.~Yuan, S.~Zeng, and J.~Zhang.
\newblock A generic first-order algorithmic framework for bi-level programming
  beyond lower-level singleton.
\newblock In {\em International Conference on Machine Learning (ICML)}, 2020.

\bibitem{maclaurin2015gradient}
D.~Maclaurin, D.~Duvenaud, and R.~Adams.
\newblock Gradient-based hyperparameter optimization through reversible
  learning.
\newblock In {\em International Conference on Machine Learning (ICML)}, pages
  2113--2122, 2015.

\bibitem{nichol2018first}
A.~Nichol, J.~Achiam, and J.~Schulman.
\newblock On first-order meta-learning algorithms.
\newblock {\em arXiv preprint arXiv:1803.02999}, 2018.

\bibitem{pedregosa2016hyperparameter}
F.~Pedregosa.
\newblock Hyperparameter optimization with approximate gradient.
\newblock In {\em International Conference on Machine Learning (ICML)}, pages
  737--746, 2016.

\bibitem{rajeswaran2019meta}
A.~Rajeswaran, C.~Finn, S.~M. Kakade, and S.~Levine.
\newblock Meta-learning with implicit gradients.
\newblock In {\em Advances in Neural Information Processing Systems (NeurIPS)},
  pages 113--124, 2019.

\bibitem{sabach2017first}
S.~Sabach and S.~Shtern.
\newblock A first order method for solving convex bilevel optimization
  problems.
\newblock {\em SIAM Journal on Optimization}, 27(2):640--660, 2017.

\bibitem{shaban2019truncated}
A.~Shaban, C.-A. Cheng, N.~Hatch, and B.~Boots.
\newblock Truncated back-propagation for bilevel optimization.
\newblock In {\em International Conference on Artificial Intelligence and
  Statistics (AISTATS)}, pages 1723--1732, 2019.

\bibitem{shen2023penalty}
H.~Shen and T.~Chen.
\newblock On penalty-based bilevel gradient descent method.
\newblock {\em arXiv preprint arXiv:2302.05185}, 2023.

\bibitem{shi2005extended}
C.~Shi, J.~Lu, and G.~Zhang.
\newblock An extended kuhn--tucker approach for linear bilevel programming.
\newblock {\em Applied Mathematics and Computation}, 162(1):51--63, 2005.

\bibitem{sinha2017review}
A.~Sinha, P.~Malo, and K.~Deb.
\newblock A review on bilevel optimization: from classical to evolutionary
  approaches and applications.
\newblock {\em IEEE Transactions on Evolutionary Computation}, 22(2):276--295,
  2017.

\bibitem{song2019maml}
X.~Song, W.~Gao, Y.~Yang, K.~Choromanski, A.~Pacchiano, and Y.~Tang.
\newblock {ES-MAML}: Simple {Hessian}-free meta learning.
\newblock In {\em International Conference on Learning Representations (ICLR)},
  2019.

\bibitem{sow2022constrained}
D.~Sow, K.~Ji, Z.~Guan, and Y.~Liang.
\newblock A constrained optimization approach to bilevel optimization with
  multiple inner minima.
\newblock {\em arXiv preprint arXiv:2203.01123}, 2022.

\bibitem{sow2022on}
D.~Sow, K.~Ji, and Y.~Liang.
\newblock On the convergence theory for {Hessian}-free bilevel algorithms.
\newblock {\em Advances in Neural Information Processing Systems (NeurIPS)},
  2022.

\bibitem{tarzanagh2022fednest}
D.~A. Tarzanagh, M.~Li, C.~Thrampoulidis, and S.~Oymak.
\newblock Fednest: Federated bilevel, minimax, and compositional optimization.
\newblock {\em arXiv preprint arXiv:2205.02215}, 2022.

\bibitem{tran2019hybrid}
Q.~Tran-Dinh, N.~H. Pham, D.~T. Phan, and L.~M. Nguyen.
\newblock Hybrid stochastic gradient descent algorithms for stochastic
  nonconvex optimization.
\newblock {\em arXiv preprint arXiv:1905.05920}, 2019.

\bibitem{vuorio2019multimodal}
R.~Vuorio, S.-H. Sun, H.~Hu, and J.~J. Lim.
\newblock Multimodal model-agnostic meta-learning via task-aware modulation.
\newblock {\em Advances in Neural Information Processing Systems (NeurIPS)},
  32, 2019.

\bibitem{yang2021provably}
J.~Yang, K.~Ji, and Y.~Liang.
\newblock Provably faster algorithms for bilevel optimization.
\newblock {\em Advances in Neural Information Processing Systems (NeurIPS)},
  34:13670--13682, 2021.

\bibitem{ye2022bome}
M.~Ye, B.~Liu, S.~Wright, P.~Stone, and Q.~Liu.
\newblock Bome! bilevel optimization made easy: A simple first-order approach.
\newblock In {\em Conference on Neural Information Processing Systems
  (NeurIPS)}, page~1, 2022.

\end{thebibliography}
\bibliographystyle{abbrv}

\newpage
\appendix
\allowdisplaybreaks
\noindent
{\huge \bf Supplementary material}

\section{Specifications of Experiments}
\subsection{Hyper-representation}\label{suply-HR}
The hyper-representation problem follows the same problem setup as in  \cite{sow2022on}, and is given by
\begin{align*}
&\min_{\lambda}L_\nu(\lambda) := \frac{1}{|S_\nu|}\sum_{(x_i, y_i)\in S_\nu}L_{CE}(w^*(\lambda)f(\lambda; x_i), y_i)\\
&s.t.\;\;w^*(\lambda)=\argmin_{w}L_{in}(\lambda, w), \;\;\; L_{in}(\lambda, w):=\frac{1}{|S_\tau|}\sum_{(\tau,y_i)\in 
S_\tau}L_{CE}(wf(\lambda, x_i),y_i),
\end{align*}
where $L_{CE}$ denotes the cross-entropy loss, $S_{\nu}$ and $S_{\tau}$ denote the training data and the validation data, and $f(\lambda; x_i)$ donates the features extracted from the data $x_i$. 
We perform the hyper-representation with the 7-layer LeNet network to solve the bilevel problem.
We take the last two layers as the lower-level parameters $w$, and all remaining layers as the upper-level parameters $\lambda$. 
In our experiments, the dimension of $w$ is 850, and the dimension of $\lambda$ is 60856.
For the choices of hyperparameters in \Cref{fig:hr}, all $\eta$ in \cref{def:htg}, \cref{def:htR_free}, and \cref{def:htf_free} are all chosen as 0.9. The stepsize $\beta_t$ for the lower-level update is chosen as 0.8 and the stepsize $\alpha_t$  for the upper-level update is chosen as 0.008. The stepsize $\lambda_t$ is 0.05 and all the $\delta_{\epsilon}$ used are 0.1. The batch size is 256 for both lower- and upper-level processes and the number of lower iterations is 1. For PZOBO-S, $F^2$SA, and $F^3$SA, we use the hyperparameter configurations suggested in their papers and implementations.  
We run the experiment 3 times with different seeds where the solid lines show the average accuracy or loss and the transparent area indicates the variance filled by the max and min values. We run the comparison algorithms using their repositories. The repository of PZOBO-S is available at  \href{https://github.com/sowmaster/esjacobians}{https://github.com/sowmaster/esjacobians}. 

\subsection{Data Hyper-Cleaning}\label{app:hc}
The formulation of the data hyper-cleaning problem is given as follows:
\begin{align*}
    &\min_{\lambda}L_{\nu}(\lambda, w^*) = \frac{1}{|S_{\nu}|}\sum_{(x_i, y_i)\in S_{\nu}} L_{CE}((w^*)^Tx_i, y_i) \nonumber \\
    & s.t. \quad w^* = \arg\min_{w}L(\lambda, w) := \frac{1}{|S_{\tau}|}\sum_{(x_i, y_i)\in S_{\tau}}  \sigma(\lambda_i)L_{CE}(w^Tx_i, y_i) + C\|w\|^2,
\end{align*}
where $L_{CE}$ denotes the cross-entropy loss, $S_{\nu}$ and $S_{\tau}$ denote the training data and the validation data, whose sizes are set to 20000 and 5000, respectively, $\lambda = \{\lambda_i\}_{i \in S_{\tau}}$ and $C$ are the regularization parameters, and $\sigma(\cdot)$ is the sigmoid function.  In the experiments, we set $C = 0.001$ and use 10000 images for testing. In addition, we set the number of iterations in solving the linear system to 3 and set $\eta = 0.5$ in the hypergradient estimation. For our FMBO, we set both inner stepsize (which is the stepsize to update $w$) and outer stepsize (which is the  stepsize to update $\lambda$) to 0.3 and set the batchsize to 100. We also use a decaying $\eta$ for more stable training. 
Following the hyperparameter configurations suggested in the codes and papers of other comparison methods, we set their batchsizes to 1000 for MRBO, stocBiO and set the batchsize to 500 for VRBO. We also set the period number to 3 for VRBO. 
We set the number of inner-loop iterations to 200 for stocBiO, AID-FP and BSA, and to 20 for VRBO, to achieve the best performance. Furthermore, we choose 0.1 as the both inner and outer stepsize for all algorithms except FMBO and SUSTAIN. For SUSTAIN, we set the inner stepsize to 0.03 and the outer stepsize to 0.1 for stability. We run the experiments at two noise rates of 0.1 and 0.15. All results are repeated with 5 random seeds and we use Macbook Pro with a 2.3 GHz Quad-Core Intel Core i5 CPU for training without the requirement of GPU. However, our code also supports GPU.

\section*{Technical Proofs}
\noindent
{\bf Proof outline.} Recall that our proposed FMBO algorithm is a simplification of the proposed Hessian/Jacobian-free FdeHBO method, which uses the Hessian matrix and Jacobian matrix vector directly without finite-difference approximation. For this reason, we first provide auxiliary lemmas for proving the main theorems in \Cref{appd:auxiLemma}. Next, we start the proof of FMBO in \Cref{sec:FMBOproof}, and then extend the analysis to the more complex FdeHBO in \Cref{sec:FdeHBOproof}. 

\section{Proofs of Preliminary Lemmas}\label{appd:auxiLemma}

\begin{lemma}[Boundedness of $v^*$]\label{lm:boundofv} 
Under Assumptions~\ref{as:ulf} and~\ref{as:llf}, we have  
for $v^*$ in~\cref{def:gradR},
    $\|v^*\|^2 \leq \frac{C_{f_y}}{\mu^2_g}.$
\end{lemma}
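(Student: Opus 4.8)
The plan is to unwind the definition of $v^*$ and then bound it by decoupling the inverse-Hessian factor from the gradient factor, using strong convexity for the former and Assumption~\ref{as:ulf} for the latter. Recall from \cref{def:gradR} that $v^*$ is the solution of the linear system $\nabla_v R(x,y,v)=\nabla_{yy}^2 g(x,y)v - \nabla_y f(x,y)=0$, so that $v^* = [\nabla_{yy}^2 g(x,y)]^{-1}\nabla_y f(x,y)$, where the inverse exists because $g(x,\cdot)$ is $\mu_g$-strongly convex and hence its Hessian is positive definite.

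First I would establish the key operator-norm bound on the inverse Hessian. By the $\mu_g$-strong convexity of $g(x,\cdot)$ in Assumption~\ref{as:llf}, we have $\nabla_{yy}^2 g(x,y)\succeq \mu_g I$, so every eigenvalue of $\nabla_{yy}^2 g(x,y)$ is at least $\mu_g$. Since the matrix is symmetric positive definite, the eigenvalues of its inverse are at most $1/\mu_g$, which yields
\begin{align*}
\big\|[\nabla_{yy}^2 g(x,y)]^{-1}\big\| \leq \frac{1}{\mu_g}.
\end{align*}

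Next I would combine this with the gradient bound. Applying submultiplicativity of the operator norm and then the bound $\|\nabla_y f(x,y)\|^2\leq C_{f_y}$ from Assumption~\ref{as:ulf} gives
\begin{align*}
\|v^*\| = \big\|[\nabla_{yy}^2 g(x,y)]^{-1}\nabla_y f(x,y)\big\|
\leq \big\|[\nabla_{yy}^2 g(x,y)]^{-1}\big\|\,\|\nabla_y f(x,y)\|
\leq \frac{\sqrt{C_{f_y}}}{\mu_g}.
\end{align*}
Squaring both sides then delivers the claimed bound $\|v^*\|^2 \leq \frac{C_{f_y}}{\mu_g^2}$.

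This argument is essentially elementary, so I do not expect a genuine obstacle; the only step requiring care is the passage from strong convexity to the eigenvalue bound on the inverse Hessian, which is the crux and relies on the symmetry and positive-definiteness of $\nabla_{yy}^2 g(x,y)$. It is worth noting that the resulting bound is \emph{uniform} in $(x,y)$, which is precisely why this lemma is useful downstream: it motivates the choice $r_v \geq \frac{C_{f_y}}{\mu_g}$ for the projection radius in Algorithms~\ref{alg:main_free} and~\ref{alg:main}, ensuring that the true target $v^*$ always lies inside the projection ball and hence that the auxiliary projection never discards the optimum.
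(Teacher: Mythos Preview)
Your proposal is correct and follows essentially the same approach as the paper: write $v^*=[\nabla_{yy}^2 g(x,y)]^{-1}\nabla_y f(x,y)$, apply submultiplicativity, and invoke the bounds $\|[\nabla_{yy}^2 g(x,y)]^{-1}\|\leq 1/\mu_g$ (from strong convexity) and $\|\nabla_y f(x,y)\|^2\leq C_{f_y}$ (from Assumption~\ref{as:ulf}). Your version just spells out the eigenvalue reasoning for the inverse-Hessian bound a bit more explicitly than the paper does.
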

\begin{proof}
From ~\cref{def:gradR}, we have
\begin{align*}
    \|v^*\|^2 = \|\big[\nabla_{yy}^2g(x,y)\big]^{-1}\nabla_y f(x,y)\|^2 \leq \|\big[\nabla_{yy}^2g(x,y)\big]^{-1}\|^2\|\nabla_y f(x,y)\|^2 \overset{(a)}{\leq} \frac{C_{f_y}}{\mu^2_g},
\end{align*}
where (a) follows Assumptions~\ref{as:ulf} and~\ref{as:llf}. Then, the proof is complete. 
\end{proof}
\begin{lemma}\label{lm:boundofgradf}
Under Assumptions~\ref{as:ulf} and~\ref{as:llf}, for any $\|v\|\leq r_v$, we have that 
$\bar\nabla{f(x,y,v)}$ is Lipschitz continuous w.r.t. $(x,y) \in \mathbb{R}^{d_{x}} \times \mathbb{R}^{d_{y}}$ with constant $L_{F}$ given by 
$$
L_F^2 = 2\Big(L_{f_x}^2 + L^2_{g_{xy}}r_v^2\Big).
$$
\end{lemma}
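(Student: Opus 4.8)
The plan is to bound the difference $\bar\nabla f(x_1,y_1,v)-\bar\nabla f(x_2,y_2,v)$ for an arbitrary fixed $v$ with $\|v\|\le r_v$ and two points $(x_1,y_1),(x_2,y_2)$, and then read off the constant. Starting from the definition in \cref{def:fffform}, the difference splits into a gradient part and a mixed-Hessian-times-$v$ part:
\begin{align*}
\bar\nabla f(x_1,y_1,v)-\bar\nabla f(x_2,y_2,v) = \big(\nabla_x f(x_1,y_1)-\nabla_x f(x_2,y_2)\big) - \big(\nabla_{xy}^2 g(x_1,y_1)-\nabla_{xy}^2 g(x_2,y_2)\big)v.
\end{align*}
First I would apply the elementary inequality $\|a-b\|^2\le 2\|a\|^2+2\|b\|^2$ to separate these two contributions; this is precisely the step that produces the factor of $2$ appearing in the stated $L_F^2$.

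Next I would bound each piece. The first term is controlled directly by Assumption~\ref{as:ulf}, since $\nabla_x f$ is $L_{f_x}$-Lipschitz in $(x,y)$, giving $\|\nabla_x f(x_1,y_1)-\nabla_x f(x_2,y_2)\|^2 \le L_{f_x}^2\big(\|x_1-x_2\|^2+\|y_1-y_2\|^2\big)$. For the second term, the idea is to pull the vector $v$ out by submultiplicativity, namely $\|(\nabla_{xy}^2 g(x_1,y_1)-\nabla_{xy}^2 g(x_2,y_2))v\| \le \|\nabla_{xy}^2 g(x_1,y_1)-\nabla_{xy}^2 g(x_2,y_2)\|\,\|v\|$, and then use $\|v\|\le r_v$ together with the $L_{g_{xy}}$-Lipschitz continuity of $\nabla_{xy}^2 g$ from Assumption~\ref{as:llf}, which yields the bound $L_{g_{xy}}^2 r_v^2\big(\|x_1-x_2\|^2+\|y_1-y_2\|^2\big)$. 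Combining the two bounds gives
\begin{align*}
\|\bar\nabla f(x_1,y_1,v)-\bar\nabla f(x_2,y_2,v)\|^2 \le 2\big(L_{f_x}^2+L_{g_{xy}}^2 r_v^2\big)\big(\|x_1-x_2\|^2+\|y_1-y_2\|^2\big),
\end{align*}
which is exactly $L_F^2$ as claimed.

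There is no genuine obstacle here; the only point requiring a little care is that the matrix-vector term must be handled through the operator-norm/submultiplicativity bound so that the constraint $\|v\|\le r_v$ can be invoked — this is what keeps the constant finite and is the whole reason the auxiliary projection onto the ball of radius $r_v$ is introduced in Algorithm~\ref{alg:main_free}. I would also note that the choice to square first, rather than use the triangle inequality directly (which would give the slightly tighter but less convenient $(L_{f_x}+L_{g_{xy}}r_v)^2$), is made purely so that the constant matches the form $2(L_{f_x}^2+L_{g_{xy}}^2 r_v^2)$ that is propagated downstream in \Cref{prop:HFetf}.
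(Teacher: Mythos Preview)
Your proposal is correct and follows essentially the same approach as the paper: write out the difference from \cref{def:fffform}, apply the inequality $\|a-b\|^2\le 2\|a\|^2+2\|b\|^2$ to split the gradient and Hessian-vector pieces, then invoke $\|v\|\le r_v$ together with the Lipschitz constants $L_{f_x}$ and $L_{g_{xy}}$ from Assumptions~\ref{as:ulf} and~\ref{as:llf}. Your additional commentary on why squaring (rather than the triangle inequality) is used is accurate but not present in the paper's proof.
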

\begin{proof}
Note from~\cref{def:fffform} that  
 $\bar\nabla{f(x,y,v)} = \nabla_xf(x,y) - \nabla_{xy}^2g(x,y)v$. Then, we have 
\begin{align*}
    \|\bar\nabla{f(x_1,y_1,v)} &- \bar\nabla{f(x_2,y_2,v)}\|^2 \\
    & = \|\big[\nabla_xf(x_1,y_1) - \nabla_xf(x_2,y_2)\big] - \big[\nabla_{xy}^2g(x_1,y_1) - \nabla_{xy}^2g(x_2,y_2)\big]v\|^2 \\
    & \overset{(a)}{\leq} 2\|\nabla_xf(x_1,y_1) - \nabla_xf(x_2,y_2)\|^2 + 2\|\big[\nabla_{xy}^2g(x_1,y_1) - \nabla_{xy}^2g(x_2,y_2)\big]v\|^2 \\ 
    & \overset{(b)}{\leq} 2\|\nabla_xf(x_1,y_1) - \nabla_xf(x_2,y_2)\|^2 + 2r_v^2\|\nabla_{xy}^2g(x_1,y_1) - \nabla_{xy}^2g(x_2,y_2)\|^2 \\ 
    & \overset{(c)}{\leq} 2\Big(L_{f_x}^2 + L^2_{g_{xy}}r_v^2\Big)\big(\|x_1 - x_2\|^2+\|y_1 - y_2\|^2\big) \\
    & = L_F^2\big(\|x_1 - x_2\|^2+\|y_1 - y_2\|^2\big)
\end{align*}
where (a) uses Cauchy–Schwartz inequality; (b) follows from $\|v\|\leq r_v$;  
(c) follows from Assumption~\ref{as:ulf} and Assumption~\ref{as:llf}. Then the proof is complete. 
\end{proof}

\begin{lemma}\label{lm:B}
Under Assumption~\ref{as:llf} and ~\ref{as:sf}, the estimation $\bar{\nabla}f(x, y, v; \xi)$ is unbiased. And the gradient estimate of the upper-level objective satisfies
\begin{align*}
\mathbb{E}_{\bar{\xi}}\|\bar{\nabla}f(x,y,v;\bar{\xi}) - \bar{\nabla}f(x,y,v)\|^2 \leq \sigma_f^2,
\end{align*}
where $\sigma_f^2 = 2(\sigma_{f_x}^2 + r_v^2\sigma_{g_{xy}}^2)$. 
\end{lemma}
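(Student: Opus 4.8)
The plan is to split the statement into its two parts and prove each by direct computation from the definition in \cref{def:fffform}, where $\bar{\nabla}f(x,y,v;\bar{\xi}) = \nabla_x f(x,y;\bar{\xi}) - \nabla_{xy}^2 g(x,y;\bar{\xi})v$ and $\bar{\nabla}f(x,y,v) = \nabla_x f(x,y) - \nabla_{xy}^2 g(x,y)v$ differ only through their stochastic components, with $v$ held fixed. For the unbiasedness claim I would apply linearity of $\mathbb{E}_{\bar{\xi}}[\cdot]$ and pull the fixed vector $v$ outside the expectation, reducing the assertion to $\mathbb{E}_{\bar{\xi}}[\nabla_x f(x,y;\bar{\xi})] = \nabla_x f(x,y)$ and $\mathbb{E}_{\bar{\xi}}[\nabla_{xy}^2 g(x,y;\bar{\xi})] = \nabla_{xy}^2 g(x,y)$. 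Both follow from the expectation form of the objectives in \cref{objective_total} together with the standard interchange of expectation and differentiation permitted by the smoothness in Assumptions~\ref{as:llf} and~\ref{as:sf}; substituting back gives $\mathbb{E}_{\bar{\xi}}[\bar{\nabla}f(x,y,v;\bar{\xi})] = \bar{\nabla}f(x,y,v)$.

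For the variance bound I would form the difference $\bar{\nabla}f(x,y,v;\bar{\xi}) - \bar{\nabla}f(x,y,v) = [\nabla_x f(x,y;\bar{\xi}) - \nabla_x f(x,y)] - [\nabla_{xy}^2 g(x,y;\bar{\xi}) - \nabla_{xy}^2 g(x,y)]v$, apply the inequality $\|a+b\|^2 \leq 2\|a\|^2 + 2\|b\|^2$ to separate the two fluctuation terms, and bound the second term by submultiplicativity of the matrix-vector product together with $\|v\| \leq r_v$. Taking $\mathbb{E}_{\bar{\xi}}[\cdot]$ and invoking the two variance bounds $\mathbb{E}\|\nabla_x f - \nabla_x f(\cdot;\xi)\|^2 \leq \sigma_{f_x}^2$ and $\mathbb{E}\|\nabla_{xy}^2 g - \nabla_{xy}^2 g(\cdot;\xi)\|^2 \leq \sigma_{g_{xy}}^2$ from Assumption~\ref{as:sf} then yields the bound $2\sigma_{f_x}^2 + 2r_v^2\sigma_{g_{xy}}^2 = \sigma_f^2$, as claimed.

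This argument is essentially routine and contains no genuinely hard step; the calculation is the same in spirit as the Lipschitz estimate in \cref{lm:boundofgradf}, with fluctuations playing the role that Lipschitz increments play there. The only point requiring care is that the variance constant depends on $\|v\| \leq r_v$ rather than on $v$ being arbitrary, so the statement is really about estimators evaluated at projected iterates, which is exactly what the auxiliary projection in line~6 of \cref{alg:main_free} enforces before $\widetilde{h}_t^f$ is formed. A secondary subtlety worth flagging is that the sample $\bar{\xi}$ couples the $f$-gradient and $g$-Hessian evaluations, but since the variance argument splits into two separately bounded terms, no independence between these two components is needed.
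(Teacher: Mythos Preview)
Your proposal is correct and follows essentially the same approach as the paper: expand the definition in \cref{def:fffform}, use linearity of expectation for unbiasedness, then apply $\|a+b\|^2 \le 2\|a\|^2 + 2\|b\|^2$ together with $\|v\|\le r_v$ and the variance bounds in Assumption~\ref{as:sf} for the second claim. Your remark that the bound relies on the projected iterate $\|v\|\le r_v$ (enforced by line~6 of the algorithm) matches exactly the paper's justification.
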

\begin{proof}
Based on the definition of $\bar{\nabla}f(x, y, v; \xi)$ in \cref{def:fffform}, we have 
\begin{align*}
\mathbb{E}&\left[ \bar{\nabla}f(x, y, v; \xi) \right] - \bar{\nabla}f(x, y, v) \\
&= \mathbb{E}\left[ \nabla_x f(x, y; \xi) - \nabla_x \nabla_y g(x, y; \xi)v \right] - \left[ \nabla_x f(x, y) - \nabla_x \nabla_y g(x, y)v \right] \\
&= \mathbb{E}\left[\nabla_x f(x, y; \xi) - \nabla_x f(x, y) \right] - \mathbb{E}\left[\nabla_y g(x, y; \xi) - \nabla_y g(x, y) \right]v \\
&= 0.
\end{align*}
By taking norm, we have
\begin{align*}
    \mathbb{E}\| \bar{\nabla}f&(x, y, v; \xi)  - \bar{\nabla}f(x, y, v)\|^2 \\
    &= 
   \mathbb{E} \|\left[ \nabla_x f(x, y; \xi) - \nabla_x \nabla_y g(x, y; \xi)v \right] - \left[ \nabla_x f(x, y) - \nabla_x \nabla_y g(x, y)v \right]\|^2 \\
    &\leq 2\mathbb{E}\| \nabla_x f(x, y; \xi) - \nabla_x f(x, y)\|^2 + 2\mathbb{E}\|\nabla_x \nabla_y g(x, y; \xi)v - \nabla_x \nabla_y g(x, y)v\|^2 \\
    &\overset{(a)}{\leq} 2\mathbb{E}\| \nabla_x f(x, y; \xi) - \nabla_x f(x, y)\|^2 + 2r_v^2\mathbb{E}\|\nabla_x \nabla_y g(x, y; \xi) - \nabla_x \nabla_y g(x, y)\|^2 \\
    &\overset{(b)}{\leq} 2\sigma_{f_x}^2 + 2r_v^2\sigma_{g_{xy}}^2.
\end{align*}
where (a) follows from step 6 in \cref{alg:main}, (b) follows from Assumption~\ref{as:sf}. Then, the proof is complete. 
\end{proof}
\begin{lemma}[\cite{ghadimi2018approximation} lemma 2.2]\label{lm:3ieq}
Under Assumptions~\ref{as:ulf},~\ref{as:llf} and ~\ref{as:sf}, we have, for all $x, x_1 , x_2 \in \mathbb{R}^{d_{y}}$ and $y \in \mathbb{R}^{d_{y}}$,  
\begin{align*}
    \|\bar{\nabla}f(x,y,v) - \nabla \Phi(x)\| &\leq L\|y^*(x) - y\| \\
    \|y^*(x_1) - y^*(x_2)\| &\leq L_y\|x_1 - x_2\| \\
    \|\nabla \Phi(x_1) - \nabla \Phi(x_2)\| &\leq L_f\|x_1 - x_2\|, 
\end{align*}
where the Lipschitz constants are given by 
$$
L = L_{f_x} + \frac{L_{f_y}C_{g_{xy}}}{\mu_g} + C_y\left(\frac{L_{g_{xy}}}{\mu_g}+\frac{L_{g_{yy}}C_{g_{xy}}}{\mu_g^2}\right), \quad L_f = L+\frac{LC_{g_{xy}}}{\mu_g}, \quad L_y = \frac{C_{g_{xy}}}{\mu_g}.
$$
\end{lemma}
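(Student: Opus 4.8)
The three inequalities are the standard regularity estimates for the nonconvex–strongly-convex bilevel problem, and the plan is to prove them in the order (2), (1), (3), since the second is the structural backbone, the first is a perturbation estimate around $y^*(x)$, and the third is a composition of the two. Throughout, I read the first inequality with $v$ taken to be the exact solution $v^*(x,y)=[\nabla^2_{yy}g(x,y)]^{-1}\nabla_y f(x,y)$, so that $\bar\nabla f(x,y,v^*(x,y))$ is the plug-in hypergradient matching the form of $\nabla\Phi$ in \cref{def:Phi}, and $\nabla\Phi(x)=\bar\nabla f(x,y^*(x),v^*(x,y^*(x)))$. The single technical ingredient that powers all three estimates is the boundedness and Lipschitz continuity of the inverse Hessian $[\nabla^2_{yy}g]^{-1}$, which I establish first: strong convexity (Assumption~\ref{as:llf}) gives the uniform operator-norm bound $\|[\nabla^2_{yy}g(x,y)]^{-1}\|\le 1/\mu_g$, and the resolvent identity $A^{-1}-B^{-1}=A^{-1}(B-A)B^{-1}$ together with the $L_{g_{yy}}$-Lipschitzness of $\nabla^2_{yy}g$ yields $\|[\nabla^2_{yy}g(x_1,y_1)]^{-1}-[\nabla^2_{yy}g(x_2,y_2)]^{-1}\|\le \frac{L_{g_{yy}}}{\mu_g^2}(\|x_1-x_2\|+\|y_1-y_2\|)$.

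For inequality (2), I start from the first-order optimality condition $\nabla_y g(x,y^*(x))=0$, which holds since $y^*(x)$ is the unique minimizer of the $\mu_g$-strongly-convex map $g(x,\cdot)$. Differentiating in $x$ and applying the implicit function theorem gives $\nabla y^*(x)=-[\nabla^2_{yy}g(x,y^*(x))]^{-1}\nabla^2_{yx}g(x,y^*(x))$. Bounding the operator norm by $\|[\nabla^2_{yy}g]^{-1}\|\cdot\|\nabla^2_{xy}g\|\le \frac{C_{g_{xy}}}{\mu_g}$ (following the Ghadimi–Wang convention under Assumption~\ref{as:llf}) and integrating along the segment joining $x_1$ and $x_2$ produces $\|y^*(x_1)-y^*(x_2)\|\le L_y\|x_1-x_2\|$ with $L_y=\frac{C_{g_{xy}}}{\mu_g}$.

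For inequality (1), I write the error as $\bar\nabla f(x,y)-\nabla\Phi(x)=\bar\nabla f(x,y)-\bar\nabla f(x,y^*(x))$ and split it into the two constituent pieces of \cref{def:Phi}. The term $\nabla_x f(x,y)-\nabla_x f(x,y^*(x))$ is controlled by $L_{f_x}\|y-y^*(x)\|$. The product term $\nabla^2_{xy}g(x,\cdot)[\nabla^2_{yy}g(x,\cdot)]^{-1}\nabla_y f(x,\cdot)$ I handle by a telescoping insertion of intermediate factors, so that each factor varies one at a time: the change in $\nabla^2_{xy}g$ uses $L_{g_{xy}}$ and the boundedness of $[\nabla^2_{yy}g]^{-1}\nabla_y f$ (i.e.\ $\|v^*\|$, bounded as in \cref{lm:boundofv}); the change in $[\nabla^2_{yy}g]^{-1}$ uses the inverse-Lipschitz estimate above together with $C_{g_{xy}}$ and $C_{f_y}$; and the change in $\nabla_y f$ uses $L_{f_y}$ with $C_{g_{xy}}$ and $1/\mu_g$. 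Summing these gives exactly $L=L_{f_x}+\frac{L_{f_y}C_{g_{xy}}}{\mu_g}+C_y\big(\frac{L_{g_{xy}}}{\mu_g}+\frac{L_{g_{yy}}C_{g_{xy}}}{\mu_g^2}\big)$ times $\|y-y^*(x)\|$.

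Finally, inequality (3) follows by composition: using $\nabla\Phi(x)=\bar\nabla f(x,y^*(x))$ and inserting the intermediate point $\bar\nabla f(x_2,y^*(x_1))$, the triangle inequality splits the difference into an $x$-variation bounded by $L\|x_1-x_2\|$ and a $y$-variation bounded by $L\|y^*(x_1)-y^*(x_2)\|\le L\,L_y\|x_1-x_2\|$, whence $L_f=L+\frac{LC_{g_{xy}}}{\mu_g}=L(1+L_y)$. The main obstacle, and the step I would spend the most care on, is the inverse-Hessian Lipschitz bound and its interaction with the three time-varying factors in the product term of the hypergradient; once that resolvent estimate and the uniform $1/\mu_g$ bound are in hand, the rest is bookkeeping with the Cauchy–Schwarz inequality and the boundedness/Lipschitz constants of Assumptions~\ref{as:ulf} and~\ref{as:llf}.
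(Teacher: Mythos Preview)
The paper does not provide its own proof of this lemma; it simply cites it as Lemma~2.2 of \cite{ghadimi2018approximation}. Your proposal is a correct reconstruction of that standard argument: the implicit function theorem plus the $1/\mu_g$ bound on $[\nabla^2_{yy}g]^{-1}$ gives inequality~(2); the resolvent identity and telescoping across the three factors of the product term give inequality~(1); and composition with inequality~(2) gives inequality~(3). One small point to make explicit in your write-up of inequality~(3): the ``$x$-variation'' bound $\|\bar\nabla f(x_1,y^*(x_1))-\bar\nabla f(x_2,y^*(x_1))\|\le L\|x_1-x_2\|$ requires the same telescoping argument as inequality~(1) but with $x$ varying instead of $y$, so it is worth noting that the Lipschitz constants in Assumptions~\ref{as:ulf} and~\ref{as:llf} are joint in $(x,y)$, which is what makes the same constant $L$ work in both directions.
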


\begin{lemma}\label{lm:boundofeJH}
Under Assumption~\ref{as:llf}, for any $\xi$ and $\psi$, define $e_t^J:= \widetilde{J}(x_t, y_t, v_t, \delta_{\epsilon};\xi) - \nabla^2_{xy}g(x_t,y_t;\xi)v_t$, $e_t^H:= \widetilde{H}(x_t, y_t, v_t, \delta_{\epsilon};\psi) - \nabla^2_{yy}g(x_t,y_t;\psi)v_t$, then we have the bound of $e_t^J$ and $e_t^H$ that
\begin{align*}
    \|e_t^J\|^2 \leq C_J^2\delta_{\epsilon}^2, \quad \|e_t^H\|^2 \leq C_H^2\delta_{\epsilon}^2,
\end{align*}
where $C_J := L_{g_{xy}}r_v^2$, $C_H := L_{g_{yy}}r_v^2$. 
\end{lemma}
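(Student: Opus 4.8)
The plan is to treat each central finite-difference quotient as an approximation to a directional derivative and to control the error via the Lipschitz continuity of the second-order derivatives. I will carry out the bound on $e_t^H$ in detail; the argument for $e_t^J$ is word-for-word identical after replacing $\nabla_y g$ by $\nabla_x g$ and $\nabla^2_{yy}g$ by $\nabla^2_{xy}g$. Note that this is a purely deterministic, per-sample estimate (it holds pointwise in $\psi$), so no expectation enters.

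First I would introduce the auxiliary vector-valued function $\phi(s) := \nabla_y g(x_t, y_t + s v_t;\psi)$ for $s\in[-\delta_{\epsilon},\delta_{\epsilon}]$. Since $g(\cdot\,;\psi)$ is twice continuously differentiable (Assumption~\ref{as:llf}, holding per sample via Assumption~\ref{as:sf}), $\phi$ is continuously differentiable with $\phi'(s) = \nabla^2_{yy}g(x_t, y_t+sv_t;\psi)\,v_t$, and in particular $\phi'(0) = \nabla^2_{yy}g(x_t,y_t;\psi)v_t$. By the definition of $\widetilde{H}$ in \cref{def:H} and the fundamental theorem of calculus,
\begin{align*}
\widetilde{H}(x_t,y_t,v_t,\delta_{\epsilon};\psi) = \frac{\phi(\delta_{\epsilon})-\phi(-\delta_{\epsilon})}{2\delta_{\epsilon}} = \frac{1}{2\delta_{\epsilon}}\int_{-\delta_{\epsilon}}^{\delta_{\epsilon}}\phi'(s)\,ds.
\end{align*}

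Next I would subtract $\phi'(0)$, using $\frac{1}{2\delta_{\epsilon}}\int_{-\delta_{\epsilon}}^{\delta_{\epsilon}}\phi'(0)\,ds = \phi'(0)$, to obtain the clean representation
\begin{align*}
e_t^H = \widetilde{H} - \phi'(0) = \frac{1}{2\delta_{\epsilon}}\int_{-\delta_{\epsilon}}^{\delta_{\epsilon}}\big[\nabla^2_{yy}g(x_t, y_t+sv_t;\psi) - \nabla^2_{yy}g(x_t,y_t;\psi)\big]v_t\,ds.
\end{align*}
The $L_{g_{yy}}$-Lipschitz continuity of $\nabla^2_{yy}g$ then gives the pointwise operator-norm bound $\|[\nabla^2_{yy}g(x_t,y_t+sv_t;\psi)-\nabla^2_{yy}g(x_t,y_t;\psi)]v_t\| \le L_{g_{yy}}|s|\,\|v_t\|^2$, and integrating $\int_{-\delta_{\epsilon}}^{\delta_{\epsilon}}|s|\,ds = \delta_{\epsilon}^2$ yields $\|e_t^H\| \le \tfrac{1}{2}L_{g_{yy}}\|v_t\|^2\delta_{\epsilon}$. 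The auxiliary projection in line~6 of \Cref{alg:main_free} guarantees $\|v_t\|\le r_v$ for every $t$, so $\|e_t^H\| \le \tfrac{1}{2} L_{g_{yy}}r_v^2\,\delta_{\epsilon} \le L_{g_{yy}}r_v^2\,\delta_{\epsilon} = C_H\delta_{\epsilon}$, and squaring gives the claim $\|e_t^H\|^2\le C_H^2\delta_{\epsilon}^2$.

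I do not expect a genuine obstacle here; the only point that needs care—and the reason the bound is dimension-free, as emphasized in the main text—is the final step: the integrand is controlled in \emph{operator} norm times $\|v_t\|$, so the error scales with $\|v_t\|^2$ rather than with the ambient dimension $q$, and it is precisely the projection (not any assumption on the iterates) that keeps $\|v_t\|$ bounded. The central/symmetric differencing is incidental: a one-sided quotient would already reach the same $\mathcal{O}(\delta_{\epsilon})$ rate under only Lipschitz Hessians. For $e_t^J$, repeating the argument verbatim with $\phi(s)=\nabla_x g(x_t,y_t+sv_t;\xi)$ and the $L_{g_{xy}}$-Lipschitz continuity of $\nabla^2_{xy}g$ produces $\|e_t^J\|\le \tfrac{1}{2} L_{g_{xy}}r_v^2\delta_{\epsilon} \le C_J\delta_{\epsilon}$, which completes the proof.
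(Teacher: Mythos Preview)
Your proof is correct and follows essentially the same approach as the paper: both arguments rest on the Lipschitz continuity of the second-order derivatives to control a Taylor-type remainder, and both end with the projection bound $\|v_t\|\le r_v$. The only cosmetic difference is packaging—the paper splits the central difference into two one-sided pieces and invokes the standard remainder bound $\|\nabla_y g(y+h)-\nabla_y g(y)-\nabla^2_{yy}g(y)h\|\le \tfrac{L_{g_{yy}}}{2}\|h\|^2$ (cited from \cite{allen2018neon2}), whereas you derive the same thing inline via the integral representation $\frac{1}{2\delta_\epsilon}\int_{-\delta_\epsilon}^{\delta_\epsilon}[\phi'(s)-\phi'(0)]\,ds$; your version is slightly more self-contained and even recovers the sharper constant $\tfrac{1}{2}L_{g_{yy}}r_v^2\delta_\epsilon$ before relaxing to match the statement.
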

\begin{proof}

According to definition of $\widetilde{H}(x_t, y_t, v_t, \delta_{\epsilon};\psi)$ in \cref{def:H}, we have
\begin{align}
    \|e_t^H\| &= \|\widetilde{H}(x_t, y_t, v_t, \delta_{\epsilon};\psi) - \nabla^2_{yy} g(x_t, y_t;\psi)v_t\| \nonumber \\
    &= \frac{1}{2\delta_\epsilon}\|\nabla_{y}g(x_t, y_t + \delta_{\epsilon}v;\psi) - \nabla_{y}g(x_t, y_t;\psi) - \nabla^2_{yy} g(x_t, y_t;\psi)(\delta_\epsilon v_t)\| \nonumber \\
    & \quad + \frac{1}{2\delta_\epsilon}\|\nabla_{y}g(x_t, y_t;\psi) - \nabla_{y}g(x_t, y_t - \delta_{\epsilon}v;\psi) - \nabla^2_{yy} g(x_t, y_t;\psi)(\delta_\epsilon v_t)\| \nonumber \\
    & \overset{(a)}{\leq} L_{g_{yy}}\delta_\epsilon\| v_t\|^2 \leq L_{g_{yy}}r_v^2\delta_\epsilon. \nonumber
\end{align}
where (a) uses the Lemma A.1 in \cite{allen2018neon2}. 
Similarly, according to definition of $\widetilde{J}(x_t, y_t, v_t, \delta_{\epsilon};\psi)$ in \cref{def:J}, we have 
\begin{align}
    \|e_t^J\| &= \|\widetilde{J}(x_t, y_t, v_t, \delta_{\epsilon};\psi) - \nabla^2_{xy} g(x_t, y_t;\psi)v_t\| \nonumber \\
    &= \frac{1}{2\delta_\epsilon}\|\nabla_{x}g(x_t, y_t + \delta_{\epsilon}v;\psi) - \nabla_{x}g(x_t, y_t;\psi) - \nabla^2_{xy} g(x_t, y_t;\psi)(\delta_\epsilon v_t)\| \nonumber \\
    & \quad + \frac{1}{2\delta_\epsilon}\|\nabla_{x}g(x_t, y_t;\psi) - \nabla_{x}g(x_t, y_t - \delta_{\epsilon}v;\psi) - \nabla^2_{xy} g(x_t, y_t;\psi)(\delta_\epsilon v_t)\| \nonumber \\
    & \overset{(a)}{\leq} L_{g_{xy}}\delta_\epsilon\| v_t\|^2 \leq L_{g_{xy}}r_v^2\delta_\epsilon. \nonumber
\end{align}
where (a) uses the Lemma A.1 in \cite{allen2018neon2}.

\end{proof}

\section{Proof of~\Cref{th:main}  {\color{blue}(FMBO without first order approximation)}}\label{sec:FMBOproof}
\subsection{Some existential lemmas}
The following lemmas provide characterizations on descents of (1) function value (by~\Cref{lm:corefunction}); (2) iterates of the lower level problem (by~\Cref{lm:yy}); and (3) gradient estimation error of the outer-level function (by~\Cref{lm:errorf}).
\begin{lemma}\label{lm:corefunction}
For non-convex and smooth $\Phi(\cdot)$,with $e^f_t$ defined as: $e_t^f:=h_t^f - \bar{\nabla}f(x_t,y_t,v_t)$, the consecutive iterates of~\Cref{alg:main} satisfy:
\begin{align*}
    \mathbb{E}\big[\Phi(x_{t+1})\big] &\leq \mathbb{E}\big[\Phi(x_{t}) - \frac{\alpha_t}{2}\|\nabla \Phi(x_t)\|^2 - \frac{\alpha_t}{2}(1-\alpha_tL_f)\|h_t^f\|^2 + \alpha_t\|e_t^f\|^2 \nonumber \\
   &\quad + 4\alpha_t\Big(L_{f_x}^2 + \frac{C_{f_y}L_{g_{xy}}^2}{\mu_g^2}\Big)\|y_t - y_t^*\|^2 + 2\alpha_tC_{g_{xy}}\|v_t - v_t^*\|^2\big],
\end{align*} 
for all $t \in \{0,1,...,T-1\}$.
\end{lemma}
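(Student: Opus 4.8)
The plan is to establish a standard descent inequality for the smooth nonconvex function $\Phi$, starting from its $L_f$-smoothness (as provided by \Cref{lm:3ieq}), and then carefully bound the cross term that arises by relating the momentum-based estimator $h_t^f$ to the true hypergradient $\nabla\Phi(x_t)$. Concretely, since $x_{t+1}=x_t-\alpha_t h_t^f$ and $\Phi$ is $L_f$-smooth, I would first write
\begin{align*}
\Phi(x_{t+1}) \leq \Phi(x_t) - \alpha_t\langle \nabla\Phi(x_t), h_t^f\rangle + \frac{L_f\alpha_t^2}{2}\|h_t^f\|^2.
\end{align*}
The key is to decompose the inner product. Using $h_t^f = \bar\nabla f(x_t,y_t,v_t)+e_t^f$ by the definition of $e_t^f$, and inserting $\nabla\Phi(x_t)$, I would rewrite $-\alpha_t\langle\nabla\Phi(x_t),h_t^f\rangle$ in terms of $\|\nabla\Phi(x_t)\|^2$, $\|h_t^f\|^2$, and the errors coming from the gap between $\bar\nabla f(x_t,y_t,v_t)$ and $\nabla\Phi(x_t)$.

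The main technical step is controlling this approximation gap. The idea is to apply the elementary identity $-\langle a,b\rangle = \tfrac12\|a-b\|^2 - \tfrac12\|a\|^2 - \tfrac12\|b\|^2$ with $a=\nabla\Phi(x_t)$ and $b=h_t^f$, which produces the $-\tfrac{\alpha_t}{2}\|\nabla\Phi(x_t)\|^2$ and $-\tfrac{\alpha_t}{2}\|h_t^f\|^2$ terms, plus a term $\tfrac{\alpha_t}{2}\|\nabla\Phi(x_t)-h_t^f\|^2$. Combining this with the smoothness quadratic term yields the coefficient $-\tfrac{\alpha_t}{2}(1-\alpha_t L_f)\|h_t^f\|^2$. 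The remaining job is to bound $\|\nabla\Phi(x_t)-h_t^f\|^2$, which I would split via Cauchy--Schwarz into $\|\nabla\Phi(x_t)-\bar\nabla f(x_t,y_t,v_t)\|^2$ and $\|e_t^f\|^2$. The first piece I would further bound by the discrepancy between the surrogate $\bar\nabla f$ evaluated at $(x_t,y_t,v_t)$ versus at the exact $(y^*,v^*)$: one part is handled by the Lipschitz bound $\|\bar\nabla f(x_t,y_t,v_t)-\nabla\Phi(x_t)\|$ in terms of $\|y_t-y_t^*\|$ and $\|v_t-v_t^*\|$, invoking \Cref{lm:3ieq} together with the explicit form $\bar\nabla f = \nabla_x f - \nabla_{xy}^2 g\, v$ and the bounds $\|\nabla_{xy}^2 g\|^2\le C_{g_{xy}}$ and $\|v^*\|^2\le C_{f_y}/\mu_g^2$ from \Cref{lm:boundofv}.

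The hardest part will be tracking the precise constants so that the $\|y_t-y_t^*\|^2$ coefficient comes out as $4(L_{f_x}^2 + C_{f_y}L_{g_{xy}}^2/\mu_g^2)$ and the $\|v_t-v_t^*\|^2$ coefficient as $2C_{g_{xy}}$. To get these, I would separately bound the error contributions: perturbing $y_t$ away from $y_t^*$ affects both $\nabla_x f$ (contributing $L_{f_x}^2$ after using Lipschitzness) and $\nabla_{xy}^2 g\,v^*$ (contributing a term scaled by $C_{f_y}/\mu_g^2$ via $\|v^*\|^2$ and the Lipschitzness $L_{g_{xy}}$), while perturbing $v_t$ away from $v_t^*$ acts through the Jacobian-vector product $\nabla_{xy}^2 g\,(v_t-v_t^*)$, whose squared norm is bounded by $C_{g_{xy}}\|v_t-v_t^*\|^2$. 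Applying Cauchy--Schwarz with the right grouping (splitting the gap into two summands, each picking up a factor of $2$) produces the stated constants. Finally, I would take expectations throughout; since the cross term $\langle\nabla\Phi(x_t),e_t^f\rangle$ need not vanish in expectation (the estimator is momentum-based and, in the FdeHBO case, biased), I would keep the full $\alpha_t\|e_t^f\|^2$ contribution rather than relying on unbiasedness, which is precisely why the bound carries an explicit $\alpha_t\|e_t^f\|^2$ term to be controlled later by \Cref{lm:errorf} and the analogous propositions.
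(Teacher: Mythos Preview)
Your proposal is correct and follows essentially the same route as the paper: start from $L_f$-smoothness of $\Phi$, apply the polarization identity $-\langle a,b\rangle=\tfrac12\|a-b\|^2-\tfrac12\|a\|^2-\tfrac12\|b\|^2$ to produce the $-\tfrac{\alpha_t}{2}\|\nabla\Phi(x_t)\|^2$ and $-\tfrac{\alpha_t}{2}(1-\alpha_tL_f)\|h_t^f\|^2$ terms, and then split $\|h_t^f-\nabla\Phi(x_t)\|^2$ into $2\|e_t^f\|^2$ plus the surrogate gap, which is in turn split into a $v$-part (bounded by $C_{g_{xy}}\|v_t-v_t^*\|^2$) and a $y$-part (bounded using $L_{f_x}$, $L_{g_{xy}}$, and $\|v_t^*\|^2\le C_{f_y}/\mu_g^2$). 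One small clarification: the paper does not invoke the first inequality of \Cref{lm:3ieq} here but works directly from the explicit form $\bar\nabla f=\nabla_x f-\nabla_{xy}^2 g\,v$, exactly as you also describe; that direct computation is what yields the specific constants $4(L_{f_x}^2+C_{f_y}L_{g_{xy}}^2/\mu_g^2)$ and $2C_{g_{xy}}$ after the two successive Cauchy--Schwarz splits.
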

\begin{proof}
Using the Lipschitz smoothness of the objective function from~\Cref{lm:3ieq}, we have
\begin{align}\label{eq:obj1}
   \Phi(x_{t+1}) &\leq \Phi(x_{t}) + \langle \nabla\Phi(x_t), x_{t+1} - x_t\rangle + \frac{L_f}{2}\|x_{t+1} - x_t\|^2 \nonumber \\
   & = \Phi(x_{t}) + \langle \nabla\Phi(x_t), h_t^f\rangle + \frac{L_f}{2}\|h_t^f\|^2 \nonumber \\
   & = \Phi(x_{t}) - \frac{\alpha_t}{2}\|\nabla \Phi(x_t)\|^2 - \frac{\alpha_t}{2}(1-\alpha_tL_f)\|h_t^f\|^2 + \frac{\alpha_t}{2}\|h_t^f - \nabla \Phi(x_t)\|^2, 
\end{align}
where the last term of the right-hand side of \cref{eq:obj1} can be show as 
\begin{align}\label{eq:obj2}
    \|h_t^f &- \nabla \Phi(x_t)\|^2 \nonumber \\
    &= \|h_t^f - \bar{\nabla}f(x_t, y_t, v_t) + \bar{\nabla}f(x_t, y_t, v_t) - \nabla \Phi(x_t)\|^2 \nonumber \\
    &\leq 2\|e_t^f\|^2 + 2\|\bar{\nabla}f(x_t, y_t, v_t) - \bar{\nabla}f(x_t, y_t^*, v_t^*)\|^2 \nonumber \\
    &\leq 2\|e_t^f\|^2 + 4\|\bar{\nabla}f(x_t, y_t, v_t) - \bar{\nabla}f(x_t, y_t, v_t^*)\|^2 + 4\|\bar{\nabla}f(x_t, y_t, v_t^*) - \bar{\nabla}f(x_t, y_t, v_t^*)\|^2  \nonumber \\
    &\overset{(a)}{\leq} 2\|e_t^f\|^2 + 4C_{g_{xy}}\|v_t - v_t^*\|^2 + 8\Big(L_{f_x}^2 + \frac{C_{f_y}L_{g_{xy}}^2}{\mu_g^2}\Big)\|y_t - y_t^*\|^2. 
    \end{align}
By applying \cref{eq:obj2} to \cref{eq:obj1}, we have 
\begin{align}\label{eq:obj3}
   \Phi(x_{t+1}) &\leq \Phi(x_{t}) - \frac{\alpha_t}{2}\|\nabla \Phi(x_t)\|^2 - \frac{\alpha_t}{2}(1-\alpha_tL_f)\|h_t^f\|^2 + \alpha_t\|e_t^f\|^2 \nonumber \\
   &\quad + 4\alpha_t\Big(L_{f_x}^2 + \frac{C_{f_y}L_{g_{xy}}^2}{\mu_g^2}\Big)\|y_t - y_t^*\|^2 + 2\alpha_tC_{g_{xy}}\|v_t - v_t^*\|^2, 
\end{align}
then we take the expectation of both sides and the proof is complete. 
\end{proof}
\begin{lemma}[\cite{khanduri2021near} Lemma C.2]\label{lm:yy}
Define $e^g_t := h^g_t -\nabla_y g(x_t,y_t)$. Then the iterates of the inner problem generated by~\Cref{alg:main} satisfy
\begin{align*}
    \mathbb{E}\|y_{t+1} &- y^*_{t+1}\|^2 \nonumber
    \\
    &\leq (1+\gamma_t)(1+\delta_t)\left(1-2\beta_t\frac{\mu_g Lg}{\mu_g + Lg}\right)\mathbb{E}\|y_t-y^*(x_t)\|^2 + \left(1+\frac{1}{\gamma_t}\right)L_y^2\alpha_t^2\mathbb{E}\|h_t^f\|^2 \nonumber
    \\
    &\quad - (1+\gamma_t)(1+\delta_t)\left(\frac{2\beta_t}{\mu_g + Lg}-\beta_t^2 \right)\mathbb{E}\|\nabla_y g(x_t,y_t)\|^2 + (1+\gamma_t)(1+\frac{1}{\delta_t})\beta^2_t\mathbb{E}\|e_t^g\|^2
\end{align*}
for all $t \in \{0, . . . , T-1\}$ with some $\gamma_t, \delta_t > 0$.
\end{lemma}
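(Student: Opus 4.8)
The plan is to track the squared distance $\|y_{t+1} - y^*(x_{t+1})\|^2$ by isolating three effects: the \emph{drift} of the lower-level minimizer caused by the upper-level move $x_t \to x_{t+1}$, the \emph{noise} injected by the momentum estimator $h_t^g$, and the deterministic \emph{contraction} of a single exact gradient step under strong convexity and smoothness. Each split is realized by a Young's inequality, which is where the free parameters $\gamma_t$ and $\delta_t$ enter.

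First I would insert the intermediate point $y^*(x_t)$ and apply Young's inequality with parameter $\gamma_t>0$:
\begin{align*}
\|y_{t+1} - y^*(x_{t+1})\|^2 \leq (1+\gamma_t)\|y_{t+1} - y^*(x_t)\|^2 + \Big(1+\tfrac{1}{\gamma_t}\Big)\|y^*(x_t) - y^*(x_{t+1})\|^2.
\end{align*}
The drift term on the right is handled by the Lipschitz continuity of $y^*(\cdot)$ from \Cref{lm:3ieq} together with the upper-level update $x_{t+1}=x_t-\alpha_t h_t^f$, giving $\|y^*(x_t)-y^*(x_{t+1})\|^2 \leq L_y^2\alpha_t^2\|h_t^f\|^2$, which is precisely the $(1+\frac{1}{\gamma_t})L_y^2\alpha_t^2\|h_t^f\|^2$ contribution.

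Next I would address $\|y_{t+1}-y^*(x_t)\|^2$, where $y^*(x_t)$ is fixed. Writing $h_t^g=\nabla_y g(x_t,y_t)+e_t^g$ and separating the noise via a second Young's inequality with parameter $\delta_t>0$,
\begin{align*}
\|y_{t+1}-y^*(x_t)\|^2 \leq (1+\delta_t)\big\|y_t-\beta_t\nabla_y g(x_t,y_t)-y^*(x_t)\big\|^2 + \Big(1+\tfrac{1}{\delta_t}\Big)\beta_t^2\|e_t^g\|^2,
\end{align*}
producing the $(1+\gamma_t)(1+\frac{1}{\delta_t})\beta_t^2\|e_t^g\|^2$ term after multiplying through by $(1+\gamma_t)$.

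The remaining quantity is the exact gradient step. Expanding the square and invoking that $g(x_t,\cdot)$ is $\mu_g$-strongly convex with $L_g$-Lipschitz gradient, I would apply the co-coercivity bound for strongly convex and smooth functions, using $\nabla_y g(x_t,y^*(x_t))=0$, to obtain
\begin{align*}
\langle \nabla_y g(x_t,y_t),\, y_t-y^*(x_t)\rangle \geq \frac{\mu_g L_g}{\mu_g+L_g}\|y_t-y^*(x_t)\|^2 + \frac{1}{\mu_g+L_g}\|\nabla_y g(x_t,y_t)\|^2.
\end{align*}
Substituting this into the expansion yields the contraction factor $1-2\beta_t\frac{\mu_g L_g}{\mu_g+L_g}$ on $\|y_t-y^*(x_t)\|^2$ and the negative term $-\big(\frac{2\beta_t}{\mu_g+L_g}-\beta_t^2\big)\|\nabla_y g(x_t,y_t)\|^2$. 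Chaining the three bounds, multiplying out the $(1+\gamma_t)(1+\delta_t)$ prefactors, and taking expectations gives the stated recursion. The only delicate ingredient is the co-coercivity step, which supplies the precise $\frac{\mu_g L_g}{\mu_g+L_g}$ and $\frac{1}{\mu_g+L_g}$ constants; the rest is routine bookkeeping with Young's inequality and the Lipschitzness of $y^*$.
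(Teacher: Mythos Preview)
Your proposal is correct and follows exactly the standard approach: the paper does not reprove this lemma (it is cited as Lemma~C.2 of \cite{khanduri2021near}), but its own proof of the analogous LS iterate bound in \Cref{lm:vv} uses precisely your three-step decomposition---Young's inequality with $\gamma_t'$ to separate the drift of the minimizer, Young's inequality with $\delta_t'$ to separate the estimator error, and the co-coercivity inequality for strongly convex smooth functions to obtain the contraction factor and the negative gradient-norm term.
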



\subsection{Descent in the gradient estimation error of the upper function}
\begin{lemma}\label{lm:errorf}
Define $e^f_t :=h^f_t - \bar{\nabla}f(x_t,y_t,v_t)$. 
Under \Cref{lm:B}, the iterations of the outer problem generated by~\Cref{alg:main} satisfy
\begin{align*}
    \mathbb{E}\|e^f_{t+1}\|^2 
    &\leq (1-\eta^f_{t+1})^2\mathbb{E}\|e_t^f\|^2 + 2(\eta^f_{t+1})^2\sigma_f^2\\
    &\quad \ +6(1-\eta^f_{t+1})^2\bigg[L_F^2\Big(\alpha_t^2\mathbb{E}\|h_t^f\|^2+\beta_t^2\big(2\mathbb{E}\|e_t^g\|^2+2\mathbb{E}\|\nabla_yg(x_t, y_t)\|^2\big)\Big) \\
    &\qquad \qquad \qquad \qquad + 2C_{g_{xy}}\lambda^2_t\big(\mathbb{E}\|e_t^R\|^2 + L_g^2\mathbb{E}\|v_t - v_t^*\|^2\big)\bigg]   
\end{align*}
for all $t \in \{0, . . . , T-1\}$ with $L_F$ 
in~\Cref{lm:boundofgradf}.
\end{lemma}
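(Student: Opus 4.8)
The plan is to bound the recursive momentum error $e_{t+1}^f := h_{t+1}^f - \bar{\nabla}f(x_{t+1},y_{t+1},v_{t+1})$ by exploiting the STORM-style structure of the estimator in \cref{def:htf}. First I would expand $h_{t+1}^f$ using its definition, writing it as $\eta_{t+1}^f \bar\nabla f(x_{t+1},y_{t+1},v_{t+1};\bar\xi_{t+1}) + (1-\eta_{t+1}^f)\big(h_t^f + \bar\nabla f(x_{t+1},y_{t+1},v_{t+1};\bar\xi_{t+1}) - \bar\nabla f(x_t,y_t,v_t;\bar\xi_{t+1})\big)$, and then subtract the true gradient $\bar\nabla f(x_{t+1},y_{t+1},v_{t+1})$. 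The standard trick is to regroup this into $(1-\eta_{t+1}^f)e_t^f$ plus two stochastic terms: a martingale-difference term proportional to $\eta_{t+1}^f$ that measures the variance of a single gradient sample, and a variance-reduction term $(1-\eta_{t+1}^f)$ times the difference of stochastic gradient \emph{differences} evaluated at consecutive iterates minus their population counterpart.

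Next I would take the conditional expectation given the filtration up to iteration $t$. Because $\bar\nabla f(x,y,v;\bar\xi)$ is unbiased by \Cref{lm:B}, the cross term between $(1-\eta_{t+1}^f)e_t^f$ and the two mean-zero stochastic terms vanishes, leaving $\mathbb{E}\|e_{t+1}^f\|^2 \le (1-\eta_{t+1}^f)^2\mathbb{E}\|e_t^f\|^2$ plus the expected squared norms of the two stochastic pieces. The $\eta_{t+1}^f$-term is controlled by the variance bound $\sigma_f^2$ from \Cref{lm:B}, contributing the $2(\eta_{t+1}^f)^2\sigma_f^2$ summand. The variance-reduction term is bounded by $(1-\eta_{t+1}^f)^2$ times $\mathbb{E}\|\bar\nabla f(x_{t+1},y_{t+1},v_{t+1};\bar\xi_{t+1}) - \bar\nabla f(x_t,y_t,v_t;\bar\xi_{t+1})\|^2$, and here I would invoke the Lipschitz continuity of $\bar\nabla f$ in $(x,y)$ with constant $L_F$ from \Cref{lm:boundofgradf} together with the boundedness of the Jacobian-vector factor $C_{g_{xy}}$ in $v$.

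The key step is then to translate the iterate increments into the error quantities appearing in the statement. I would write $\|x_{t+1}-x_t\|^2 = \alpha_t^2\|h_t^f\|^2$ directly from the update rule, and bound $\|y_{t+1}-y_t\|^2 = \beta_t^2\|h_t^g\|^2 \le \beta_t^2(2\|e_t^g\|^2 + 2\|\nabla_y g(x_t,y_t)\|^2)$ using the definition $e_t^g = h_t^g - \nabla_y g(x_t,y_t)$ and Young's inequality. For the $v$-increment I would use $\|v_{t+1}-v_t\| \le \|w_{t+1}-v_t\| = \lambda_t\|h_t^R\|$, exploiting that projection onto the ball of radius $r_v$ is nonexpansive while $v_t$ already lies in the ball, and then bound $\|h_t^R\|^2 \le 2\|e_t^R\|^2 + 2\|\nabla_v R(x_t,y_t,v_t)\|^2 \le 2\|e_t^R\|^2 + 2L_g^2\|v_t-v_t^*\|^2$, where the last inequality uses $\nabla_v R(x_t,y_t,v_t^*)=0$ and Lipschitzness of $\nabla_v R$ in $v$ with constant $L_g$ (equal to the largest eigenvalue $C_{g_{yy}}^{1/2}$ of the Hessian). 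Distributing the constant $6$ across the three increment contributions via repeated application of the Cauchy--Schwarz inequality $\|a+b+c\|^2 \le 3(\|a\|^2+\|b\|^2+\|c\|^2)$ yields the stated coefficients, with the $v$-term carrying the extra $C_{g_{xy}}$ factor from the Jacobian-vector Lipschitz bound.

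\textbf{Main obstacle.} The delicate point is handling the $v$-increment: unlike $x$ and $y$, the variable $v$ is updated via a projection, so $\|v_{t+1}-v_t\|$ is not simply $\lambda_t\|h_t^R\|$. The nonexpansiveness argument is what saves this, but it must be combined carefully with the Lipschitz dependence of $\bar\nabla f$ on $v$ (which is only through the linear term $\nabla_{xy}^2 g\, v$, hence bounded by $C_{g_{xy}}$ rather than $L_F$). Ensuring the three sources of iterate movement are split with the correct constants $L_F^2$, $L_F^2$ and $2C_{g_{xy}}$ respectively, and that the variance term uses $\sigma_f^2$ with the right factor, is the bookkeeping-heavy part of the argument; conceptually it is routine STORM analysis once the unbiasedness from \Cref{lm:B} is in place.
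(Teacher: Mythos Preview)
Your proposal is correct and follows essentially the same route as the paper's proof: expand the STORM update, use unbiasedness from \Cref{lm:B} to kill the cross term, apply the variance bound for the $\eta_{t+1}^f$-piece, and bound the variance-reduction term by splitting $\bar\nabla f(x_{t+1},y_{t+1},v_{t+1};\bar\xi_{t+1})-\bar\nabla f(x_t,y_t,v_t;\bar\xi_{t+1})$ into $x$-, $y$-, and $v$-increments with constants $L_F^2$, $L_F^2$, $C_{g_{xy}}$ respectively, using the projection nonexpansiveness for the $v$-part and the bounds $\|h_t^g\|^2\le 2\|e_t^g\|^2+2\|\nabla_y g\|^2$, $\|h_t^R\|^2\le 2\|e_t^R\|^2+2L_g^2\|v_t-v_t^*\|^2$. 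The only minor refinement is that the relevant filtration should include $(x_{t+1},y_{t+1},v_{t+1})$ (which are fixed before $\bar\xi_{t+1}$ is drawn), not just the iterates up to $t$; this is what makes the cross term vanish cleanly.
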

\begin{proof}
From the definition of $e_t^f$, we have 
\begin{align}\label{eq:etf}
    &\mathbb{E}\|e_{t+1}^f\|^2 \nonumber\\
    &= \|h^f_{t+1} - \bar{\nabla}f(x_{t+1},y_{t+1},v_{t+1})\|^2 \nonumber\\
    &\overset{(a)}{=}\mathbb{E}\|\eta^f_{t+1}\bar{\nabla}f(x_{t+1},y_{t+1},v_{t+1}; \xi_{t+1}) + (1-\eta^f_{t+1})\big(h_t^f+\bar{\nabla}f(x_{t+1},y_{t+1},v_{t+1}; \xi_{t+1}) \nonumber\\
    &\quad \quad \ \ - \bar{\nabla}f(x_{t},y_{t},v_{t}; \xi_{t+1})\big) - \bar{\nabla}f(x_{t+1},y_{t+1},v_{t+1})\|^2 \nonumber\\
    &\overset{(b)}{=}\mathbb{E}\Big\|(1-\eta^f_{t+1})e_t^f + \eta^f_{t+1}\big(\bar{\nabla}f(x_{t+1},y_{t+1},v_{t+1}; \xi_{t+1}) - \bar{\nabla}f(x_{t+1},y_{t+1},v_{t+1})\big)\nonumber\\
    &\quad \quad \ \ \ +(1-\eta^f_{t+1})\Big(\big(\bar{\nabla}f(x_{t+1},y_{t+1},v_{t+1}; \xi_{t+1}) - \bar{\nabla}f(x_{t+1},y_{t+1},v_{t+1})\big) \nonumber
    \\&\quad \quad \ \ \ -\big(\bar{\nabla}f(x_{t},y_{t},v_{t}; \xi_{t+1}) - \bar{\nabla}f(x_{t},y_{t},v_{t})\big)\Big)\Big\|^2
    \nonumber\\
    &\overset{(c)}{=}(1-\eta^f_{t+1})^2\mathbb{E}\|e_t^f\|^2 + \mathbb{E}\|\eta^f_{t+1}\big(\bar{\nabla}f(x_{t+1},y_{t+1},v_{t+1}; \xi_{t+1}) - \bar{\nabla}f(x_{t+1},y_{t+1},v_{t+1})\big) \nonumber\\
    &\quad\ +(1-\eta^f_{t+1})\Big(\big(\bar{\nabla}f(x_{t+1},y_{t+1},v_{t+1}; \xi_{t+1}) - \bar{\nabla}f(x_{t+1},y_{t+1},v_{t+1})\big) \nonumber
    \\&\quad \quad \ \ \ -\big(\bar{\nabla}f(x_{t},y_{t},v_{t}; \xi_{t+1}) - \bar{\nabla}f(x_{t},y_{t},v_{t}) \big)\Big)\|^2 \nonumber\\
    &\leq(1-\eta^f_{t+1})^2\mathbb{E}\|e_t^f\|^2 + 2(\eta^f_{t+1})^2 \mathbb{E}\|\bar{\nabla}f(x_{t+1},y_{t+1},v_{t+1}; \xi_{t+1}) - \bar{\nabla}f(x_{t+1},y_{t+1},v_{t+1}) \|^2 \nonumber\\ 
    &\quad \ + 2(1-\eta^f_{t+1})^2\mathbb{E}\|\big(\bar{\nabla}f(x_{t+1},y_{t+1},v_{t+1}; \xi_{t+1}) - \bar{\nabla}f(x_{t+1},y_{t+1},v_{t+1})\big) \nonumber
    \\&\quad \quad \ \ \ -\big(\bar{\nabla}f(x_{t},y_{t},v_{t}; \xi_{t+1}) - \bar{\nabla}f(x_{t},y_{t},v_{t})\big)\|^2\nonumber\\
    &\overset{(d)}{\leq} (1-\eta^f_{t+1})^2\mathbb{E}\|e_t^f\|^2 +  2(\eta^f_{t+1})^2\sigma_f^2 \nonumber\\ 
    &\quad \ + 2(1-\eta^f_{t+1})^2\mathbb{E}\|\big(\bar{\nabla}f(x_{t+1},y_{t+1},v_{t+1}; \xi_{t+1}) - \bar{\nabla}f(x_{t+1},y_{t+1},v_{t+1})\big) \nonumber
    \\&\quad \quad \ \ \ -\big(\bar{\nabla}f(x_{t},y_{t},v_{t}; \xi_{t+1}) - \bar{\nabla}f(x_{t},y_{t},v_{t})\big)\|^2 
\end{align}
where 
(a) uses the definition of $h^f_{t+1}$ in~\cref{def:htf}, 
(b) uses the definition that $e^f_t := h^f_t - \bar{\nabla}f(x_t,y_t,v_t)$,
(c) follows because for $\Sigma_{t+1} = \sigma\{y_0, x_0,v_0,...,y_t, x_t, v_t,  y_{t+1}, x_{t+1}, v_{t+1}\}$,
\begin{align}\label{eq:etfinnerproduct}
    &\mathbb{E}\Big\langle e_t^f, \big(\bar{\nabla} f(x_{t+1}, y_{t+1}, v_{t+1}; \xi_{t+1}) - \bar{\nabla} f(x_{t+1}, y_{t+1}, v_{t+1}) \nonumber\\
    &\quad\quad\quad-(1-\eta_{t+1}^f)\big(\bar{\nabla} f(x_{t}, y_{t}, v_{t}; \xi_{t+1}) - \bar{\nabla} f(x_{t}, y_{t}, v_{t}) \big)\Big\rangle \nonumber\\
    &\quad \ =\mathbb{E}\bigg\langle e_t^f, \mathbb{E}\Big[\big(\bar{\nabla} f(x_{t+1}, y_{t+1}, v_{t+1}; \xi_{t+1}) - \bar{\nabla} f(x_{t+1}, y_{t+1}, v_{t+1}) \nonumber\\
    &\qquad\quad\quad\quad\quad \ \ \  -(1-\eta_{t+1}^f)\big(\bar{\nabla} f(x_{t}, y_{t}, v_{t}; \xi_{t+1}) - \bar{\nabla} f(x_{t}, y_{t}, v_{t}) \big)|\Sigma_{t+1}\Big]\bigg\rangle = 0,
\end{align}
which follows from the fact that the second term in the inner product of~\cref{eq:etfinnerproduct} is zero mean as a result of Assumption~\ref{as:sf},
(d) follows from Assumption~\ref{as:sf}.

\noindent Next, we bound the last term of~\cref{eq:etf}
\begin{align}\label{eq:2ndpartofetf}
    &2(1-\eta^f_{t+1})^2\mathbb{E}\|\big(\bar{\nabla}f(x_{t+1},y_{t+1},v_{t+1}; \xi_{t+1}) - \bar{\nabla}f(x_{t},y_{t},v_{t}; \xi_{t+1})\big) \nonumber\\&\quad \quad - \big(\bar{\nabla}f(x_{t+1},y_{t+1},v_{t+1}) - \bar{\nabla}f(x_{t},y_{t},v_{t})\big)\|^2 \nonumber \\
    &\ \ \overset{(a)}{\leq}2(1-\eta^f_{t+1})^2\mathbb{E}\|\bar{\nabla}f(x_{t+1},y_{t+1},v_{t+1}; \xi_{t+1}) - \bar{\nabla}f(x_{t},y_{t},v_{t}; \xi_{t+1})\|^2 \nonumber \\
    &\ \ \leq 6(1-\eta^f_{t+1})^2\mathbb{E}\|\bar{\nabla}f(x_{t+1},y_{t+1},v_{t+1}; \xi_{t+1}) - \bar{\nabla}f(x_{t},y_{t+1},v_{t+1}; \xi_{t+1})\|^2 \nonumber \\
    &\ \ \quad \ + 6(1-\eta^f_{t+1})^2\mathbb{E}\|\bar{\nabla}f(x_{t},y_{t+1},v_{t+1}; \xi_{t+1}) - \bar{\nabla}f(x_{t},y_{t},v_{t+1}; \xi_{t+1})\|^2 \nonumber \\
    &\ \ \quad \ + 6(1-\eta^f_{t+1})^2\mathbb{E}\|\bar{\nabla}f(x_{t},y_{t},v_{t+1}; \xi_{t+1}) - \bar{\nabla}f(x_{t},y_{t},v_{t}; \xi_{t+1})\|^2 \nonumber \\
    &\ \ \overset{(b)}{\leq} 6(1-\eta^f_{t+1})^2L_F^2\mathbb{E}\|x_{t+1} - x_t\|^2 + 6(1-\eta^f_{t+1})^2L_F^2\mathbb{E}\|y_{t+1} - y_t\|^2 \nonumber \\
    &\ \ \quad \ + 6(1-\eta^f_{t+1})^2\mathbb{E}\|\nabla^2_{xy} g(x_t, y_t)(v_{t+1} - v_t)\|^2 \nonumber \\
    &\ \ \overset{(c)}{\leq} 6(1-\eta^f_{t+1})^2L_F^2\mathbb{E}\|x_{t+1} - x_t\|^2 + 6(1-\eta^f_{t+1})^2L_F^2\mathbb{E}\|y_{t+1} - y_t\|^2 \nonumber
    \\&\ \ \quad \ + 6(1-\eta^f_{t+1})^2C_{g_{xy}}\mathbb{E}\|v_{t+1} - v_t\|^2 \nonumber \\
    &\ \ \overset{(d)}{\leq} 6(1-\eta^f_{t+1})^2L_F^2\mathbb{E}\|x_{t+1} - x_t\|^2 + 6(1-\eta^f_{t+1})^2L_F^2\mathbb{E}\|y_{t+1} - y_t\|^2 \nonumber
    \\&\ \ \quad \ + 6(1-\eta^f_{t+1})^2C_{g_{xy}}\mathbb{E}\|w_{t+1} - v_t\|^2 \nonumber \\
    &\ \ \overset{(e)}{\leq} 6(1-\eta^f_{t+1})^2\Big[L_F^2\big(\alpha_t^2\mathbb{E}\|h_t^f\|^2+\beta_t^2\mathbb{E}\|h_t^g\|^2\big) + C_{g_{xy}}\lambda^2_t\mathbb{E}\|h_t^R\|^2\Big] \nonumber \\
    &\ \ \overset{(f)}{\leq}6(1-\eta^f_{t+1})^2\Big[L_F^2\big(\alpha_t^2\mathbb{E}\|h_t^f\|^2+\beta_t^2\mathbb{E}\|h_t^g\|^2\big) + 2C_{g_{xy}}\lambda^2_t\big(\mathbb{E}\|e_t^R\|^2 + \mathbb{E}\|\nabla R_v(x_t, y_t, v_t)\|^2\big)\Big] \nonumber \\
    &\ \ \overset{(g)}{\leq}6(1-\eta^f_{t+1})^2\Big[L_F^2\big(\alpha_t^2\mathbb{E}\|h_t^f\|^2+\beta_t^2\mathbb{E}\|h_t^g\|^2\big) + 2C_{g_{xy}}\lambda^2_t\big(\mathbb{E}\|e_t^R\|^2 + L_g^2\mathbb{E}\|v_t - v_t^*\|^2\big)\Big] \nonumber \\
    &\ \ \overset{(h)}{\leq}6(1-\eta^f_{t+1})^2\bigg[L_F^2\Big(\alpha_t^2\mathbb{E}\|h_t^f\|^2+\beta_t^2\big(2\mathbb{E}\|e_t^g\|^2+2\mathbb{E}\|\nabla_yg(x_t, y_t)\|^2\big)\Big) \nonumber 
    \\&\ \ \quad \ + 2C_{g_{xy}}\lambda^2_t\big(\mathbb{E}\|e_t^R\|^2 + L_g^2\mathbb{E}\|v_t - v_t^*\|^2\big)\bigg],
\end{align}
where (a) follows from the mean-variance inequality: For a random variable $Z$ we have $\mathbb{E}\|Z - \mathbb{E}[Z]\|^2 \leq \mathbb{E}\|Z\|^2$ with $Z$ defined as $Z:= \bar{\nabla}f(x_{t+1},y_{t+1},v_{t+1}; \xi_{t+1}) - \bar{\nabla}f(x_{t},y_{t},v_{t}; \xi_{t+1})$, 
(b) follows from~\Cref{lm:boundofgradf} and~\cref{def:gradR}, 
(c) uses Assumption~\ref{as:llf}, 
(d) follows from the nonexpansiveness of projection that $\|v_{t+1} - v^*_t\| \leq \|\text{Proj}_B(w_{t+1})- \text{Proj}_B(v_t^*)\| \leq \|w_{t+1} - v^*_t\|$ in convex ball $B(0, r_v^2)$,
(e) uses the definition of $h_t^f$ in~\cref{def:htf}, $h_t^g$ in~\cref{def:htg} and $h_t^R$ in~\cref{def:htR}, 
(f) follows from the definition that $e_t^R := h_t^R - \nabla_v R(x_t, y_t, v_t)$, 
(g) uses that result the
\begin{align}\label{eq:boundofnablaR}
    \mathbb{E}\|\nabla_v R(x_t, y_t, v_t)\|^2 &= \mathbb{E}\|\nabla_{yy}^2g(x_t, y_t)v_t - \nabla_yf(x_t, y_t)\|^2 \nonumber\\ 
    &= \mathbb{E}\|\nabla_{yy}^2g(x_t, y_t)(v_t-v_t^*)\|^2 \leq L_g^2\mathbb{E}\|v_t - v_t^*\|^2.
\end{align}
(h) uses the definition that $e_t^g := h_t^g - \nabla_y g(x_t, y_t)$.
Finally, substituting~\cref{eq:2ndpartofetf} in~\cref{eq:etf}, we have the statement in the lemma. 

\noindent Then, the proof is complete.
\end{proof}

\subsection{Descent in the gradient estimation error of the inner function}
\begin{lemma}\label{lm:errorg}
Define $e_t^g := h_t^g - \nabla_y g(x_t, y_t)$. Under Assumption~\ref{as:llf} and~\ref{as:sf}, the iterates generated from~\Cref{alg:main} satisfy 
\begin{align*}
    \mathbb{E} \|e_{t+1}^g\|^2 &\leq \left((1-\eta_{t+1}^g)^2 \mathbb{E} \|e_{t}^g\|^2 + 32(1-\eta_{t+1}^g)^2 L_g^2\beta_t^2 \right)\mathbb{E} \|e_{t}^g\|^2 + 2(\eta_{t+1}^g)^2\sigma_g^2 \nonumber\\ &+ 16(1-\eta_{t+1}^g)^2L_g^2 \alpha_t^2\mathbb{E} \|h_{t}^f\|^2 + 32(1-\eta_{t+1}^g)^2L_g^2 \beta^2_t \mathbb{E}\|\nabla_y g(x_t, y_t)\|^2
\end{align*}
for all $t\in \{0,1,...,T-1\}$.
\end{lemma}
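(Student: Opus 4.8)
The plan is to follow the template of \Cref{lm:errorf}, specialized to the lower-level recursion, which is cleaner here because the estimator $\nabla_y g(x,y;\zeta)$ in \cref{def:htg} carries no finite-difference bias. First I would insert \cref{def:htg} into $e_{t+1}^g = h_{t+1}^g - \nabla_y g(x_{t+1},y_{t+1})$ and, writing $\delta_s := \nabla_y g(x_s,y_s;\zeta_{t+1}) - \nabla_y g(x_s,y_s)$, regroup the terms into
\begin{align*}
e_{t+1}^g = (1-\eta_{t+1}^g)e_t^g + \eta_{t+1}^g\delta_{t+1} + (1-\eta_{t+1}^g)(\delta_{t+1}-\delta_t).
\end{align*}
Here $e_t^g$ is $\Sigma_{t+1}$-measurable, whereas both $\delta_{t+1}$ and $\delta_t$ have conditional mean zero given $\Sigma_{t+1}$, since $\zeta_{t+1}$ is drawn independently and $\nabla_y g(\cdot;\zeta)$ is unbiased under Assumption~\ref{as:sf}.

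Second, I would take $\mathbb{E}\|\cdot\|^2$. The cross term between $e_t^g$ and the two noise pieces vanishes by the tower property, exactly as in \cref{eq:etfinnerproduct}, leaving $(1-\eta_{t+1}^g)^2\mathbb{E}\|e_t^g\|^2$ plus the expected squared norm of $\eta_{t+1}^g\delta_{t+1} + (1-\eta_{t+1}^g)(\delta_{t+1}-\delta_t)$. Applying $\|a+b\|^2\le 2\|a\|^2+2\|b\|^2$ splits this into $2(\eta_{t+1}^g)^2\mathbb{E}\|\delta_{t+1}\|^2$, bounded by $2(\eta_{t+1}^g)^2\sigma_g^2$ through the variance bound in Assumption~\ref{as:sf}, and the variance-difference term $2(1-\eta_{t+1}^g)^2\mathbb{E}\|\delta_{t+1}-\delta_t\|^2$.

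Third, the core estimate is to control $\mathbb{E}\|\delta_{t+1}-\delta_t\|^2$. I would separate it into its stochastic difference $\nabla_y g(x_{t+1},y_{t+1};\zeta_{t+1}) - \nabla_y g(x_t,y_t;\zeta_{t+1})$ and its deterministic counterpart, then split each further into an $x$-change and a $y$-change, and apply the $L_g$-Lipschitz continuity of $\nabla_y g(\cdot)$ and $\nabla_y g(\cdot;\zeta)$ guaranteed by Assumptions~\ref{as:llf} and~\ref{as:sf}. Substituting the updates $x_{t+1}-x_t = -\alpha_t h_t^f$ and $y_{t+1}-y_t = -\beta_t h_t^g$ yields a term of the form $16(1-\eta_{t+1}^g)^2 L_g^2\big(\alpha_t^2\mathbb{E}\|h_t^f\|^2 + \beta_t^2\mathbb{E}\|h_t^g\|^2\big)$, and the final bound $\|h_t^g\|^2\le 2\|e_t^g\|^2 + 2\|\nabla_y g(x_t,y_t)\|^2$ converts the $\beta_t^2\mathbb{E}\|h_t^g\|^2$ piece into the stated $\mathbb{E}\|e_t^g\|^2$ and $\mathbb{E}\|\nabla_y g(x_t,y_t)\|^2$ contributions, reproducing the coefficients $16$ and $32$.

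The step needing the most care is the cross-term cancellation: it relies on the estimator being conditionally unbiased, which holds here because \cref{def:htg} uses the exact stochastic gradient, so no finite-difference approximation enters the lower-level update in either FMBO or FdeHBO. This is precisely why the lower-level error analysis stays clean even when, as in \Cref{prop:HFetf} and \Cref{prop:hfetR}, the $v$- and $x$-updates become biased and demand a more delicate treatment. Beyond this, the proof is routine bookkeeping with Young's inequality and Lipschitz continuity; the precise constants $16$ and $32$ merely record how many two-way splits are performed before invoking the Lipschitz bounds.
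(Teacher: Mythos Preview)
Your proposal is correct and follows essentially the same route as the paper's proof: the same algebraic regrouping of $e_{t+1}^g$ into $(1-\eta_{t+1}^g)e_t^g$ plus a zero-mean noise term, the same tower-property cancellation of the cross term, the same Cauchy--Schwarz split into the $2(\eta_{t+1}^g)^2\sigma_g^2$ variance piece and the $2(1-\eta_{t+1}^g)^2\mathbb{E}\|\delta_{t+1}-\delta_t\|^2$ difference piece, and the same cascade of two-way splits (stochastic vs.\ deterministic, then $x$-change vs.\ $y$-change) followed by Lipschitz continuity and the substitution $\|h_t^g\|^2\le 2\|e_t^g\|^2+2\|\nabla_y g(x_t,y_t)\|^2$. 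The constant bookkeeping you describe matches the paper's exactly.
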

\begin{proof}
From the definition of the $e_t^g$ we have 
\begin{align*}
    \mathbb{E}\|&e_{t+1}^g\|^2 = \mathbb{E}\|h_{t+1}^g - \nabla_y g(x_{t+1}, y_{t+1})\|^2 \\
    &\overset{(a)}{=} \mathbb{E}\|\nabla_y g(x_{t+1}, y_{t+1}; \zeta_{t+1}) + (1-\eta_{t+1}^g)^2(h_t^g - \nabla_y g(x_t, y_t; \zeta_{t+1})) - \nabla_y g(x_{t+1}, y_{t+1})\|^2\\
    &\overset{(b)}{=} \mathbb{E}\|(1-\eta_{t+1}^g)e_t^g + (\nabla_y g(x_{t+1}, y_{t+1}; \zeta_{t+1}) - \nabla_y g(x_{t+1}, y_{t+1}))\\
    &\quad \quad \ \ - (1-\eta_{t+1}^g)(\nabla_y g(x_{t}, y_{t}; \zeta_{t+1})) - \nabla_y g(x_{t}, y_{t})\|^2 \\
    &\overset{(c)}{=}(1-\eta_{t+1}^g)^2\mathbb{E}\|e_t^g\|^2 + \mathbb{E}\|(\nabla_y g(x_{t+1}, y_{t+1}; \zeta_{t+1}) - \nabla_y g(x_{t+1}, y_{t+1}))\\ 
    &\quad \ - (1-\eta_{t+1}^g)(\nabla_y g(x_{t}, y_{t}; \zeta_{t+1})) - \nabla_y g(x_{t}, y_{t})\|^2 \\
    &\overset{(d)}{\leq}(1-\eta_{t+1}^g)^2\mathbb{E}\|e_t^g\|^2 + 2(\eta_{t+1}^g)^2\sigma_g^2\\
    &\quad \ + 2(1-\eta_{t+1}^g)^2\mathbb{E}\|g(x_{t+1}, y_{t+1}; \zeta_{t+1}) - g(x_{t+1}, y_{t+1}) - g(x_{t}, y_{t}; \zeta_{t+1}) + g(x_{t}, y_{t})\|^2 \\
    &\overset{(e)}{\leq}(1-\eta_{t+1}^g)^2\mathbb{E}\|e_t^g\|^2 + 2(\eta_{t+1}^g)^2\sigma_g^2
    + 4(1-\eta_{t+1}^g)^2\mathbb{E}\|g(x_{t+1}, y_{t+1}) - g(x_{t}, y_{t})\|^2\\
    &\quad \ + 4(1-\eta_{t+1}^g)^2\mathbb{E}\|g(x_{t+1}, y_{t+1}; \zeta_{t+1}) - g(x_{t}, y_{t}; \zeta_{t+1})\|^2 \\
    &\overset{(f)}{\leq}(1-\eta_{t+1}^g)^2\mathbb{E}\|e_t^g\|^2 + 2(\eta_{t+1}^g)^2\sigma_g^2
    + 8(1-\eta_{t+1}^g)^2\mathbb{E}\|g(x_{t+1}, y_{t+1}) - g(x_{t+1}, y_{t})\|^2\\
    &\quad \ + 8(1-\eta_{t+1}^g)^2\mathbb{E}\|g(x_{t+1}, y_{t}) - g(x_{t}, y_{t})\|^2 \\
    &\quad \ + 8(1-\eta_{t+1}^g)^2\mathbb{E}\|g(x_{t+1}, y_{t+1}; \zeta_{t+1}) - g(x_{t+1}, y_{t}; \zeta_{t+1})\|^2 \\
    &\quad \ + 8(1-\eta_{t+1}^g)^2\mathbb{E}\|g(x_{t+1}, y_{t}; \zeta_{t+1}) - g(x_{t}, y_{t}; \zeta_{t+1})\|^2 \\
    &\overset{(g)}{\leq}(1-\eta_{t+1}^g)^2\mathbb{E}\|e_t^g\|^2 + 2(\eta_{t+1}^g)^2\sigma_g^2 \\
    &\quad \ + 16(1-\eta_{t+1}^g)^2L_g^2\mathbb{E}\|x_{t+1}-x_t\|^2 + 16(1-\eta_{t+1}^g)^2L_g^2\mathbb{E}\|y_{t+1}-y_t\|^2 \\
    &\overset{(h)}{\leq}(1-\eta_{t+1}^g)^2\mathbb{E}\|e_t^g\|^2 + 2(\eta_{t+1}^g)^2\sigma_g^2 \\
    &\quad \ + 16(1-\eta_{t+1}^g)^2L_g^2\alpha_t^2\mathbb{E}\|h_t^f\|^2 + 16(1-\eta_{t+1}^g)^2L_g^2\beta_t^2\mathbb{E}\|h_t^g\|^2 \\
    &\overset{(i)}{\leq}(1-\eta_{t+1}^g)^2\mathbb{E}\|e_t^g\|^2 + 2(\eta_{t+1}^g)^2\sigma_g^2 
    + 16(1-\eta_{t+1}^g)^2L_g^2\alpha_t^2\mathbb{E}\|h_t^f\|^2 \\
    &\quad \ + 32(1-\eta_{t+1}^g)^2L_g^2\beta_t^2\mathbb{E}\|e_t^g\|^2 + 32(1-\eta_{t+1}^g)^2L_g^2\beta_t^2\mathbb{E}\|\nabla_y g(x_t, y_t)\|^2 \\
    &= (1-\eta_{t+1}^g)^2(1+32L_g^2\beta_t^2)\mathbb{E}\|e_t^g\|^2 + 2(\eta_{t+1}^g)^2\sigma_g^2 + 16(1-\eta_{t+1}^g)^2L_g^2\alpha_t^2\mathbb{E}\|h_t^f\|^2 \\
    &\quad \ + 32(1-\eta_{t+1}^g)^2L_g^2\beta_t^2\mathbb{E}\|\nabla_y g(x_t, y_t)\|^2,
\end{align*}
where (a) uses the definition of $h^g_{t}$ in~\cref{def:htg}, (b) uses the definition of $e^g_{t}$, (c) follows because for $\Sigma_{t+1} = \sigma\{y_0, x_0,...,y_t, x_t, y_{t+1}, x_{t+1}\}$, 
\begin{align*}
    \mathbb{E}\Big\langle e_t^g, \big(\nabla_y g(x_{t+1}, &y_{t+1}; \zeta_{t+1}) - \nabla_y g(x_{t+1}, y_{t+1}) \\
    &-(1-\eta_{t+1}^g)\big(\nabla_y g(x_{t}, y_{t}; \zeta_{t+1}) - \nabla_y g(x_{t}, y_{t})\big)\Big\rangle \\
    =\mathbb{E}\Big\langle e_t^g, \mathbb{E}\big[&\big(\nabla_y g(x_{t+1}, y_{t+1}; \zeta_{t+1}) - \nabla_y g(x_{t+1}, y_{t+1})\big) \\
    &-(1-\eta_{t+1}^g)\big(\nabla_y g(x_{t}, y_{t}; \zeta_{t+1}) - \nabla_y g(x_{t}, y_{t}) \big)|\Sigma_{t+1}\big]\Big\rangle = 0,
\end{align*}
where 
(d) follows from Cauchy–Schwartz inequality and Assumption~\ref{as:sf}, 
(e) and (f) use Cauchy–Schwartz inequality, 
(g) follows from Assumption~\ref{as:llf}; 
(h) follows from Steps 4 and 7 in~\Cref{alg:main}; 
(i) follows from the definition $e_t^g := h_t^g - \nabla_y g(x_t, y_t)$.  
\end{proof}

\subsection{Descent in the gradient estimation error of the $R$ function}
\begin{lemma}\label{lm:errorR}
Define $e_t^R := h_t^R - \nabla_v R(x_t, y_t, v_t)$. Under Assumption~\ref{as:ulf},~\ref{as:llf},~\ref{as:sf}, the iterates generated by~\Cref{alg:main} satisfy 
\begin{align*}
    \mathbb{E} \|e_{t+1}^R\|^2 &\leq (1-\eta_{t+1}^R)^2\left(1 + 48L_g^2\lambda_t^2 \right)\mathbb{E} \|e_{t}^R\|^2 
    + 4(\eta_{t+1}^R)^2\big(\sigma^2_{g_{yy}}r_v^2+\sigma^2_{f_y}\big) \\ 
    &\quad \ + 48(1-\eta_{t+1}^R)^2\left(L^2_{g_{yy}}r^2_v+L^2_{f_y}\right) \left[\alpha_t^2\mathbb{E}\|h_{t}^f\|^2 + 2\beta_t^2(\mathbb{E}\|e_t^g\|^2 + \mathbb{E}\|\nabla_y g(x_t,y_t)\|^2)\right] \\
    &\quad \ +48(1-\eta_{t+1}^R)^2L^4_g\lambda^2_t \mathbb{E}\|v_t-v_t^*\|^2
\end{align*}
for all $t\in \{0,1,...,T-1\}$. 
\end{lemma}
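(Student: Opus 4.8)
The plan is to mirror the momentum-error analyses of \Cref{lm:errorf} and \Cref{lm:errorg}, adapting them to the linear-system gradient $\nabla_v R$. First I would expand $e_{t+1}^R = h_{t+1}^R - \nabla_v R(x_{t+1},y_{t+1},v_{t+1})$ using the recursion in \cref{def:htR} together with the identity $h_t^R = e_t^R + \nabla_v R(x_t,y_t,v_t)$, which rewrites the error as
\[
e_{t+1}^R = (1-\eta_{t+1}^R)e_t^R + \eta_{t+1}^R\Delta_{t+1} + (1-\eta_{t+1}^R)(\Delta_{t+1}-\Delta_t),
\]
where $\Delta_s := \nabla_v R(x_s,y_s,v_s;\psi_{t+1}) - \nabla_v R(x_s,y_s,v_s)$ for $s\in\{t,t+1\}$ is the stochastic-gradient noise evaluated at the fresh sample $\psi_{t+1}$. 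Conditioned on the filtration $\Sigma_{t+1}$ generated by the iterates through step $t+1$, both $\Delta_t$ and $\Delta_{t+1}$ are zero-mean: this is the unbiasedness of $\nabla_v R(\cdot;\psi)=\nabla_{yy}^2 g(\cdot;\psi)v-\nabla_y f(\cdot;\psi)$, which holds because the stochastic Hessian and $\nabla_y f$ are unbiased under \Cref{as:sf} and $v_s$ is $\Sigma_{t+1}$-measurable. Hence the cross term with $e_t^R$ vanishes, and applying $\|a+b\|^2\le 2\|a\|^2+2\|b\|^2$ gives
\[
\mathbb{E}\|e_{t+1}^R\|^2 \le (1-\eta_{t+1}^R)^2\mathbb{E}\|e_t^R\|^2 + 2(\eta_{t+1}^R)^2\mathbb{E}\|\Delta_{t+1}\|^2 + 2(1-\eta_{t+1}^R)^2\mathbb{E}\|\Delta_{t+1}-\Delta_t\|^2.
\]

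The variance term is immediate: since $\|v_{t+1}\|\le r_v$ is guaranteed by the auxiliary projection, \Cref{as:sf} yields $\mathbb{E}\|\Delta_{t+1}\|^2 \le 2r_v^2\sigma_{g_{yy}}^2 + 2\sigma_{f_y}^2$, producing the $4(\eta_{t+1}^R)^2(\sigma_{g_{yy}}^2 r_v^2 + \sigma_{f_y}^2)$ term. The substance of the proof is the difference term. Following the triangle-split of \Cref{lm:errorg} (rather than the mean-variance reduction of \Cref{lm:errorf}), I would write $\|\Delta_{t+1}-\Delta_t\|^2 \le 2\|\nabla_v R_{t+1}^{\psi}-\nabla_v R_t^{\psi}\|^2 + 2\|\nabla_v R_{t+1}-\nabla_v R_t\|^2$, bounding the stochastic and the true gradient differences separately (both obey identical Lipschitz constants by \Cref{as:sf}). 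Each difference is decomposed along the three coordinates $x,y,v$ via $\|a+b+c\|^2\le 3(\|a\|^2+\|b\|^2+\|c\|^2)$, and inside the $x$- and $y$-moves the Hessian-vector piece and the $\nabla_y f$ piece are separated with a further factor $2$. Using the Lipschitz constant $L_{g_{yy}}$ on $\nabla_{yy}^2 g$ together with $\|v_{t+1}\|\le r_v$, and $L_{f_y}$ on $\nabla_y f$, the $x,y$-moves carry the coefficient $(L_{g_{yy}}^2 r_v^2 + L_{f_y}^2)$; for the $v$-move I use $\|\nabla_{yy}^2 g\|\le L_g$ to obtain an $L_g^2\|v_{t+1}-v_t\|^2$ bound. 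Accumulating the outer factor, the triangle-split, the two structurally identical differences, the coordinate split, and the Hessian/gradient split produces the coefficient $48$.

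Finally I would convert the iterate increments into algorithmic quantities: $\|x_{t+1}-x_t\|^2 = \alpha_t^2\|h_t^f\|^2$, $\|y_{t+1}-y_t\|^2 = \beta_t^2\|h_t^g\|^2 \le 2\beta_t^2(\|e_t^g\|^2 + \|\nabla_y g(x_t,y_t)\|^2)$, and $\|v_{t+1}-v_t\|^2 \le \lambda_t^2\|h_t^R\|^2 \le 2\lambda_t^2(\|e_t^R\|^2 + L_g^2\|v_t-v_t^*\|^2)$. The $v$-increment bound relies on the nonexpansiveness of the projection, with $v_t$ a fixed point since $\|v_t\|\le r_v$, and on $\mathbb{E}\|\nabla_v R(x_t,y_t,v_t)\|^2 \le L_g^2\mathbb{E}\|v_t-v_t^*\|^2$ from \cref{eq:boundofnablaR}. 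Collecting terms, the $\|e_t^R\|^2$ contribution arising from the $v$-increment merges with the leading term to give the contraction factor $(1-\eta_{t+1}^R)^2(1+48L_g^2\lambda_t^2)$, while the rest reproduces the stated $x,y$-dependent and $\|v_t-v_t^*\|^2$-dependent terms.

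I expect the main obstacle to be the bookkeeping around the projection: the boundedness $\|v_{t+1}\|\le r_v$ is precisely what simultaneously controls the variance $\mathbb{E}\|\Delta_{t+1}\|^2$, the Lipschitz constant of the Hessian-vector term in the $x,y$-moves, and, through nonexpansiveness, the $v$-increment; without it the $\nabla_{yy}^2 g(\cdot)v$ contributions would be uncontrolled. A second subtlety is the self-referential $\|e_t^R\|^2$ term generated by expanding $\|h_t^R\|^2$, which must be absorbed into the contraction coefficient rather than treated as an independent error source; keeping $\lambda_t$ small later ensures $1+48L_g^2\lambda_t^2$ stays compatible with the $(1-\mathcal{O}(\eta_{t+1}^R))^2$ variance-reduction contraction.
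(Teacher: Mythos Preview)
Your proposal is correct and follows essentially the same route as the paper's proof: expand $e_{t+1}^R$ via the momentum recursion, kill the cross term by conditional unbiasedness of $\nabla_v R(\cdot;\psi_{t+1})$, bound the variance term using $\|v_{t+1}\|\le r_v$, split $\|\Delta_{t+1}-\Delta_t\|^2$ into the stochastic and deterministic differences (triangle split, not the mean--variance reduction), decompose each along $x,y,v$, use $L_{g_{yy}},L_{f_y}$ on the $x,y$ moves and $\|\nabla_{yy}^2 g\|\le L_g$ on the $v$ move, and finally convert the $v$-increment via projection nonexpansiveness and \cref{eq:boundofnablaR} so that the self-referential $\|e_t^R\|^2$ term is absorbed into the contraction factor $(1-\eta_{t+1}^R)^2(1+48L_g^2\lambda_t^2)$. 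The constants and the handling of the projection match the paper exactly.
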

\begin{proof}
For the gradient estimation error of the R function, we have  
\begin{align}\label{ieq:etR1}
&\mathbb{E}\|e^R_{t+1}\|^2 \nonumber\\
&\quad = \mathbb{E}\|h^R_{t+1} - \nabla_v R(x_t, y_t, v_t)\|^2 \nonumber\\
&\quad \overset{(a)}{=} \mathbb{E}\|\nabla_v R(x_{t+1}, y_{t+1}, v_{t+1}; \psi_{t+1}) + (1-\eta^R_{t+1})h_t^R - (1-\eta^R_{t+1})\nabla_v R(x_{t}, y_{t}, v_{t+1}; \psi_{t+1}) \nonumber\\
&\quad \qquad \ \ - \nabla_v R(x_{t+1}, y_{t+1}, v_{t+1})\|^2 \nonumber\\
&\quad \overset{(b)}{=} \mathbb{E}\|(1-\eta^R_{t+1})e_t^R + (1-\eta^R_{t+1})\nabla_v R(x_{t}, y_{t}, v_{t}) \nonumber\\
&\quad \qquad \ \ + \nabla_v R(x_{t+1}, y_{t+1}, v_{t+1}; \psi_{t+1}) - (1-\eta^R_{t+1}) \nabla_v R(x_{t}, y_{t}, v_{t}; \psi_{t+1}) \nonumber
\\&\quad \qquad \ \ - \nabla_v R(x_{t+1}, y_{t+1}, v_{t+1})\|^2 \nonumber\\
&\quad \overset{(c)}{=}(1-\eta^R_{t+1})^2 \mathbb{E}\|e_t^R\|^2
+ \mathbb{E}\|(1-\eta^R_{t+1})\nabla_v R(x_{t}, y_{t}, v_{t}) - (1-\eta^R_{t+1})\nabla_v R(x_{t}, y_{t}, v_{t};\psi_{t+1}) \nonumber\\
&\quad \quad \ + \nabla_v R(x_{t+1}, y_{t+1}, v_{t+1};\psi_{t+1}) - \nabla_v R(x_{t+1}, y_{t+1}, v_{t+1}) \|^2
\end{align} 
where (a) follows from~\cref{def:htR},
(b) uses the definition of $e_t^R := h_t^R - \nabla_v R(x_t, y_t, v_t)$,
(c) follows from the fact that  
\begin{align*}
    \mathbb{E}\Big\langle e_t^R, \big(\nabla_v R(x_{t+1}, y_{t+1},& v_{t+1}; \psi_{t+1}) - \nabla_v R(x_{t+1}, y_{t+1}, v_{t+1}) \\
    &-(1-\eta_{t+1}^R)\big(\nabla_v R(x_{t}, y_{t}, v_{t}; \psi_{t+1}) - \nabla_v R(x_{t}, y_{t}, v_{t})\big)\Big\rangle \\
    =\mathbb{E}\Big\langle e_t^R, \mathbb{E}\big[&\big(\nabla_v R(x_{t+1}, y_{t+1}, v_{t+1}; \psi_{t+1}) - \nabla R(x_{t+1}, y_{t+1}, v_{t+1})\big) \\
    &-(1-\eta_{t+1}^R)\big(\nabla_v R(x_{t}, y_{t}, v_{t}; \psi_{t+1}) - \nabla_v R(x_{t}, y_{t}, v_{t}) \big)|\Sigma_{t+1}\big]\Big\rangle = 0
\end{align*}
for $\Sigma_{t+1} = \sigma\{y_0, v_0, x_0,...,y_t, v_t, x_t, y_{t+1}, v_{t+1}, x_{t+1}\}$. 
For the second part of the right-hand side of~\cref{ieq:etR1}, 
\begin{align}\label{ieq:etR2}
&\quad \ \mathbb{E}\|(1-\eta^R_{t+1})\nabla_v R(x_{t}, y_{t}, v_{t}) - (1-\eta^R_{t+1})\nabla_v R(x_{t}, y_{t}, v_{t};\psi_{t+1}) \nonumber\\
&\quad \quad \ + \nabla_v R(x_{t+1}, y_{t+1}, v_{t+1};\psi_{t+1}) - \nabla_v R(x_{t+1}, y_{t+1}, v_{t+1}) \|^2 \nonumber\\
&\qquad \qquad\quad\quad \overset{(a)}{\leq} 2(\eta^R_{t+1})^2 \mathbb{E}\|\nabla_v R(x_{t+1}, y_{t+1}, v_{t+1};\psi_{t+1}) - \nabla_v R(x_{t+1}, y_{t+1}, v_{t+1})\|^2 \nonumber\\
&\qquad \qquad \quad\quad\quad + 2(1-\eta^R_{t+1})^2 \mathbb{E}\|\nabla_v R(x_{t+1}, y_{t+1}, v_{t+1};\psi_{t+1}) - \nabla_v R(x_{t+1}, y_{t+1}, v_{t+1}) \nonumber\\
&\qquad \qquad \qquad\qquad\qquad\qquad\quad\quad\quad - \nabla_v R(x_{t}, y_{t}, v_{t};\psi_{t+1}) + \nabla_v R(x_{t}, y_{t}, v_{t})\|^2
\end{align}
where (a) uses Cauchy–Schwartz inequality. For the first term of the right-hand side of~\cref{ieq:etR2}, we have 
\begin{align}\label{eq:first_delta_t}
&\mathbb{E}\|\nabla_v R(x_{t+1}, y_{t+1}, v_{t+1};\psi_{t+1}) - \nabla_v R(x_{t+1}, y_{t+1}, v_{t+1})\|^2 \nonumber\\
&\overset{(a)}{=} \mathbb{E}\Big\|\left[\nabla_{yy}^2g(x_{t+1}, y_{t+1}; \psi_{t+1})-\nabla_{yy}^2g(x_{t+1}, y_{t+1})\right]v_{t+1} \nonumber
\\&\qquad\qquad- [\nabla_y f(x_{t+1}, y_{t+1}; \psi_{t+1})-\nabla_y f(x_{t+1}, y_{t+1})]\Big\|^2 \nonumber
\\&\leq 2\mathbb{E}\|\left[\nabla_{yy}^2g(x_{t+1}, y_{t+1}; \psi_{t+1})-\nabla_{yy}^2g(x_{t+1}, y_{t+1})\right]v_{t+1}\|^2 \nonumber
\\&\quad \ + 2\mathbb{E}\|\nabla_y f(x_{t+1}, y_{t+1}; \psi_{t+1})-\nabla_y f(x_{t+1}, y_{t+1})\|^2 \nonumber\\
& \overset{(b)}{\leq} 2\Big(r^2_v\sigma^2_{g_{yy}}+\sigma^2_{f_y}\Big)
\end{align}
where (a) follows the definition of $R(x_t, y_t, v_t)$ in~\cref{def:R}, (b) follows from the boundedness of $v_t$ that $\|v_t\|\leq r_v$ (see line 6 in~\Cref{alg:main}),~\Cref{lm:boundofv} and Assumption~\ref{as:sf}. Moreover, for the second part of~\cref{ieq:etR2}, we have 
\begin{align}\label{eq:diffRss}
&\mathbb{E}\|\nabla_v R(x_{t+1}, y_{t+1}, v_{t+1};\psi_{t+1}) - \nabla_v R(x_{t+1}, y_{t+1}, v_{t+1}) \nonumber
\\&\quad \qquad - \nabla_v R(x_{t}, y_{t}, v_{t};\psi_{t+1}) + \nabla_v R(x_{t}, y_{t}, v_{t})\|^2 \nonumber\\
&\leq 2\mathbb{E}\|\nabla_v R(x_{t+1}, y_{t+1}, v_{t+1};\psi_{t+1}) - \nabla_v R(x_{t}, y_{t}, v_{t};\psi_{t+1})\|^2 \nonumber\\
& \quad \ + 2\mathbb{E}\|\nabla_v R(x_{t+1}, y_{t+1}, v_{t+1}) - \nabla_v R(x_{t}, y_{t}, v_{t})\|^2.
\end{align}
Next, we upper bound the second term of the right-hand side of \cref{eq:diffRss}. In specific, we have 
\begin{align}\label{eq:r_error_v}
&\mathbb{E}\|\nabla_v R(x_{t+1}, y_{t+1}, v_{t+1}) - \nabla_v R(x_{t}, y_{t}, v_{t})\|^2 \nonumber\\
&\overset{(a)}{\leq} 3\mathbb{E}\|\nabla_v R(x_{t+1}, y_{t+1}, v_{t+1}) - \nabla_v R(x_{t}, y_{t+1}, v_{t+1})\|^2 \nonumber
\\&\quad+ 3\mathbb{E}\|\nabla_v R(x_{t}, y_{t+1}, v_{t+1}) - \nabla_v R(x_{t}, y_{t}, v_{t+1})\|^2 \nonumber\\
&\quad \ + 3\mathbb{E}\|\nabla_v R(x_{t}, y_{t}, v_{t+1}) - \nabla_v R(x_{t}, y_{t}, v_{t})\|^2 \nonumber\\
& \overset{(b)}{=} 3\mathbb{E}\|\left[\nabla_{yy}^2g(x_{t+1}, y_{t+1})-\nabla_{yy}^2g(x_t, y_{t+1})\right]v_{t+1} - \left[\nabla_y f(x_{t+1}, y_{t+1})-\nabla_y f(x_t, y_{t+1})\right]\|^2\nonumber \\
&\quad \ + 3\mathbb{E}\|\left[\nabla_{yy}^2g(x_{t}, y_{t+1})-\nabla_{yy}^2g(x_t, y_{t})\right]v_{t+1} - \left[\nabla_y f(x_{t}, y_{t+1})-\nabla_y f(x_t, y_{t})\right]\|^2 \nonumber\\
&\quad \ + 3\mathbb{E}\|\nabla_v R(x_{t}, y_{t}, v_{t+1}) - \nabla_v R(x_{t}, y_{t}, v_{t})\|^2\nonumber \\
&\overset{(c)}{\leq} 6\left(L^2_{g_{yy}}\|v_{t+1}\|^2 + L^2_{f_y}\right)\mathbb{E}\|x_{t+1} - x_t\|^2\nonumber \\ 
& \quad + 6\left(L^2_{g_{yy}}\|v_{t+1}\|^2 + L^2_{f_y}\right)\mathbb{E}\|y_{t+1} - y_t\|^2 + 3L^2_g\lambda^2_t \mathbb{E}\|h_t^R\|^2 \nonumber\\
&\overset{(d)}{\leq} 6\left(L^2_{g_{yy}}r^2_v + L^2_{f_y}\right)\left(\mathbb{E}\|x_{t+1} - x_t\|^2 + \mathbb{E}\|y_{t+1} - y_t\|^2\right) + 3L^2_g\lambda^2_t \mathbb{E}\|h_t^R\|^2 \nonumber\\
& \overset{(e)}{\leq} 6\left(L^2_{g_{yy}}r^2_v + L^2_{f_y}\right)\left(\alpha_t^2\mathbb{E}\|h_t^f\|^2 + \beta_t^2\mathbb{E}\|h_t^g\|^2\right) + 6L^2_g\lambda^2_t \left(\mathbb{E}\|e_t^R\|^2 + \mathbb{E}\|\nabla_v R(x_t, y_t, v_t)\|^2\right)\nonumber \\
& \overset{(f)}{\leq} 6\left(L^2_{g_{yy}}r^2_v + L^2_{f_y}\right)\left[\alpha_t^2\mathbb{E}\|h_t^f\|^2 + 2\beta_t^2(\mathbb{E}\|e_t^g\|^2+\mathbb{E}\|\nabla_yg(x_t,y_t)\|^2)\right]
\nonumber\\&\quad+ 6L^2_g\lambda^2_t \left(\mathbb{E}\|e_t^R\|^2 
+ L_g^2\mathbb{E}\|v_t-v_t^*\|^2\right), 
\end{align}
where 
(a) uses Cauchy–Schwartz inequality,
(b) uses the definition of~\cref{def:gradR}, 
(c) follows from Assumption~\ref{as:ulf},~\ref{as:llf}, and Step 5 and 6 in~\Cref{alg:main}, 
(d) uses~\Cref{lm:boundofv};
(e) follows from Step 4 and 7 in~\Cref{alg:main} and the definition of $h_t^R$ in~\cref{def:htR}, and 
(f) follows from the definition of $h_t^g$ in~\cref{def:htg} and the fact that 
\begin{align*}
\mathbb{E}\|\nabla_v R(x_t, y_t, v_t)\|^2 &= \mathbb{E}\|\nabla_{yy}^2g(x_t, y_t)v_t - \nabla_yf(x_t, y_t)\|^2 \\
&= \mathbb{E}\|\nabla_{yy}^2 g(x_t,y_t)(v_t-v_t^*)\|^2 \leq L_g^2\mathbb{E}\|v_t-v_t^*\|^2.
\end{align*}
For the first term of the right-hand side of \cref{eq:diffRss}, we follow the same steps and get the same upper bound as in~\cref{eq:r_error_v}. Then, incorporating \cref{eq:r_error_v} into \cref{eq:diffRss} yields  
\begin{align}\label{eq:indi_mediat}
\mathbb{E}\|\nabla_v R(x_{t+1},& y_{t+1}, v_{t+1};\psi_{t+1}) - \nabla_v R(x_{t+1}, y_{t+1}, v_{t+1}) \nonumber
\\&\quad \qquad - \nabla_v R(x_{t}, y_{t}, v_{t};\psi_{t+1}) + \nabla_v R(x_{t}, y_{t}, v_{t})\|^2\nonumber
\\&\leq 24\left(L^2_{g_{yy}}r^2_v + L^2_{f_y}\right)\left[\alpha_t^2\mathbb{E}\|h_t^f\|^2 + 2\beta_t^2(\mathbb{E}\|e_t^g\|^2+\mathbb{E}\|\nabla_yg(x_t,y_t)\|^2)\right]
\nonumber\\&\quad+ 24L^2_g\lambda^2_t \left(\mathbb{E}\|e_t^R\|^2 
+ L_g^2\mathbb{E}\|v_t-v_t^*\|^2\right).
\end{align}
Then, incorporating \cref{eq:first_delta_t} and  \cref{eq:indi_mediat} into 
\cref{ieq:etR2}, we have 
\begin{align}\label{eq:etR_2ndpart}
&\mathbb{E}\|(1-\eta^R_{t+1})\nabla_v R(x_{t}, y_{t}, v_{t}) - (1-\eta^R_{t+1})\nabla_v R(x_{t}, y_{t}, v_{t};\psi_{t+1}) \nonumber\\
&\ \quad + \nabla_v R(x_{t+1}, y_{t+1}, v_{t+1};\psi_{t+1}) - \nabla_v R(x_{t+1}, y_{t+1}, v_{t+1}) \|^2 \nonumber\\
&\qquad  \leq 4(\eta_{t+1}^R)^2\big(r_v^2\sigma_{g_{yy}}^2 + \sigma_{f_y}^2\big)\nonumber\\
&\qquad \quad \ + 48(1-\eta_{t+1}^R)^2\left(L^2_{g_{yy}}r^2_v + L^2_{f_y}\right) \left[\alpha_t^2\mathbb{E}\|h_{t}^f\|^2 + 2\beta_t^2(\mathbb{E}\|e_t^g\|^2 + \mathbb{E}\|\nabla_y g(x_t,y_t)\|^2)\right] \nonumber\\
&\qquad \quad \ + 48(1-\eta_{t+1}^R)^2L^2_g\lambda^2_t \left(\mathbb{E}\|e_t^R\|^2 + L_g^2\mathbb{E}\|v_t-v_t^*\|^2\right),
\end{align}
which, incorporated into \cref{ieq:etR1}, yields
\begin{align*}
    \mathbb{E} \|e_{t+1}^R\|^2 &\leq (1-\eta_{t+1}^R)^2\left(1 + 48L_g^2\lambda_t^2 \right)\mathbb{E} \|e_{t}^R\|^2 
    + 4(\eta_{t+1}^R)^2\big(r_v^2\sigma_{g_{yy}}^2 + \sigma_{f_y}^2\big) \\ 
    &\quad \ + 48(1-\eta_{t+1}^R)^2\left(L^2_{g_{yy}}r^2_v + L^2_{f_y}\right) \left[\alpha_t^2\mathbb{E}\|h_{t}^f\|^2 + 2\beta_t^2(\mathbb{E}\|e_t^g\|^2 + \mathbb{E}\|\nabla_y g(x_t,y_t)\|^2)\right] \\
    &\quad \ +48(1-\eta_{t+1}^R)^2L^4_g\lambda^2_t \mathbb{E}\|v_t-v_t^*\|^2,
\end{align*}
which finishes the proof. 
\end{proof}

\subsection{Descent in iterates of the LS problem}
\begin{lemma}\label{lm:vv}
Under the Assumption~\ref{as:ulf},~\ref{as:llf}, the iterates of the LS problem generated according to~\Cref{alg:main} satisfy
\begin{align*}
\mathbb{E}&\|v_{t+1} - v_{t+1}^*\|^2  \\
&\leq (1+\gamma_t')\left(1+\delta_t'\right)\left[\left(1-2\lambda_t\frac{(L_g+L_g^3)\mu_g}{\mu_g + L_g} + \lambda_t^2L_g^2\right)\mathbb{E}\|v_t - v^*_t\|^2\right] \\
&\quad \ + (1+\gamma_t')\left(1+\frac{1}{\delta_t'}\right)\lambda_t^2\mathbb{E}\|e_t^R\|^2 \\
&\quad \ + (1+\frac{1}{\gamma_t'})\left( \frac{2L^2_{f_y}}{\mu^2_g} + \frac{2C_{f_y}L^2_{g_{yy}}}{\mu^4_g} \right)\left[ \alpha_t^2\mathbb{E}\|h_t^f\|^2 + \beta_t^2\left(2\mathbb{E}\|e_t^g\|^2 + 2\mathbb{E}\|\nabla_y g(x_t, y_t)\|^2 \right) \right].
\end{align*}
for all $t \in \{0, . . . , T-1\}$ with some $\gamma_t'>0$ and $ \delta_t' > 0$.
\end{lemma}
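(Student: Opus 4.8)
The plan is to control the moving target $v_{t+1}^*$ by splitting $\|v_{t+1}-v_{t+1}^*\|^2$ into a one-step descent on the \emph{frozen} linear system and the drift of the optimal solution caused by the $x$ and $y$ updates. Concretely, I would first write $v_{t+1}-v_{t+1}^* = (v_{t+1}-v_t^*) + (v_t^*-v_{t+1}^*)$ and apply Young's inequality with parameter $\gamma_t'>0$, which produces the weight $(1+\gamma_t')$ in front of $\|v_{t+1}-v_t^*\|^2$ and $(1+1/\gamma_t')$ in front of $\|v_t^*-v_{t+1}^*\|^2$. Since the choice $r_v\ge C_{f_y}/\mu_g$ together with \Cref{lm:boundofv} guarantees $v_t^*\in B(0,r_v)$, the nonexpansiveness of the Euclidean projection gives $\|v_{t+1}-v_t^*\| = \|\mathrm{Proj}_B(w_{t+1})-\mathrm{Proj}_B(v_t^*)\|\le\|w_{t+1}-v_t^*\|$, so the projection is discarded for free.

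Next I would analyze the descent term $\|w_{t+1}-v_t^*\|^2$. Writing $h_t^R = e_t^R + \nabla_v R(x_t,y_t,v_t)$ and applying a second Young's inequality with parameter $\delta_t'>0$ separates the deterministic gradient step from the momentum error, yielding the $(1+\delta_t')$-weighted term $\|(v_t-v_t^*)-\lambda_t\nabla_v R(x_t,y_t,v_t)\|^2$ and the $(1+1/\delta_t')\lambda_t^2\|e_t^R\|^2$ term. For the gradient step, the key observation is that $R(x_t,y_t,\cdot)$ is a quadratic whose Hessian is $\nabla_{yy}^2 g(x_t,y_t)$, so by \Cref{as:llf} it is $\mu_g$-strongly convex and $L_g$-smooth in $v$, and $\nabla_v R(x_t,y_t,v_t^*)=0$ by \cref{def:gradR}. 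Hence $\nabla_v R(x_t,y_t,v_t)=\nabla_{yy}^2g(x_t,y_t)(v_t-v_t^*)$, and the standard strong-convexity-plus-smoothness estimate for a gradient step on this quadratic yields a contraction factor of the form $1-2\lambda_t\tfrac{(L_g+L_g^3)\mu_g}{\mu_g+L_g}+\lambda_t^2L_g^2$ multiplying $\|v_t-v_t^*\|^2$.

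For the drift term $\|v_t^*-v_{t+1}^*\|^2$, I would show that $v^*(x,y)=[\nabla_{yy}^2g(x,y)]^{-1}\nabla_y f(x,y)$ is Lipschitz in $(x,y)$. Using the resolvent identity $A^{-1}u-B^{-1}w = A^{-1}(u-w)+A^{-1}(B-A)B^{-1}w$ with $A=\nabla_{yy}^2g(x_t,y_t)$, $B=\nabla_{yy}^2g(x_{t+1},y_{t+1})$, together with $\|A^{-1}\|\le 1/\mu_g$, the Lipschitzness of $\nabla_y f$ and $\nabla_{yy}^2 g$ from \Cref{as:ulf} and \Cref{as:llf}, and the bound $\|v^*\|^2\le C_{f_y}/\mu_g^2$ of \Cref{lm:boundofv}, gives a squared Lipschitz constant $\tfrac{2L_{f_y}^2}{\mu_g^2}+\tfrac{2C_{f_y}L_{g_{yy}}^2}{\mu_g^4}$ multiplying $\|x_{t+1}-x_t\|^2+\|y_{t+1}-y_t\|^2$. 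Finally I would substitute the updates $x_{t+1}-x_t=-\alpha_t h_t^f$ and $y_{t+1}-y_t=-\beta_t h_t^g$, and bound $\|h_t^g\|^2\le 2\|e_t^g\|^2+2\|\nabla_y g(x_t,y_t)\|^2$ via $e_t^g=h_t^g-\nabla_y g(x_t,y_t)$, which reproduces exactly the bracketed quantity in the third line. Taking expectations throughout completes the bound.

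The main obstacle I anticipate is obtaining the correct drift estimate: the perturbation of the Hessian inverse must be routed through the resolvent identity rather than bounded naively, and the a priori bound on $\|v^*\|$ from \Cref{lm:boundofv} is essential to keep the $A^{-1}(B-A)B^{-1}w$ term finite. A secondary subtlety is the careful bookkeeping of the two Young parameters $\gamma_t'$ and $\delta_t'$ so that the three weight patterns $(1+\gamma_t')(1+\delta_t')$, $(1+\gamma_t')(1+1/\delta_t')$, and $(1+1/\gamma_t')$ line up with the contraction, momentum-error, and drift terms respectively; these parameters are left free here and are tuned later in the convergence theorem.
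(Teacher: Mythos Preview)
Your proposal is correct and follows essentially the same route as the paper: the paper also applies Young's inequality with $\gamma_t'$ to separate the moving target, uses nonexpansiveness of the projection onto $B(0,r_v)$ (relying on $v_t^*\in B(0,r_v)$ via \Cref{lm:boundofv}), applies a second Young's inequality with $\delta_t'$ to isolate $\lambda_t^2\|e_t^R\|^2$, invokes the co-coercivity inequality for the $\mu_g$-strongly-convex / $L_g$-smooth quadratic $R(x_t,y_t,\cdot)$ together with $\|\nabla_v R\|^2\le L_g^2\|v_t-v_t^*\|^2$ to obtain the stated contraction factor, and bounds $\|v_{t+1}^*-v_t^*\|^2$ via exactly the add-and-subtract / resolvent-identity argument you describe, finishing by substituting the $x$- and $y$-updates and $\|h_t^g\|^2\le 2\|e_t^g\|^2+2\|\nabla_y g(x_t,y_t)\|^2$. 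The only cosmetic difference is that the paper bounds $\|\nabla_y f(x_t,y_t)\|^2\le C_{f_y}$ directly from \Cref{as:ulf} rather than invoking \Cref{lm:boundofv} for the $A^{-1}(B-A)B^{-1}w$ term, which yields the same constant.
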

\begin{proof}
Letting $v_{k} := v(x_k, y_k)$ and $ v_{k}^* := v^*(x_k, y_k)$ and choosing the radius $r_v = \frac{C_{f_y}}{\mu_g}$ (see Step 6 in~\Cref{alg:main}), we have 
\begin{align}\label{eq:vgaps}
    \mathbb{E}\|v_{t+1} &- v_{t+1}^*\|^2 
    \overset{(a)}{\leq} (1+\gamma_t')\mathbb{E}\|v_{t+1} - v_{t}^*\|^2 + (1+\frac{1}{\gamma_t'})\mathbb{E}\|v_{t}^* - v_{t+1}^*\|^2\nonumber \\
    &\overset{(b)}{\leq} (1+\gamma_t')\mathbb{E}\|w_{t+1} - v_{t}^*\|^2 + (1+\frac{1}{\gamma_t'})\mathbb{E}\|v_{t}^* - v_{t+1}^*\|^2\nonumber \\
    &\overset{(c)}{\leq} (1+\gamma_t')\mathbb{E}\|v_t - \lambda_th_t^R - v_{t}^*\|^2 + (1+\frac{1}{\gamma_t'})\mathbb{E}\|v_{t}^* - v_{t+1}^*\|^2 \nonumber\\
    &\overset{(d)}{\leq} (1+\gamma_t')(1+\delta_t')\mathbb{E}\|v_t - \lambda_t\nabla_v R(x_t, y_t, v_t) - v_{t}^*\|^2 \nonumber\\
    &\quad \ + (1+\gamma_t')(1+\frac{1}{\delta_t'})\mathbb{E}\lambda_t^2\|h_t^R - \nabla_v R(x_t, y_t, v_t)\|^2 
     +(1+\frac{1}{\gamma_t'})\mathbb{E}\|v_{t}^* - v_{t+1}^*\|^2,
\end{align}
where (a) follows from Young’s inequality, 
(b) follows Step 6 in ~\Cref{alg:main} and \Cref{lm:boundofv} that $v^*$ is in a ball with radius $r_v$(define as $B(0,r_v^2)$), then we have $\|v_{t+1}- v_t^*\|=\|Proj_B(w_{t+1})- Proj_B(v_t^*)\|\leq\|w_{t+1}- v_t^*\|$ (this inequality is based on the nonexpansiveness of projection),
(c) follows from Step 5 in ~\Cref{alg:main}, and 
(d) uses Young’s inequality and the definition of $h_t^R$ in~\cref{def:htR}.  
For the first term of the above \cref{eq:vgaps}, we have
\begin{align}\label{eq:vtlambda}
    \mathbb{E}\|v_t - &\lambda_t\nabla_v R(x_t, y_t, v_t) - v_{t}^*\|^2 \nonumber\\
    &= \mathbb{E}\|v_t - v^*_t\|^2 + \lambda_t^2\mathbb{E}\|\nabla_v R(x_t, y_t, v_t)\|^2 - 2\lambda_t
    \mathbb{E}\big \langle \nabla_v R(x_t, y_t, v_t), v_t - v_t^* \big \rangle \nonumber\\
    &\overset{(a)}{\leq} \bigg(1-2\lambda_t\frac{\mu_gL_g}{\mu_g + L_g}\bigg)\mathbb{E}\|v_t - v^*_t\|^2 - \bigg(2\lambda_t\frac{\mu_g L_g}{\mu_g + L_g} - \lambda_t^2\bigg)\mathbb{E}\|\nabla_v R(x_t, y_t, v_t)\|^2 \nonumber\\
    &\overset{(b)}{\leq} \bigg(1-2\lambda_t\frac{(L_g+L_g^3)\mu_g}{\mu_g + L_g} + \lambda_t^2L_g^2\bigg)\mathbb{E}\|v_t - v^*_t\|^2
\end{align}
where (a) follows from the strong convexity of $R$ function(in~\cref{def:R}) that 
\begin{align*}
    \mathbb{E}\langle\nabla_v R(x_t, y_t, v_t), v_t - v_t^* \rangle \geq \frac{\mu_gL_g}{\mu_g+L_g}\mathbb{E}\|v_t - v_t^*\|^2 + \frac{1}{\mu_g+L_g}\mathbb{E}\|\nabla_v R(x_t, y_t, v_t)\|^2,
\end{align*}
and  (b) follows from \cref{eq:boundofnablaR}.
For the last term of \cref{eq:vgaps}, we have 
\begin{align}\label{eq:vt1star}
    \mathbb{E}\|&v_{t+1}^* - v_{t}^*\|^2 = \mathbb{E}\|[\nabla_{yy}^2g(x_{t+1}, y_{t+1})]^{-1}\nabla_y f(x_{t+1}, y_{t+1}) - [\nabla_{yy}^2g(x_{t}, y_{t})]^{-1}\nabla_y f(x_{t}, y_{t})\|^2 \nonumber\\
    &= \mathbb{E}\|[\nabla_{yy}^2g(x_{t+1}, y_{t+1})]^{-1}\nabla_y f(x_{t+1}, y_{t+1}) - [\nabla_{yy}^2g(x_{t+1}, y_{t+1})]^{-1}\nabla_y f(x_{t}, y_{t}) \nonumber\\
    &\quad\ \ \ + [\nabla_{yy}^2g(x_{t+1}, y_{t+1})]^{-1}\nabla_y f(x_{t}, y_{t}) - [\nabla_{yy}^2g(x_{t}, y_{t})]^{-1}\nabla_y f(x_{t}, y_{t})\|^2 \nonumber\\
    &\overset{(a)}{\leq} 2 \mathbb{E}\|[\nabla_{yy}^2g(x_{t+1}, y_{t+1})]^{-1}\left(\nabla_y f(x_{t+1}, y_{t+1}) - \nabla_y f(x_{t}, y_{t})\right)\|^2 \nonumber\\
    &\quad\ + 2\mathbb{E}\|\left([\nabla_{yy}^2g(x_{t+1}, y_{t+1})]^{-1} - [\nabla_{yy}^2g(x_{t}, y_{t})]^{-1}\right)\nabla_y f(x_{t}, y_{t}) \|^2 \nonumber\\
    &\overset{(b)}{\leq} \frac{2L^2_{f_y}}{\mu_g^2}\mathbb{E}\|(x_{t+1}, y_{t+1}) - (x_{t}, y_{t})\|^2 \nonumber\\
    &\quad\ + 2C_{f_y}\mathbb{E}\|[\nabla_{yy}^2g(x_{t}, y_{t})]^{-1}[\nabla_{yy}^2g(x_{t}, y_{t}) - \nabla_{yy}^2g(x_{t+1}, y_{t+1})][\nabla_{yy}^2g(x_{t+1}, y_{t+1})]^{-1}\|^2 \nonumber\\
    &\overset{(c)}{\leq} \left( \frac{2L^2_{f_y}}{\mu^2_g} + \frac{2C_{f_y}L^2_{g_{yy}}}{\mu^4_g} \right)\mathbb{E}\|(x_{t+1}, y_{t+1}) - (x_{t}, y_{t})\|^2 \nonumber\\
    &\overset{}{\leq} \left( \frac{2L^2_{f_y}}{\mu^2_g} + \frac{2C_{f_y}L^2_{g_{yy}}}{\mu^4_g} \right)\left( \mathbb{E}\|x_{t+1}-x_{t}\|^2 + \mathbb{E}\|y_{t+1}-y_{t}\|^2\right)\nonumber \\
    &\overset{(d)}{=} \left( \frac{2L^2_{f_y}}{\mu^2_g} + \frac{2C_{f_y}L^2_{g_{yy}}}{\mu^4_g} \right)\left( \alpha_t^2\mathbb{E}\|h_t^f\|^2 + \beta_t^2\mathbb{E}\|h_t^g\|^2 \right) \nonumber\\
    &\overset{(e)}{=} \left( \frac{2L^2_{f_y}}{\mu^2_g} + \frac{2C_{f_y}L^2_{g_{yy}}}{\mu^4_g} \right)\left[ \alpha_t^2\mathbb{E}\|h_t^f\|^2 + \beta_t^2\left(2\mathbb{E}\|e_t^g\|^2 + 2\mathbb{E}\|\nabla_y g(x_t, y_t)\|^2 \right) \right],
\end{align}
where (a) follows from Cauchy–Schwartz inequality, 
(b) follows from  Assumption~\ref{as:ulf} and~\ref{as:llf}, 
(c) follows from  Assumption~\ref{as:llf},  
(d) follows from Steps 4 and 7 in~\Cref{alg:main}, and 
(e) uses the definition of $h_t^g$ in~\cref{def:htg}.
\noindent
Finally, incorporating \cref{eq:vtlambda} and \cref{eq:vt1star} into \cref{eq:vgaps},  
we have 
\begin{align*}
\mathbb{E}\|&v_{t+1} - v_{t+1}^*\|^2  \\
&\leq (1+\gamma_t')\left(1+\delta_t'\right)\left[\left(1-2\lambda_t\frac{(L_g+L_g^3)\mu_g}{\mu_g + L_g} + \lambda_t^2L_g^2\right)\mathbb{E}\|v_t - v^*_t\|^2\right] \\
&\quad \ + (1+\gamma_t')\left(1+\frac{1}{\delta_t'}\right)\lambda_t^2\mathbb{E}\|e_t^R\|^2 \\
&\quad \ + (1+\frac{1}{\gamma_t'})\left( \frac{2L^2_{f_y}}{\mu^2_g} + \frac{2C^2_{f_y}L^2_{g_{yy}}}{\mu^4_g} \right)\left[ \alpha_t^2\mathbb{E}\|h_t^f\|^2 + \beta_t^2\left(2\mathbb{E}\|e_t^g\|^2 + 2\mathbb{E}\|\nabla_y g(x_t, y_t)\|^2 \right) \right].
\end{align*}
Then, the proof is complete. 
\end{proof}

\subsection{Descent in the Potential Function}
Define the potential function as
\begin{align}\label{def:potentialF}
V_t :=& \Phi(x_{t}) + K_1\|y_{t} - y^*(x_{t})\|^2 + K_2\|v_{t} - v^*(x_{t}, y_{t})\|^2 \nonumber
\\&+  \frac{1}{\bar{c}_{\eta_f}} \frac{\|e_{t}^f\|^2}{\alpha_{t-1}} + \frac{1}{\bar{c}_{\eta_g}} \frac{\|e_{t}^g\|^2}{\alpha_{t-1}} + \frac{1}{\bar{c}_{\eta_R}} \frac{\|e_{t}^R\|^2}{\alpha_{t-1}},
\end{align}
where the coefficients are given by
\begin{align}
K_1 &= \frac{8(L^2_{f_y} + \frac{C_{f_y}L62_{g_{xy}}}{\mu_g^2})}{c_{\beta}L_{\mu_g}}, \quad K_2 = \frac{4C_{g_{xy}}}{c_{\lambda}L_{\mu_g}}; \nonumber \\
\bar{c}_{\eta_f} &= \max\left\{96L_F^2, \ 12L_g^2c_{\lambda}^2, \ \frac{48L_{\mu_g}L^2_fc^2_{\beta}\max\{L_{\mu_g}, \mu_g+L_g\}}{L^2_{f_y}+\frac{C_{f_y}L^2_{g_{xy}}}{\mu_g^2}}, \ \frac{3}{2}L^2_{\mu_g}c^2_\lambda\right\}, \nonumber \\ 
\bar{c}_{\eta_g} &= \max\left\{256L_g^2, \ \frac{128L_{\mu_g}L^2_gc^2_{\beta}\max\{L_{\mu_g}, \mu_g+L_g\}}{L^2_{f_y}+\frac{C_{f_y}L^2_{g_{xy}}}{\mu_g^2}}\right\}, \ 
\nonumber\\
\bar{c}_{\eta_R} &= \max\bigg\{768(L^2_{g_{yy}}r_v^2 + L^2_{f_y}),\ \frac{48L_g^4c^2_{\lambda}}{C_{g_{xy}}}, \ \frac{384L_{\mu_g}(L^2_{g_{yy}}r_v^2 + L^2_{f_y})c^2_{\beta}\max\{L_{\mu_g}, \mu_g+L_g\}}{L^2_{f_y}+\frac{C_{f_y}L^2_{g_{xy}}}{\mu_g^2}}\bigg\}.\nonumber
\end{align}
\begin{lemma}\label{lm:mergeV}
Suppose Assumptions~\ref{as:ulf} ,\ref{as:llf} and \ref{as:sf} are satisfied. 
Choose the parameters of~\Cref{alg:main} as 
\begin{align*}
    &\alpha_t := \frac{1}{(w+t)^{1/3}},\  \beta_t := c_\beta \alpha_t,\ \lambda_t := c_\lambda \alpha_t;\ \eta_{t+1}^f := c_{\eta_f}\alpha_t^2,\ \eta_{t+1}^g := c_{\eta_g}\alpha_t^2,\ \eta_{t+1}^R := c_{\eta_R}\alpha_t^2,
\end{align*}
where the constants are given by 
\begin{align}\label{def:parameters}
    &c_{\beta} \geq \sqrt{\frac{512L_y^2(L^2_{f_y}+\frac{C_{f_y}L^2_{g_{xy}}}{\mu_g^2})}{L^2_{\mu_g}}}, \nonumber\\
    &c_\lambda \geq \sqrt{\max\left\{\frac{1024C_{g_{xy}}}{L^2_{\mu_g}}\Big(\frac{L^2_{f_y}}{\mu_g^2}+\frac{C_{f_y}L^2_{g_{yy}}}{\mu_g^4}\Big), \frac{128(\mu_g+L_g)C_{g_{xy}}}{L_{\mu_g}}c^2_{\beta}, 128C_{g_{xy}}c_\beta^2\right\}}\ ; \nonumber\\
    &c_{\eta_f} = \frac{1}{3L_f} + \bar{c}_{\eta_f},\  c_{\eta_g} = \frac{1}{3L_f} + 32L_g^2c_{\beta}^2 + \left[ \frac{17(L^2_{f_y}+\frac{C_{f_y}L^2_{g_{xy}}}{\mu_g^2})}{L^2_{\mu_g}}\right]\bar{c}_{\eta_g}, \nonumber\\ 
    &c_{\eta_R} = \frac{1}{3L_f} + 48L_g^2c_\lambda^2 + \left[ \frac{16C_{g_{xy}}}{L^2_{\mu_g}}\right]\bar{c}_{\eta_R};\ \ \sigma_R := \sqrt{\sigma^2_{g_{yy}}r^2_v+\sigma^2_{f_y}}, \nonumber\\
    &w \geq \left(\max\left\{c_\beta(\mu_g+L_g), \ \frac{c_\lambda(\mu_g+L_g)}{2\mu_gL_g} \right\}\right)^3 - 1. 
\end{align}
Then the iterates generated by~\Cref{alg:main} satisfy
$$\mathbb{E}[V_{t+1} - V_{t}] \leq -\frac{\alpha_t}{2}\mathbb{E}\|\nabla \Phi(x_t)\|^2 + \frac{2(\eta^f_{t+1})^2}{\bar{c}_{\eta_f} \alpha_t}\sigma_f^2 + \frac{2(\eta^g_{t+1})^2}{\bar{c}_{\eta_g} \alpha_t}\sigma_g^2 + 
\frac{4(\eta^R_{t+1})^2}{\bar{c}_{\eta_R} \alpha_t}\sigma_R^2,$$
for all $t \in \{0,1,...,T-1\}$. 
\end{lemma}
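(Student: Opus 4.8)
The plan is to derive the one-step potential decrease by adding the six component estimates with exactly the weights that appear in \cref{def:potentialF}: \Cref{lm:corefunction} with weight $1$, \Cref{lm:yy} with weight $K_1$, \Cref{lm:vv} with weight $K_2$, and \Cref{lm:errorf}, \Cref{lm:errorg}, \Cref{lm:errorR} with weights $\frac{1}{\bar{c}_{\eta_f}\alpha_t}$, $\frac{1}{\bar{c}_{\eta_g}\alpha_t}$, $\frac{1}{\bar{c}_{\eta_R}\alpha_t}$, respectively. Before summing, I would fix the free Young's-inequality parameters as $\gamma_t,\delta_t=\Theta(\beta_t)$ in \Cref{lm:yy} and $\gamma_t',\delta_t'=\Theta(\lambda_t)$ in \Cref{lm:vv} (e.g. $\gamma_t=\delta_t=\frac{\beta_t\mu_g L_g}{2(\mu_g+L_g)}$), so that the primal contraction factors collapse to $1-\Theta(\beta_t)$ and $1-\Theta(\lambda_t)$; the hypothesis on $w$ in \cref{def:parameters} forces $\beta_t\le(\mu_g+L_g)^{-1}$ and the analogous bound on $\lambda_t$, so these are genuine contractions. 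A key bookkeeping observation is that with this choice the blow-up factors $(1+1/\delta_t)$ and $(1+1/\gamma_t')$ are of order $1/\alpha_t$, so that every squared-stepsize feedback ($\alpha_t^2$, $\beta_t^2$ or $\lambda_t^2$) that they, or the error weights $\frac{1}{\bar{c}_\eta\alpha_t}$, multiply ultimately lands at the common order $\alpha_t$.

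After summing, the right-hand side is a linear combination of $\|\nabla\Phi(x_t)\|^2$, $\|h_t^f\|^2$, $\|y_t-y_t^*\|^2$, $\|v_t-v_t^*\|^2$, $\|e_t^f\|^2$, $\|e_t^g\|^2$, $\|e_t^R\|^2$, $\|\nabla_y g(x_t,y_t)\|^2$ and the variances $\sigma_f^2,\sigma_g^2,\sigma_R^2$, and the goal is to show that all seven coefficients other than that of $\|\nabla\Phi(x_t)\|^2$ and the variances are nonpositive. For the three momentum errors I would exploit the STORM-type time-scale telescoping: the weight of $\|e_t^\bullet\|^2$ passes from $\frac{1}{\bar{c}_\eta\alpha_{t-1}}$ in $V_t$ to $\frac{(1-\eta_{t+1})^2(1+\Theta(\alpha_t^2))}{\bar{c}_\eta\alpha_t}$ in $V_{t+1}$, and using $(1-\eta_{t+1})^2\le 1-\eta_{t+1}=1-c_\eta\alpha_t^2$ together with $\frac{1}{\alpha_t}-\frac{1}{\alpha_{t-1}}=(w+t)^{1/3}-(w+t-1)^{1/3}\le\frac{1}{3}(w+t-1)^{-2/3}=O(\alpha_t^2)$ turns the diagonal contribution into $-\frac{c_\eta-(\text{self-amplification})}{\bar{c}_\eta}\alpha_t+O(\alpha_t^2)$. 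The definitions $c_{\eta_f}=\frac{1}{3L_f}+\bar{c}_{\eta_f}$, $c_{\eta_g}=\frac{1}{3L_f}+32L_g^2c_\beta^2+\frac{17(\cdots)}{L_{\mu_g}^2}\bar{c}_{\eta_g}$ and $c_{\eta_R}=\frac{1}{3L_f}+48L_g^2c_\lambda^2+\frac{16C_{g_{xy}}}{L_{\mu_g}^2}\bar{c}_{\eta_R}$ are reverse-engineered so that the $32L_g^2c_\beta^2$ and $48L_g^2c_\lambda^2$ pieces absorb the self-amplification, the $\bar{c}_\eta$-proportional pieces absorb the total $\Theta(\alpha_t)$ cross-feedback each error receives (the $+\alpha_t\|e_t^f\|^2$ of \Cref{lm:corefunction}, and the $\|e_t^g\|^2$, $\|e_t^R\|^2$ feedbacks from \Cref{lm:yy}, \Cref{lm:vv}, \Cref{lm:errorf} and \Cref{lm:errorR}), and the residual $\frac{1}{3L_f}$ leaves a strictly negative $-\Theta(\alpha_t)$ that swallows the $O(\alpha_t^2)$ time-scale remainder once $w$ is large.

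For the primal distance terms I would use the contraction slack. The weighted telescoping of $K_1\|y_t-y_t^*\|^2$ yields a net $-\Theta(K_1\beta_t)\|y_t-y_t^*\|^2$, which must dominate the $4\alpha_t(L_{f_x}^2+C_{f_y}L_{g_{xy}}^2/\mu_g^2)$ produced by \Cref{lm:corefunction}; since $\beta_t=c_\beta\alpha_t$ and $K_1$ scales like $1/c_\beta$, the lower bound on $c_\beta$ in \cref{def:parameters} leaves a factor-two margin. The identical mechanism with $K_2,c_\lambda$ controls $\|v_t-v_t^*\|^2$ against the $2\alpha_tC_{g_{xy}}$ of \Cref{lm:corefunction} and the $\lambda_t^2$-weighted self-feedback from \Cref{lm:errorf} and \Cref{lm:errorR}. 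The coefficient of $\|h_t^f\|^2$ is made negative by the $-\frac{\alpha_t}{2}(1-\alpha_t L_f)$ of \Cref{lm:corefunction}: each error lemma injects an $O(\alpha_t)$ amount that the lower bounds $\bar{c}_{\eta_f}\ge96L_F^2$, $\bar{c}_{\eta_g}\ge256L_g^2$, $\bar{c}_{\eta_R}\ge768(L_{g_{yy}}^2r_v^2+L_{f_y}^2)$ pin below $\frac{\alpha_t}{16}$ each, while the $K_1,K_2$ feedbacks---also $O(\alpha_t)$ through the $(1+1/\delta_t)\beta_t^2$ blow-up---are pinned below $\frac{\alpha_t}{32}$ by the same $c_\beta,c_\lambda$ lower bounds. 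Finally $\|\nabla_y g(x_t,y_t)\|^2$ carries the negative $-\Theta(K_1\beta_t)$ from \Cref{lm:yy}, which beats the $\beta_t^2$-weighted positive contributions it collects from \Cref{lm:errorf}, \Cref{lm:errorg}, \Cref{lm:errorR} and \Cref{lm:vv}.

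I expect the main obstacle to be the simultaneous calibration in the error step for $\|e_t^g\|^2$ and $\|e_t^R\|^2$: each of these carries both a self-amplification ($1+32L_g^2\beta_t^2$ and $1+48L_g^2\lambda_t^2$, respectively) and cross-feedback from three other lemmas, so one must check that the bracketed multiples of $\bar{c}_{\eta_g}$ and $\bar{c}_{\eta_R}$ genuinely upper-bound the whole sum of these $\Theta(\alpha_t)$ terms while still leaving the $\frac{1}{3L_f}$ residual free to dominate $\frac{1}{\bar{c}_\eta}(\frac{1}{\alpha_t}-\frac{1}{\alpha_{t-1}})$ uniformly in $t$---this is where the precise constants $17$, $16$ and the powers of $L_{\mu_g}$ enter. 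Once all seven non-variance coefficients are verified nonpositive, I would drop those terms from the summed inequality and read off the three surviving variance contributions $\frac{2(\eta_{t+1}^f)^2}{\bar{c}_{\eta_f}\alpha_t}\sigma_f^2$, $\frac{2(\eta_{t+1}^g)^2}{\bar{c}_{\eta_g}\alpha_t}\sigma_g^2$ and $\frac{4(\eta_{t+1}^R)^2}{\bar{c}_{\eta_R}\alpha_t}\sigma_R^2$, which gives the claimed bound on $\mathbb{E}[V_{t+1}-V_t]$.
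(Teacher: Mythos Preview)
Your proposal is correct and mirrors the paper's own proof almost exactly: the same six ingredient lemmas are combined with the weights $1,K_1,K_2,\frac{1}{\bar c_{\eta_f}\alpha_t},\frac{1}{\bar c_{\eta_g}\alpha_t},\frac{1}{\bar c_{\eta_R}\alpha_t}$, the Young parameters are chosen as $\gamma_t,\delta_t=\Theta(\beta_tL_{\mu_g})$ (the paper takes $\gamma_t=\tfrac{\beta_tL_{\mu_g}/2}{1-\beta_tL_{\mu_g}}$, $\delta_t=\tfrac{\beta_tL_{\mu_g}}{1-2\beta_tL_{\mu_g}}$, which to leading order coincide with your $\tfrac{\beta_tL_{\mu_g}}{2}$), and the momentum-error diagonals are handled via the STORM telescoping $\tfrac{1}{\alpha_t}-\tfrac{1}{\alpha_{t-1}}\le\tfrac{\alpha_t}{3L_f}$ exactly as you describe. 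The calibration you flag as the delicate point---showing that the bracketed multiples of $\bar c_{\eta_g},\bar c_{\eta_R}$ absorb the total cross-feedback while leaving the $\tfrac{1}{3L_f}$ residual free---is precisely where the paper spends its effort in Steps~3--5.
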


\begin{proof}
Based on the definition of $V_t$ in~\cref{def:potentialF}, it can be seen that $V_t$ contains six parts.  We next develop five important inequalities to prove \Cref{lm:mergeV}.

\noindent{\bf Step 1. Bound $\mathbb{E}\|y_{t} - y^*(x_{t})\|^2$ in \cref{def:potentialF}.} 

\noindent Based on~\Cref{lm:yy}, we have 
\begin{align}\label{eq:minusy}
    \mathbb{E}\|y_{t+1} &- y^*(x_{t+1})\|^2 - \mathbb{E}\|y_{t} - y^*(x_{t})\|^2 \nonumber\\ &\leq \left[ (1+\gamma_t)(1+\delta_t)\left(1-2\beta\frac{\mu_gL_g}{\mu_g+L_g}\right) -1\right]\mathbb{E}\|y_{t} - y^*(x_{t})]\|^2 \nonumber\\
    &\quad \ - (1+\gamma_t)(1+\delta_t)\left( \frac{2\beta_t}{\mu_g+L_g} - \beta_t^2 \right)\mathbb{E}\|\nabla_y g(x_t,y_t)\|^2 \nonumber\\
    &\quad \ + (1+\gamma_t)(1+\frac{1}{\delta_t})\beta_t^2\mathbb{E}\|e_t^g\|^2 + \left(1+\frac{1}{\gamma_t}\right)L^2_y\alpha_t^2\mathbb{E}\|h_t^f\|^2.
\end{align}
Choose 
$\gamma_t = \frac{\beta_t L_{\mu_g}/2}{1-\beta_t L_{\mu_g}}$ and $ \delta_t = \frac{\beta_tL_{\mu_g}}{1-2\beta_tL_{\mu_g}}$. 
Then,  the following equations and inequalities are satisfied. 
\begin{align}\label{eq:gammaanddelta}
(1+\gamma_t)(1+\delta_t)(1-2\beta_t L_{\mu_g}) &= 1-\frac{\beta_tL_{\mu_g}}{2}, \nonumber\\
(1+\delta_t)(1-2\beta_t L_{\mu_g}) &= 1-\beta_tL_{\mu_g}, \nonumber\\
(1+\gamma_t)(1-\beta_t L_{\mu_g}) &= 1-\frac{\beta_tL_{\mu_g}}{2}, \nonumber\\
1+\frac{1}{\delta_t} \leq \frac{1}{\beta_t L_{\mu_g}}, \quad
&1+\frac{1}{\gamma_t} \leq \frac{2}{\beta_t L_{\mu_g}},
\end{align}
where $L_{\mu_g} = \frac{\mu_gL_g}{\mu_g+L_g}$. 
Based on the selection of $w$ in \cref{def:parameters}, we have 
$(1+\gamma_t)(1+\frac{1}{\delta_t})\leq \frac{2}{\beta_t L_{\mu_g}}$.

\noindent Substituting~\cref{eq:gammaanddelta}'s bounds in~\cref{eq:minusy}, we have 
\begin{align*}
   \mathbb{E}\|y_{t+1} - &y^*(x_{t+1})\|^2 - \mathbb{E}\|y_{t} - y^*(x_{t})\|^2 \\ &\leq -\frac{\beta_t L_{\mu_g}}{2} \mathbb{E}\|y_{t} - y^*(x_{t})\|^2 
   - \left(\frac{2\beta_t}{\mu_g + L_g} - \beta^2_t\right)\mathbb{E}\|\nabla_y g(x_t,y_t)\|^2 \\
   &\quad \ +\frac{2}{\beta_t L_{\mu_g}}\beta^2_t\mathbb{E}\|e_t^g\|^2 + \frac{2}{\beta_t L_{\mu_g}}L_y^2\alpha_t^2\mathbb{E}\|h_t^f\|^2,
\end{align*}
which, 
in conjunction with our selection in \cref{def:parameters} 
,  yields 
\begin{align}\label{eq:ytplu1}
   \mathbb{E}\|y_{t+1} - &y^*(x_{t+1})\|^2 - \mathbb{E}\|y_{t} - y^*(x_{t})\|^2 \nonumber\\ &\leq -\frac{\beta_t L_{\mu_g}}{2} \mathbb{E}\|y_{t} - y^*(x_{t})\|^2 
   - \frac{\beta_t}{\mu_g + L_g}\mathbb{E}\|\nabla_y g(x_t,y_t)\|^2 \nonumber\\
   &\quad \ +\frac{2\beta_t}{ L_{\mu_g}}\mathbb{E}\|e_t^g\|^2 + \frac{2}{\beta_t L_{\mu_g}}L_y^2\alpha_t^2\mathbb{E}\|h_t^f\|^2.
\end{align}

\noindent Using $\beta_t = c_\beta\alpha_t$, and multiplying both sides of \cref{eq:ytplu1} by $K_1$, we have 
\begin{align}\label{ieq:yy}
   K_1 \mathbb{E}\big[\|y_{t+1} &- y^*(x_{t+1})\|^2 - \|y_{t} - y^*(x_{t})\|^2\big] \nonumber\\
   &\leq -4\Big(L^2_{f_x} + \frac{C_{f_y}L^2_{g_{xy}}}{\mu_g^2}\Big)\alpha_t
   \mathbb{E}\|y_{t} - y^*(x_{t})\|^2 
   - \frac{L^2_{f_y}+\frac{C_{f_y}L^2_{g_{xy}}}{\mu_g^2}}{L_{\mu_g}(\mu_g+L_g)}\alpha_t\mathbb{E}\|\nabla_y g(x_t,y_t)\|^2 \nonumber\\ 
   &\quad \ +\frac{16(L^2_{f_y}+\frac{C_{f_y}L^2_{g_{xy}}}{\mu_g^2})}{L^2_{\mu_g}}\alpha_t\mathbb{E}\|e_t^g\|^2 
   + \frac{\alpha_t}{32}\mathbb{E}\|h_t^f\|^2.
\end{align}

\noindent{\bf Step 2. Bound $\mathbb{E}\|v_{t} - v^*(x_{t})\|^2$ in \cref{def:potentialF}.} 

\noindent Next, we deal with $\mathbb{E}\|v_{t+1} - v^*_{t+1}\|^2$ in similar way. 
Based on the parameter selections in \cref{def:parameters}, we have   $\lambda_t \leq \frac{2\mu_g L_g}{\mu_g + L_g}$ , which  combined with~\Cref{lm:vv},  
yields 
\begin{align*}
\mathbb{E}\|&v_{t+1} - v_{t+1}^*\|^2 \\
&\leq (1+\gamma_t')\left(1+\delta_t'\right)\left[\left(1-2\lambda_t\frac{(L_g+L_g^3)\mu_g}{\mu_g + L_g} + \lambda_t^2L_g^2\right)\|v_t - v^*_t\|^2\right] \\
&\quad \ + (1+\gamma_t')\left(1+\frac{1}{\delta_t'}\right)\lambda_t^2\mathbb{E}\|e_t^R\|^2 \\
&\quad \ + (1+\frac{1}{\gamma_t'})\left( \frac{2L^2_{f_y}}{\mu^2_g} + \frac{2C_{f_y}L^2_{g_{yy}}}{\mu^4_g} \right)\left[ \alpha_t^2\mathbb{E}\|h_t^f\|^2 + \beta_t^2\left(2\mathbb{E}\|e_t^g\|^2 + 2\mathbb{E}\|\nabla_y g(x_t, y_t)\|^2 \right) \right]\\
&\leq (1+\gamma_t')\left(1+\delta_t'\right)\left[\left(1 - \frac{2\lambda_t L_g \mu_g}{\mu_g+L_g}\right)\mathbb{E}\|v_{t} - v_{t}^*\|^2\right] \\
&\quad \ + (1+\gamma_t')\left(1+\frac{1}{\delta_t'}\right)\lambda_t^2\mathbb{E}\|e_t^R\|^2 \\
&\quad \ + (1+\frac{1}{\gamma_t'})\left( \frac{2L^2_{f_y}}{\mu^2_g} + \frac{2C_{f_y}L^2_{g_{yy}}}{\mu^4_g} \right)\left[ \alpha_t^2\mathbb{E}\|h_t^f\|^2 + \beta_t^2\left(2\mathbb{E}\|e_t^g\|^2 + 2\mathbb{E}\|\nabla_y g(x_t, y_t)\|^2 \right) \right],
\end{align*}
Similarly to Step 1, we choose
$
\delta_t' = \frac{\lambda_t L_{\mu_g}}{1-2\lambda_t L_{\mu_g}}$ and $ \gamma_t' = \frac{\lambda_t L_{\mu_g}/2}{1-\lambda_t L_{\mu_g}}
$
which implies 
\begin{align*}
    1+\frac{1}{\delta_t'} \leq \frac{1}{\lambda_t L_{\mu_g}}, \quad 1+\frac{1}{\gamma_t'} \leq \frac{2}{\lambda_t L_{\mu_g}}, \quad
    (1+\gamma_t')(1+\frac{1}{\delta_t'})\leq \frac{2}{\lambda_t L_{\mu_g}}.
\end{align*}
Thus, we have
\begin{align}\label{eq:vsubvstar}
\mathbb{E}&\|v_{t+1} - v_{t+1}^*\|^2 \nonumber\\
&\leq \left(1-\frac{\lambda_t L_{\mu_g}}{2}\right)\mathbb{E}\|v_{t} - v_{t}^*\|^2 
+ \frac{2}{\lambda_t L_{\mu_g}}\lambda_t^2\mathbb{E}\|e_t^R\|^2\nonumber \\
&\quad \ + \frac{2}{\lambda_t L_{\mu_g}}\left( \frac{2L^2_{f_y}}{\mu^2_g} + \frac{2C_{f_y}L^2_{g_{yy}}}{\mu^4_g} \right)\left[ \alpha_t^2\mathbb{E}\|h_t^f\|^2 + \beta_t^2\left(2\mathbb{E}\|e_t^g\|^2 + 2\mathbb{E}\|\nabla_y g(x_t, y_t)\|^2 \right) \right].
\end{align}
Rearranging the above \cref{eq:vsubvstar}, we have 
\begin{align}\label{eq:vtpulsones}
    \mathbb{E}&\big[\|v_{t+1}- v_{t+1}^*\|^2 - \|v_{t} - v_{t}^*\|^2\big] \nonumber\\
    &\leq -\frac{\lambda_t L_{\mu_g}}{2}\mathbb{E}\|v_{t} - v_{t}^*\|^2 
    + \frac{2}{\lambda_t L_{\mu_g}}\lambda_t^2\mathbb{E}\|e_t^R\|^2 \nonumber\\
    &\quad \ + \frac{2}{\lambda_t L_{\mu_g}}\left( \frac{2L^2_{f_y}}{\mu^2_g} + \frac{2C_{f_y}L^2_{g_{yy}}}{\mu^4_g} \right)\Big[ \alpha_t^2\mathbb{E}\|h_t^f\|^2 + \beta_t^2\left(2\mathbb{E}\|e_t^g\|^2 + 2\mathbb{E}\|\nabla_y g(x_t, y_t)\|^2 \right) \Big].
\end{align}
Using $\lambda_t = c_\lambda\alpha_t$ with $c_\lambda$ in \cref{def:parameters}, and multiplying both sides of \cref{eq:vtpulsones} by $K_2$, we have  
\begin{align}\label{ieq:vv}
K_2\mathbb{E}&\big[\|v_{t+1} - v_{t+1}^*\|^2-\|v_{t} - v_{t}^*\|^2\big] \nonumber\\ 
&\leq -2C_{g_{xy}} \alpha_t \mathbb{E}\|v_{t} - v_{t}^*\|^2 
+ \frac{8C_{g_{xy}}^2}{L_{\mu_g}^2}\alpha_t\mathbb{E}\|e_t^R\|^2 + \frac{\alpha_t}{32}\mathbb{E}\|h_t^f\|^2 \nonumber\\
&\quad \ + \frac{L^2_{f_y}+\frac{C_{f_y}L^2_{g_{xy}}}{\mu_g^2}}{4L_{\mu_g}^2}\alpha_t\mathbb{E}\|e_t^g\|^2 + \frac{L^2_{f_y}+\frac{C_{f_y}L^2_{g_{xy}}}{\mu_g^2}}{4L_{\mu_g}(\mu_g+L_g)}\alpha_t\mathbb{E}\|\nabla_y g(x_t, y_t)\|^2.
\end{align}

\noindent{\bf Step 3. Bound $\mathbb{E}\|e_t^f\|^2$ in \cref{def:potentialF}.} 

\noindent Next, we obtain  from~\Cref{lm:errorf} that 
\begin{align}\label{eq:eftplus1}
\frac{\mathbb{E}\|e^f_{t+1}\|^2}{\alpha_{t}} - \frac{\mathbb{E}\|e^f_{t}\|^2}{\alpha_{t-1}}
&\leq \left[\frac{(1-\eta_{t+1})^2}{\alpha_t} - \frac{1}{\alpha_{t-1}}\right]\mathbb{E}\|e^f_{t}\|^2 + \frac{2(\eta^f_{t+1})^2}{\alpha_t}\sigma^2_f + 6L_F^2\alpha_t \mathbb{E}\|h_t^f\|^2 \nonumber\\
&\quad \ + \frac{12L_F^2\beta_t^2}{\alpha_t}\mathbb{E}\|e_t^g\|^2 + \frac{12L_F^2\beta_t^2}{\alpha_t}\mathbb{E}\|\nabla_y g(x_t,y_t)\|^2 \nonumber\\
&\quad \ + 12C_{g_{xy}}\frac{\lambda_t^2}{\alpha_t}\Big(\mathbb{E}\|e_t^R\|^2 + L_g^2\mathbb{E}\|v_t-v_t^*\|^2\Big),
\end{align}
where 
the inequality follows from the fact that $0 < 1-\eta_t < 1$ for all $t \in \{0,1,...,T-1\}$. 
Now considering the coefficient of the first term on the right-hand side of the above \cref{eq:eftplus1}, we have
\begin{align}\label{eq:fracalpha}
    \frac{(1-\eta^f_{t+1})^2}{\alpha_t} - \frac{1}{\alpha_{t-1}} \leq \frac{1}{\alpha_{t}} - 
    \frac{\eta^f_{t+1}}{\alpha_t} - \frac{1}{\alpha_{t-1}}.
\end{align}
Using the definition of $\alpha_t$ in \cref{def:parameters}, we have
\begin{align}\label{eq:at_at-1}
    \frac{1}{\alpha_{t}} - \frac{1}{\alpha_{t-1}} &= (w+t)^{1/3} - (w+t-1)^{1/3} \overset{(a)}{\leq} \frac{1}{3(w+t-1)^{2/3}} \overset{(b)}{\leq} \frac{1}{3(w/2+t)^{2/3}}\nonumber \\
    &=  \frac{2^{2/3}}{3(w+2t)^{2/3}} \leq \frac{2^{2/3}}{3(w+t)^{2/3}} \overset{(c)}{\leq} \frac{2^{2/3}}{3}\alpha_t^2 \overset{(d)}{\leq} \frac{\alpha_t}{3L_f}, 
\end{align}
where (a) follows from $(x+y)^{1/3} - x^{1/3} \leq y/(3x^{2/3})$, (b) 
follows because 
we choose $w \geq 2$, 
(c) follows 
from the definition of $\alpha_t$ and (d) follows because 
we choose $\alpha_t \leq 1/3L_f$. Substituting 
\cref{eq:at_at-1} 
into~\cref{eq:fracalpha_free} and using $\eta_{t+1}^f = c_{\eta_f}\alpha^2_t$, we have 
\begin{align}\label{eq:1subetaf}
    \frac{(1-\eta^f_{t+1})^2}{\alpha_t} - \frac{1}{\alpha_{t-1}} \leq \frac{\alpha_t}{3L_f} - c_{\eta_f}\alpha_t \leq - \bar{c}_{\eta_f}\alpha_t,
\end{align}
where the inequalities follow from 
$c_{\eta_f} = \frac{1}{3L_f} + \bar{c}_{\eta_f}$
with $\bar{c}_{\eta_f}$ in \cref{def:parameters}. 
Then, substituting \cref{eq:1subetaf} into \cref{eq:eftplus1} yields 
\begin{align}\label{ieq:errorf}
\frac{1}{\bar{c}_{\eta_f}} \mathbb{E}&\bigg[\frac{\|e^f_{t+1}\|^2}{\alpha_t} - \frac{\|e^f_{t}\|^2}{\alpha_{t-1}}\bigg]\nonumber
\\\leq &-\alpha_t\mathbb{E}\|e^f_{t}\|^2 + \frac{2(\eta^f_{t+1})^2}{\bar{c}_{\eta_f}\alpha_t}\sigma_f^2 + \frac{\alpha_t}{16}\mathbb{E}\|h_t^f\|^2 + \frac{L^2_{f_y}+\frac{C_{f_y}L^2_{g_{xy}}}{\mu_g^2}}{4L_{\mu_g}^2}\alpha_t\mathbb{E}\|e_t^g\|^2  \nonumber
\\
& + \frac{L^2_{f_y}+\frac{C_{f_y}L^2_{g_{xy}}}{\mu_g^2}}{4L_{\mu_g}(\mu_g+L_g)}\alpha_t\mathbb{E}\|\nabla_y g(x_t, y_t)\|^2 + \frac{8C_{g_{xy}}}{L^2_{\mu_g}}\alpha_t \mathbb{E}\|e_t^R\|^2 + C_{g_{xy}}\alpha_t\mathbb{E}\|v_t - v_t^*\|^2.
\end{align}

\noindent{\bf Step 4. Bound $\mathbb{E}\|e_t^g\|^2$ in \cref{def:potentialF}.} 

\noindent Next, from~\Cref{lm:errorg}, we have 
\begin{align}\label{eq:etplus1g}
\frac{\mathbb{E}\|e_{t+1}^g\|^2}{\alpha_t} - \frac{\mathbb{E}\|e_{t}^g\|^2}{\alpha_{t-1}}
&\leq \left[ \frac{(1-\eta^g_{t+1})^2 + 32(1-\eta^g_{t+1})^2L^2_g\beta^2_t}{\alpha_t} - \frac{1}{\alpha_{t-1}}\right]\mathbb{E}\|e_t^g\|^2 + \frac{2(\eta^g_{t+1})^2}{\alpha_t}\sigma^2_g\nonumber \\
&\quad \ +16L_g^2\alpha_t\mathbb{E}\|h_t^f\|^2 + \frac{32L_g^2\beta_t^2}{\alpha_t}\mathbb{E}\|\nabla_y g(x_t,y_t)\|^2,
\end{align}
where we use the 
fact that $0<1-\eta_t^g \leq 1$for all $t\in\{0,1,...,T-1\}$. 
Let us consider the coefficient of the first term on the right hand side 
of the above \cref{eq:etplus1g}. In specific, 
we have
\begin{align*}
\frac{(1-\eta^g_{t+1})^2 + 32(1-\eta^g_{t+1})^2L^2_g\beta^2_t}{\alpha_t} - \frac{1}{\alpha_{t-1}} &\leq \frac{(1-\eta^g_{t+1})}{\alpha_t}(1+32L^2_g\beta^2_t) - \frac{1}{\alpha_{t-1}} \\
&= \frac{1}{\alpha_t} - \frac{1}{\alpha_{t-1}} + \frac{32L^2_g\beta^2_t}{\alpha_t} - c_{\eta_g}\alpha_t(1+32L^2_g\beta^2_t),
\end{align*}
which, combined with \cref{eq:at_at-1} that $\frac{1}{\alpha_t} - \frac{1}{\alpha_{t-1}}\leq \frac{\alpha_t}{3L_f}$ and the definition of $\beta_t = c_{\beta}\alpha_t$, yields
\begin{align}\label{eq:etatplusoneg}
\frac{(1-\eta^g_{t+1})^2 + 32(1-\eta^g_{t+1})^2L^2_g\beta^2_t}{\alpha_t} - \frac{1}{\alpha_{t-1}} \leq \frac{\alpha_t}{3L_f} + 32L_g^2c_{\beta}^2\alpha_t - c_{\eta_g}\alpha_t.
\end{align}
Recall $\bar{c}_{\eta_g}$ from \cref{def:parameters} that we choose, then we have
\begin{align*}
&c_{\eta_g} = \frac{1}{3L_f} + 32L_g^2c_{\beta}^2 + \frac{17(L^2_{f_y}+\frac{C_{f_y}L^2_{g_{xy}}}{\mu_g^2})}{L^2_{\mu_g}} \bar{c}_{\eta_g},
\end{align*}
which, in conjunction with \cref{eq:etatplusoneg}, yields 
\begin{align}\label{eq:finaletas}
\frac{(1-\eta^g_{t+1})^2 + 32(1-\eta^g_{t+1})^2L^2_g\beta^2_t}{\alpha_t} - \frac{1}{\alpha_{t-1}} \leq -\frac{17(L^2_{f_y}+\frac{C_{f_y}L^2_{g_{xy}}}{\mu_g^2})}{L^2_{\mu_g}}\bar{c}_{\eta_g}\alpha_t.
\end{align}
Substituting \cref{eq:finaletas} into \cref{eq:etplus1g} yields 
\begin{align}\label{ieq:errorg}
    \frac{1}{\bar{c}_{\eta_g}}\left[\frac{\mathbb{E}\|e_{t+1}^g\|^2}{\alpha_t} - \frac{\mathbb{E}\|e_{t}^g\|^2}{\alpha_{t-1}}\right] 
    &\leq -\frac{17(L^2_{f_y}+\frac{C_{f_y}L^2_{g_{xy}}}{\mu_g^2})}{L^2_{\mu_g}}\alpha_t\mathbb{E}\|e_t^g\|^2 + \frac{2(\eta^g_{t+1})^2}{\bar{c}_{\eta_g}\alpha_t}\sigma^2_g \nonumber\\
    &\quad \ + \frac{\alpha_t}{16}\mathbb{E}\|h_t^f\|^2
    + \frac{L^2_{f_y}+\frac{C_{f_y}L^2_{g_{xy}}}{\mu_g^2}}{4L_{\mu_g}(\mu_g+L_g)}\alpha_t\mathbb{E}\|\nabla_y g(x_t, y_t)\|^2.
\end{align}

\noindent{\bf Step 5. Bound $\mathbb{E}\|e_t^R\|^2$ in \cref{def:potentialF}.} 

\noindent Next, from~\Cref{lm:errorR}, we have 
\begin{align}\label{eq:etRRs}
    \frac{\mathbb{E}\|e_{t+1}^R\|^2}{\alpha_t} - \frac{\mathbb{E}\|e_{t}^R\|^2}{\alpha_{t-1}} &\leq \Bigg[\frac{(1-\eta_{t+1}^R)^2(1 + 48L_g^2\lambda_t^2)}{\alpha_t} - \frac{1}{\alpha_{t-1}}\Bigg]\mathbb{E}\|e_{t}^R\|^2 \nonumber \\
    &\quad +\Bigg[\frac{4(\eta^R_{t+1})^2(\sigma^2_{g_{yy}}r^2_v+\sigma^2_{f_y})}{\alpha_t}\Bigg] + 48(1-\eta_{t+1}^R)^2(\sigma^2_{g_{yy}}r^2_v+\sigma^2_{f_y})\alpha_t\mathbb{E}\|h_t^f\|^2 \nonumber \\
    &\quad +48(1-\eta_{t+1}^R)^2(\sigma^2_{g_{yy}}r^2_v+\sigma^2_{f_y})c^2_\beta \alpha_t(2\mathbb{E}\|e_t^g\|^2 + 2\mathbb{E}\|\nabla_y g(x_t,y_t)\|^2) \nonumber \\
    &\quad +48(1-\eta_{t+1}^R)^2L^4_g c_\lambda^2\alpha_t \mathbb{E}\|v_t-v^*\|^2.
\end{align}
For the first term of the right-hand side of \cref{eq:etRRs}, we have 
\begin{align}\label{eq:firscoeefient}
\frac{(1-\eta_{t+1}^R)^2(1 + 48L_g^2\lambda_t^2 )}{\alpha_t} - \frac{1}{\alpha_{t-1}} 
&\leq \frac{1-\eta_{t+1}^R}{\alpha_t}(1 + 48L_g^2\lambda_t^2 ) - \frac{1}{\alpha_{t-1}} \nonumber\\
&= \frac{1}{\alpha_t} - \frac{1}{\alpha_{t-1}} - \frac{\eta_{t+1}^R}{\alpha_t} + \frac{1-\eta_{t+1}^R}{\alpha_t}\cdot48L_g^2\lambda_t^2 \nonumber\\
&\overset{(a)}{=}\frac{1}{\alpha_t} - \frac{1}{\alpha_{t-1}} - c_{\eta_R}\alpha_t + \left(\frac{1}{\alpha_t} - c_{\eta_R}\alpha_t\right)\cdot48L_g^2c_\lambda^2\alpha_t^2\nonumber\\
&\overset{(b)}{\leq} \frac{\alpha_t}{3L_f} + 48L_g^2c_\lambda^2\alpha_t - c_{\eta_R}\alpha_t,
\end{align}
where (a) follows from the definition that $\eta^R_{t+1} = c_{\eta_R}\alpha_t^2$, and (b) follows from \cref{eq:at_at-1} that $\frac{1}{\alpha_t} - \frac{1}{\alpha_{t-1}} \leq \frac{\alpha_t}{3L_f}$.
Recall $\bar{c}_{\eta_R}$ from \cref{def:parameters} that 
$$
c_{\eta_R} = \frac{1}{3L_f} + 48L_g^2c_\lambda^2 + \frac{16C_{g_{xy}}}{L_{\mu_g}^2}\bar{c}_{\eta_R},
$$
which, in conjunction with \cref{eq:firscoeefient}, yields
\begin{align}\label{eq:intermidiatResult}
\frac{(1-\eta_{t+1}^R)^2(1 + 48L_g^2\lambda_t^2 )}{\alpha_t} - \frac{1}{\alpha_{t-1}} \leq -\frac{16C_{g_{xy}}}{L_{\mu_g}^2}\bar{c}_{\eta_R}\alpha_t.
\end{align}
Incorporating \cref{eq:intermidiatResult} into \cref{eq:etRRs}, 
recalling from \cref{def:parameters} that  $\sigma_R := \sqrt{\sigma^2_{g_{yy}}r^2_v+\sigma^2_{f_y}}$, 
and multiplying both sides of \cref{eq:etRRs}
by $\frac{1}{\bar{c}_{\eta_R}}$, we have 
\begin{align}\label{ieq:errorR}
\frac{1}{\bar{c}_{\eta_R}}\mathbb{E}\left[\frac{\|e^R_{t+1}\|^2}{\alpha_t} - \frac{\|e^R_{t+1}\|^2}{\alpha_{t-1}}\right]
&\leq -\frac{16C_{g_{xy}}}{L_{\mu_g}^2} \alpha_t \mathbb{E}\|e^R_t\|^2 + \frac{4(\eta_{t+1}^R)^2}{\bar{c}_{\eta_R}\alpha_t}\sigma_R^2 
+\frac{\alpha_t}{16}\mathbb{E}\|h_{t}^f\|^2 \nonumber\\
&\ +\frac{L^2_{f_y}+\frac{C_{f_y}L^2_{g_{xy}}}{\mu_g^2}}{4L_{\mu_g}^2}\alpha_t \mathbb{E}\|e_t^g\|^2 + \frac{L^2_{f_y}+\frac{C_{f_y}L^2_{g_{xy}}}{\mu_g^2}}{4L_{\mu_g}(\mu_g+L_g)}\alpha_t\mathbb{E}\|\nabla_y g(x_t, y_t)\|^2 \nonumber\\
& \ +C_{g_{xy}}\alpha_t\mathbb{E}\|v_t-v_t^*\|^2.
\end{align}

\noindent{\bf Step 6. Merging the results of Step 1-5 to prove \cref{def:potentialF}.} 

\noindent Finally, adding~\cref{ieq:yy}, ~\cref{ieq:vv}~\cref{ieq:errorf}, ~\cref{ieq:errorg}, ~\cref{ieq:errorR} and the result of~\Cref{lm:corefunction} with $\alpha_t \leq \frac{1}{3L_f}$ yields 
\begin{align}
\mathbb{E}[V_{t+1} - V_{t}] \leq -\frac{\alpha_t}{2}\mathbb{E}\|\nabla \Phi(x_t)\|^2 + \frac{2(\eta^f_{t+1})^2}{\bar{c}_{\eta_f} \alpha_t}\sigma_f^2 + \frac{2(\eta^g_{t+1})^2}{\bar{c}_{\eta_g} \alpha_t}\sigma_g^2 + 
\frac{4(\eta^R_{t+1})^2}{\bar{c}_{\eta_R} \alpha_t}\sigma_R^2. 
\end{align}
Then, the proof is complete. 
\end{proof}

\subsection{Proof of~\Cref{th:main}}
\begin{proof}
Summing the result of~\Cref{lm:mergeV} for $t = 0$ to $T-1$, dividing by $T$ on both sides and using the definitions that  $\eta_{t+1}^f := c_{\eta_f}\alpha_t^2$, $\eta_{t+1}^g := c_{\eta_g}\alpha_t^2$, $\eta_{t+1}^R := c_{\eta_R}\alpha_t^2$, we have 
\begin{align}\label{eq:sumV}
    \frac{\mathbb{E}[V_{T} - V_{0}]}{T} \leq& -\frac{1}{T}\sum_{t=0}^{T-1}\frac{\alpha_t}{2}\mathbb{E}\|\nabla \Phi(x_t)\|^2 \nonumber
    \\&+ \frac{1}{T}\left[ \frac{2(c_{\eta_f})^2}{\bar{c}_{\eta_f}}\sigma_f^2 + \frac{2(c_{\eta_g})^2}{\bar{c}_{\eta_g}}\sigma_g^2 + \frac{4(c_{\eta_R})^2}{\bar{c}_{\eta_R}}\sigma_R^2 \right]\sum_{t=0}^{T-1}\alpha_t^3.
\end{align}
Next based on the definition of $\alpha_t$ in \cref{def:parameters}, we have 
\begin{align}\label{eq:sumalpha}
    \sum_{t=0}^{T-1}\alpha_t^3 = \sum_{t=0}^{T-1}\frac{1}{w + t}
    \overset{(a)}{\leq} \sum_{t=0}^{T-1}\frac{1}{1 + t} \leq \log(T+1) 
\end{align}
where inequality (a) results from the fact that we choose $w \geq 1$. 
By plugging ~\cref{eq:sumalpha} in~\cref{eq:sumV}, we have 
\begin{align}\label{eq:methEvt}
    \frac{\mathbb{E}[V_{T} - V_{0}]}{T} \leq -\frac{1}{T}\sum_{t=0}^{T-1}\frac{\alpha_t}{2}\mathbb{E}\|\nabla \Phi(x_t)\|^2
    + \left[ \frac{2c_{\eta_f}^2}{\bar{c}_{\eta_f}}\sigma_f^2 
    + \frac{2c_{\eta_g}^2}{\bar{c}_{\eta_g}}\sigma_g^2 + \frac{4c_{\eta_R}^2}{\bar{c}_{\eta_R}}\sigma_R^2 \right] \frac{\log(T+1)}{T}.
\end{align}
Rearrange the terms in \cref{eq:methEvt}, we have 
\begin{align*}
    \frac{1}{T}\sum_{t=0}^{T-1}\frac{\alpha_t}{2}\mathbb{E}\|\nabla \Phi(x_t)\|^2 \leq \frac{\mathbb{E}[V_0 - l^*]}{T}
    + \left[ \frac{2c_{\eta_f}^2}{\bar{c}_{\eta_f}}\sigma_f^2 
    + \frac{2c_{\eta_g}^2}{\bar{c}_{\eta_g}}\sigma_g^2 + \frac{4c_{\eta_R}^2}{\bar{c}_{\eta_R}}\sigma_R^2 \right]\frac{\log(T+1)}{T},
\end{align*}
which, in conjunction with the fact that $\alpha_t$ is decreasing w.r.t.~$t$ and multiplying by $2/\alpha_T$
on both sides, yields 
\begin{align}\label{eq:1Tephis}
    \frac{1}{T}\sum_{t=0}^{T-1}\mathbb{E}\|\nabla \Phi(x_t)\|^2 \leq \frac{2\mathbb{E}[V_0 - l^*]}{\alpha_T T}
    + \left[ \frac{4c_{\eta_f}^2}{\bar{c}_{\eta_f}}\sigma_f^2 
    + \frac{4c_{\eta_g}^2}{\bar{c}_{\eta_g}}\sigma_g^2 + \frac{8c_{\eta_R}^2}{\bar{c}_{\eta_R}}\sigma_R^2 \right]\frac{\log(T+1)}{\alpha_T T}.
\end{align}
Finally, based on the definition of the potential function, we have 
\begin{align}\label{eq:evo_ss}
    \mathbb{E}[V_0] :=& \mathbb{E} 
    \Big[ 
    \Phi(x_0) + \frac{2L}{3\sqrt{2}L_y}\|y_0 - y^*(x_0)\|^2
    + \frac{4C_B}{C_\lambda L_{\mu_g}}\|v_0 - v^*(x_0, y_0)\|^2 \nonumber
     \\
    &\quad + \frac{1}{\bar{c}_{\eta_f}}\frac{\|e_t^f\|^2}{\alpha_{t-1}}
    + \frac{1}{\bar{c}_{\eta_g}}\frac{\|e_t^g\|^2}{\alpha_{t-1}}+ \frac{1}{\bar{c}_{\eta_R}}\frac{\|e_t^R\|^2}{\alpha_{t-1}} 
    \Big] \nonumber \\
    \leq &
    \Phi(x_0) + \frac{2L}{3\sqrt{2}L_y}\|y_0 - y^*(x_0)\|^2
    + \frac{4C_B}{C_\lambda L_{\mu_g}}\|v_0 - v^*(x_0, y_0)\|^2 \nonumber
    \\&
    + \frac{1}{\bar{c}_{\eta_f}}\frac{\sigma_f^2}{\alpha_{t-1}} 
    + \frac{1}{\bar{c}_{\eta_g}}\frac{\sigma_g^2}{\alpha_{t-1}}
    + \frac{1}{\bar{c}_{\eta_R}}\frac{\sigma_R^2}{\alpha_{t-1}} 
\end{align}
where 
the inequality follows from Assumption~\ref{as:ulf},~\ref{as:llf},~\ref{as:sf},~\cref{lm:B} and the definitions of $h^f_t$, $h^g_t$ and $h^R_t$ in~\cref{def:htf},~\cref{def:htg},~\cref{def:htR}. 
Then, substituting \cref{eq:evo_ss} into \cref{eq:1Tephis} yields 
\begin{align*}
    \frac{1}{T}&\sum_{t=0}^{T-1}\|\nabla \Phi(x_t)\|^2 
    \leq
    \frac{2[\Phi(x_0) - \Phi^*]}{\alpha_t T} 
    + \frac{4L}{3\sqrt{2}L_y}\frac{\|y_0 - y^*(x_0)\|^2}{\alpha_t T}
    + \frac{8C_B}{C_{\lambda}L_{\mu_y}}\frac{\|v_0 - v^*(x_0, y_0)\|^2}{\alpha_t T} \\
    &+ \frac{2}{\alpha_{-1}\alpha_{T}T}\left( \frac{\sigma^2_f}{\bar{c}_{\eta_f}}+\frac{\sigma^2_g}{\bar{c}_{\eta_g}}+\frac{\sigma^2_R}{\bar{c}_{\eta_R}} \right) 
    + \left[ \frac{4c_{\eta_f}^2}{\bar{c}_{\eta_f}}\sigma_f^2 
    + \frac{4c_{\eta_g}^2}{\bar{c}_{\eta_g}}\sigma_g^2 + \frac{8c_{\eta_R}^2}{\bar{c}_{\eta_R}}\sigma_R^2 \right]\frac{\log(T+1)}{\alpha_T T}, 
\end{align*}
which, combined with the definitions of $\alpha_T := \frac{1}{(\omega + T)^{1/3}}$ and $\alpha_{-1} = \alpha_{0}$, yields 
\begin{align*}
    \mathbb{E}\|\nabla \Phi(x_a(T))\|^2 &\leq \widetilde{\mathcal{O}}\Bigg( \frac{\Phi(x_0) - \Phi^*}{T^{2/3}}
    + \frac{\|y_0 - y^*(x_0)\|^2}{T^{2/3}}
    + \frac{\|v_0 - v^*(x_0, y_0)\|^2}{T^{2/3}} 
    \\&\qquad\quad+ \frac{\sigma_f^2}{T^{2/3}}
    +  \frac{\sigma_g^2}{T^{2/3}}
    +  \frac{\sigma_R^2}{T^{2/3}}\Bigg) .
\end{align*}
Finally, the proof is complete. 
\end{proof}

\section{Proof of~\Cref{th:hessianfree}  {\color{blue}(FdeHBO with first order approximation)}}\label{sec:FdeHBOproof}

\subsection{Descent in the function value}
\begin{lemma}\label{lm:corefunction_free}
For non-convex and smooth $\Phi(\cdot)$,with $\widetilde{e}^f_t$ defined as: $\widetilde{e}_t^f:=\widetilde{h}_t^f - \bar{\nabla}f(x_t,y_t,v_t)$, the consecutive iterates of~\Cref{alg:main} satisfy:
\begin{align*}
    \mathbb{E}\left[\Phi(x_{t+1})\right] 
    &\leq \mathbb{E} \bigg[\Phi(x_{t})-\frac{\alpha_t}{2}\|\nabla \Phi(x_t)\|^2-\frac{\alpha_t}{2}(1-\alpha_t L_f)\|\widetilde{h}_t^f\|^2 
    \\&\quad \ + \alpha_t\|\widetilde{e}_t^f\|^2 + 4\alpha_t\Big(L_{f_x}^2 + \frac{C_{f_y}L_{g_{xy}}^2}{\mu_g^2}\Big)\|y_t - y_t^*\|^2 + 2\alpha_tC_{g_{xy}}\|v_t - v_t^*\|^2\bigg]
\end{align*} 
for all $t \in \{0,1,...,T-1\}$.
\end{lemma}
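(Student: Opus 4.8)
The plan is to mirror the proof of \Cref{lm:corefunction} almost verbatim, the key observation being that the finite-difference structure of the estimator $\widetilde{h}_t^f$ never enters this particular descent estimate. All of its deviation from the surrogate gradient $\bar\nabla f(x_t,y_t,v_t)$ is packaged into the single error term $\widetilde{e}_t^f := \widetilde{h}_t^f - \bar\nabla f(x_t,y_t,v_t)$, and the only property of the update I would use is $x_{t+1} - x_t = -\alpha_t\widetilde{h}_t^f$. Thus the lemma is insensitive to whether $\widetilde{h}_t^f$ is unbiased or finite-difference based; that distinction is deferred entirely to the later bound on $\mathbb{E}\|\widetilde{e}_t^f\|^2$ in \Cref{prop:HFetf}.

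First I would invoke the $L_f$-smoothness of $\Phi$ from \Cref{lm:3ieq},
\[
\Phi(x_{t+1}) \leq \Phi(x_t) + \langle \nabla\Phi(x_t), x_{t+1} - x_t\rangle + \frac{L_f}{2}\|x_{t+1} - x_t\|^2,
\]
and substitute $x_{t+1} - x_t = -\alpha_t\widetilde{h}_t^f$. Applying the elementary identity $-\langle a,b\rangle = \frac12\big(\|a-b\|^2 - \|a\|^2 - \|b\|^2\big)$ with $a = \nabla\Phi(x_t)$ and $b = \widetilde{h}_t^f$, and then merging the $\|\widetilde{h}_t^f\|^2$ contributions, produces the descent term $-\frac{\alpha_t}{2}\|\nabla\Phi(x_t)\|^2$, the step term $-\frac{\alpha_t}{2}(1-\alpha_t L_f)\|\widetilde{h}_t^f\|^2$, and a residual $\frac{\alpha_t}{2}\|\widetilde{h}_t^f - \nabla\Phi(x_t)\|^2$.

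The only genuine work is bounding this residual. I would insert $\pm\bar\nabla f(x_t,y_t,v_t)$ and use that, by the IFT form \cref{def:Phi} together with \cref{def:fffform}, the true hypergradient coincides with the surrogate evaluated at the exact solutions, i.e.\ $\nabla\Phi(x_t) = \bar\nabla f(x_t, y_t^*, v_t^*)$. A first Cauchy--Schwarz split then yields $2\|\widetilde{e}_t^f\|^2 + 2\|\bar\nabla f(x_t,y_t,v_t) - \bar\nabla f(x_t,y_t^*,v_t^*)\|^2$. Splitting the approximation error a second time by varying $v$ first and then $y$, the $v$-part is bounded by $C_{g_{xy}}\|v_t - v_t^*\|^2$ via the Jacobian boundedness in \Cref{as:llf}, and the $y$-part by $2\big(L_{f_x}^2 + \tfrac{C_{f_y}L_{g_{xy}}^2}{\mu_g^2}\big)\|y_t - y_t^*\|^2$, where the factor $\tfrac{C_{f_y}}{\mu_g^2}$ arises from the uniform bound $\|v_t^*\|^2 \leq \tfrac{C_{f_y}}{\mu_g^2}$ of \Cref{lm:boundofv} combined with the Lipschitzness of $\nabla_x f$ and $\nabla^2_{xy}g$. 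Collecting constants reproduces the stated coefficients $4\alpha_t\big(L_{f_x}^2 + \tfrac{C_{f_y}L_{g_{xy}}^2}{\mu_g^2}\big)$ and $2\alpha_t C_{g_{xy}}$.

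Finally I would substitute the residual bound into the smoothness inequality and take expectations, which gives the claim. I do not anticipate a real obstacle: the argument is structurally identical to \Cref{lm:corefunction}, and the subtleties of the finite-difference approximation (biasedness, the perturbation level $\delta_\epsilon$) do not appear at this stage. The one step to state with care is the identification $\nabla\Phi(x_t) = \bar\nabla f(x_t, y_t^*, v_t^*)$, since it is precisely what licenses treating the whole approximation error through Lipschitz continuity in $(y,v)$ rather than through any property of $\widetilde{h}_t^f$ itself.
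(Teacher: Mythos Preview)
Your proposal is correct and matches the paper's approach exactly: the paper's proof of this lemma is the single line ``The proof follows the same steps as in \Cref{lm:corefunction},'' and you have accurately identified both that this is the right strategy and why it works---the finite-difference structure of $\widetilde{h}_t^f$ is entirely absorbed into $\widetilde{e}_t^f$, so the smoothness-plus-residual-splitting argument of \Cref{lm:corefunction} carries over verbatim with $h_t^f, e_t^f$ replaced by $\widetilde{h}_t^f, \widetilde{e}_t^f$.
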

\begin{proof}
    The proof follows the same steps  as in  \Cref{lm:corefunction}. 
\end{proof}

\subsection{Descent in the iterates of the lower level function}
\begin{lemma}\label{lm:yy_free}
Define $e^g_t := h^g_t -\nabla_y g(x_t,y_t)$. Then the iterates of solving the lower-level  problem generated by~\Cref{alg:main_free} satisfy
\begin{align*}
    &\mathbb{E}\|y_{t+1} - y^*_{t+1}\|^2 \nonumber
    \\
    &\leq (1+\gamma_t)(1+\delta_t)\left(1-2\beta_t\frac{\mu_g Lg}{\mu_g + Lg}\right)\mathbb{E}\|y_t-y^*(x_t)\|^2 + \left(1+\frac{1}{\gamma_t}\right)L_y^2\alpha_t^2\mathbb{E}\|\widetilde{h}_t^f\|^2 \nonumber
    \\
    &\quad - (1+\gamma_t)(1+\delta_t)\left(\frac{2\beta_t}{\mu_g + Lg}-\beta_t^2 \right)\mathbb{E}\|\nabla_y g(x_t,y_t)\|^2 + (1+\gamma_t)(1+\frac{1}{\delta_t})\beta^2_t\mathbb{E}\|e_t^g\|^2
\end{align*}
for all $t \in \{0, . . . , T-1\}$ with some $\gamma_t, \delta_t > 0$.
\end{lemma}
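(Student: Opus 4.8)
The plan is to mirror the proof of the FMBO counterpart in \Cref{lm:yy}, since the lower-level update $y_{t+1}=y_t-\beta_t h_t^g$ in \Cref{alg:main_free} is \emph{identical} to the one in \Cref{alg:main}; the only change is that the upper-level step now reads $x_{t+1}-x_t=-\alpha_t\widetilde{h}_t^f$, so that every occurrence of $\|h_t^f\|^2$ arising from the motion of $x$ is simply replaced by $\|\widetilde{h}_t^f\|^2$. In particular, the finite-difference estimator enters the argument \emph{only} through $\|x_{t+1}-x_t\|$, and hence no bias analysis relating $\widetilde{h}_t^f$ to $\bar{\nabla}f$ is needed for this lemma.

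First I would separate the drift of the solution map from the drift of the iterate via Young's inequality with a free parameter $\gamma_t>0$,
\[
\|y_{t+1}-y^*(x_{t+1})\|^2 \leq (1+\gamma_t)\|y_{t+1}-y^*(x_t)\|^2 + \Bigl(1+\tfrac{1}{\gamma_t}\Bigr)\|y^*(x_t)-y^*(x_{t+1})\|^2 .
\]
The second summand is bounded using the Lipschitz continuity of $y^*(\cdot)$ from \Cref{lm:3ieq}, namely $\|y^*(x_t)-y^*(x_{t+1})\|\leq L_y\|x_{t+1}-x_t\|=L_y\alpha_t\|\widetilde{h}_t^f\|$, which yields exactly the term $(1+1/\gamma_t)L_y^2\alpha_t^2\mathbb{E}\|\widetilde{h}_t^f\|^2$ in the statement.

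Next, for $\|y_{t+1}-y^*(x_t)\|^2$ I would substitute $h_t^g=\nabla_y g(x_t,y_t)+e_t^g$ and apply Young's inequality with a second free parameter $\delta_t>0$ to peel off the estimation error,
\[
\|y_{t+1}-y^*(x_t)\|^2 \leq (1+\delta_t)\bigl\|y_t-\beta_t\nabla_y g(x_t,y_t)-y^*(x_t)\bigr\|^2 + \Bigl(1+\tfrac{1}{\delta_t}\Bigr)\beta_t^2\|e_t^g\|^2 .
\]
Expanding the square in the first term and invoking the standard strongly-convex-and-smooth bound (valid under \Cref{as:llf} since $\nabla_y g(x_t,y^*(x_t))=0$),
\[
\bigl\langle \nabla_y g(x_t,y_t),\, y_t-y^*(x_t)\bigr\rangle \;\geq\; \frac{\mu_g L_g}{\mu_g+L_g}\|y_t-y^*(x_t)\|^2 + \frac{1}{\mu_g+L_g}\|\nabla_y g(x_t,y_t)\|^2 ,
\]
produces the contraction factor $1-2\beta_t\frac{\mu_g L_g}{\mu_g+L_g}$ on $\|y_t-y^*(x_t)\|^2$ together with the negative term $-(\frac{2\beta_t}{\mu_g+L_g}-\beta_t^2)\|\nabla_y g(x_t,y_t)\|^2$. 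Collecting the three pieces and taking total expectation gives the claimed inequality, with $\gamma_t,\delta_t$ left as free positive parameters.

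I do not expect a genuine obstacle here: the lemma isolates the $y$-subproblem, whose update is unchanged from FMBO, and the proof is a pure deterministic chain of Young's inequalities plus the co-coercivity estimate, so no zero-mean cancellation or conditional-expectation bookkeeping is required (those enter only in the error-recursion lemmas). The one mild subtlety is cosmetic: the parameters $\gamma_t,\delta_t$ must eventually be tuned---in the later potential-function analysis---so that $(1+\gamma_t)(1+\delta_t)(1-2\beta_t\frac{\mu_g L_g}{\mu_g+L_g})$ becomes a genuine contraction, but at the level of this lemma they remain arbitrary.
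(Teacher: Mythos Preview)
Your proposal is correct and matches the paper's approach exactly: the paper simply states that ``the proof follows the same steps as in Lemma C.2 in \cite{khanduri2021near}'' (i.e., the FMBO counterpart \Cref{lm:yy}), which is precisely the Young's-inequality decomposition plus co-coercivity argument you outlined, with $\|x_{t+1}-x_t\|=\alpha_t\|\widetilde{h}_t^f\|$ replacing $\alpha_t\|h_t^f\|$.
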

\begin{proof}
    The proof follows the same steps as in  Lemma C.2 in \cite{khanduri2021near}. 
\end{proof}

\subsection{Descent in the gradient estimation error of the upper function}
\begin{lemma}[Restatement of \Cref{prop:HFetf}]\label{lm:errorf_free}
For any $\xi$, define $\widetilde{e}^f_t :=\widetilde{h}^f_t - \bar \nabla f(x_t,y_t,v_t)$, $\widetilde{e}_t^J:= \widetilde{J}(x_t, y_t, v_t, \delta_{\epsilon}; \xi) - \nabla^2_{xy}g(x_t,y_t; \xi)v_t$, and $\widetilde{e}_t^R := \widetilde{h}_t^R - \nabla_v R(x_t, y_t, v_t)$. 
Under Assumption~\ref{as:sf}, the iterates of the outer problem generated by~\Cref{alg:main_free} satisfy
\begin{align*}
    \mathbb{E}\|\widetilde{e}^f_{t+1}\|^2 
    &\leq \Big[(1-\eta^f_{t+1})^2+4L_{g_{xy}}r_v^2\delta_{\epsilon}\Big]\mathbb{E}\|\widetilde{e}_t^f\|^2 + 4(\eta^f_{t+1})^2\sigma_f^2 + \big(4L_{g_{xy}}r_v^2\delta_{\epsilon} + 16L^2_{g_{xy}}r_v^4\delta_{\epsilon}^2\big) \\
    &\quad \ +6(1-\eta^f_{t+1})^2\Big[L_F^2\alpha_t^2\mathbb{E}\|\widetilde{h}_t^f\|^2+2L_F^2\beta_t^2\big(\mathbb{E}\|e_t^g\|^2 + \|\nabla_y g(x_t, y_t)\|^2\big) \\
    &\qquad \qquad \qquad \qquad + 2C_{g_{xy}}\lambda^2_t\big(\mathbb{E}\|\widetilde{e}_t^R\|^2 + L_g^2\mathbb{E}\|v_t - v_t^*\|^2\big)\Big]    
\end{align*}
for all $t \in \{0, . . . , T-1\}$ with $L_F$ 
in~\Cref{lm:boundofgradf}.
\end{lemma}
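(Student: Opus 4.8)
The plan is to mirror the momentum-error analysis of the unbiased case in \Cref{lm:errorf}, and to treat the finite-difference bias as an additive perturbation on top of it. The starting point is the recursion \cref{def:htf_free} for $\widetilde{h}^f_{t+1}$, into which I substitute the identity $\widetilde{\nabla}f(x,y,v,\delta_\epsilon;\bar\xi)=\bar{\nabla}f(x,y,v;\bar\xi)-\widetilde{e}^J(x,y,v;\bar\xi)$, where $\widetilde{e}^J$ is the Jacobian-vector bias from \Cref{lm:boundofeJH}. Subtracting $\bar{\nabla}f(x_{t+1},y_{t+1},v_{t+1})$ then decomposes $\widetilde{e}^f_{t+1}$ into three pieces: the contraction term $(1-\eta^f_{t+1})\widetilde{e}^f_t$; the zero-mean stochastic part $B$ (the same martingale-difference expression that appears in \cref{eq:etf}); and the bias part $C=-\eta^f_{t+1}\widetilde{e}^J_{t+1}(\bar\xi_{t+1})-(1-\eta^f_{t+1})\big(\widetilde{e}^J_{t+1}(\bar\xi_{t+1})-\widetilde{e}^J_{t}(\bar\xi_{t+1})\big)$.

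After expanding $\mathbb{E}\|\widetilde{e}^f_{t+1}\|^2=\mathbb{E}\|(1-\eta^f_{t+1})\widetilde{e}^f_t+B+C\|^2$, the cross term $\mathbb{E}\langle(1-\eta^f_{t+1})\widetilde{e}^f_t,B\rangle$ vanishes by conditioning on the filtration $\Sigma_{t+1}$ exactly as in \cref{eq:etfinnerproduct}, since $B$ is conditionally zero-mean while $\widetilde{e}^f_t$ is $\Sigma_{t+1}$-measurable. The contribution of $\|B\|^2$ is then identical to the unbiased analysis: its variance piece is controlled by $\sigma_f^2$ via \Cref{lm:B}, and its consecutive-difference piece is bounded exactly as in \cref{eq:2ndpartofetf}, reusing \Cref{lm:boundofgradf} for the $L_F^2$ Lipschitz factor, Assumption~\ref{as:llf} together with the projection radius for the $C_{g_{xy}}$ factor, the nonexpansiveness of the projection to pass from $\|w_{t+1}-v_t\|$ to $\|v_{t+1}-v_t\|$, and $\mathbb{E}\|\nabla_vR\|^2\le L_g^2\mathbb{E}\|v_t-v_t^*\|^2$ from \cref{eq:boundofnablaR}. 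This reproduces the entire last bracket of the claimed bound.

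The genuinely new work is the bias part, and it is also the main obstacle. Unlike $B$, the term $C$ has a nonzero conditional mean (finite differencing is only asymptotically unbiased), so the cross term $\mathbb{E}\langle(1-\eta^f_{t+1})\widetilde{e}^f_t,C\rangle$ does not disappear and must be absorbed. I would group the contraction term with $C$ and apply Young's inequality $\|a+b\|^2\le(1+\theta)\|a\|^2+(1+\theta^{-1})\|b\|^2$ with $\theta\asymp 4L_{g_{xy}}r_v^2\delta_\epsilon$; using \Cref{lm:boundofeJH} to bound $\|\widetilde{e}^J_{t+1}\|,\|\widetilde{e}^J_{t}\|\le L_{g_{xy}}r_v^2\delta_\epsilon$ (finite precisely because the auxiliary projection enforces $\|v_t\|\le r_v$, and dimension-free), this converts $(1-\eta^f_{t+1})^2\mathbb{E}\|\widetilde{e}^f_t\|^2$ into $\big[(1-\eta^f_{t+1})^2+4L_{g_{xy}}r_v^2\delta_\epsilon\big]\mathbb{E}\|\widetilde{e}^f_t\|^2$ and emits the additive constants $4L_{g_{xy}}r_v^2\delta_\epsilon+16L^2_{g_{xy}}r_v^4\delta_\epsilon^2$. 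The residual cross term $\mathbb{E}\langle B,C\rangle$ is dispatched by a further Young's split into $\|B\|^2$ and $\|C\|^2$, which accounts for the enlarged variance coefficient $4(\eta^f_{t+1})^2\sigma_f^2$. Collecting everything and using $\eta^f_{t+1}\in[0,1]$ yields the stated inequality; the only delicate point is keeping $\theta$ proportional to $\delta_\epsilon$, so that under the theorem's choice $\delta_\epsilon=\mathcal{O}((\eta^f_{t+1})^2)$ the contraction factor stays at order $(1-\mathcal{O}(\eta^f_{t+1}))^2$ and the momentum variance-reduction effect survives.
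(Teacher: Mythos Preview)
Your proposal is correct and follows the same decomposition as the paper: write $\widetilde{e}^f_{t+1}=(1-\eta^f_{t+1})\widetilde{e}^f_t+B+C$ with $B$ the conditionally zero-mean noise and $C=-\big(\widetilde{e}^J_{t+1}-(1-\eta^f_{t+1})\widetilde{e}^J_t\big)$ the finite-difference bias, use $\mathbb{E}\langle\widetilde{e}^f_t,B\rangle=0$, bound $\|C\|$ via \Cref{lm:boundofeJH}, and reuse \cref{eq:2ndpartofetf} for the consecutive-difference piece. The one substantive difference is how the nonvanishing cross term $\mathbb{E}\langle(1-\eta^f_{t+1})\widetilde{e}^f_t,C\rangle$ is absorbed. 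The paper does \emph{not} use Young's inequality here: it applies Cauchy--Schwarz and then the crude scalar bound $\sqrt{x}\le\max\{1,x\}\le 1+x$ to $\sqrt{\mathbb{E}\|\widetilde{e}^f_t\|^2}$, which directly yields a contribution of the form $2L_{g_{xy}}r_v^2\delta_\epsilon\,(1+\mathbb{E}\|\widetilde{e}^f_t\|^2)$. Your Young route with $\theta\asymp\delta_\epsilon$ is cleaner and actually gives slightly tighter additive $\delta_\epsilon$-constants.

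One bookkeeping caveat: as you describe it, after handling $\langle A,C\rangle$ you separately Young $2\langle B,C\rangle\le\|B\|^2+\|C\|^2$, which leaves a total of $2\,\mathbb{E}\|B\|^2$ and hence, after the three-way split inside \cref{eq:2ndpartofetf}, a prefactor $12(1-\eta^f_{t+1})^2$ on the bracket rather than the stated $6(1-\eta^f_{t+1})^2$. The paper avoids this by keeping $\|B+C\|^2$ intact and splitting it as $2\|\eta^f_{t+1}U-C\|^2+2(1-\eta^f_{t+1})^2\|V\|^2$ (variance grouped with the bias, consecutive-difference alone), which preserves the coefficient $2$ on the consecutive-difference piece. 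Either arrangement suffices for the downstream $\mathcal{O}(\epsilon^{-1.5})$ result, but if you want the lemma's exact constants you should adopt that grouping rather than expanding all pairwise crosses.
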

\begin{proof}
Based on the definition of $\bar{\nabla} f(x_t, y_t, v_t;\xi)$ in \cref{def:fffform}, the definition of $\widetilde{\nabla} f(x_t, y_t, v_t;\xi)$ in \Cref{sec:hfbo} and the definition of  $\widetilde{e}_t^J$ above, we have 
\begin{align}\label{eq:etjinf}
    \bar{\nabla} f(x_t, y_t, v_t; \xi) = \widetilde{\nabla} f(x_t, y_t, v_t,\delta_\epsilon; \xi) + \widetilde{e}_t^J
\end{align}
for any data sample $\xi$.
From the definition of $\widetilde{e}_t^f$, we have 
\begin{align}\label{eq:etf_free}
    &\mathbb{E}\|\widetilde{e}_{t+1}^f\|^2 \nonumber\\
    &= \mathbb{E}\|\widetilde{h}^f_{t+1} - \bar \nabla f(x_{t+1},y_{t+1},v_{t+1})\|^2 \nonumber\\
    &\overset{(a)}{=}\mathbb{E}\|\eta^f_{t+1}\widetilde{\nabla}f(x_{t+1},y_{t+1},v_{t+1},\delta_{\epsilon}; \xi_{t+1}) + (1-\eta^f_{t+1})\big(\widetilde{h}_t^f+\widetilde{\nabla}f(x_{t+1},y_{t+1},v_{t+1},\delta_{\epsilon}; \xi_{t+1}) \nonumber\\
    &\quad \quad \ \ - \widetilde{\nabla}f(x_{t},y_{t},v_{t},\delta_{\epsilon}; \xi_{t+1})\big) - \bar{\nabla}f(x_{t+1},y_{t+1},v_{t+1})\|^2 \nonumber\\
    &\overset{(b)}{=}\mathbb{E}\|\eta^f_{t+1}\bar{\nabla}f(x_{t+1},y_{t+1},v_{t+1}; \xi_{t+1}) + (1-\eta^f_{t+1})\big(\widetilde{h}_t^f+\bar{\nabla}f(x_{t+1},y_{t+1},v_{t+1}; \xi_{t+1}) \nonumber\\
    &\quad - \bar{\nabla}f(x_{t},y_{t},v_{t}; \xi_{t+1})\big) - \bar{\nabla}f(x_{t+1},y_{t+1},v_{t+1}) - \big(\widetilde{e}^J_{t+1} - (1-\eta^f_{t+1})\widetilde{e}^J_t\big)\|^2 \nonumber\\
    &\overset{(c)}{=}\mathbb{E}\|(1-\eta^f_{t+1})\widetilde{e}_t^f + \eta^f_{t+1}\big(\bar{\nabla}f(x_{t+1},y_{t+1},v_{t+1}; \xi_{t+1}) - \bar{\nabla}f(x_{t+1},y_{t+1},v_{t+1}) \nonumber\\
    &\quad \quad \ \ \ +(1-\eta^f_{t+1})\Big(\big(\bar{\nabla}f(x_{t+1},y_{t+1},v_{t+1}; \xi_{t+1}) - \bar{\nabla}f(x_{t+1},y_{t+1},v_{t+1})\big) \nonumber\\
    &\quad \quad \ \ \ -\big(\bar{\nabla}f(x_{t},y_{t},v_{t}; \xi_{t+1}) - \bar{\nabla}f(x_{t},y_{t},v_{t}) \big)\Big)
    - \big(\widetilde{e}^J_{t+1} - (1-\eta^f_{t+1})\widetilde{e}^J_t\big)\|^2
    \nonumber\\
    &\overset{(d)}{\leq}\Big[(1-\eta^f_{t+1})^2+2L_{g_{xy}}r_v^2\delta_{\epsilon}\Big]\mathbb{E}\|\widetilde{e}_t^f\|^2 \nonumber
\\&\quad\ +\mathbb{E}\|\eta^f_{t+1}\big(\bar{\nabla}f(x_{t+1},y_{t+1},v_{t+1}; \xi_{t+1}) - \bar{\nabla}f(x_{t+1},y_{t+1},v_{t+1})\big) \nonumber\\
    &\quad\ +(1-\eta^f_{t+1})\Big(\big(\bar{\nabla}f(x_{t+1},y_{t+1},v_{t+1}; \xi_{t+1}) - \bar{\nabla}f(x_{t+1},y_{t+1},v_{t+1}) \big) \nonumber\\
    &\quad\  -\big(\bar{\nabla}f(x_{t},y_{t},v_{t}; \xi_{t+1}) - \bar{\nabla}f(x_{t},y_{t},v_{t}) \big)\Big) - \big(\widetilde{e}^J_{t+1} - (1-\eta^f_{t+1})\widetilde{e}^J_t\big)\|^2 + 2L_{g_{xy}}r_v^2\delta_{\epsilon} \nonumber\\
    &\overset{(e)}{\leq} \Big[(1-\eta^f_{t+1})^2+2L_{g_{xy}}r_v^2\delta_{\epsilon}\Big]\mathbb{E}\|\widetilde{e}_t^f\|^2 + 2L_{g_{xy}}r_v^2\delta_{\epsilon} + 4(\eta^f_{t+1})^2\sigma_f^2 + 4\mathbb{E}\|\widetilde{e}^J_{t+1} - (1-\eta^f_{t+1})\widetilde{e}^J_t\|^2
    \nonumber\\ 
    &\quad \ + 2(1-\eta^f_{t+1})^2\mathbb{E}\|\big(\bar{\nabla}f(x_{t+1},y_{t+1},v_{t+1}; \xi_{t+1}) - \bar{\nabla}f(x_{t+1},y_{t+1},v_{t+1})\big) \nonumber
    \\&\quad\ -\big(\bar{\nabla}f(x_{t},y_{t},v_{t}; \xi_{t+1}) - \bar{\nabla}f(x_{t},y_{t},v_{t}) \big)\|^2 \nonumber\\
    &\leq \Big[(1-\eta^f_{t+1})^2+4L_{g_{xy}}r_v^2\delta_{\epsilon}\Big]\mathbb{E}\|\widetilde{e}_t^f\|^2 + 4L_{g_{xy}}r_v^2\delta_{\epsilon} + 4(\eta^f_{t+1})^2\sigma_f^2 + 16L^2_{g_{xy}}r_v^4\delta_{\epsilon}^2
    \nonumber\\ 
    &\quad \ + 2(1-\eta^f_{t+1})^2\mathbb{E}\|\big(\bar{\nabla}f(x_{t+1},y_{t+1},v_{t+1}; \xi_{t+1}) - \bar{\nabla}f(x_{t+1},y_{t+1},v_{t+1})\big) \nonumber
    \\&\quad\ -\big(\bar{\nabla}f(x_{t},y_{t},v_{t}; \xi_{t+1}) - \bar{\nabla}f(x_{t},y_{t},v_{t})\big)\|^2
\end{align}
where 
(a) uses the definition of $\widetilde{h}^f_{t+1}$ in~\cref{def:htf_free}, 
(b) uses \cref{eq:etjinf}, 
(c) uses the definition that $\widetilde{e}^f_t :=\widetilde{h}^f_t - \bar{\nabla} f(x_t,y_t,v_t)$,
(d) follows because for $\Sigma_{t+1} = \sigma\{y_0, x_0,v_0,...,y_t, x_t, v_t,  y_{t+1}, x_{t+1}, v_{t+1}\}$,
\begin{align*}
    &\mathbb{E}\bigg\langle (1-\eta^f_{t+1})\widetilde{e}_t^f, \big(\bar{\nabla}f(x_{t+1},y_{t+1},v_{t+1}; \xi_{t+1}) - \bar{\nabla}f(x_{t+1},y_{t+1},v_{t+1}) \big)\nonumber\\
    &\quad \quad \ \ \ -\big(\bar{\nabla}f(x_{t},y_{t},v_{t}; \xi_{t+1}) - \bar{\nabla}f(x_{t},y_{t},v_{t}) \big)\Big)
    - \big(\widetilde{e}^J_{t+1} - (1-\eta^f_{t+1})\widetilde{e}^J_t\big)\|^2 | \Sigma_{t+1}\bigg\rangle\\
    &\quad=\mathbb{E}\bigg\langle (1-\eta^f_{t+1})\widetilde{e}_t^f, \mathbb{E}\Big[\big(\bar{\nabla}f(x_{t+1},y_{t+1},v_{t+1}; \xi_{t+1}) - \bar{\nabla}f(x_{t+1},y_{t+1},v_{t+1}) \big)\nonumber\\
    &\quad \quad\ -\big(\bar{\nabla}f(x_{t},y_{t},v_{t}; \xi_{t+1}) - \bar{\nabla}f(x_{t},y_{t},v_{t}) \big)\Big)
    - \big(\widetilde{e}^J_{t+1} - (1-\eta^f_{t+1})\widetilde{e}^J_t\big)\|^2\Big] | \Sigma_{t+1}\bigg\rangle \\
    &\quad=\mathbb{E}\Big\langle (1-\eta^f_{t+1})\widetilde{e}_t^f, -\big(\widetilde{e}^J_{t+1} - (1-\eta^f_{t+1})\widetilde{e}^J_t\big)\Big\rangle \\
    &\quad\leq \sqrt{\mathbb{E}\|\widetilde{e}_t^f\|^2}\cdot\sqrt{\mathbb{E}\|\widetilde{e}^J_{t+1} - (1-\eta^f_{t+1})\widetilde{e}^J_t\|^2} \\
    &\quad\leq \max\{1, \mathbb{E}\|\widetilde{e}_t^f\|^2\}\cdot\sqrt{\mathbb{E}\|\widetilde{e}^J_{t+1} - (1-\eta^f_{t+1})\widetilde{e}^J_t\|^2} \\
    &\quad\leq (1+ \mathbb{E}\|\widetilde{e}_t^f\|^2)\cdot\sqrt{2\mathbb{E}\|\widetilde{e}^J_{t+1}\|^2 + 2\mathbb{E}\|\widetilde{e}^J_t\|^2} \\
    &\quad\leq (1+ \mathbb{E}\|\widetilde{e}_t^f\|^2)\cdot\big(2L_{g_{xy}}r_v^2\delta_{\epsilon}\big)\\
    &\quad = \big(2L_{g_{xy}}r_v^2\delta_{\epsilon}\big)\mathbb{E}\|\widetilde{e}_t^f\|^2 + 2L_{g_{xy}}r_v^2\delta_{\epsilon}, 
\end{align*}
which follows from~\Cref{lm:boundofeJH}, and 
(e) follows from~\cref{lm:B}.

\noindent Next, we bound the last term of~\cref{eq:etf_free} as 
\begin{align}\label{eq:2ndpartofetf_free}
    &2(1-\eta^f_{t+1})^2\mathbb{E}\|\big(\bar{\nabla}f(x_{t+1},y_{t+1},v_{t+1}; \xi_{t+1}) - \bar{\nabla}f(x_{t},y_{t},v_{t}; \xi_{t+1})\big) \nonumber
    \\&\qquad\quad\- \big(\bar{\nabla}f(x_{t+1},y_{t+1},v_{t+1}) - \bar{\nabla}f(x_{t},y_{t},v_{t})\big)\|^2 \nonumber \\
    &\quad \ \overset{(a)}{\leq}2(1-\eta^f_{t+1})^2\mathbb{E}\|\bar{\nabla}f(x_{t+1},y_{t+1},v_{t+1}; \xi_{t+1}) - \bar{\nabla}f(x_{t},y_{t},v_{t}; \xi_{t+1})\|^2 \nonumber \\
    &\quad \ \leq 6(1-\eta^f_{t+1})^2\mathbb{E}\|\bar{\nabla}f(x_{t+1},y_{t+1},v_{t+1}; \xi_{t+1}) - \bar{\nabla}f(x_{t},y_{t+1},v_{t+1}; \xi_{t+1})\|^2 \nonumber \\
    &\quad \ \quad \ + 6(1-\eta^f_{t+1})^2\mathbb{E}\|\bar{\nabla}f(x_{t},y_{t+1},v_{t+1}; \xi_{t+1}) - \bar{\nabla}f(x_{t},y_{t},v_{t+1}; \xi_{t+1})\|^2 \nonumber \\
    &\quad \ \quad \ + 6(1-\eta^f_{t+1})^2\mathbb{E}\|\bar{\nabla}f(x_{t},y_{t},v_{t+1}; \xi_{t+1}) - \bar{\nabla}f(x_{t},y_{t},v_{t}; \xi_{t+1})\|^2 \nonumber \\
    &\quad \ \overset{(b)}{\leq} 6(1-\eta^f_{t+1})^2L_F^2\mathbb{E}\|x_{t+1} - x_t\|^2 + 6(1-\eta^f_{t+1})^2L_F^2\mathbb{E}\|y_{t+1} - y_t\|^2 \nonumber
    \\&\quad\ \quad\ + 6(1-\eta^f_{t+1})^2\mathbb{E}\|\nabla^2_{xy} g(x_t, y_t)(v_{t+1} - v_t)\|^2 \nonumber \\
    &\quad \ \overset{(c)}{\leq} 6(1-\eta^f_{t+1})^2L_F^2\mathbb{E}\|x_{t+1} - x_t\|^2 + 6(1-\eta^f_{t+1})^2L_F^2\mathbb{E}\|y_{t+1} - y_t\|^2 \nonumber
    \\&\quad\ \quad\ +6(1-\eta^f_{t+1})^2C_{g_{xy}}\mathbb{E}\|v_{t+1} - v_t\|^2 \nonumber \\
    &\quad \ \overset{(d)}{\leq} 6(1-\eta^f_{t+1})^2L_F^2\mathbb{E}\|x_{t+1} - x_t\|^2 + 6(1-\eta^f_{t+1})^2L_F^2\mathbb{E}\|y_{t+1} - y_t\|^2 \nonumber
    \\& \quad\ \quad\ + 6(1-\eta^f_{t+1})^2C_{g_{xy}}\mathbb{E}\|w_{t+1} - v_t\|^2 \nonumber \\
    &\quad \ \overset{(e)}{\leq} 6(1-\eta^f_{t+1})^2\Big[L_F^2\big(\alpha_t^2\mathbb{E}\|\widetilde{h}_t^f\|^2+\beta_t^2\mathbb{E}\|h_t^g\|^2\big) + C_{g_{xy}}\lambda^2_t\mathbb{E}\|\widetilde{h}_t^R\|^2\Big] \nonumber \\
    &\quad \ \overset{(f)}{\leq}6(1-\eta^f_{t+1})^2\Big[L_F^2\alpha_t^2\mathbb{E}\|\widetilde{h}_t^f\|^2+2L_F^2\beta_t^2\big(\mathbb{E}\|e_t^g\|^2 + \|\nabla_y g(x_t, y_t)\|^2\big) \nonumber
    \\&\quad\ \quad\ + 2C_{g_{xy}}\lambda^2_t\big(\mathbb{E}\|\widetilde{e}_t^R\|^2 + \mathbb{E}\|\nabla R_v(x_t, y_t, v_t)\|^2\big)\Big] \nonumber \\
    &\quad \ \overset{(g)}{\leq}6(1-\eta^f_{t+1})^2\Big[L_F^2\alpha_t^2\mathbb{E}\|\widetilde{h}_t^f\|^2+2L_F^2\beta_t^2\big(\mathbb{E}\|e_t^g\|^2 + \|\nabla_y g(x_t, y_t)\|^2\big)\nonumber
    \\&\quad\ \quad\ + 2C_{g_{xy}}\lambda^2_t\big(\mathbb{E}\|\widetilde{e}_t^R\|^2 + \mathbb{E}\|v_t - v_t^*\|^2\big)\Big],
\end{align}
where (a) follows from the mean variance inequality: For a random variable $Z$ we have $\mathbb{E}\|Z - \mathbb{E}[Z]\|^2 \leq \mathbb{E}\|Z\|^2$ with $Z$ defined as $Z := \bar{\nabla}f(x_{t+1},y_{t+1},v_{t+1}; \xi_{t+1}) - \bar{\nabla}f(x_{t},y_{t},v_{t}; \xi_{t+1})$, (b) follows from~\Cref{lm:boundofgradf} and~\cref{def:gradR}, (c) uses Assumption~\ref{as:llf}, (d) uses the nonexpansiveness of projection, (e) uses the definition of $\widetilde{h}_t^f$ in~\cref{def:htf_free}, $h_t^g$ in~\cref{def:htg} and $\widetilde{h}_t^R$ in~\cref{def:htR_free}, (f) follows from the definition that $e_t^g := g_t^R - \nabla g(x_t, y_t)$ and $\widetilde{e}_t^R := \widetilde{h}_t^R - \nabla_v R(x_t, y_t, v_t)$, (g) uses the result of~\cref{eq:boundofnablaR}.

\noindent Finally, substituting~\cref{eq:2ndpartofetf_free} in~\cref{eq:etf_free}, we finish the proof.  
\end{proof}

\subsection{Descent in the gradient estimation error of the inner function}
\begin{lemma}\label{lm:errorg_free}
Define $e_t^g := h_t^g - \nabla_y g(x_t, y_t)$. Under Assumption~\ref{as:llf} and~\ref{as:sf}, the iterates generated from~\Cref{alg:main_free} satisfy 
\begin{align*}
    \mathbb{E} \|e_{t+1}^g\|^2 &\leq \left((1-\eta_{t+1}^g)^2 \mathbb{E} \|e_{t}^g\|^2 + 32(1-\eta_{t+1}^g)^2 L_g^2\beta_t^2 \right)\mathbb{E} \|e_{t}^g\|^2 + 2(\eta_{t+1}^g)^2\sigma_g^2 \nonumber\\ &+ 16(1-\eta_{t+1}^g)^2L_g^2 \alpha_t^2\mathbb{E} \|\widetilde{h}_{t}^f\|^2 + 32(1-\eta_{t+1}^g)^2L_g^2 \beta^2_t \mathbb{E}\|\nabla_y g(x_t, y_t)\|^2
\end{align*}
for all $t\in \{0,1,...,T-1\}$.
\end{lemma}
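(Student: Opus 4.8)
The plan is to recognize that the lower-level $y$-update in \Cref{alg:main_free} is \emph{identical} to the one in \Cref{alg:main}: both set $y_{t+1} = y_t - \beta_t h_t^g$ with the momentum estimator $h_t^g$ given by \cref{def:htg}, which uses the \emph{exact} stochastic gradient $\nabla_y g(x,y;\zeta)$. The finite-difference approximation in FdeHBO enters only the $v$- and $x$-updates (through $\widetilde{H}$ and $\widetilde{J}$), never the $y$-update. Consequently $h_t^g$ remains an unbiased recursive-momentum estimator, and the argument is a verbatim repetition of the proof of \Cref{lm:errorg}. I would state this reduction explicitly and then reproduce the chain of bounds, changing only one line.

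Concretely, I would expand $e_{t+1}^g = h_{t+1}^g - \nabla_y g(x_{t+1},y_{t+1})$ using \cref{def:htg} and the definition $e_t^g := h_t^g - \nabla_y g(x_t,y_t)$, regrouping the expression as $(1-\eta_{t+1}^g)e_t^g$ plus the stochastic-difference term $\big(\nabla_y g(x_{t+1},y_{t+1};\zeta_{t+1}) - \nabla_y g(x_{t+1},y_{t+1})\big) - (1-\eta_{t+1}^g)\big(\nabla_y g(x_t,y_t;\zeta_{t+1}) - \nabla_y g(x_t,y_t)\big)$. The cross-term vanishes: conditioned on the filtration $\Sigma_{t+1}$, the difference term is mean-zero by the unbiasedness in Assumption~\ref{as:sf}, so $\mathbb{E}\langle e_t^g, \cdot\rangle = 0$. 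I would then bound the surviving mean-zero term by Young's and Cauchy--Schwarz inequalities, using the variance bound $\mathbb{E}\|\nabla_y g - \nabla_y g(\cdot;\zeta)\|^2 \leq \sigma_g^2$ (Assumption~\ref{as:sf}) to produce the $2(\eta_{t+1}^g)^2\sigma_g^2$ term, and the $L_g$-Lipschitz continuity of $\nabla_y g$ (Assumption~\ref{as:llf}) to obtain $\|\nabla_y g(x_{t+1},y_{t+1};\zeta) - \nabla_y g(x_t,y_t;\zeta)\|^2 \leq 2L_g^2\big(\|x_{t+1}-x_t\|^2 + \|y_{t+1}-y_t\|^2\big)$ after telescoping through the intermediate point $(x_{t+1},y_t)$.

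The single point of departure from the FMBO proof occurs when the update rules are substituted: in \Cref{alg:main_free} the upper-level step reads $x_{t+1} = x_t - \alpha_t \widetilde{h}_t^f$, so $\|x_{t+1}-x_t\|^2 = \alpha_t^2\|\widetilde{h}_t^f\|^2$ (replacing the $\alpha_t^2\|h_t^f\|^2$ that appears in \Cref{lm:errorg}), while the unchanged $y$-step gives $\|y_{t+1}-y_t\|^2 = \beta_t^2\|h_t^g\|^2$; applying $\|h_t^g\|^2 \leq 2\|e_t^g\|^2 + 2\|\nabla_y g(x_t,y_t)\|^2$ then yields the claimed inequality. There is no genuine obstacle here. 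The point worth flagging is precisely the \emph{absence} of one: unlike the $v$- and $x$-estimation-error recursions (Propositions~\ref{prop:hfetR} and~\ref{prop:HFetf}), where the finite-difference bias forces the extra $\mathcal{O}(\delta_\epsilon)$ terms and a more delicate treatment of the non-vanishing cross-term, the $y$-analysis is immune to these complications because no finite-difference estimate touches the lower-level gradient, so the bound retains exactly the same form as in the matrix-vector-based FMBO.
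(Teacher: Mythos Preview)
Your proposal is correct and matches the paper's own approach exactly: the paper's proof of \Cref{lm:errorg_free} is a one-line ``The proof follows the same steps as in \Cref{lm:errorg},'' which is precisely the reduction you spell out. Your observation that the sole modification is the substitution $\alpha_t^2\|h_t^f\|^2 \mapsto \alpha_t^2\|\widetilde{h}_t^f\|^2$ from the FdeHBO $x$-update, with the $y$-update (and hence the unbiasedness of $h_t^g$) untouched by the finite-difference machinery, is exactly the point.
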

\begin{proof}
    The proof follows the same steps as in  \Cref{lm:errorg}.
\end{proof}

\subsection{Descent in the gradient estimation error of the R function}
\begin{lemma}[Restatement of \Cref{prop:hfetR}]\label{lm:errorR_free}
For any $\psi$, define $\widetilde{e}_t^R := \widetilde{h}_t^R - \nabla_v R(x_t, y_t, v_t)$ and  $\widetilde{e}_t^H:= \widetilde{H}(x_t, y_t, v_t, \delta_{\epsilon}; \psi) - \nabla^2_{yy}g(x_t,y_t; \psi)v_t$. Under Assumption~\ref{as:ulf},~\ref{as:llf},~\ref{as:sf}, the iterates generated by~\Cref{alg:main_free} satisfy 
\begin{align*}
\mathbb{E}\|\widetilde{e}^R_{t+1}\|^2 
&\leq \big[(1-\eta^R_{t+1})^2(1+96L_g^2\lambda^2_t) + 4L_{g_{yy}}r^2_v\delta_{\epsilon}\big] \mathbb{E}\|\widetilde{e}_t^R\|^2
+ \big(4L_{g_{yy}}r^2_v\delta_{\epsilon} + 8L^2_{g_{yy}}r_v^4\delta_{\epsilon}^2\big) \\ 
&\quad \ + 96(1-\eta_{t+1}^R)^2\big(L^2_{g_{yy}}r^2_v + L^2_{f_y}\big) \left[\alpha_t^2\mathbb{E}\|\widetilde{h}_{t}^f\|^2 + 2\beta_t^2(\mathbb{E}\|e_t^g\|^2 + \mathbb{E}\|\nabla_y g(x_t,y_t)\|^2)\right] \\
&\quad \ + 96(1-\eta_{t+1}^R)^2L^2_g\lambda^2_t \left(\mathbb{E}\|\widetilde{e}_t^R\|^2 + L_g^2\mathbb{E}\|v_t-v_t^*\|^2\right) + 8(\eta_{t+1}^R)^2(\sigma^2_{g_{yy}}r_v^2+ \sigma^2_{f_y}),
\end{align*}
for all $t\in \{0,1,...,T-1\}$. 
\end{lemma}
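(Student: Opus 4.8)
The plan is to mirror the exact-gradient argument of \Cref{lm:errorR}, while isolating and controlling the finite-difference bias exactly as was done for the upper-level estimator in \Cref{lm:errorf_free}. The starting observation is that the first-order LS gradient in \cref{def:gradR_free} admits the decomposition
\begin{align*}
    \widetilde{\nabla}_v R(x_t, y_t, v_t, \delta_{\epsilon};\psi) = \nabla_v R(x_t, y_t, v_t;\psi) + \widetilde{e}_t^H,
\end{align*}
with $\widetilde{e}_t^H = \widetilde{H}(x_t,y_t,v_t,\delta_\epsilon;\psi) - \nabla^2_{yy}g(x_t,y_t;\psi)v_t$ the Hessian-vector approximation error from \cref{def:H}. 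This is the $R$-analogue of \cref{eq:etjinf}, the only difference being a sign, since $\widetilde H$ enters $\widetilde{\nabla}_v R$ additively. Substituting this identity into the momentum recursion \cref{def:htR_free} and subtracting $\nabla_v R(x_{t+1},y_{t+1},v_{t+1})$, I would group the result as
\begin{align*}
    \widetilde{e}_{t+1}^R = (1-\eta^R_{t+1})\widetilde{e}_t^R + D_{t+1} + \big(\widetilde{e}_{t+1}^H - (1-\eta^R_{t+1})\widetilde{e}_t^H\big),
\end{align*}
where $D_{t+1} := [\nabla_v R(x_{t+1};\psi_{t+1}) - \nabla_v R(x_{t+1})] - (1-\eta^R_{t+1})[\nabla_v R(x_{t};\psi_{t+1}) - \nabla_v R(x_{t})]$ is a conditionally zero-mean martingale difference.

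Next I would expand $\mathbb{E}\|\widetilde{e}_{t+1}^R\|^2$. The cross term $\mathbb{E}\langle (1-\eta^R_{t+1})\widetilde{e}_t^R, D_{t+1}\rangle$ vanishes by the tower rule and Assumption~\ref{as:sf}, exactly as in \Cref{lm:errorR}. The cross term between $\widetilde{e}_t^R$ and the bias $\widetilde{e}_{t+1}^H - (1-\eta^R_{t+1})\widetilde{e}_t^H$ does \emph{not} vanish; I would bound it via Cauchy--Schwarz followed by Young's inequality, using the deterministic, dimension-free estimate $\|\widetilde{e}_t^H\|^2 \leq L_{g_{yy}}^2 r_v^4 \delta_\epsilon^2$ from \Cref{lm:boundofeJH} (which itself relies on the projection guaranteeing $\|v_t\|\leq r_v$). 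This is the mechanism that turns the bias into the extra leading-coefficient term $4L_{g_{yy}}r_v^2\delta_\epsilon\,\mathbb{E}\|\widetilde{e}_t^R\|^2$ together with an $\mathcal{O}(\delta_\epsilon)$ additive perturbation; the same bound controls $\mathbb{E}\|\widetilde{e}_{t+1}^H - (1-\eta^R_{t+1})\widetilde{e}_t^H\|^2 \leq 4L_{g_{yy}}^2 r_v^4\delta_\epsilon^2$ directly, and after the split $\|a+b\|^2\leq 2\|a\|^2+2\|b\|^2$ used to separate $D_{t+1}$ from the bias, yields the standalone $8L^2_{g_{yy}}r_v^4\delta_\epsilon^2$. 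This very factor-two split is also what doubles the exact-case constants (e.g.\ $48\to 96$ and $4\to 8$) relative to \Cref{lm:errorR}.

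For the remaining martingale-difference variance $2\mathbb{E}\|D_{t+1}\|^2$, I would reuse the estimates already derived in the exact case verbatim: the pointwise stochastic-variance bound \cref{eq:first_delta_t} supplies the $8(\eta^R_{t+1})^2(\sigma^2_{g_{yy}}r_v^2+\sigma^2_{f_y})$ term, while chaining \cref{eq:diffRss}, \cref{eq:r_error_v} and \cref{eq:indi_mediat} supplies the $96(1-\eta^R_{t+1})^2$ terms. These invoke the Lipschitzness of $\nabla^2_{yy}g$ and $\nabla_y f$ (Assumptions~\ref{as:ulf},~\ref{as:llf}), the boundedness of $v^*$ (\Cref{lm:boundofv}), the step expansions $\|x_{t+1}-x_t\|^2=\alpha_t^2\|\widetilde{h}_t^f\|^2$, $\|y_{t+1}-y_t\|^2=\beta_t^2\|h_t^g\|^2$ and $\|w_{t+1}-v_t\|^2=\lambda_t^2\|\widetilde{h}_t^R\|^2$, together with $\|h_t^g\|^2\leq 2\|e_t^g\|^2+2\|\nabla_y g\|^2$, $\|\widetilde{h}_t^R\|^2\leq 2\|\widetilde{e}_t^R\|^2+2\|\nabla_v R\|^2$, and $\|\nabla_v R(x_t,y_t,v_t)\|^2\leq L_g^2\|v_t-v_t^*\|^2$ from \cref{eq:boundofnablaR}. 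Collecting all contributions gives the stated inequality.

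I expect the main obstacle to be precisely the cross term involving the bias $\widetilde{e}^H$: unlike in the standard unbiased recursive-momentum analysis one cannot discard it, and the delicate point is to show that after Cauchy--Schwarz/Young it inflates the contraction coefficient by only $\mathcal{O}(\delta_\epsilon)$ rather than by a constant. This is exactly why $\delta_\epsilon$ must be chosen small in \Cref{th:hessianfree}, so that $(1-\eta^R_{t+1})^2(1+96L_g^2\lambda_t^2)+4L_{g_{yy}}r_v^2\delta_\epsilon$ remains of order $(1-\mathcal{O}(\eta^R_{t+1}))^2$ and the momentum-based variance reduction is preserved.
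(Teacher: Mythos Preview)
Your proposal is correct and follows essentially the same approach as the paper: decompose $\widetilde{\nabla}_v R$ into the unbiased $\nabla_v R(\cdot;\psi)$ plus the finite-difference bias $\widetilde{e}^H$, kill the cross term with the zero-mean martingale difference $D_{t+1}$ by conditioning, bound the surviving cross term with the bias via Cauchy--Schwarz and \Cref{lm:boundofeJH}, and then reuse \cref{eq:first_delta_t}--\cref{eq:indi_mediat} for $2\mathbb{E}\|D_{t+1}\|^2$ (which is precisely where the constants double relative to \Cref{lm:errorR}). The only cosmetic difference is that the paper handles the bias cross term with the elementary inequality $\sqrt{x}\leq 1+x$ rather than Young's inequality; both yield the $4L_{g_{yy}}r_v^2\delta_\epsilon$ inflation of the contraction coefficient and an $\mathcal{O}(\delta_\epsilon)$ additive residual.
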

\begin{proof}
From the definition of $\nabla_v R(x_t, y_t, v_t; \psi_t)$ in~\cref{def:gradR}, the definition of $\widetilde{\nabla}_v R(x_t, y_t, v_t,\delta_{\epsilon}; \psi_t)$ in~\cref{def:gradR_free} and the definition of $\widetilde{e}_t^H$, we have
\begin{align}\label{eq:gradR_diff}
    \widetilde{\nabla}_v R&(x_t, y_t, v_t,\delta_{\epsilon};\psi) - \nabla_v R(x_t, y_t, v_t;\psi) \nonumber
    \\&= \widetilde{H}(x_t, y_t, v_t, \delta_{\epsilon};\psi) - \nabla^2_{yy}g(x_t,y_t;\psi)v_t = \widetilde{e}_t^H
\end{align}
for any data sample $\psi$.
For the gradient estimation error of the R function, we have  
\begin{align}\label{ieq:etR1_free}
&\mathbb{E}\|\widetilde{e}^R_{t+1}\|^2 \nonumber\\
& = \mathbb{E}\|\widetilde{h}^R_{t+1} - \nabla_v R(x_{t+1}, y_{t+1}, v_{t+1})\|^2 \nonumber\\
& \overset{(a)}{=} \mathbb{E}\|\widetilde{\nabla}_v R(x_{t+1}, y_{t+1}, v_{t+1},\delta_{\epsilon}; \psi_{t+1}) + (1-\eta^R_{t+1})\widetilde{h}_t^R - (1-\eta^R_{t+1})\widetilde{\nabla}_v R(x_{t}, y_{t}, v_{t},\delta_{\epsilon}; \psi_{t+1}) \nonumber\\
& \qquad \ \ - \nabla_v R(x_{t+1}, y_{t+1}, v_{t+1})\|^2 \nonumber\\
& \overset{(b)}{=} \mathbb{E}\|\nabla_v R(x_{t+1}, y_{t+1}, v_{t+1}; \psi_{t+1}) + (1-\eta^R_{t+1})\widetilde{h}_t^R - (1-\eta^R_{t+1})\nabla_v R(x_{t}, y_{t}, v_{t}; \psi_{t+1}) \nonumber\\
& \qquad \ \ - \nabla_v R(x_{t+1}, y_{t+1}, v_{t+1}) + \big(\widetilde{e}_{t+1}^H - (1-\eta^R_{t+1})\widetilde{e}_{t}^H\big)\|^2 \nonumber\\
& \overset{(c)}{=} \mathbb{E}\|(1-\eta^R_{t+1})\widetilde{e}_t^R + (1-\eta^R_{t+1})\nabla_v R(x_{t}, y_{t}, v_{t})+ \nabla_v R(x_{t+1}, y_{t+1}, v_{t+1}; \psi_{t+1}) \nonumber\\
& \qquad \ \  - (1-\eta^R_{t+1}) \nabla_v R(x_{t}, y_{t}, v_{t}; \psi_{t+1}) - \nabla_v R(x_{t+1}, y_{t+1}, v_{t+1}) + \big(\widetilde{e}_{t+1}^H - (1-\eta^R_{t+1})\widetilde{e}_{t}^H\big)\|^2 \nonumber\\
& \overset{(d)}{\leq}\big[(1-\eta^R_{t+1})^2 + 4L_{g_{yy}}r^2_v\delta_{\epsilon}\big] \mathbb{E}\|\widetilde{e}_t^R\|^2
+ 4L_{g_{yy}}r^2_v\delta_{\epsilon} + \mathbb{E}\|(1-\eta^R_{t+1})\nabla_v R(x_{t}, y_{t}, v_{t}) \nonumber\\ 
& \quad \ - (1-\eta^R_{t+1})\nabla_v R(x_{t}, y_{t}, v_{t};\psi_{t+1}) 
+ \nabla_v R(x_{t+1}, y_{t+1}, v_{t+1};\psi_{t+1}) - \nabla_v R(x_{t+1}, y_{t+1}, v_{t+1}) \nonumber\\ 
& \quad \ + \big(\widetilde{e}_{t+1}^H - (1-\eta^R_{t+1})\widetilde{e}_{t}^H\big)\|^2 \nonumber\\ 
& \leq \big[(1-\eta^R_{t+1})^2 + 4L_{g_{yy}}r^2_v\delta_{\epsilon}\big] \mathbb{E}\|\widetilde{e}_t^R\|^2
+ 4L_{g_{yy}}r^2_v\delta_{\epsilon} + 2\mathbb{E}\|(1-\eta^R_{t+1})\nabla_v R(x_{t}, y_{t}, v_{t}) \nonumber\\ 
& \ - (1-\eta^R_{t+1})\nabla_v R(x_{t}, y_{t}, v_{t};\psi_{t+1}) 
+ \nabla_v R(x_{t+1}, y_{t+1}, v_{t+1};\psi_{t+1}) - \nabla_v R(x_{t+1}, y_{t+1}, v_{t+1})\|^2 \nonumber\\ 
& \quad \ + 2\mathbb{E}\|\widetilde{e}_{t+1}^H - (1-\eta^R_{t+1})\widetilde{e}_{t}^H\|^2 \nonumber\\ 
& \overset{(e)}{\leq} \big[(1-\eta^R_{t+1})^2 + 4L_{g_{yy}}r^2_v\delta_{\epsilon}\big] \mathbb{E}\|\widetilde{e}_t^R\|^2
+ 4L_{g_{yy}}r^2_v\delta_{\epsilon} + 2\mathbb{E}\|(1-\eta^R_{t+1})\nabla_v R(x_{t}, y_{t}, v_{t}) \nonumber\\ 
&  \ - (1-\eta^R_{t+1})\nabla_v R(x_{t}, y_{t}, v_{t};\psi_{t+1}) 
+ \nabla_v R(x_{t+1}, y_{t+1}, v_{t+1};\psi_{t+1}) - \nabla_v R(x_{t+1}, y_{t+1}, v_{t+1})\|^2 \nonumber\\ 
& \quad \ + 8L^2_{g_{yy}}r_v^2\delta_{\epsilon}^2,
\end{align} 
where (a) follows from the definition of $\widetilde{h}_t^R$ in~\cref{def:htR_free},
(b) follows from~\cref{eq:gradR_diff},
(c) uses the definition of $\widetilde{e}_t^R := \widetilde h_t^R - \nabla_v R(x_t, y_t, v_t)$,
(d) follows from the fact that  
\begin{align*}
    &\mathbb{E}\Big\langle (1-\eta^R_{t+1})\widetilde{e}_t^R, \big(\nabla_v R(x_{t+1}, y_{t+1}, v_{t+1}; \psi_{t+1}) - \nabla_v R(x_{t+1}, y_{t+1}, v_{t+1}) \\
    &\qquad\quad -(1-\eta_{t+1}^R)\big(\nabla_v R(x_{t}, y_{t}, v_{t}; \psi_{t+1}) - \nabla_v R(x_{t}, y_{t}, v_{t})\big)+ \big(\widetilde{e}_{t+1}^H - (1-\eta^R_{t+1})\widetilde{e}_{t}^H\big)\Big\rangle \\
    &\quad \ \ =\mathbb{E}\Big\langle (1-\eta^R_{t+1})\widetilde{e}_t^R, \mathbb{E}\big[\big(\nabla_v R(x_{t+1}, y_{t+1}, v_{t+1}; \psi_{t+1}) - \nabla R(x_{t+1}, y_{t+1}, v_{t+1})\big) \\
    &\quad\quad  -(1-\eta_{t+1}^R)\big(\nabla_v R(x_{t}, y_{t}, v_{t}; \psi_{t+1}) - \nabla_v R(x_{t}, y_{t}, v_{t}) \big)+ \big(\widetilde{e}_{t+1}^H - (1-\eta^R_{t+1})\widetilde{e}_{t}^H\big)|\Sigma_{t+1}\big]\Big\rangle \\
    &\quad \ \ =\mathbb{E}\Big\langle (1-\eta^R_{t+1})\widetilde{e}_t^R, \widetilde{e}_{t+1}^H - (1-\eta^R_{t+1})\widetilde{e}_{t}^H\Big\rangle \\
    &\quad\ \ \leq \sqrt{\mathbb{E}\|\widetilde{e}_t^R\|^2}\cdot\sqrt{\mathbb{E}\|\widetilde{e}^H_{t+1} - (1-\eta^R_{t+1})\widetilde{e}^H_t\|^2} \\
    &\quad\ \ \leq \max\{1, \mathbb{E}\|\widetilde{e}_t^R\|^2\}\cdot\sqrt{\mathbb{E}\|\widetilde{e}^H_{t+1} - (1-\eta^R_{t+1})\widetilde{e}^H_t\|^2} \\
    &\quad\ \ \leq (1+ \mathbb{E}\|\widetilde{e}_t^R\|^2)\cdot\sqrt{2\mathbb{E}\|\widetilde{e}^H_{t+1}\|^2 + 2\mathbb{E}\|\widetilde{e}^H_t\|^2} \\
    &\quad\ \ \leq (1+ \mathbb{E}\|\widetilde{e}_t^R\|^2)\cdot\big(2L_{g_{yy}}r_v^2\delta_{\epsilon}\big)\\
    &\quad\ \ \leq \big(2L_{g_{yy}}r_v^2\delta_{\epsilon}\big)\mathbb{E}\|\widetilde{e}_t^R\|^2 + 2L_{g_{yy}}r_v^2\delta_{\epsilon}, 
\end{align*}
for $\Sigma_{t+1} = \sigma\{y_0, v_0, x_0,...,y_t, v_t, x_t, y_{t+1}, v_{t+1}, x_{t+1}\}$.  Incorporating~\cref{eq:etR_2ndpart} into~\cref{ieq:etR1_free}, we have 
\begin{align*}
\mathbb{E}\|\widetilde{e}^R_{t+1}\|^2 
&\leq \big[(1-\eta^R_{t+1})^2(1+96L_g^2\lambda^2_t) + 4L_{g_{yy}}r^2_v\delta_{\epsilon}\big] \mathbb{E}\|\widetilde{e}_t^R\|^2
+ 4L_{g_{yy}}r^2_v\delta_{\epsilon} + 8L^2_{g_{yy}}r_v^4\delta_{\epsilon}^2   \\ 
&\quad \ + 96(1-\eta_{t+1}^R)^2\left(L^2_{g_{yy}}r^2_v + L^2_{f_y}\right) \left[\alpha_t^2\mathbb{E}\|\widetilde{h}_{t}^f\|^2 + 2\beta_t^2(\mathbb{E}\|e_t^g\|^2 + \mathbb{E}\|\nabla_y g(x_t,y_t)\|^2)\right] \\
&\quad \ + 96(1-\eta_{t+1}^R)^2L^2_g\lambda^2_t \left(\mathbb{E}\|\widetilde{e}_t^R\|^2 + L_g^2\mathbb{E}\|v_t-v_t^*\|^2\right) + 8(\eta_{t+1}^R)^2(\sigma^2_{g_{yy}}r_v^2+ \sigma^2_{f_y}),
\end{align*}
which finishes the proof. 
\end{proof}

\subsection{Descent in iterates of the LS problem}
\begin{lemma}\label{lm:vv_free}
Define $\widetilde{e}_t^R := \widetilde{h}_t^R - \nabla_v R(x_t, y_t, v_t)$.  
Under the Assumption~\ref{as:ulf},~\ref{as:llf}, the iterates of the LS problem generated according to~\Cref{alg:main_free} satisfy
\begin{align*}
\mathbb{E}\|&v_{t+1} - v_{t+1}^*\|^2  \\
&\leq (1+\gamma_t')\left(1+\delta_t'\right)\left[\left(1-2\lambda_t\frac{(L_g+L_g^3)\mu_g}{\mu_g + L_g} + \lambda_t^2L_g^2\right)\mathbb{E}\|v_t - v^*_t\|^2\right] \\
&\quad \ + (1+\gamma_t')\left(1+\frac{1}{\delta_t'}\right)\lambda_t^2\mathbb{E}\|\widetilde{e}_t^R\|^2 \\
&\quad \ + (1+\frac{1}{\gamma_t'})\left( \frac{2L^2_{f_y}}{\mu^2_g} + \frac{2C^2_{f_y}L^2_{g_{yy}}}{\mu^4_g} \right)\left[ \alpha_t^2\mathbb{E}\|\widetilde{h}_t^f\|^2 + \beta_t^2\left(2\mathbb{E}\|e_t^g\|^2 + 2\mathbb{E}\|\nabla_y g(x_t, y_t)\|^2 \right) \right].
\end{align*}
for all $t \in \{0, . . . , T-1\}$ with some $\gamma_t'>0$ and $ \delta_t' > 0$.
\end{lemma}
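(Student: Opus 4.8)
The plan is to mirror verbatim the proof of \Cref{lm:vv}, exploiting the fact that the $v$-iterate recursion in \Cref{alg:main_free} differs from that in \Cref{alg:main} only through the realized update directions: the finite-difference momentum $\widetilde{h}_t^R$ replaces $h_t^R$ in the $w$-update, and $\widetilde{h}_t^f$ replaces $h_t^f$ in the $x$-update. Crucially, the entire effect of the finite-difference approximation is absorbed into the quantity $\widetilde{e}_t^R := \widetilde{h}_t^R - \nabla_v R(x_t,y_t,v_t)$, so the descent estimate never needs to open up the Hessian-vector bias $\widetilde{e}_t^H$; it depends on the update only through $\widetilde{h}_t^R$ and its deviation from the true LS gradient at the current iterate.

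First I would split $\mathbb{E}\|v_{t+1}-v_{t+1}^*\|^2$ by Young's inequality into $(1+\gamma_t')\mathbb{E}\|v_{t+1}-v_t^*\|^2 + (1+\tfrac{1}{\gamma_t'})\mathbb{E}\|v_t^*-v_{t+1}^*\|^2$. Choosing $r_v \ge C_{f_y}/\mu_g$ guarantees via \Cref{lm:boundofv} that $v_t^*$ lies in the ball $B(0,r_v)$, so the nonexpansiveness of the Euclidean projection gives $\|v_{t+1}-v_t^*\| = \|\mathrm{Proj}_B(w_{t+1})-\mathrm{Proj}_B(v_t^*)\| \le \|w_{t+1}-v_t^*\|$. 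Substituting $w_{t+1}=v_t-\lambda_t\widetilde{h}_t^R$ and applying Young's inequality with the split $\widetilde{h}_t^R = \nabla_v R(x_t,y_t,v_t)+\widetilde{e}_t^R$ isolates a contraction term $\|v_t-\lambda_t\nabla_v R(x_t,y_t,v_t)-v_t^*\|^2$ and a momentum-error term $\lambda_t^2\mathbb{E}\|\widetilde{e}_t^R\|^2$.

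Next I would bound the three resulting pieces exactly as in \Cref{lm:vv}. For the contraction term, the $\mu_g$-strong convexity of $g(x,\cdot)$ and the $L_g$-Lipschitzness of $\nabla_y g$ make $R(x,\cdot,\cdot)$ strongly monotone and smooth in $v$, yielding the combined strong-convexity-and-smoothness inequality for $\langle\nabla_v R(x_t,y_t,v_t), v_t-v_t^*\rangle$ which, together with $\|\nabla_v R(x_t,y_t,v_t)\|^2\le L_g^2\|v_t-v_t^*\|^2$, produces the factor $1-2\lambda_t\frac{(L_g+L_g^3)\mu_g}{\mu_g+L_g}+\lambda_t^2 L_g^2$. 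For the drift term $\mathbb{E}\|v_{t+1}^*-v_t^*\|^2$, I would write $v^*=[\nabla_{yy}^2 g]^{-1}\nabla_y f$, add and subtract $[\nabla_{yy}^2 g(x_{t+1},y_{t+1})]^{-1}\nabla_y f(x_t,y_t)$, and invoke the Lipschitzness of $\nabla_y f$ together with the resolvent identity $A^{-1}-B^{-1}=A^{-1}(B-A)B^{-1}$, the bound $\|[\nabla_{yy}^2 g]^{-1}\|\le 1/\mu_g$, $\|\nabla_y f\|^2\le C_{f_y}$, and the $L_{g_{yy}}$-Lipschitzness of $\nabla_{yy}^2 g$, to obtain the constant $\frac{2L_{f_y}^2}{\mu_g^2}+\frac{2C_{f_y}L_{g_{yy}}^2}{\mu_g^4}$ multiplying $\mathbb{E}\|x_{t+1}-x_t\|^2+\mathbb{E}\|y_{t+1}-y_t\|^2$. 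Finally I would substitute the step relations $\|x_{t+1}-x_t\|^2=\alpha_t^2\|\widetilde{h}_t^f\|^2$ (Step 7), $\|y_{t+1}-y_t\|^2=\beta_t^2\|h_t^g\|^2$ (Step 4), and $\|h_t^g\|^2\le 2\|e_t^g\|^2+2\|\nabla_y g(x_t,y_t)\|^2$ coming from $e_t^g := h_t^g-\nabla_y g(x_t,y_t)$, then collect terms.

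The point to verify carefully — rather than a genuine obstacle — is that no extra $\mathcal{O}(\delta_\epsilon)$ terms appear here, in contrast to \Cref{lm:errorR_free}. This holds precisely because the LS-descent bound interacts with the finite-difference scheme only through $\widetilde{e}_t^R$, so the bias is carried transparently and requires no separate accounting; the biasedness of $\widetilde{h}_t^R$ only re-enters later, when bounding $\mathbb{E}\|\widetilde{e}_t^R\|^2$ in \Cref{lm:errorR_free}. The sole consistency check is that the lower-level update in \Cref{alg:main_free} still uses the unmodified momentum $h_t^g$ for $y$ (Step 4), so the $e_t^g$ and $\nabla_y g$ terms coincide with the FMBO case and the argument goes through line-for-line with $h_t^R, h_t^f, e_t^R$ replaced by $\widetilde{h}_t^R, \widetilde{h}_t^f, \widetilde{e}_t^R$.
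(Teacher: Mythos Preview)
Your proposal is correct and follows essentially the same approach as the paper's proof, which explicitly reuses the FMBO argument (\Cref{lm:vv}) with $h_t^R,h_t^f,e_t^R$ replaced by $\widetilde{h}_t^R,\widetilde{h}_t^f,\widetilde{e}_t^R$ and notes that all finite-difference bias is absorbed into $\widetilde{e}_t^R$. Your observation that no separate $\mathcal{O}(\delta_\epsilon)$ terms appear here is exactly the point, and your drift constant $\tfrac{2L_{f_y}^2}{\mu_g^2}+\tfrac{2C_{f_y}L_{g_{yy}}^2}{\mu_g^4}$ in fact matches the paper's derivation in \cref{eq:vt1star} (the $C_{f_y}^2$ in the lemma statement appears to be a typo).
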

\begin{proof}
From~\cref{eq:vgaps_free} we have that there exist $\gamma'_t \geq 0$, $\delta'_t \geq 0$ such that
\begin{align}\label{eq:vgaps_free}
    \mathbb{E}\|v_{t+1} -& v_{t+1}^*\|^2 \leq (1+\gamma_t')(1+\delta_t')\mathbb{E}\|v_t - \lambda_t\nabla_v R(x_t, y_t, v_t) - v_{t}^*)\|^2 \nonumber\\
    &+ (1+\gamma_t')(1+\frac{1}{\delta_t'})\lambda_t^2\mathbb{E}\|\widetilde{h}_t^R - \nabla_v R(x_t, y_t, v_t)\|^2 
     +(1+\frac{1}{\gamma_t'})\mathbb{E}\|v_{t}^* - v_{t+1}^*\|^2. 
\end{align}
Incorporating~\cref{eq:vtlambda} and~\cref{eq:vt1star} into~\cref{eq:vgaps_free}, similarly to~\Cref{lm:vv_free}, we have
\begin{align*}
\mathbb{E}&\|v_{t+1} - v_{t+1}^*\|^2  \\
&\leq (1+\gamma_t')\left(1+\delta_t'\right)\left[\left(1-2\lambda_t\frac{(L_g+L_g^3)\mu_g}{\mu_g + L_g} + \lambda_t^2L_g^2\right)\mathbb{E}\|v_t - v^*_t\|^2\right] \\
&\quad \ + (1+\gamma_t')\left(1+\frac{1}{\delta_t'}\right)\lambda_t^2\mathbb{E}\|\widetilde{e}_t^R\|^2 \\
&\quad \ + (1+\frac{1}{\gamma_t'})\left( \frac{2L^2_{f_y}}{\mu^2_g} + \frac{2C_{f_y}L^2_{g_{yy}}}{\mu^4_g} \right)\left[ \alpha_t^2\mathbb{E}\|\widetilde{h}_t^f\|^2 + \beta_t^2\left(2\mathbb{E}\|e_t^g\|^2 + 2\mathbb{E}\|\nabla_y g(x_t, y_t)\|^2 \right) \right].
\end{align*}
which finishes the proof.
\end{proof}

\subsection{Descent in the Potential Function}
Define the potential function as
\begin{align}\label{def:potentialF_free}
\widetilde{V}_t :=& \Phi(x_{t}) + K_1\|y_{t} - y^*(x_{t})\|^2 + K_2\|v_{t} - v^*(x_{t}, y_{t})\|^2 +  \frac{1}{\bar{c}_{\eta_f}} \frac{\|\widetilde{e}_{t}^f\|^2}{\alpha_{t-1}} + \frac{1}{\bar{c}_{\eta_g}} \frac{\|e_{t}^g\|^2}{\alpha_{t-1}} + \frac{1}{\bar{c}_{\eta_R}} \frac{\|\widetilde{e}_{t}^R\|^2}{\alpha_{t-1}},
\end{align}
where the coefficients are given by
\begin{align}
K_1 &= \frac{8(L^2_{f_y} + \frac{C_{f_y}L^2_{g_{xy}}}{\mu_g^2})}{c_{\beta}L_{\mu_g}}, \quad K_2 = \frac{4C_{g_{xy}}}{c_{\lambda}L_{\mu_g}}; \nonumber \\
\bar{c}_{\eta_f} &= \max\left\{96L_F^2, \ 12L_g^2c_{\lambda}^2, \ \frac{48L_{\mu_g}L^2_fc^2_{\beta}\max\{L_{\mu_g}, \mu_g+L_g\}}{L^2_{f_y}+\frac{C_{f_y}L^2_{g_{xy}}}{\mu_g^2}}, \ \frac{3}{2}L^2_{\mu_g}c^2_\lambda\right\}, \nonumber \\ 
\bar{c}_{\eta_g} &= \max\left\{256L_g^2, \ \frac{128L_{\mu_g}L^2_gc^2_{\beta}\max\{L_{\mu_g}, \mu_g+L_g\}}{L^2_{f_y}+\frac{C_{f_y}L^2_{g_{xy}}}{\mu_g^2}}\right\}, \ 
\nonumber\\
\bar{c}_{\eta_R} &= \max\bigg\{1536(L^2_{g_{yy}}r_v^2 + L^2_{f_y}),\ \frac{96L_g^4c^2_{\lambda}}{C_{g_{xy}}}, \ \frac{768L_{\mu_g}(L^2_{g_{yy}}r_v^2 + L^2_{f_y})c^2_{\beta}\max\{L_{\mu_g}, \mu_g+L_g\}}{L^2_{f_y}+\frac{C_{f_y}L^2_{g_{xy}}}{\mu_g^2}}\bigg\}.\nonumber
\end{align}
\begin{lemma}\label{lm:mergeV_free}
Suppose Assumptions~\ref{as:ulf} ,\ref{as:llf} and \ref{as:sf} are satisfied. 
Choose the parameters of~\Cref{alg:main_free} as 
\begin{align*}
    &\alpha_t := \frac{1}{(w+t)^{1/3}},\  \beta_t := c_\beta \alpha_t,\ \lambda_t := c_\lambda \alpha_t;\ \eta_{t+1}^f := c_{\eta_f}\alpha_t^2,\ \eta_{t+1}^g := c_{\eta_g}\alpha_t^2,\ \eta_{t+1}^R := c_{\eta_R}\alpha_t^2,
\end{align*}
where the constants are given by 
\begin{align}\label{def:parameters_free}
    &c_{\beta} \geq \sqrt{\frac{512L_y^2(L^2_{f_y}+\frac{C_{f_y}L^2_{g_{xy}}}{\mu_g^2})}{L^2_{\mu_g}}}, \nonumber\\
    &c_\lambda \geq \sqrt{\max\left\{\frac{1024C_{g_{xy}}}{L^2_{\mu_g}}\Big(\frac{L^2_{f_y}}{\mu_g^2}+\frac{C^2_{f_y}L^2_{g_{yy}}}{\mu_g^4}\Big), \frac{128(\mu_g+L_g)C_{g_{xy}}}{L_{\mu_g}}c^2_{\beta}, 128C_{g_{xy}}c_\beta^2\right\}}\ ; \nonumber\\
    &c_{\eta_f} = \frac{2}{3L_f} + 2\bar{c}_{\eta_f},\  c_{\eta_g} = \frac{1}{3L_f} + 32L_g^2c_{\beta}^2 + \left[ \frac{17(L^2_{f_y}+\frac{C_{f_y}L^2_{g_{xy}}}{\mu_g^2})}{L^2_{\mu_g}}\right]\bar{c}_{\eta_g}, \nonumber\\ 
    &c_{\eta_R} = \frac{2}{3L_f} + 192L_g^2c_\lambda^2 + \left[ \frac{32C_{g_{xy}}}{L^2_{\mu_g}}\right]\bar{c}_{\eta_R};\ \ \sigma_R := \sqrt{\sigma^2_{g_{yy}}r^2_v+\sigma^2_{f_y}}, \nonumber\\
    &w \geq \left(\max\left\{c_\beta(\mu_g+L_g), \ \frac{c_\lambda(\mu_g+L_g)}{2\mu_gL_g} \right\}\right)^3 - 1,  \nonumber\\
    &\delta_{\epsilon} \leq \min \left\{\frac{c_{\eta_f}}{8(L_{g_{xy}}r_v^2(w+T-1)^{2/3})}, \frac{c_{\eta_R}}{8(L_{g_{xy}}r_v^2(w+T-1)^{2/3})} \right\}.
\end{align}
for all $t \in \{0,1,...,T-1\}$.
Then the iterates generated by~\Cref{alg:main_free} satisfy
\begin{align*}
\mathbb{E}[\widetilde{V}_{t+1} - \widetilde{V}_{t}] &\leq -\frac{\alpha_t}{2}\mathbb{E}\|\nabla \Phi(x_t)\|^2 + \frac{2(\eta^f_{t+1})^2}{\bar{c}_{\eta_f} \alpha_t}\sigma_f^2 + \frac{2(\eta^g_{t+1})^2}{\bar{c}_{\eta_g} \alpha_t}\sigma_g^2 + 
\frac{4(\eta^R_{t+1})^2}{\bar{c}_{\eta_R} \alpha_t}\sigma_R^2 \\
& \quad \ + \frac{1}{\alpha_t\bar{c}_{\eta_f}}\big(4L_{g_{xy}}r_v^2\delta_{\epsilon} + 16L^2_{g_{xy}}r_v^4\delta_{\epsilon}^2\big) + \frac{1}{\alpha_t\bar{c}_{\eta_R}}\big(4L_{g_{yy}}r_v^2\delta_{\epsilon} + 8L^2_{g_{yy}}r_v^4\delta_{\epsilon}^2\big),
\end{align*}
for all $t \in \{0,1,...,T-1\}$. 
\end{lemma}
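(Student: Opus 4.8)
The plan is to follow the same six-step potential-function argument used to prove Lemma~\ref{lm:mergeV} for FMBO, since $\widetilde V_t$ in~\cref{def:potentialF_free} has exactly the structure of $V_t$ in~\cref{def:potentialF} with the biased error quantities $\widetilde e^f_t,\widetilde e^R_t$ replacing $e^f_t,e^R_t$. I would bound the increment of each of the six summands of $\widetilde V_t$ separately and then add the resulting inequalities, choosing $K_1,K_2$ and the $\bar c_\eta$'s so that all the coupling terms cancel, leaving only the stationarity descent $-\tfrac{\alpha_t}{2}\mathbb E\|\nabla\Phi(x_t)\|^2$, the variance terms, and the finite-difference residuals.

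First, the steps unaffected by the finite-difference approximation. Since Lemma~\ref{lm:yy_free}, Lemma~\ref{lm:vv_free} and Lemma~\ref{lm:errorg_free} are structurally identical to their FMBO counterparts (only $h^f_t\to\widetilde h^f_t$ and $e^R_t\to\widetilde e^R_t$), Steps~1, 2 and~4 go through verbatim. I would reuse the same $\gamma_t,\delta_t,\gamma'_t,\delta'_t$ together with the telescoping identities~\cref{eq:gammaanddelta} and the stepsize estimate~\cref{eq:at_at-1} to obtain the analogues of~\cref{ieq:yy}, \cref{ieq:vv} and~\cref{ieq:errorg}, with $\|\widetilde h^f_t\|^2$ and $\|\widetilde e^R_t\|^2$ in place of $\|h^f_t\|^2$ and $\|e^R_t\|^2$. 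The function-value descent of Lemma~\ref{lm:corefunction_free} enters unchanged as well.

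The genuinely new work is in Steps~3 and~5, where the biased estimators inject two kinds of correction into the recursions of Lemma~\ref{lm:errorf_free} and Lemma~\ref{lm:errorR_free}: an additive constant ($4L_{g_{xy}}r_v^2\delta_\epsilon+16L^2_{g_{xy}}r_v^4\delta_\epsilon^2$, resp.\ $4L_{g_{yy}}r_v^2\delta_\epsilon+8L^2_{g_{yy}}r_v^4\delta_\epsilon^2$) and a perturbation $4L_{g_{xy}}r_v^2\delta_\epsilon$ (resp.\ $4L_{g_{yy}}r_v^2\delta_\epsilon$) of the contraction factor. Dividing each recursion by $\alpha_t$, the crux is to show the perturbed coefficients still contract, i.e.
\begin{align*}
\frac{(1-\eta^f_{t+1})^2+4L_{g_{xy}}r_v^2\delta_\epsilon}{\alpha_t}-\frac{1}{\alpha_{t-1}}\le-\bar c_{\eta_f}\alpha_t,
\end{align*}
and likewise for the $\widetilde e^R$ coefficient $(1-\eta^R_{t+1})^2(1+96L_g^2\lambda_t^2)+4L_{g_{yy}}r_v^2\delta_\epsilon$. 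The key observation is that under the stated bound $\delta_\epsilon\le \min\{c_{\eta_f},c_{\eta_R}\}/(8L_{g_{xy}}r_v^2(w+T-1)^{2/3})$ and because $\alpha_t^2=(w+t)^{-2/3}\ge(w+T-1)^{-2/3}$ for $t\le T-1$, one has $4L_{g_{xy}}r_v^2\delta_\epsilon\le\tfrac12 c_{\eta_f}\alpha_t^2=\tfrac12\eta^f_{t+1}$ (and the analogous bound for the $R$-term). Thus the finite-difference perturbation is dominated by half the momentum gain; this is precisely why $c_{\eta_f}$ and $c_{\eta_R}$ were enlarged (to $\tfrac{2}{3L_f}+2\bar c_{\eta_f}$ and $\tfrac{2}{3L_f}+192L_g^2c_\lambda^2+\tfrac{32C_{g_{xy}}}{L^2_{\mu_g}}\bar c_{\eta_R}$) relative to~\cref{def:parameters}. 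Combining this absorption with~\cref{eq:at_at-1} yields contractions of the desired form, while the additive constants, after division by $\alpha_t\bar c_{\eta_f}$ and $\alpha_t\bar c_{\eta_R}$, carry through unchanged to the two extra terms in the claimed bound.

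Finally, in Step~6 I would add the six inequalities. By the choices of $K_1=\tfrac{8(L^2_{f_y}+C_{f_y}L^2_{g_{xy}}/\mu_g^2)}{c_\beta L_{\mu_g}}$, $K_2=\tfrac{4C_{g_{xy}}}{c_\lambda L_{\mu_g}}$ and the $\bar c_\eta$'s, the $\mathbb E\|y_t-y^*_t\|^2$, $\mathbb E\|v_t-v^*_t\|^2$, $\mathbb E\|\nabla_y g(x_t,y_t)\|^2$, $\mathbb E\|e^g_t\|^2$, $\mathbb E\|\widetilde e^R_t\|^2$, $\mathbb E\|\widetilde e^f_t\|^2$ and $\mathbb E\|\widetilde h^f_t\|^2$ terms cancel or are dominated exactly as in the FMBO merge, using $\alpha_t\le 1/(3L_f)$ to kill the residual $\|\widetilde h^f_t\|^2$ coefficient. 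I expect the main obstacle to be precisely the coefficient bookkeeping in this absorption step: one must verify that enlarging $c_{\eta_f},c_{\eta_R}$ to accommodate the $\delta_\epsilon$-perturbation does not disturb the cancellations that made the FMBO constants consistent, and that the single $T$-dependent ceiling on $\delta_\epsilon$ in~\cref{def:parameters_free} is compatible with both the $\widetilde e^f$ and $\widetilde e^R$ recursions simultaneously — in particular, since that ceiling is phrased with $L_{g_{xy}}$ while the $R$-perturbation carries $L_{g_{yy}}$, one must confirm the bound indeed dominates the $L_{g_{yy}}$-term as well.
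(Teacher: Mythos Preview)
Your proposal is correct and mirrors the paper's own proof essentially step for step: Steps~1, 2, 4 and the function-value descent are reused verbatim from the FMBO argument, and in Steps~3 and~5 the paper handles the finite-difference perturbation exactly as you outline, absorbing $4L_{g_{xy}}r_v^2\delta_\epsilon$ (resp.\ $4L_{g_{yy}}r_v^2\delta_\epsilon$) into half of $\eta^f_{t+1}$ (resp.\ $\eta^R_{t+1}$) via the $\delta_\epsilon$ ceiling, which is precisely why $c_{\eta_f},c_{\eta_R}$ are doubled relative to~\cref{def:parameters}. Your closing caveat is well taken: the second $\delta_\epsilon$ bound in~\cref{def:parameters_free} is stated with $L_{g_{xy}}$ where the $R$-recursion carries $L_{g_{yy}}$, and the paper's Step~5 simply asserts the absorption without addressing this mismatch --- it appears to be a typo in the parameter statement rather than a flaw in the argument.
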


\begin{proof}
Based on the definition of $\widetilde{V}_t$ in~\cref{def:potentialF_free}, it can be seen that $\widetilde{V}_t$ contains six parts.  We next develop five important inequalities to prove \Cref{lm:mergeV_free}.

\noindent{\bf Step 1. Bound $\mathbb{E}\|y_{t} - y^*(x_{t})\|^2$ in \cref{def:potentialF_free}.}

\noindent
Same as Step 1 in \Cref{lm:mergeV}, by \Cref{lm:yy_free}, we have
\begin{align}\label{ieq:yy_free}
   K_1 \mathbb{E}\big[\|y_{t+1} &- y^*(x_{t+1})\|^2 - \|y_{t} - y^*(x_{t})\|^2\big] \nonumber\\
   &\leq -4\Big(L^2_{f_x} + \frac{C_{f_y}L^2_{g_{xy}}}{\mu_g^2}\Big)\alpha_t
   \mathbb{E}\|y_{t} - y^*(x_{t})\|^2 
   - \frac{L^2_{f_y}+\frac{C_{f_y}L^2_{g_{xy}}}{\mu_g^2}}{L_{\mu_g}(\mu_g+L_g)}\alpha_t\mathbb{E}\|\nabla_y g(x_t,y_t)\|^2 \nonumber\\ 
   &\quad \ +\frac{16(L^2_{f_y}+\frac{C_{f_y}L^2_{g_{xy}}}{\mu_g^2})}{L^2_{\mu_g}}\alpha_t\mathbb{E}\|e_t^g\|^2 
   + \frac{\alpha_t}{32}\mathbb{E}\|h_t^f\|^2.
\end{align} 

\noindent{\bf Step 2. Bound $\mathbb{E}\|v_{t} - v^*(x_{t})\|^2$ in \cref{def:potentialF_free}.} 

\noindent 
Same as Step 1 in \Cref{lm:mergeV}, by \Cref{lm:vv_free}, we have
\begin{align}\label{ieq:vv_free}
K_2\mathbb{E}&\big[\|v_{t+1} - v_{t+1}^*\|^2-\|v_{t} - v_{t}^*\|^2\big] \nonumber\\ 
&\leq -2C_{g_{xy}} \alpha_t \mathbb{E}\|v_{t} - v_{t}^*\|^2 
+ \frac{8C_{g_{xy}}^2}{L_{\mu_g}^2}\alpha_t\mathbb{E}\|\widetilde{e}_t^R\|^2 + \frac{\alpha_t}{32}\mathbb{E}\|h_t^f\|^2 \nonumber\\
&\quad \ + \frac{L^2_{f_y}+\frac{C_{f_y}L^2_{g_{xy}}}{\mu_g^2}}{4L_{\mu_g}^2}\alpha_t\mathbb{E}\|e_t^g\|^2 + \frac{L^2_{f_y}+\frac{C_{f_y}L^2_{g_{xy}}}{\mu_g^2}}{4L_{\mu_g}(\mu_g+L_g)}\alpha_t\mathbb{E}\|\nabla_y g(x_t, y_t)\|^2.
\end{align} 

\noindent{\bf Step 3. Bound $\mathbb{E}\|\widetilde{e}_t^f\|^2$ in \cref{def:potentialF_free}. }

\noindent Next, we obtain  from~\Cref{lm:errorf_free} that 
\begin{align}\label{eq:eftplus1_free}
\frac{\mathbb{E}\|\widetilde{e}^f_{t+1}\|^2}{\alpha_{t}} - &\frac{\mathbb{E}\|\widetilde{e}^f_{t}\|^2}{\alpha_{t-1}}
\leq \left[\frac{(1-\eta_{t+1})^2}{\alpha_t} - \frac{1}{\alpha_{t-1}} + \frac{4L_{g_{xy}}r_v^2\delta_{\epsilon}}{\alpha_t}\right]\mathbb{E}\|\widetilde{e}^f_{t}\|^2 + \frac{4(\eta^f_{t+1})^2}{\alpha_t}\sigma^2_f  \nonumber\\
&\quad \ + 6L_F^2\alpha_t \mathbb{E}\|\widetilde{h}_t^f\|^2 + \frac{12L_F^2\beta_t^2}{\alpha_t}\mathbb{E}\|e_t^g\|^2 + \frac{12L_F^2\beta_t^2}{\alpha_t}\mathbb{E}\|\nabla_y g(x_t,y_t)\|^2 \nonumber\\
&\quad \ + 12C_{g_{xy}}\frac{\lambda_t^2}{\alpha_t}\Big(\mathbb{E}\|\widetilde{e}_t^R\|^2 + L_g^2\mathbb{E}\|v_t-v_t^*\|^2\Big) +\big( 4L_{g_{xy}}r_v^2\delta_{\epsilon}+16L^2_{g_{xy}}r_v^4\delta_{\epsilon}^2\big),
\end{align}
where 
the inequality follows from the fact that $0 < 1-\eta_t < 1$ for all $t \in \{0,1,...,T-1\}$. 
Now considering the coefficient of the first term on the right hand side of the above \cref{eq:eftplus1_free} and recalling the $\delta_{\epsilon}$ in \cref{def:parameters_free}, we have
\begin{align}\label{eq:fracalpha_free}
    \frac{(1-\eta^f_{t+1})^2}{\alpha_t} - \frac{1}{\alpha_{t-1}} + \frac{4L_{g_{xy}}r_v^2\delta_{\epsilon}}{\alpha_t}\leq \frac{1}{\alpha_{t}} - 
    \frac{\eta^f_{t+1}}{2\alpha_t} - \frac{1}{\alpha_{t-1}}.
\end{align}
Using the definition of $\alpha_t$ in \cref{def:parameters_free}, we have
\begin{align}\label{eq:at_at-1_free}
    \frac{1}{\alpha_{t}} - \frac{1}{\alpha_{t-1}} &= (w+t)^{1/3} - (w+t-1)^{1/3} \overset{(a)}{\leq} \frac{1}{3(w+t-1)^{2/3}} \overset{(b)}{\leq} \frac{1}{3(w/2+t)^{2/3}}\nonumber \\
    &=  \frac{2^{2/3}}{3(w+2t)^{2/3}} \leq \frac{2^{2/3}}{3(w+t)^{2/3}} \overset{(c)}{\leq} \frac{2^{2/3}}{3}\alpha_t^2 \overset{(d)}{\leq} \frac{\alpha_t}{3L_f}, 
\end{align}
where (a) follows from $(x+y)^{1/3} - x^{1/3} \leq y/(3x^{2/3})$, (b) 
follows because 
we choose $w \geq 2$ hence $1 \leq w/2$, 
(c) follows 
from the definition of $\alpha_t$ and (d) follows because 
we choose $\alpha_t \leq 1/3L_f$. Substituting 
\cref{eq:at_at-1_free} 
into~\cref{eq:fracalpha_free} and using $\eta_{t+1}^f = c_{\eta_f}\alpha^2_t$, we have 
\begin{align}\label{eq:1subetaf_free}
    \frac{(1-\eta^f_{t+1})^2}{\alpha_t} - \frac{1}{\alpha_{t-1}} \leq \frac{\alpha_t}{3L_f} - \frac{c_{\eta_f}\alpha_t}{2} \leq - \bar{c}_{\eta_f}\alpha_t,
\end{align}
where the inequalities follow from 
$c_{\eta_f} = \frac{2}{3L_f} + 2\bar{c}_{\eta_f}$
with $\bar{c}_{\eta_f}$ in \cref{def:parameters_free}.
Then substituting \cref{eq:1subetaf_free} into \cref{eq:eftplus1_free} yields 
\begin{align}\label{ieq:errorf_free}
\frac{1}{\bar{c}_{\eta_f}} \mathbb{E}&\bigg[\frac{\|\widetilde{e}^f_{t+1}\|^2}{\alpha_t} - \frac{\|\widetilde{e}^f_{t}\|^2}{\alpha_{t-1}}\bigg]\nonumber
\\\leq &-\alpha_t\mathbb{E}\|\widetilde{e}^f_{t}\|^2 + \frac{2(\eta^f_{t+1})^2}{\bar{c}_{\eta_f}\alpha_t}\sigma_f^2 + \frac{\alpha_t}{16}\mathbb{E}\|\widetilde{h}_t^f\|^2 + \frac{L^2_{f_y}+\frac{C_{f_y}L^2_{g_{xy}}}{\mu_g^2}}{4L_{\mu_g}^2}\alpha_t\mathbb{E}\|e_t^g\|^2  \nonumber\\
& + \frac{L^2_{f_y}+\frac{C_{f_y}L^2_{g_{xy}}}{\mu_g^2}}{4L_{\mu_g}(\mu_g+L_g)}\alpha_t\mathbb{E}\|\nabla_y g(x_t, y_t)\|^2 + \frac{8C_{g_{xy}}}{L^2_{\mu_g}}\alpha_t \mathbb{E}\|\widetilde{e}_t^R\|^2 + C_{g_{xy}}\alpha_t\mathbb{E}\|v_t - v_t^*\|^2.
\end{align}

\noindent{\bf Step 4. Bound $\mathbb{E}\|e_t^g\|^2$ in \cref{def:potentialF_free}.} 

\noindent Next, from~\Cref{lm:errorg_free}, we have 
\begin{align}\label{eq:etplus1g_free}
\frac{\mathbb{E}\|e_{t+1}^g\|^2}{\alpha_t} - \frac{\mathbb{E}\|e_{t}^g\|^2}{\alpha_{t-1}}
&\leq \left[ \frac{(1-\eta^g_{t+1})^2 + 32(1-\eta^g_{t+1})^2L^2_g\beta^2_t}{\alpha_t} - \frac{1}{\alpha_{t-1}}\right]\mathbb{E}\|e_t^g\|^2 + \frac{2(\eta^g_{t+1})^2}{\alpha_t}\sigma^2_g\nonumber \\
&\quad \ +16L_g^2\alpha_t\mathbb{E}\|\widetilde{h}_t^f\|^2 + \frac{32L_g^2\beta_t^2}{\alpha_t}\mathbb{E}\|\nabla_y g(x_t,y_t)\|^2,
\end{align}
where we use the 
fact that $0<1-\eta_t^g \leq 1$for all $t\in\{0,1,...,T-1\}$. 
Let us consider the coefficient of the first term on the right hand side 
of the above \cref{eq:etplus1g_free}. In specific, 
we have
\begin{align*}
\frac{(1-\eta^g_{t+1})^2 + 32(1-\eta^g_{t+1})^2L^2_g\beta^2_t}{\alpha_t} - \frac{1}{\alpha_{t-1}} &\leq \frac{(1-\eta^g_{t+1})}{\alpha_t}(1+32L^2_g\beta^2_t) - \frac{1}{\alpha_{t-1}} \\
&= \frac{1}{\alpha_t} - \frac{1}{\alpha_{t-1}} + \frac{32L^2_g\beta^2_t}{\alpha_t} - c_{\eta_g}\alpha_t(1+32L^2_g\beta^2_t),
\end{align*}
which, combined with \cref{eq:at_at-1_free} that $\frac{1}{\alpha_t} - \frac{1}{\alpha_{t-1}}\leq \frac{\alpha_t}{3L_f}$ and the definition of $\beta_t = c_{\beta}\alpha_t$, yields
\begin{align}\label{eq:etatplusoneg_free}
\frac{(1-\eta^g_{t+1})^2 + 32(1-\eta^g_{t+1})^2L^2_g\beta^2_t}{\alpha_t} - \frac{1}{\alpha_{t-1}} \leq \frac{\alpha_t}{3L_f} + 32L_g^2c_{\beta}^2\alpha_t - c_{\eta_g}\alpha_t.
\end{align}
Recall from \cref{def:parameters_free} that we choose 
$$
c_{\eta_g} = \frac{1}{3L_f} + 32L_g^2c_{\beta}^2 + \frac{17(L^2_{f_y}+\frac{C_{f_y}L^2_{g_{xy}}}{\mu_g^2})}{L^2_{\mu_g}}\bar{c}_{\eta_g},
$$
which, in conjunction with \cref{eq:etatplusoneg_free}, yields 
\begin{align}\label{eq:finaletas_free}
\frac{(1-\eta^g_{t+1})^2 + 32(1-\eta^g_{t+1})^2L^2_g\beta^2_t}{\alpha_t} - \frac{1}{\alpha_{t-1}} \leq - \frac{17(L^2_{f_y}+\frac{C_{f_y}L^2_{g_{xy}}}{\mu_g^2})}{L^2_{\mu_g}}\bar{c}_{\eta_g}\alpha_t.
\end{align}
Substituting \cref{eq:finaletas_free} into \cref{eq:etplus1g_free} yields 
\begin{align}\label{ieq:errorg_free}
    \frac{1}{\bar{c}_{\eta_g}}\left[\frac{\mathbb{E}\|e_{t+1}^g\|^2}{\alpha_t} - \frac{\mathbb{E}\|e_{t}^g\|^2}{\alpha_{t-1}}\right] 
    &\leq - \frac{17(L^2_{f_y}+\frac{C_{f_y}L^2_{g_{xy}}}{\mu_g^2})}{L^2_{\mu_g}}\alpha_t\mathbb{E}\|e_t^g\|^2 + \frac{2(\eta^g_{t+1})^2}{\bar{c}_{\eta_g}\alpha_t}\sigma^2_g \nonumber\\
    &\quad \ + \frac{\alpha_t}{16}\mathbb{E}\|h_t^f\|^2
    + \frac{L^2_{f_y}+\frac{C_{f_y}L^2_{g_{xy}}}{\mu_g^2}}{4L_{\mu_g}(\mu_g+L_g)}\alpha_t\mathbb{E}\|\nabla_y g(x_t, y_t)\|^2.
\end{align}

\noindent{\bf Step 5. Bound $\mathbb{E}\|\widetilde{e}_t^R\|^2$ in \cref{def:potentialF_free}.} 

\noindent Next, from~\Cref{lm:errorR_free}, we have 
\begin{align}\label{eq:etRRs_free}
    \frac{\mathbb{E}\|\widetilde{e}_{t+1}^R\|^2}{\alpha_t} - & \frac{\mathbb{E}\|\widetilde{e}_{t}^R\|^2}{\alpha_{t-1}} 
    \leq \Bigg[\frac{(1-\eta_{t+1}^R)^2(1 + 96L_g^2\lambda_t^2)}{\alpha_t} - \frac{1}{\alpha_{t-1}} + 4L_{g_{yy}}r_v^2\delta_{\epsilon}\Bigg]\mathbb{E}\|\widetilde{e}_{t}^R\|^2 \nonumber \\
    &\quad \ +\Bigg[\frac{8(\eta^R_{t+1})^2(\sigma^2_{g_{yy}}r^2_v+\sigma^2_{f_y})}{\alpha_t}\Bigg] + 96(1-\eta_{t+1}^R)^2(L^2_{g_{yy}}r^2_v+L^2_{f_y})\alpha_t\mathbb{E}\|\widetilde{h}_t^f\|^2 \nonumber \\
    &\quad \ +96(1-\eta_{t+1}^R)^2(L^2_{g_{yy}}r^2_v+L^2_{f_y})c^2_\beta \alpha_t(2\mathbb{E}\|e_t^g\|^2 + 2\mathbb{E}\|\nabla_y g(x_t,y_t)\|^2) \nonumber \\
    &\quad \ +96(1-\eta_{t+1}^R)^2L^4_g c_\lambda^2\alpha_t \mathbb{E}\|v_t-v^*\|^2 + \frac{1}{\alpha_t}\big(4L_{g_{yy}}r_v^2\delta_{\epsilon} + 8L^2_{g_{yy}}r_v^4\delta_{\epsilon}^2\big).
\end{align}

\noindent For the first term of right hand side of \cref{eq:etRRs_free}, recalling the $\delta_{\epsilon}$ in \cref{def:parameters_free}, we have
\begin{align}\label{eq:firscoeefient_free}
\frac{(1-\eta_{t+1}^R)^2(1 + 96L_g^2\lambda_t^2 )}{\alpha_t}& - \frac{1}{\alpha_{t-1}} + \frac{4L_{g_{yy}}r_v^2\delta_{\epsilon}}{\alpha_t} \nonumber
\\&\leq \frac{1-\eta_{t+1}^R}{\alpha_t}(1 + 96L_g^2\lambda_t^2 ) - \frac{1}{\alpha_{t-1}} + \frac{4L_{g_{yy}}r_v^2\delta_{\epsilon}}{\alpha_t} \nonumber\\
& = \frac{1}{\alpha_t} - \frac{1}{\alpha_{t-1}} - \frac{\eta_{t+1}^R}{2\alpha_t} + \frac{1-\eta_{t+1}^R}{\alpha_t}\cdot96L_g^2\lambda_t^2 \nonumber\\
&\overset{(a)}{=}\frac{1}{\alpha_t} - \frac{1}{\alpha_{t-1}} - \frac{c_{\eta_R}\alpha_t}{2} + \left(\frac{1}{\alpha_t} - c_{\eta_R}\alpha_t\right)\cdot96L_g^2c_\lambda^2\alpha_t^2\nonumber\\
&\overset{(b)}{\leq} \frac{\alpha_t}{3L_f} + 96L_g^2c_\lambda^2\alpha_t - \frac{c_{\eta_R}\alpha_t}{2},
\end{align}
where (a) follows from the definition that $\eta^R_{t+1} = c_{\eta_R}\alpha_t^2$, and (b) follows from \cref{eq:at_at-1_free} that $\frac{1}{\alpha_t} - \frac{1}{\alpha_{t-1}} \leq \frac{\alpha_t}{3L_f}$.
Recalling $\bar{c}_{\eta_R}$ from \cref{def:parameters_free} that 
$$
c_{\eta_R} = \frac{2}{3L_f} + 192L_g^2c_\lambda^2 + \frac{32C_{g_{xy}}}{L_{\mu_g}^2}\bar{c}_{\eta_R}
$$
which, in conjunction with \cref{eq:firscoeefient_free}, yields
\begin{align}\label{eq:intermidiatResult_free}
\frac{(1-\eta_{t+1}^R)^2(1 + 96L_g^2\lambda_t^2 )}{\alpha_t} - \frac{1}{\alpha_{t-1}} + \frac{4L_{g_{yy}}r_v^2\delta_{\epsilon}}{\alpha_t}v\leq -\frac{16C_{g_{xy}}}{L_{\mu_g}^2}\bar{c}_{\eta_R}.
\end{align}
Incorporating \cref{eq:intermidiatResult_free} into \cref{eq:etRRs_free}, 
recalling from \cref{def:parameters_free} that  $\sigma_R := \sqrt{\sigma^2_{g_{yy}}r^2_v+\sigma^2_{f_y}}$, 
and multiplying both sides of \cref{eq:etRRs_free}
by $\frac{1}{\bar{c}_{\eta_R}}$, we have 
\begin{align}\label{ieq:errorR_free}
\frac{1}{\bar{c}_{\eta_R}}\mathbb{E}\left[\frac{\|\widetilde{e}^R_{t+1}\|^2}{\alpha_t} - \frac{\|\widetilde{e}^R_{t+1}\|^2}{\alpha_{t-1}}\right]
&\leq -\frac{16C_{g_{xy}}}{L_{\mu_g}^2}\alpha_t \|\widetilde{e}^R_t\|^2 + \frac{8(\eta_{t+1}^R)^2}{\bar{c}_{\eta_R}\alpha_t}\sigma_R^2 
+\frac{\alpha_t}{16}\mathbb{E}\|\widetilde{h}_{t}^f\|^2 \nonumber\\
&\ +\frac{L^2_{f_y}+\frac{C_{f_y}L^2_{g_{xy}}}{\mu_g^2}}{4L_{\mu_g}^2}\alpha_t \mathbb{E}\|e_t^g\|^2 + \frac{L^2_{f_y}+\frac{C_{f_y}L^2_{g_{xy}}}{\mu_g^2}}{4L_{\mu_g}(\mu_g+L_g)}\alpha_t\mathbb{E}\|\nabla_y g(x_t, y_t)\|^2 \nonumber\\
&\quad \ +C_{g_{xy}}\alpha_t\|v_t-v_t^*\|^2 + \frac{1}{\alpha_t\bar{c}_{\eta_R}}\big(4L_{g_{yy}}r_v^2\delta_{\epsilon} + 8L^2_{g_{yy}}r_v^4\delta_{\epsilon}^2\big).
\end{align}

\noindent{\bf Step 6. Merging the results of Step 1-5 to prove \cref{def:potentialF_free}.} 

\noindent Finally, adding~\cref{ieq:yy_free}, ~\cref{ieq:vv_free}~\cref{ieq:errorf_free}, ~\cref{ieq:errorg_free}, ~\cref{ieq:errorR_free} and the result of~\Cref{lm:corefunction_free} with $\alpha_t \leq \frac{1}{3L_f}$ yields 
\begin{align}\label{ieq:Vtplus1_Vt_free}
\mathbb{E}[\widetilde{V}_{t+1} - \widetilde{V}_{t}] &\leq -\frac{\alpha_t}{2}\mathbb{E}\|\nabla \Phi(x_t)\|^2 + \frac{4(\eta^f_{t+1})^2}{\bar{c}_{\eta_f} \alpha_t}\sigma_f^2 + \frac{2(\eta^g_{t+1})^2}{\bar{c}_{\eta_g} \alpha_t}\sigma_g^2 + 
\frac{8(\eta^R_{t+1})^2}{\bar{c}_{\eta_R} \alpha_t}\sigma_R^2 \nonumber\\
& \quad \ + \frac{1}{\alpha_t\bar{c}_{\eta_f}}\big(4L_{g_{xy}}r_v^2\delta_{\epsilon} + 16L^2_{g_{xy}}r_v^4\delta_{\epsilon}^2\big) + \frac{1}{\alpha_t\bar{c}_{\eta_R}}\big(4L_{g_{yy}}r_v^2\delta_{\epsilon} + 8L^2_{g_{yy}}r_v^4\delta_{\epsilon}^2\big).
\end{align}
Then, the proof is complete. 
\end{proof}

\subsection{Proof of~\Cref{th:hessianfree}}
\begin{proof}
Summing up the result of~\Cref{lm:mergeV_free} for $t = 0$ to $T-1$, dividing by $T$ on both sides and using the definitions that  $\eta_{t+1}^f := c_{\eta_f}\alpha_t^2$, $\eta_{t+1}^g := c_{\eta_g}\alpha_t^2$, $\eta_{t+1}^R := c_{\eta_R}\alpha_t^2$, we have 
\begin{align}\label{eq:sumV_free}
   & \frac{\mathbb{E}[\widetilde{V}_{T} - \widetilde{V}_{0}]}{T} \leq -\frac{1}{T}\sum_{t=0}^{T-1}\frac{\alpha_t}{2}\mathbb{E}\|\nabla \Phi(x_t)\|^2 + \frac{1}{T}\left[ \frac{4(c_{\eta_f})^2}{\bar{c}_{\eta_f}}\sigma_f^2 + \frac{2(c_{\eta_g})^2}{\bar{c}_{\eta_g}}\sigma_g^2 + \frac{8(c_{\eta_R})^2}{\bar{c}_{\eta_R}}\sigma_R^2 \right]\sum_{t=0}^{T-1}\alpha_t^3 \nonumber\\ 
    &\quad \ + \sum_{t=0}^{T-1}\frac{1}{\alpha_t\bar{c}_{\eta_f}}\big(4L_{g_{xy}}r_v^2\delta_{\epsilon} + 16L^2_{g_{xy}}r_v^4\delta_{\epsilon}^2\big) + \sum_{t=0}^{T-1}\frac{1}{\alpha_t\bar{c}_{\eta_R}}\big(4L_{g_{yy}}r_v^2\delta_{\epsilon} + 8L^2_{g_{yy}}r_v^4\delta_{\epsilon}^2\big).
\end{align}
Next based on the definition of $\alpha_t$ in \cref{def:parameters_free}, we have 
\begin{align}\label{eq:sumalpha_free}
    \sum_{t=0}^{T-1}\alpha_t^3 &= \sum_{t=0}^{T-1}\frac{1}{w + t}
    \overset{(a)}{\leq} \sum_{t=0}^{T-1}\frac{1}{1 + t} \leq \log(T+1) \nonumber \\
    \sum_{t=0}^{T-1}\frac{1}{\alpha_t} &= \sum_{t=0}^{T-1}\frac{1}{(w + t)^{1/3}} \overset{(a)}{\leq}\sum_{t=0}^{T-1} \frac{1}{(1 + t)^{1/3}} \leq \frac{3}{2}T^{2/3}.
\end{align}
where inequality (a) results from the fact that we choose $w \geq 1$. 

By plugging ~\cref{eq:sumalpha_free} in~\cref{eq:sumV_free}, we have 
\begin{align}\label{eq:methEvt_free}
    \frac{\mathbb{E}[\widetilde{V}_{T} - \widetilde{V}_{0}]}{T} &\leq -\frac{1}{T}\sum_{t=0}^{T-1}\frac{\alpha_t}{2}\mathbb{E}\|\nabla \Phi(x_t)\|^2
    + \left[ \frac{4c_{\eta_f}^2}{\bar{c}_{\eta_f}}\sigma_f^2 
    + \frac{2c_{\eta_g}^2}{\bar{c}_{\eta_g}}\sigma_g^2 + \frac{8c_{\eta_R}^2}{\bar{c}_{\eta_R}}\sigma_R^2 \right] \frac{\log(T+1)}{T} \nonumber \\
    &\quad \ + T^{2/3}\big(6L_{g_{xy}}r_v^2\delta_{\epsilon} + 24L^2_{g_{xy}}r_v^4\delta_{\epsilon}^2\big) + T^{2/3}\big(6L_{g_{yy}}r_v^2\delta_{\epsilon} + 12L^2_{g_{yy}}r_v^4\delta_{\epsilon}^2\big).
\end{align}
Rearrange the terms in \cref{eq:methEvt_free}, we have
\begin{align*}
    \frac{1}{T}\sum_{t=0}^{T-1}\frac{\alpha_t}{2}\mathbb{E}\|\nabla \Phi(x_t)&\|^2 \leq \frac{\mathbb{E}[\widetilde{V}_0 - \Phi^*]}{T}
    + \left[ \frac{4c_{\eta_f}^2}{\bar{c}_{\eta_f}}\sigma_f^2 
    + \frac{2c_{\eta_g}^2}{\bar{c}_{\eta_g}}\sigma_g^2 + \frac{8c_{\eta_R}^2}{\bar{c}_{\eta_R}}\sigma_R^2 \right]\frac{\log(T+1)}{T} \\
    &\quad \ + T^{2/3}\big(6L_{g_{xy}}r_v^2\delta_{\epsilon} + 24L^2_{g_{xy}}r_v^4\delta_{\epsilon}^2\big) + T^{2/3}\big(6L_{g_{yy}}r_v^2\delta_{\epsilon} + 12L^2_{g_{yy}}r_v^4\delta_{\epsilon}^2\big),
\end{align*}
which, in conjunction with the fact that $\alpha_t$ is decreasing w.r.t.~$t$ and multiplying by $2/\alpha_T$
on both sides, yields 
\begin{align}\label{eq:1Tephis_free}
    \frac{1}{T}\sum_{t=0}^{T-1}\mathbb{E}\|\nabla \Phi&(x_t)\|^2 \leq \frac{2\mathbb{E}[\widetilde{V}_0 - \Phi^*]}{\alpha_T T}
    + \left[ \frac{8c_{\eta_f}^2}{\bar{c}_{\eta_f}}\sigma_f^2 
    + \frac{4c_{\eta_g}^2}{\bar{c}_{\eta_g}}\sigma_g^2 + \frac{16c_{\eta_R}^2}{\bar{c}_{\eta_R}}\sigma_R^2 \right]\frac{\log(T+1)}{\alpha_T T} \nonumber\\
    &\quad \ + \frac{T^{2/3}}{\alpha_t}\big(6L_{g_{xy}}r_v^2\delta_{\epsilon} + 24L^2_{g_{xy}}r_v^4\delta_{\epsilon}^2\big) + \frac{T^{2/3}}{\alpha_t}\big(6L_{g_{yy}}r_v^2\delta_{\epsilon} + 12L^2_{g_{yy}}r_v^4\delta_{\epsilon}^2\big).
\end{align}
Finally, based on the definition of the potential function, we have 
\begin{align}\label{eq:evo_ss_free}
    \mathbb{E}[\widetilde{V}_0] :=& \mathbb{E} 
    \Big[ 
    \Phi(x_0) + \frac{2L}{3\sqrt{2}L_y}\|y_0 - y^*(x_0)\|^2
    + \frac{4C_B}{C_\lambda L_{\mu_g}}\|v_0 - v^*(x_0, y_0)\|^2 \nonumber
     \\
    &\quad + \frac{1}{\bar{c}_{\eta_f}}\frac{\|\widetilde{e}_t^f\|^2}{\alpha_{t-1}}
    + \frac{1}{\bar{c}_{\eta_g}}\frac{\|e_t^g\|^2}{\alpha_{t-1}}+ \frac{1}{\bar{c}_{\eta_R}}\frac{\|\widetilde{e}_t^R\|^2}{\alpha_{t-1}} 
    \Big] \nonumber \\
    \leq &
    \Phi(x_0) + \frac{2L}{3\sqrt{2}L_y}\|y_0 - y^*(x_0)\|^2
    + \frac{4C_B}{C_\lambda L_{\mu_g}}\|v_0 - v^*(x_0, y_0)\|^2 \nonumber
    \\&
    + \frac{1}{\bar{c}_{\eta_f}}\frac{\sigma_f^2}{\alpha_{t-1}} 
    + \frac{1}{\bar{c}_{\eta_g}}\frac{\sigma_g^2}{\alpha_{t-1}}
    + \frac{1}{\bar{c}_{\eta_R}}\frac{\sigma_R^2}{\alpha_{t-1}} 
\end{align}
where 
the inequality follows from Assumption~\ref{as:ulf},~\ref{as:llf},~\ref{as:sf},~\cref{lm:B} and the definitions of $\widetilde{h}^f_t$, $h^g_t$ and $\widetilde{h}^R_t$ in~\cref{def:htf_free},~\cref{def:htg},~\cref{def:htR_free}. 
Then, substituting \cref{eq:evo_ss_free} into \cref{eq:1Tephis_free} yields 
\begin{align}\label{eq:almostthere}
    \frac{1}{T}&\sum_{t=0}^{T-1}\|\nabla \Phi(x_t)\|^2 
    \leq
    \frac{2[\Phi(x_0) - \Phi^*]}{\alpha_t T} 
    + \frac{4L}{3\sqrt{2}L_y}\frac{\|y_0 - y^*(x_0)\|^2}{\alpha_t T}
    + \frac{8C_B}{C_{\lambda}L_{\mu_y}}\frac{\|v_0 - v^*(x_0, y_0)\|^2}{\alpha_t T} \nonumber\\
    &\quad \ + \frac{2}{\alpha_{-1}\alpha_{T}T}\left( \frac{\sigma^2_f}{\bar{c}_{\eta_f}}+\frac{\sigma^2_g}{\bar{c}_{\eta_g}}+\frac{\sigma^2_R}{\bar{c}_{\eta_R}} \right) 
    + \left[ \frac{8c_{\eta_f}^2}{\bar{c}_{\eta_f}}\sigma_f^2 
    + \frac{4c_{\eta_g}^2}{\bar{c}_{\eta_g}}\sigma_g^2 + \frac{16c_{\eta_R}^2}{\bar{c}_{\eta_R}}\sigma_R^2 \right]\frac{\log(T+1)}{\alpha_T T} \nonumber\\
    &\quad \ + \frac{T^{2/3}}{\alpha_t}\big(6L_{g_{xy}}r_v^2\delta_{\epsilon} + 24L^2_{g_{xy}}r_v^4\delta_{\epsilon}^2 + 6L_{g_{yy}}r_v^2\delta_{\epsilon} + 12L^2_{g_{yy}}r_v^4\delta_{\epsilon}^2\big).
\end{align}
Recalling $\delta_{\epsilon}$ in \cref{def:parameters_free}, we choose our $\delta_{\epsilon}$ as 
{\small
\begin{align}\label{def:delta_epsilon}
    \delta_{\epsilon} \leq \min \Bigg\{&\frac{c_{\eta_f}}{8(L_{g_{xy}}r_v^2(w+T-1)^{2/3})}, \frac{c_{\eta_R}}{8(L_{g_{xy}}r_v^2(w+T-1)^{2/3})}, \frac{(w+T)^{\frac{1}{3}}}{12T^{\frac{4}{3}}L_{g_{xy}}r_v^2}, \frac{(w+T)^{\frac{1}{3}}}{12T^{\frac{4}{3}}L_{g_{yy}}r_v^2} \Bigg\}.
\end{align}}
Substituting \cref{def:delta_epsilon} and the definitions of $\alpha_T := \frac{1}{(\omega + T)^{1/3}}$ and $\alpha_{-1} = \alpha_{0}$ into \cref{eq:almostthere}, yields 
\begin{align*}
    \mathbb{E}\|\nabla \Phi\big(x_a(T)\big)\|^2 &\leq \widetilde{\mathcal{O}}\Bigg( \frac{\Phi(x_0) - \Phi^*}{T^{2/3}}
    + \frac{\|y_0 - y^*(x_0)\|^2}{T^{2/3}}
    + \frac{\|v_0 - v^*(x_0, y_0)\|^2}{T^{2/3}} \\
    &\qquad \quad + \frac{1}{T^{2/3}} + \frac{\sigma_f^2}{T^{2/3}}
    +  \frac{\sigma_g^2}{T^{2/3}}
    +  \frac{\sigma_R^2}{T^{2/3}}\Bigg) .
\end{align*}
Finally, the proof is complete. 
\end{proof}

\vspace*{1cm}

\end{document}